\newcommand{\bfs}{\boldsymbol}
\newtheorem{theorem}{Theorem}[section]
\newtheorem{corollary}[theorem]{Corollary}
\newtheorem{lemma}[theorem]{Lemma}
\newtheorem*{fact*}{Fact}
\newtheorem{proposition}[theorem]{Proposition}
\theoremstyle{definition}
\newtheorem*{claim}{Claim}
\newtheorem{remark}[theorem]{Remark}
\numberwithin{equation}{section}
\newtheorem*{algorithm2}{Classical  factorization algorithm}
\newtheorem*{algorithm3}{ERF algorithm}
\newtheorem*{algorithm4}{DDF Algorithm}
\newtheorem*{algorithm5}{EDF algorithm}
\newcommand{\N}{\mathbb N}
\newcommand{\Z}{\mathbb Z}
\newcommand{\A}{\mathbb A}
\newcommand{\F}{\mathbb F}
\newcommand{\K}{\mathbb K}
\newcommand{\Pp}{\mathbb P}
\newcommand{\fq}{\F_{\hskip-0.7mm q}}
\newcommand{\fqk}{\F_{\hskip-0.7mm q^k}}
\newcommand{\fqtwo}{\F_{\hskip-0.7mm q^{2}}}
\newcommand{\fqi}{\F_{\hskip-0.7mm q^i}}
\newcommand{\cfq}{\overline{\F}_{\hskip-0.7mm q}}
\def\ifm#1#2{\relax \ifmmode#1\else#2\fi}
\newcommand{\klk}    {\ifm {,\ldots,} {$,\ldots,$}}
\newcommand{\plp}    {\ifm {+\cdots+} {$+\ldots+$}}
\newcommand{\wt}    {\ifm {{\sf wt}} {{$\sf wt$}}}
\begin{document}

\title[Factorization on nonlinear families]{Factorization patterns
on nonlinear families of univariate polynomials over a finite field}
\author[G. Matera]{
Guillermo Matera${}^{1,2}$}
\author[M. P\'erez]{
Mariana P\'erez${}^{1,3}$}
\author[M. Privitelli]{
Melina Privitelli${}^{2,4}$}

\address{${}^{1}$Instituto del Desarrollo Humano,
Universidad Nacional de Gene\-ral Sarmiento, J.M. Guti\'errez 1150
(B1613GSX) Los Polvorines, Buenos Aires, Argentina}
\email{gmatera@ungs.edu.ar}
\address{${}^{2}$ National Council of Science and Technology (CONICET),
Ar\-gentina}%\email{mprivitelli@conicet.gov.ar}
\address{${}^{3}$Instituto de Ciencias,
Universidad Nacional de Hurlingham, Av. Gdor. Vergara 2222
(B1688GEZ) Hurlingham, Buenos Aires, Argentina}
\email{mariana.perez@unahur.edu.ar}
\address{${}^{4}$Instituto de Ciencias,
Universidad Nacional de Gene\-ral Sarmiento, J.M. Guti\'errez 1150
(B1613GSX) Los Polvorines, Buenos Aires, Argentina}
\email{mprivite@ungs.edu.ar}

\thanks{The authors were partially supported by the grants
PIP CONICET 11220130100598 and PIO CONICET-UNGS 14420140100027}

\keywords{Finite fields, factorization patterns, symmetric
polynomials, complete intersections, singular locus, classical
factorization algorithm, average--case complexity}%

\date{\today}%

\begin{abstract}
We estimate the number $|\mathcal{A}_{\bfs\lambda}|$ of elements on
a nonlinear family $\mathcal{A}$ of monic polynomials of $\fq[T]$ of
degree $r$ having factorization pattern
$\bfs\lambda:=1^{\lambda_1}2^{\lambda_2}\cdots r^{\lambda_r}$. We
show that $|\mathcal{A}_{\bfs\lambda}|=
\mathcal{T}(\bfs\lambda)\,q^{r-m}+\mathcal{O}(q^{r-m-{1}/{2}})$,
where $\mathcal{T}(\bfs\lambda)$ is the proportion of elements of
the symmetric group of $r$ elements with cycle pattern $\bfs\lambda$
and $m$ is the codimension of $\mathcal{A}$. We provide explicit
upper bounds for the constants underlying the
$\mathcal{O}$--notation in terms of $\bfs\lambda$ and $\mathcal{A}$
with ``good'' behavior. We also apply these results to analyze the
average--case complexity of the classical factorization algorithm
restricted to $\mathcal{A}$, showing that it behaves as good as in
the general case.
\end{abstract}

\maketitle

%\tableofcontents
%
%----------------------------------------------------------------------
%----------------------------------------------------------------------
%----------------------------------------------------------------------
%----------------------------------------------------------------------
%----------------------------------------------------------------------
%----------------------------------------------------------------------
%----------------------------------------------------------------------
%----------------------------------------------------------------------
%
\section{Introduction}
The distribution of factorization patterns on univariate polynomials
over a finite field $\fq$ is a classical subject of combinatorics.
Let $\bfs \lambda:=1^{\lambda_1}2^{\lambda_2}\cdots r^{\lambda_r}$
be a factorization pattern for polynomials of degree $r$, namely
$\lambda_1,\ldots,\lambda_r\in\Z_{\ge 0}$ satisfy
$\lambda_1+2\lambda_2+\cdots+r\lambda_r=r$. A seminal article of S.
Cohen (\cite{Cohen70}) shows that the proportion of elements of
$\fq[T]$ of degree $r$ is roughly the proportion
$\mathcal{T}({\bfs{\lambda}})$ of permutations with cycle pattern
$\bfs \lambda$ in the $r$th symmetric group $\mathbb{S}_r$ (an
element of $\mathbb{S}_r$ has cycle pattern $\bfs \lambda$ if it has
exactly $\lambda_i$ cycles of length $i$ for $1\le i\le r$).

In particular, the number of irreducible polynomials, or more
generally the distribution of factorization patterns, of polynomials
of ``given forms'' has been considered in a number of recent
articles (see, e.g., \cite{Pollack13}, \cite{BaBaRo15}, \cite{Ha16},
\cite{CeMaPe17}). In \cite{Cohen72} a subset of the set of
polynomials of degree $r$ is called {uniformly distributed} if the
proportion of elements with factorization pattern $\bfs\lambda$ is
roughly $\mathcal{T}(\bfs\lambda)$ for every $\bfs\lambda$. The main
result of this paper (\cite[Theorem 3]{Cohen72}) provides a
criterion for a linear family of polynomials of
$\fq[T]$ of given degree to be uniformly distributed in the sense above. %A case
%which has attracted a lot of attention is that of polynomials with
%prescribed coefficients;
\cite{BaBaRo15}, \cite{Ha16} and \cite{CeMaPe17} provide explicit
estimates on the number of elements with factorization pattern
$\bfs\lambda$ on certain linear families of $\fq[T]$, such as the
set of polynomials with some prescribed coefficients.

In \cite[Problem 2.2]{GaHoPa99} the authors ask for estimates on the
number of polynomials of a given degree with a given factorization
pattern lying in {\em nonlinear} families of polynomials with
coefficients parameterized by an affine variety defined over $\fq$.
Except for general results (see, e.g., \cite{ChDrMa92} and
\cite{FrHaJa94}), very little is known on the asymptotic behavior of
such a number. In this article we address this question, providing a
general criterion for a nonlinear family $\mathcal{A}\subset\fq[T]$
to be uniform distributed in the sense of Cohen and explicit
estimates on the number of elements of $\mathcal{A}$ with a given
factorization pattern.

Then we apply our results on the distribution of factorization
patterns to analyze the behavior of the classical factorization
algorithm restricted to such families $\mathcal{A}$. The classical
factorization algorithm (see, e.g., \cite{GaGe99}) is not the
fastest one. Nevertheless, it is worth analyzing it, since it is
implemented in several software packages for symbolic computation,
and a number of scientific problems rely heavily on polynomial
factorization over finite fields.

A precise worst--case analysis is given in \cite{GaGe99}. On the
other hand, an average--case analysis for the set of elements of
$\fq[T]$ of a given degree is provided in \cite{FlGoPa01}. This
analysis relies on methods of analytic combinatorics which cannot be
extended to deal with the nonlinear families we are interested in
this article. For this reason, we provide an analysis of its
average--case complexity when restricted to any nonlinear family
$\mathcal{A}$ satisfying our general criterion.

Now we describe precisely our results. Let $\cfq$ be the algebraic
closure of $\fq$. Let $m$ and $r$ be positive integers with $m<r$
and let $A_{r-1}\klk A_{0}$ be indeterminates over $\cfq$. For a
fixed $k$ with $0\leq k\leq r-1$, we denote $\fq[\bfs{A_k}]:=
\fq[A_{r-1},\ldots,A_{k+1},A_{k-1},\ldots,A_0]$. Let $G_1\klk G_m\in
\fq[\bfs{A_k}]$ and let $W:=\{G_1=0,\ldots,G_m=0\}$ be the set of
common zeros in $\cfq{\!}^r$ of $G_1,\ldots,G_m$. Denoting by
$\fq[T]_r$ the set of monic polynomials of degree $r$ with
coefficients in $\fq$, we consider the following family of
polynomials:
\begin{equation}\label{non-linear family A}
\mathcal{A}:=\{T^r+a_{r-1}T^{r-1}+\cdots+a_0\in \fq[T]_r:
G_i(a_{r-1},\ldots,a_{k-1},a_{k+1},\ldots,a_{0})=0\,\,(1\leq i \leq
m)\}.
\end{equation}

Consider the weight $\wt:\fq[\bfs{A_k}]\to\N_0$ defined by setting
$\wt(A_j):=r-j$ for $0\leq j \leq r-1$ and denote by $G_1^{\wt}\klk
G_m^{\wt}$ the components of highest weight of $G_1\klk G_m$. Let
$(\partial\bfs{G}/\partial \bfs{A})$ be the Jacobian matrix of
$G_1\klk G_m$ with respect to $\bfs{A_k}$. We shall assume that $G_1
\klk G_m$ satisfy the following conditions:
\begin{itemize}
\item[$({\sf H}_1)$] $G_1,\ldots,G_m$ form a regular sequence\footnote{This
means that $\{G_1=0,\ldots,G_i=0\}$ has dimension $r-i$ for $1\le
i\le m$; see Section \ref{subsec: complete intersections} for
details.} of $\fq[\bfs{A_k}]$.
\item[$({\sf H}_2)$]  $(\partial\bfs{G}/
\partial \bfs{A_k})$ has full rank on every point of the $W$.
\item[$({\sf H}_3)$]  $G_1^{\wt}\klk G_m^{\wt}$ satisfy
$({\sf H}_1)$ and $({\sf H}_2)$.
\end{itemize}

In what follows we identify the set $\cfq[T]_r$ of monic polynomials
of $\cfq[T]$ of degree $r$ with $\cfq{\!}^r$ by mapping each
$f_{\bfs a_0}:= T^ r + a_{r-1}T^ {r-1}+\cdots+a_0 \in \cfq[T]_r$ to
$\bfs a_0:=(a_{r-1},\ldots,a_0)\in\cfq{\!}^r$. For
$\mathcal{B}\subset \cfq[T]_r$, the set of elements of $\mathcal{B}$
which are not square--free is called the discriminant locus
$\mathcal{D}(\mathcal{B})$ of $\mathcal{B}$ (see \cite{FrSm84} and
\cite{MaPePr14} for the study of discriminant loci). For $f_{\bfs
a_0}\in \mathcal{B}$, let $\mathrm{Disc}(f_{\bfs
a_0}):=\mathrm{Res}(f_{\bfs a_0},f'_{\bfs a_0})$ denote the
discriminant of $f_{\bfs a_0}$, that is, the resultant of $f_{\bfs
a_0}$ and its derivative $f'_{\bfs a_0}$. Since $f_{\bfs a_0}$ has
degree $r$, by basic properties of resultants we have
$$\mathrm{Disc}(f_{\bfs a_0})= \mathrm{Disc}(F(\bfs A_0, T))|_{\bfs A_0=\bfs a_0} :=
\mathrm{Res}(F(\bfs A_0, T), F'(\bfs A_0, T), T)|_{\bfs A_0=\bfs
a_0},
$$
where the expression $\mathrm{Res}$ in the right--hand side denotes
resultant with respect to $T$. It follows that
$\mathcal{D}(\mathcal{B}):=\{\bfs a_0 \in \mathcal{B} :
\mathrm{Disc}(F(\bfs A_0, T))|_{\bfs A_0=\bfs a_0}= 0\}$. We shall
need further to consider first subdiscriminant loci. The first
subdiscriminant locus $\mathcal{S}_1(\mathcal{B})$ of
$\mathcal{B}\subset\cfq[T]_r$ is the set of $\bfs a_0\in
\mathcal{D}(\mathcal{B})$ for which the first subdiscriminant
$\mathrm{Subdisc}(f_{\bfs a_0}):=\mathrm{Subres}(f_{\bfs
a_0},f'_{\bfs a_0})$ vanishes, where $\mathrm{Subres}(f_{\bfs
a_0},f'_{\bfs a_0})$ denotes the first subresultant of $f_{\bfs
a_0}$ and $f'_{\bfs a_0}$. Since $f_{\bfs a_0}$ has degree $r$,
basic properties of subresultants imply
$$\mathrm{Subdisc}(f_{\bfs a_0}
)= \mathrm{Subdisc}(F(\bfs A_0, T))|_{\bfs A_0=\bfs a_0} :=
\mathrm{Subres}(F(\bfs A_0, T),F'(\bfs A_0, T), T))|_{\bfs A_0=\bfs
a_0},$$
where $\mathrm{Subres}$ in the right--hand side denotes first
subresultant with respect to $T$. We have
$\mathcal{S}_1(\mathcal{B}):=\{\bfs a_0\in\mathcal{D}(\mathcal{B}):
\mathrm{Subdisc}(F(\bfs A_0, T))|_{\bfs A_0=\bfs a_0}=0\}$. Our next
conditions require that the discriminant and the first
subdiscriminant locus intersect well $W$:
\begin{enumerate}
\item[$({\sf H}_4)$] $\mathcal{D}(W)$ has codimension at least one in $W$.
\item [$({\sf H}_5)$] $ (A_0\cdot \mathcal{S}_1)(W):=\{\bfs a_0\in W: a_0=0\}\cup \mathcal{S}_1(\mathcal{B})$  has codimension at least one in
$\mathcal{D}(W)$.%, where $(A_0 \cdot \mathcal{S}_1)(W):=\{\bfs a_0\in
%\mathcal{D}(W): a_0\cdot \mathrm{Subdisc}(f_{\bfs a_0})=0\}$.
\item[$({\sf H}_6)$] $\mathcal{D}(V(G_1^{\wt}\klk G_m^{\wt}))$ has codimension at least
one in $V(G_1^{\wt}\klk G_m^{\wt})\subset \cfq{\!}^r$.
\end{enumerate}

We briefly discuss hypotheses $({\sf H}_1)$--$({\sf H}_6)$.
Hypothesis $({\sf H}_1)$--$({\sf H}_2)$ merely state that $W$ has
the expected dimension $r-m$ and it is smooth. These conditions are
satisfied for any sequence $G_1,\ldots,G_m\in\fq[\bfs A_k]$ as above
with general coefficients (see, e.g., \cite{Benoist12} or
\cite{GaMa18}). Hypothesis $({\sf H}_3)$ requires that
$G_1,\ldots,G_m$ behave properly ``at infinity'', which is also the
case for general $G_1,\ldots,G_m$. Hypotheses $({\sf H}_4)$--$({\sf
H}_5)$ require that ``most'' of the polynomials of $\mathcal{A}$ are
square--free, and among those which are not, only ``few'' of them
have roots with high multiplicity or several multiple roots. As we
are looking for criteria for uniform distribution, namely families
which behave as the whole set $\fq[T]_r$, it is clear that such a
behavior is to be expected. Further, it is required that ``few''
polynomials in the family under consideration have $0$ as a multiple
root, which is a common requirement for uniformly distributed
families (see, e.g., \cite{Cohen72}). Finally, hypothesis $({\sf
H}_6)$ requires that the discriminant locus at infinity is not too
large. We provide significant examples of families of polynomials
satisfying hypotheses $({\sf H}_1)$--$({\sf H}_6)$, which include in
particular the classical of polynomials with prescribed
coefficients.

Our main result shows that any family $\mathcal{A}$ satisfying
hypotheses $({\sf H}_1)$--$({\sf H}_6)$ is uniformly distributed in
the sense of Cohen, and provides explicit estimates on the number
$|\mathcal{A}_{\bfs\lambda}|$ of elements of $\mathcal{A}$ with
factorization pattern $\bfs\lambda$. In fact, we have the following
result (see Theorem \ref{theorem: estimate fact patterns} for a more
precise statement).
\begin{theorem}\label{theorem: estimate fact patterns into}
For $m<r$ and $\bfs\lambda$ a factorization pattern, we have
$$
\big||\mathcal{A}_{\bfs\lambda}|
-\mathcal{T}(\bfs\lambda)\,q^{r-m}\big|\le
q^{r-m-1}\big(\mathcal{T}(\bfs\lambda) (D\delta\, q^{\frac{1}{2}}+14
D^2 \delta^2+r^2\delta)+r^2\delta\big),
$$
where $\delta:=\prod_{i=1}^m \wt(G_i)$ and
$D:=\sum_{i=1}^m(\wt(G_i)-1)$.
\end{theorem}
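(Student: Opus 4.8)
The plan is to follow the classical strategy initiated by S.~Cohen and refined in \cite{CeMaPe17}, now transplanted to the nonlinear setting provided by hypotheses $({\sf H}_1)$--$({\sf H}_6)$. The starting point is the combinatorial reduction: counting $f\in\mathcal{A}$ with factorization pattern $\bfs\lambda=1^{\lambda_1}\cdots r^{\lambda_r}$ is equivalent, up to the multinomial factor that produces $\mathcal{T}(\bfs\lambda)$, to counting pairs $(f,\mathbf{g})$ where $\mathbf{g}=(g_1,\ldots,g_{\lambda_1+\cdots+\lambda_r})$ is an ordered tuple of monic irreducible polynomials, $g_i$ of degree $i$ for the appropriate indices, whose product is $f$. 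Writing each $g_i$ via its roots in $\fqi$, one passes to a variety: the coefficients $a_{r-1},\ldots,a_0$ of $f$ become the elementary symmetric functions of all the roots, and the condition $f\in\mathcal{A}$ becomes the system $G_1=\cdots=G_m=0$ pulled back along this symmetric parametrization. Concretely, I would introduce, for each cycle length $i$ occurring in $\bfs\lambda$, blocks of variables representing the roots of the degree-$i$ factors together with a primitive element of $\fqi$, and form an affine $\fq$-variety $\mathcal{A}_{\bfs\lambda}^{*}$ whose $\fq$-rational points are in controlled (roughly $q^{O(1)}$-to-one, with the multiplicity absorbed into $\mathcal{T}(\bfs\lambda)$) correspondence with the objects we want to count. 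This is exactly the construction in Section~\ref{subsec: complete intersections}-style arguments and in \cite{CeMaPe17}, adapted so that the extra equations $G_j\big(\text{symmetric functions of the roots}\big)=0$ are appended.

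The crux is then to show that $\mathcal{A}_{\bfs\lambda}^{*}$ is a complete intersection of the expected dimension, is absolutely irreducible, and has its singular locus of low dimension; once this is in place, the Lang--Weil-type estimates with explicit constants (as developed for complete intersections, e.g.\ in \cite{CeMaPe17,GaMa18}) give $|\mathcal{A}_{\bfs\lambda}^{*}(\fq)| = q^{\dim} + \mathcal{O}(q^{\dim-1/2})$, and unwinding the combinatorial correspondence yields the main term $\mathcal{T}(\bfs\lambda)q^{r-m}$ together with the stated error. The dimension count is governed by $\bfs\lambda$ (it contributes the symmetric-function geometry, dimension $r$ before imposing $\mathbf{G}$) and by the codimension $m$ of $W$; the numerical quantities $\delta=\prod\wt(G_i)$ and $D=\sum(\wt(G_i)-1)$ enter as degree bounds for the variety $\mathcal{A}_{\bfs\lambda}^{*}$ and its defining equations, which is why they appear in the final estimate. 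Here hypothesis $({\sf H}_3)$ (good behavior of the highest-weight forms $G_i^{\wt}$) is what controls the geometry at infinity and hence both absolute irreducibility and the degree of the projective closure; hypothesis $({\sf H}_6)$ does the same for the discriminant locus at infinity.

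The main obstacle, and where the bulk of the work lies, is proving absolute irreducibility of $\mathcal{A}_{\bfs\lambda}^{*}$ and bounding its singular locus. The symmetric parametrization already makes the ``unconstrained'' variety (the one for $\mathcal{A}=\fq[T]_r$) irreducible, by arguments going back to Cohen; but intersecting with $\{G_1=\cdots=G_m=0\}$ can in principle destroy irreducibility unless one knows the intersection is generically transverse and remains a complete intersection with small singular locus. This is precisely what hypotheses $({\sf H}_1)$--$({\sf H}_2)$ (so $W$ is a smooth complete intersection of dimension $r-m$) and $({\sf H}_3)$ (so the same holds at infinity) are designed to guarantee; the discriminant/subdiscriminant hypotheses $({\sf H}_4)$--$({\sf H}_5)$ ensure that the ``bad'' locus where roots collide — where the symmetric parametrization itself degenerates — meets $W$ in codimension at least one, so it does not swell the singular locus of $\mathcal{A}_{\bfs\lambda}^{*}$ beyond what the estimates can tolerate, and $({\sf H}_5)$ additionally removes the extraneous contribution of polynomials having $0$ as a multiple root. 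I would therefore first establish, via $({\sf H}_1)$--$({\sf H}_3)$, that $\mathcal{A}_{\bfs\lambda}^{*}$ (and its projective closure) is a complete intersection of the right dimension; then, using $({\sf H}_4)$--$({\sf H}_6)$ together with a Jacobian-criterion analysis of the symmetric parametrization, bound $\dim\mathrm{Sing}(\mathcal{A}_{\bfs\lambda}^{*})$ by $\dim\mathcal{A}_{\bfs\lambda}^{*}-2$, which upgrades completeness to absolute irreducibility; and finally feed the resulting degree bounds — expressed through $\delta$ and $D$ via Bézout — into the explicit point-counting estimate and reassemble the combinatorial sum over all pairs $(f,\mathbf{g})$ to obtain the displayed inequality.
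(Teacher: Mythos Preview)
Your proposal is correct and follows essentially the same approach as the paper: the combinatorial reduction via normal bases to an affine variety defined by the $G_j$ composed with elementary symmetric polynomials, the geometric analysis (complete intersection, singular locus of codimension at least two via $({\sf H}_1)$--$({\sf H}_6)$, hence normality and absolute irreducibility of the projective closure), and the application of explicit point-counting estimates for normal complete intersections, followed by the square-free/non-square-free split to reassemble $|\mathcal{A}_{\bfs\lambda}|$. One clarification worth noting is that in the paper the variety one obtains for each $\bfs\lambda$ differs from the ``model'' variety $V$ of Section~\ref{sec: estimate of |A|} only by an invertible linear change of coordinates (the normal-basis matrices $A_i$), so the entire geometric analysis is carried out once on $V$ and then transported; your sketch leaves this implicit but it is the mechanism that makes the argument uniform in $\bfs\lambda$.
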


Our methodology differs significantly from that of \cite{Cohen70}
and \cite{Cohen72}, as we express $|\mathcal{A}_{\bfs\lambda}|$ in
terms of the set of common of $\fq$--rational zeros of certain
symmetric multivariate polynomials defined over $\fq$. This allows
us to establish several facts concerning the geometry of the set of
zeros of such polynomials over $\cfq$. Combining these results with
estimates on the number of $\fq$--rational points of such set of
zeros (see, e.g., \cite{CaMa06} or \cite{CaMaPr15}), we obtain our
main results.

Then we consider the average--case complexity of the classical
factorization algorithm restricted to $\mathcal{A}$. This algorithm
works in four main steps. First it performs an ``elimination of
repeated factors''. Then it computes a (partial) factorization of
the result of the first step by splitting its irreducible factors
according to their degree (this is called the distinct--degree
factorization). The third step factorizes each of the factors
computed in the second step (the equal--degree factorization).
Finally, the fourth step consists of the factorization of the
repeated factors left aside in the first step (factorization of
repeated factors). The following result summarizes our estimates on
the average--case complexity of each of these steps (see Theorems
\ref{costo paso 1}, \ref{average de DDF}, \ref{average de EDF} and
\ref{costo en promedio cuarto paso} for more precise statements).
\begin{theorem}\label{th: average factorization}
Let $\delta_{\bfs G}:=\deg G_1\cdots \deg G_m$. Denote by
$E[\mathcal {X} _1] $, $E[\mathcal {X} _2]$, $E[\mathcal {X} _3]$
and $E[\mathcal {X} _4]$ the average cost on $\mathcal{A}$ of the
steps of elimination of repeated factors, distinct--degree
factorization, equal--degree factorization and factorization of
repeated factors.

For $q > 15\delta_{\bfs G}^{13/3}$, assuming that fast
multiplication is used, we have
\begin{align*}
 E [\mathcal {X} _1] \leq c \, \mathcal{U} (r) +o(1),\quad\!
&E[\mathcal{X}_2] \le\xi\,(2\, \tau_1 \lambda(q)+\tau_1+\tau_2\log
r)
\,M(r)\,(r+1)\big(1+o(1)\big),\\[1ex]
E[\mathcal{X}_3] \leq \tau\, M(r) \log q\,(1+o(1)),\quad\!
&E[\mathcal{X}_4] \leq \tau_1 M(r)(1+o(1)),
\end{align*}
where $M(r):=r\log r\log\log r$ is the fast--multiplication time
function, $\mathcal{U}(r):=M(r)\log r$ is the $\gcd$ time function,
$\lambda (q)$ is the number of multiplications required to compute
$q$--th powers using repeated squaring, $\xi\sim 0.62432945\dots$ is
the Golomb constant, and $c$, $\tau_1$, $\tau_2$ and $\tau$ are
constants independent of $q$ and $r$.
\end{theorem}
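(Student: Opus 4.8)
The plan is to reduce the average cost over $\mathcal{A}$ to an expectation over the symmetric group $\mathbb{S}_r$, using the uniform--distribution estimate of Theorem~\ref{theorem: estimate fact patterns into}. Fix one of the four steps. By the worst--case analysis of \cite{GaGe99}, its cost on a \emph{square--free} input $f\in\mathcal{A}$ depends, up to absolute constants, only on the factorization pattern $\bfs\lambda(f)$ of $f$ (through its largest parts, its number of distinct parts and the multiplicities $\lambda_1,\dots,\lambda_r$); write this cost $c(\bfs\lambda)$. Grouping the square--free elements of $\mathcal{A}$ by pattern and using $|\mathcal{A}|=q^{r-m}+\mathcal{O}(q^{r-m-1/2})$ --- a standard point count on the smooth complete intersection $W$ granted by $({\sf H}_1)$--$({\sf H}_3)$ and \cite{CaMa06} --- Theorem~\ref{theorem: estimate fact patterns into} yields $|\mathcal{A}_{\bfs\lambda}|/|\mathcal{A}|=\mathcal{T}(\bfs\lambda)+\mathcal{E}(\bfs\lambda)$ with
\[
|\mathcal{E}(\bfs\lambda)|\;\le\;\mathcal{T}(\bfs\lambda)\,D\delta\,q^{-1/2}+\mathcal{O}\!\big(q^{-1}\big)
\]
uniformly in $\bfs\lambda$, where $\delta=\prod_i\wt(G_i)$ and $D=\sum_i(\wt(G_i)-1)$. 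Hence the average of the step's cost over $\mathcal{A}$ equals the $\mathbb{S}_r$--expectation $\sum_{\bfs\lambda}\mathcal{T}(\bfs\lambda)\,c(\bfs\lambda)$ plus an error of absolute value at most $\big(D\delta\,q^{-1/2}+p(r)\,q^{-1}\big)\max_{\bfs\lambda}c(\bfs\lambda)$, where $p(r)$ is the number of integer partitions of $r$, plus the contribution of the non--square--free inputs. Thus it suffices to (i) evaluate each $\mathbb{S}_r$--expectation, and (ii) control those two error sources.

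For (ii), hypothesis $({\sf H}_4)$ gives $|\mathcal{D}(\mathcal{A})|=\mathcal{O}(q^{r-m-1})$ (the point count on $\mathcal{D}(W)$ being effective thanks to the at--infinity hypothesis $({\sf H}_6)$), so the non--square--free polynomials of $\mathcal{A}$ form an $\mathcal{O}(q^{-1})$ fraction; since a single run of the whole algorithm costs $\mathcal{O}\big((\lambda(q)+\log r)\,r\,M(r)\big)$ in the worst case, their total contribution to each average is $o(1)$. This already accounts for the additive $o(1)$ in $E[\mathcal{X}_1]$. It also settles step $4$, which acts only on the higher--multiplicity part extracted in step $1$ and is trivial on square--free inputs, leaving just the $\mathcal{O}(M(r))$ square--free test, whence $E[\mathcal{X}_4]\le\tau_1 M(r)(1+o(1))$. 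Hypothesis $({\sf H}_5)$ sharpens this book--keeping: $(A_0\cdot\mathcal{S}_1)(W)$ having codimension $\ge 1$ in $\mathcal{D}(W)$ bounds by $\mathcal{O}(q^{r-m-2})$ the polynomials of $\mathcal{A}$ with a root of multiplicity $\ge 3$, with two or more multiple roots, or with $0$ as a multiple root --- precisely the inputs on which steps $1$ and $4$ recurse most deeply.

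For (i): the elimination of repeated factors performs $\mathcal{O}(1)$ gcd computations, each of cost $\mathcal{U}(r)$, on every square--free input, so its $\mathbb{S}_r$--average is $\mathcal{O}(\mathcal{U}(r))$, giving $E[\mathcal{X}_1]\le c\,\mathcal{U}(r)+o(1)$. For distinct--degree factorization, on a square--free $f$ with pattern $\bfs\lambda$ the main loop is repeated a number $\beta(\bfs\lambda)$ of times determined by the two largest parts of $\bfs\lambda$ (the loop stops once the unfactored part is recognized as irreducible), each repetition consisting of one $q$--th power modulo a divisor of $f$ followed by one $\gcd$, at cost $\big(2\tau_1\lambda(q)+\tau_1+\tau_2\log r\big)M(r)$; since the $\mathcal{T}$--weighted expectation of $\beta$ is $\xi(r+1)(1+o(1))$ with $\xi$ the Golomb constant --- the content of the longest--cycle asymptotics underlying \cite{FlGoPa01} --- the stated bound for $E[\mathcal{X}_2]$ follows. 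For equal--degree factorization, splitting the degree--$i$ block of $f$ (a product of $\lambda_i$ irreducibles of degree $i$) by the probabilistic splitting procedure costs $\mathcal{O}\big((i+\log q)\,M(i\lambda_i)\log\lambda_i\big)$; summing over $i$ with $\sum_i i\lambda_i=r$, using superadditivity of $M$, and averaging over $\mathbb{S}_r$ gives $E[\mathcal{X}_3]\le\tau\,M(r)\log q\,(1+o(1))$, the $\mathbb{S}_r$--average being the quantity already computed in \cite{FlGoPa01}. In all cases the resulting constants $c,\tau_1,\tau_2,\tau$ are those occurring in the analysis of \cite{FlGoPa01} for the full set $\fq[T]_r$, hence independent of $q$ and $r$.

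The principal obstacle is the uniform control of the transfer error in (ii): one must bound $\sum_{\bfs\lambda}|\mathcal{E}(\bfs\lambda)|\,c(\bfs\lambda)$ against $\sum_{\bfs\lambda}\mathcal{T}(\bfs\lambda)\,c(\bfs\lambda)$ simultaneously for all four cost functions, the delicate case being distinct--degree factorization, where $c(\bfs\lambda)$ may be as large as $\Theta\big(r\,(\lambda(q)+\log r)\,M(r)\big)$. Here the precise shape of the error in Theorem~\ref{theorem: estimate fact patterns into} --- whose leading term is proportional to $\mathcal{T}(\bfs\lambda)\,D\delta$, not to an unweighted quantity --- is essential: it forces
\[
\sum_{\bfs\lambda}|\mathcal{E}(\bfs\lambda)|\,c(\bfs\lambda)\;\le\;D\delta\,q^{-1/2}\sum_{\bfs\lambda}\mathcal{T}(\bfs\lambda)\,c(\bfs\lambda)\;+\;\mathcal{O}\!\big(p(r)\,q^{-1}\textstyle\max_{\bfs\lambda}c(\bfs\lambda)\big),
\]
and a careful but routine estimation --- bounding $D$ and $\delta$ via $\wt(G_i)\le r\deg G_i$, hence in terms of $\delta_{\bfs G}=\deg G_1\cdots\deg G_m$, and $p(r)$ via the Hardy--Ramanujan estimate --- shows that the hypothesis $q>15\,\delta_{\bfs G}^{13/3}$ makes this relative error $o(1)$. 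Assembling the four estimates yields the theorem.
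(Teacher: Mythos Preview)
Your strategy is essentially the paper's: split each $E[\mathcal{X}_i]$ into contributions from $\mathcal{A}^{sq}$ and $\mathcal{A}^{nsq}$, bound the latter via $({\sf H}_4)$ (and $({\sf H}_5)$ for step~4), and reduce the former to an $\mathbb{S}_r$--expectation through the factorization--pattern estimate. The paper executes this step by step (Theorems~\ref{costo paso 1}, \ref{average de DDF}, \ref{average de EDF}, \ref{costo en promedio cuarto paso}) rather than through a single transfer lemma, and in doing so avoids your $p(r)$--weighted error: for each step it appeals directly to a closed--form $\mathbb{S}_r$--identity --- the longest--cycle expectation bounded by $\xi(r+1)$ via \cite{GoGa98} for DDF, the exact cycle--count distribution of \cite{ShLl96} for EDF --- so the error stays proportional to the main term without ever summing over all partitions. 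Two small points to correct: in the paper's DDF routine the loop runs exactly $\max\{i:\lambda_i>0\}$ times (there is no early--termination test for irreducibility), so only the largest part of $\bfs\lambda$ matters, not the two largest; and the hypothesis $q>15\delta_{\bfs G}^{13/3}$ is not what forces your error to be $o(1)$ --- its sole role is to validate the lower bound $|\mathcal{A}|\ge\tfrac12 q^{r-m}$ of Theorem~\ref{estimation A}, while the $o(1)$ is simply the statement that $q\to\infty$ with $r$ and the $\deg G_i$ fixed.
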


Here, the $o(1)$ terms goes to zero as $q$ tends to infinity, for
fixed $r$ and $\deg G_1,\ldots,\deg G_m$. See Theorems \ref{costo
paso 1}, \ref{average de DDF}, \ref{average de EDF} and \ref{costo
en promedio cuarto paso} for explicit expressions of these terms.

This result significantly strengthens the conclusions of the
average--case analysis of \cite{FlGoPa01}, in that it shows that
such conclusions are not only applicable to the whole set $\fq[T]_r$
of monic polynomials of degree $r$, but to any family
$\mathcal{A}\subset\fq[T]_r$ satisfying hypotheses $({\sf
H}_1)$--$({\sf H}_6)$. Further, our estimates improve by roughly a
factor $r$ those of \cite{FlGoPa01}, up to logarithmic terms, due to
the fact that we consider fast multiplication of polynomials,
instead of the classical polynomial multiplication considered
\cite{FlGoPa01}.

The paper is organized as follows. In Section \ref{sec: notions of
algebraic geometry} we collect the notions of algebraic geometry we
use. In Section \ref{sec: estimate of |A|} we obtain a lower bound
on the number of elements of the family $\mathcal{A}$ under
consideration. Section \ref{the factorization patterns} is devoted
to describe our algebraic--geometry approach to the distribution of
factorization patterns and to prove Theorem \ref{theorem: estimate
fact patterns into}. In Section \ref{sec: examples} we exhibit
examples of linear and nonlinear families of polynomials satisfying
hypotheses $({\sf H}_1)$--$({\sf H}_6)$. Finally, in Section
\ref{sec: average-case complexity of factorization} we perform the
average--case analysis of the classical polynomial factorization
restricted to $\mathcal{A}$, showing Theorem \ref{th: average
factorization}.

%
%----------------------------------------------------------------------
%----------------------------------------------------------------------
%----------------------------------------------------------------------
%----------------------------------------------------------------------
%----------------------------------------------------------------------
%----------------------------------------------------------------------
%----------------------------------------------------------------------
%----------------------------------------------------------------------

\section{Basic notions of algebraic geometry}
\label{sec: notions of algebraic geometry}
In this section we collect the basic definitions and facts of
algebraic geometry that we need in the sequel. We use standard
notions and notations which can be found in, e.g., \cite{Kunz85},
\cite{Shafarevich94}.

Let $\K$ be any of the fields $\fq$ or $\cfq$. We denote by $\A^r$
the affine $r$--dimensional space $\cfq{\!}^{r}$ and by $\Pp^r$ the
projective $r$--dimensional space over $\cfq{\!}^{r+1}$. Both spaces
are endowed with their respective Zariski topologies over $\K$, for
which a closed set is the zero locus of a set of polynomials of
$\K[X_1,\ldots, X_r]$, or of a set of homogeneous polynomials of
$\K[X_0,\ldots, X_r]$.

A subset $V\subset \Pp^r$ is a {\em projective variety defined over}
$\K$ (or a projective $\K$--variety for short) if it is the set of
common zeros in $\Pp^r$ of homogeneous polynomials $F_1,\ldots, F_m
\in\K[X_0 ,\ldots, X_r]$. Correspondingly, an {\em affine variety of
$\A^r$ defined over} $\K$ (or an affine $\K$--variety) is the set of
common zeros in $\A^r$ of polynomials $F_1,\ldots, F_{m} \in
\K[X_1,\ldots, X_r]$. We think a projective or affine $\K$--variety
to be equipped with the induced Zariski topology. We shall denote by
$\{F_1=0,\ldots, F_m=0\}$ or $V(F_1,\ldots,F_m)$ the affine or
projective $\K$--variety consisting of the common zeros of
$F_1,\ldots, F_m$.

In the remaining part of this section, unless otherwise stated, all
results referring to varieties in general should be understood as
valid for both projective and affine varieties.

A $\K$--variety $V$ is {\em irreducible} if it cannot be expressed
as a finite union of proper $\K$--subvarieties of $V$. Further, $V$
is {\em absolutely irreducible} if it is $\cfq$--irreducible as a
$\cfq$--variety. Any $\K$--variety $V$ can be expressed as an
irredundant union $V=\mathcal{C}_1\cup \cdots\cup\mathcal{C}_s$ of
irreducible (absolutely irreducible) $\K$--varieties, unique up to
reordering, called the {\em irreducible} ({\em absolutely
irreducible}) $\K$--{\em components} of $V$.

For a $\K$--variety $V$ contained in $\Pp^r$ or $\A^r$, its {\em
defining ideal} $I(V)$ is the set of polynomials of $\K[X_0,\ldots,
X_r]$, or of $\K[X_1,\ldots, X_r]$, vanishing on $V$. The {\em
coordinate ring} $\K[V]$ of $V$ is the quotient ring
$\K[X_0,\ldots,X_r]/I(V)$ or $\K[X_1,\ldots,X_r]/I(V)$. The {\em
dimension} $\dim V$ of $V$ is the length $n$ of a longest chain
$V_0\varsubsetneq V_1 \varsubsetneq\cdots \varsubsetneq V_n$ of
nonempty irreducible $\K$--varieties contained in $V$. %A
%$\K$--variety $V$ is called {\em equidimensional} if all the
%irreducible $\K$--components of $V$ are of the same dimension. In
%such a case,
We say that $V$ has {\em pure dimension} $n$ if every irreducible
$\K$--component of $V$ has dimension $n$. A $\K$--variety of $\Pp^r$
or $\A^r$ of pure dimension $r-1$ is called a $\K$--{\em
hypersurface}. A $\K$--hypersurface of $\Pp^r$ (or $\A^r$) can also
be described as the set of zeros of a single nonzero polynomial of
$\K[X_0,\ldots, X_r]$ (or of $\K[X_1,\ldots, X_r]$).

The {\em degree} $\deg V$ of an irreducible $\K$--variety $V$ is the
maximum of $|V\cap L|$, considering all the linear spaces $L$ of
codimension $\dim V$ such that $|V\cap L|<\infty$. More generally,
following \cite{Heintz83} (see also \cite{Fulton84}), if
$V=\mathcal{C}_1\cup\cdots\cup \mathcal{C}_s$ is the decomposition
of $V$ into irreducible $\K$--components, we define the degree of
$V$ as
$$\deg V:=\sum_{i=1}^s\deg \mathcal{C}_i.$$
The degree of a $\K$--hypersurface $V$ is the degree of a polynomial
of minimal degree defining $V$. %Another property is that the degree
%of a dense open subset of a $\K$--variety $V$ is equal to the degree
%of $V$.
%
%An important tool
We shall use the following {\em B\'ezout inequality} (see
\cite{Heintz83}, \cite{Fulton84}, \cite{Vogel84}): if $V$ and $W$
are $\K$--varieties of the same ambient space, then
\begin{equation}\label{eq: Bezout}
\deg (V\cap W)\le \deg V \cdot \deg W.
\end{equation}

Let $V\subset\A^r$ be a $\K$--variety, $I(V)\subset \K[X_1,\ldots,
X_r]$ its defining ideal and $x$ a point of $V$. The {\em dimension}
$\dim_xV$ {\em of} $V$ {\em at} $x$ is the maximum of the dimensions
of the irreducible $\K$--components of $V$ containing $x$. If
$I(V)=(F_1,\ldots, F_m)$, the {\em tangent space} $\mathcal{T}_xV$
to $V$ at $x$ is the kernel of the Jacobian matrix $(\partial
F_i/\partial X_j)_{1\le i\le m,1\le j\le r}(x)$ of $F_1,\ldots, F_m$
with respect to $X_1,\ldots, X_r$ at $x$. We have
$\dim\mathcal{T}_xV\ge \dim_xV$ (see, e.g., \cite[page
94]{Shafarevich94}). The point $x$ is {\em regular} if
$\dim\mathcal{T}_xV=\dim_xV$; otherwise, $x$ is called {\em
singular}. The set of singular points of $V$ is the {\em singular
locus} $\mathrm{Sing}(V)$ of $V$; it is a closed $\K$--subvariety of
$V$. A variety is called {\em nonsingular} if its singular locus is
empty. For projective varieties, the concepts of tangent space,
regular and singular point can be defined by considering an affine
neighborhood of the point under consideration.

Let $V$ and $W$ be irreducible affine $\K$--varieties of the same
dimension and $f:V\to W$ a regular map with $\overline{f(V)}=W$,
where $\overline{f(V)}$ denotes the closure of $f(V)$ with respect
to the Zariski topology of $W$. Such a map is called {\em dominant}.
Then $f$ induces a ring extension $\K[W]\hookrightarrow \K[V]$ by
composition with $f$. We say that the dominant map $f$ is {\em
finite} if this extension is integral, namely each element
$\eta\in\K[V]$ satisfies a monic equation with coefficients in
$\K[W]$. A dominant finite
morphism is necessarily closed. Another fact %concerning
we shall use is that the preimage $f^{-1}(S)$ of an irreducible
closed subset $S\subset W$ under a dominant finite morphism $f$ is
of pure dimension $\dim S$ (see, e.g., \cite[\S 4.2,
Proposition]{Danilov94}).
%
%----------------------------------------------------------------
%----------------------------------------------------------------
%----------------------------------------------------------------
%----------------------------------------------------------------
%
\subsection{Rational points}
Let $\Pp^r(\fq)$ be the $r$--dimensional projective space over $\fq$
and $\A^r(\fq)$ the $r$--dimensional $\fq$--vector space $\fq^n$.
For a projective variety $V\subset\Pp^r$ or an affine variety
$V\subset\A^r$, we denote by $V(\fq)$ the set of $\fq$--rational
points of $V$, namely $V(\fq):=V\cap \Pp^r(\fq)$ in the projective
case and $V(\fq):=V\cap \A^r(\fq)$ in the affine case. For an affine
variety $V$ of dimension $n$ and degree $\delta$, we have the
following bound (see, e.g., \cite[Lemma 2.1]{CaMa06}):
\begin{equation}\label{eq: upper bound -- affine gral}
   |V(\fq)|\leq \delta\, q^n.
\end{equation}
On the other hand, if $V$ is a projective variety of dimension $n$
and degree $\delta$, we have the following bound (see
\cite[Proposition 12.1]{GhLa02a} or \cite[Proposition 3.1]{CaMa07};
see \cite{LaRo15} for more precise upper bounds):
 \begin{equation}\label{eq: upper bound -- projective gral}
|V(\fq)|\leq \delta\, p_n,
\end{equation}
where $p_n:=q^n+q^{n-1}+\cdots+q+1=|\Pp^n(\fq)|$.
%
%----------------------------------------------------------------
%----------------------------------------------------------------
%----------------------------------------------------------------
%----------------------------------------------------------------
%
\subsection{Complete intersections}\label{subsec: complete intersections}
Elements $F_1,\ldots, F_m$ in $\mathbb{K}[X_1,\ldots,X_r]$ or
$\mathbb{K}[X_0,\ldots,X_r]$ form a \emph{regular sequence} if $F_1$
is nonzero and no $F_i$ is zero or a zero divisor in the quotient
ring $\mathbb{K}[X_1,\ldots,X_r]/ (F_1,\ldots,F_{i-1})$ or
$\mathbb{K}[X_0,\ldots,X_r]/ (F_1,\ldots,F_{i-1})$ for $2\leq i \leq
m$. In that case, the (affine or projective) $\mathbb{K}$--variety
$V:=V(F_1,\ldots,F_{r-n})$ is called a {\em set--theoretic complete
intersection}. We remark that $V$ is necessarily of pure dimension
$r-m$. Further, $V$ is called an {\em (ideal--theoretic) complete
intersection} if its ideal $I(V)$ over $\K$ can be generated by $m$
polynomials. We shall frequently use the following criterion to
prove that a variety is a complete intersection (see, e.g.,
\cite[Theorem 18.15]{Eisenbud95}).
\begin{theorem}\label{theorem: eisenbud 18.15}
Let $F_1,\ldots,F_m\in\mathbb{K}[X_1,\ldots,X_r]$ be polynomials
which form a regular sequence and let
$V:=V(F_1,\ldots,F_m)\subset\A^r$. Denote by
$(\partial\bfs{F}/\partial \bfs{X})$ the Jacobian matrix of
$F_1,\ldots,F_m$ with respect to $X_1,\ldots,X_r$. If the subvariety
of $V$ defined by the set of common zeros of the maximal minors of
$(\partial\bfs{F}/\partial \bfs{X})$ has codimension at least one in
$V$, then $F_1,\ldots,F_m$ define a radical ideal. In particular,
$V$ is a complete intersection.
\end{theorem}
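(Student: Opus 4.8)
The plan is to obtain radicality of $(F_1,\ldots,F_m)$ from Serre's criterion for reducedness, and then to pass from this to the statement that $I(V)$ admits $m$ generators. Recall that a Noetherian ring is reduced if and only if it satisfies the conditions $(R_0)$ (its localizations at minimal primes are regular, hence fields) and $(S_1)$ (every associated prime is minimal). I would apply this to the coordinate ring $R:=\K[X_1,\ldots,X_r]/(F_1,\ldots,F_m)$, following \cite[Theorem 18.15]{Eisenbud95}.

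First I would verify $(S_1)$. Since $F_1,\ldots,F_m$ form a regular sequence in the Cohen--Macaulay ring $\K[X_1,\ldots,X_r]$, the quotient $R$ is Cohen--Macaulay as well, so it satisfies $(S_k)$ for every $k$; in particular it satisfies $(S_1)$, i.e.\ $R$ has no embedded primes and all its minimal primes have codimension $m$ (equivalently, $V$ has pure dimension $r-m$, as already recorded for set--theoretic complete intersections).

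Next I would verify $(R_0)$ by means of the Jacobian criterion. The field $\K$ is perfect (both $\fq$ and $\cfq$ are), so for a prime $\mathfrak{p}$ of $R$ the local ring $R_{\mathfrak p}$ is regular precisely when the Jacobian matrix $(\partial\bfs{F}/\partial\bfs{X})$ has rank $m$ modulo $\mathfrak p$; since $R$ is a complete intersection of the expected codimension $m$, this is the correct smoothness test. The set of primes where the rank drops is exactly the zero locus inside $V$ of the maximal ($m\times m$) minors of $(\partial\bfs{F}/\partial\bfs{X})$, which by hypothesis has codimension at least one in $V$. Hence no minimal prime of $V$ lies in this locus, so $R_{\mathfrak p}$ is a regular local ring of dimension $0$, i.e.\ a field, for every minimal prime $\mathfrak p$; this is $(R_0)$.

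By Serre's criterion $R$ is reduced, that is, $(F_1,\ldots,F_m)$ is a radical ideal of $\K[X_1,\ldots,X_r]$. Since $\K$ is perfect, this ideal remains radical after base change to $\cfq$, so, applying the Nullstellensatz and contracting back to $\K[X_1,\ldots,X_r]$, we obtain $I(V)=(F_1,\ldots,F_m)$; in particular $I(V)$ is generated by $m$ polynomials and $V$ is a complete intersection. I expect the delicate point to be the Jacobian criterion step, where one must use both that $\K$ is perfect and that $V$ has everywhere the expected codimension $m$ (which is exactly what the regular sequence hypothesis guarantees), so that rank $m$ of the Jacobian is genuinely equivalent to regularity at the generic points of $V$.
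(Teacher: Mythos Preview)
The paper does not supply a proof of this statement: it records the theorem as a known criterion and cites \cite[Theorem 18.15]{Eisenbud95} without further argument. Your proposal is a correct and complete sketch of the standard proof of that result---the Cohen--Macaulay property of $R$ gives $(S_1)$, the Jacobian hypothesis over a perfect field gives $(R_0)$ at the generic points, and Serre's criterion yields reducedness, hence radicality of the ideal and the identification $I(V)=(F_1,\ldots,F_m)$. This is essentially the argument one finds in Eisenbud, so there is nothing to compare: you have simply filled in what the paper left as a citation.
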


If $V\subset\Pp^r$ is a complete intersection defined over $\K$ of
dimension $r-m$, and $F_1 ,\ldots, F_m$ is a system of homogeneous
generators of $I(V)$, the degrees $d_1,\ldots, d_m$ depend only on
$V$ and not on the system of generators. Arranging the $d_i$ in such
a way that $d_1\geq d_2 \geq \cdots \geq d_m$, we call $(d_1,\ldots,
d_m)$ the {\em multidegree} of
$V$. In this case, a stronger version of %the B\'ezout inequality
\eqref{eq: Bezout} holds, called the {\em B\'ezout theorem} (see,
e.g., \cite[Theorem 18.3]{Harris92}):
\begin{equation}\label{eq: Bezout theorem}
\deg V=d_1\cdots d_m.
\end{equation}

A complete intersection $V$ is called {\em normal} if it is {\em
regular in codimension 1}, that is, the singular locus
$\mathrm{Sing}(V)$ of $V$ has codimension at least $2$ in $V$,
namely $\dim V-\dim \mathrm{Sing}(V)\ge 2$ (actually, normality is a
general notion that agrees on complete intersections with the one we
define here). A fundamental result for projective complete
intersections is the Hartshorne connectedness theorem (see, e.g.,
\cite[Theorem VI.4.2]{Kunz85}): if $V\subset\Pp^r$ is a complete
intersection defined over $\K$ and $W\subset V$ is any
$\K$--subvariety of codimension at least 2, then $V\setminus W$ is
connected in the Zariski topology of $\Pp^r$ over $\K$. Applying the
Hartshorne connectedness theorem with $W:=\mathrm{Sing}(V)$, one
deduces the following result.
\begin{theorem}\label{theorem: normal complete int implies irred}
If $V\subset\Pp^r$ is a normal complete intersection, then $V$ is
absolutely irreducible.
\end{theorem}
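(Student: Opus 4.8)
The plan is to combine the Hartshorne connectedness theorem quoted above with the elementary fact that a \emph{connected nonsingular} variety over an algebraically closed field is irreducible. Since both normality and the complete intersection property are preserved under the base change $\K\hookrightarrow\cfq$ --- the ideal $I(V)$ is generated by the same $m$ polynomials over $\cfq$, and the singular locus is unchanged --- it suffices to prove that $V$, viewed as a $\cfq$-variety, is irreducible; this is exactly absolute irreducibility.

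First I would introduce the open subset $X:=V\setminus\mathrm{Sing}(V)$. Because $V$ is a complete intersection it has pure dimension $r-m$, and because $V$ is normal, $\mathrm{Sing}(V)$ has codimension at least $2$ in $V$. In particular $\mathrm{Sing}(V)$ contains no irreducible component of $V$, so $X$ is a nonempty dense open subset of $V$, and each irreducible $\cfq$-component $\mathcal{C}_i$ of $V$ meets $X$ in the nonempty dense open subset $\mathcal{C}_i\setminus\mathrm{Sing}(V)$. Applying the Hartshorne connectedness theorem with $W:=\mathrm{Sing}(V)$, a $\cfq$-subvariety of codimension at least $2$ in $V$, shows that $X=V\setminus W$ is connected in the Zariski topology.

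Next I would argue that $X$ is irreducible. Every point $x\in X$ is a regular point of $V$, so the local ring $\mathcal{O}_{V,x}$ is a regular local ring, hence an integral domain; therefore $x$ lies on a unique irreducible component of $V$. Consequently the sets $\mathcal{C}_i\cap X$ are pairwise disjoint, and each of them is both closed in $X$ (being $X\cap\mathcal{C}_i$) and open in $X$ (being the complement in $X$ of the union of the remaining $\mathcal{C}_j\cap X$). Since $X$ is connected there is only one such component, so $X=\mathcal{C}_1\cap X$ is irreducible. Finally, as $X$ is dense in $V$ we have $V=\overline{X}$, and the closure of an irreducible set is irreducible, so $V$ is $\cfq$-irreducible.

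The only genuinely delicate point is checking that the hypotheses of the Hartshorne connectedness theorem really apply in the form needed: that after extension of scalars $V$ is still a complete intersection over $\cfq$, and that $\mathrm{Sing}(V)$ is a $\cfq$-subvariety of codimension at least $2$ --- the latter being precisely the content of the normality assumption. Everything else (density of $X$, the local-ring argument, passage to the closure) is formal, and indeed this is essentially the deduction already outlined in the text preceding the statement.
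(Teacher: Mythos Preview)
Your proposal is correct and follows exactly the approach the paper indicates: apply the Hartshorne connectedness theorem with $W:=\mathrm{Sing}(V)$, obtain that $V\setminus\mathrm{Sing}(V)$ is connected, and then use that a connected nonsingular variety is irreducible to conclude. The paper only sketches this deduction in the sentence preceding the theorem; you have correctly filled in the details (the regular-local-ring argument for uniqueness of components at smooth points, density of the smooth locus, and the base-change remark).
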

%
%------------------------------------------------------------------
%------------------------------------------------------------------
%------------------------------------------------------------------
%------------------------------------------------------------------
%------------------------------------------------------------------
%------------------------------------------------------------------
%------------------------------------------------------------------
%------------------------------------------------------------------
%
%
\section{Estimates on the number of elements of $\mathcal{A}$}
\label{sec: estimate of |A|}
Let  $X_1 \klk X_r$ be indeterminates over $\cfq$. Denote by $\Pi_1
\klk \Pi_{r}$ the elementary symmetric polynomials of $\fq[X_1 \klk
X_r]$. Observe that $f:=T^r+ a_{r-1}T^{r-1}+\ldots +a_0 \in
\mathcal{A}$ if and only if there exists $\bfs x\in \A^r$ such that
$a_j=(-1)^{r-j} \Pi_{r-j}(\bfs x)$ for $0 \leq j \leq r-1$ and
$$R_i:=G_i(-\Pi_1(\bfs x) \klk {(-1)^{r-k-1}\Pi_{r-k-1}(\bfs x)},
{(-1)^{r-k+1}\Pi_{r-k+1}(\bfs x)} \klk
(-1)^{r}\Pi_{r}(\bfs x))=0$$
for $1\leq i \leq m$. Thus, we associate to $\mathcal{A}$ the
polynomials $R_1\klk R_m\in\fq[X_1 \klk X_r]$ and the variety $V
\subset \A^r$ defined by $R_1 \klk R_m$.

Our estimates on the distribution of factorization patterns in
$\mathcal{A}$ require asymptotically--tight estimates on the number
of $\fq$--rational points of $V$, and for the average--case analysis
of the classical factorization algorithm restricted to $\mathcal{A}$
we need asymptotically--tight lower bounds on the number of elements
of $\mathcal{A}$. For this purpose, we shall prove several facts
concerning the geometry of the affine varieties $V$ and $W$.

Hypothesis $({\sf H}_1)$ implies that $W$ is a set--theoretic
complete intersection of dimension $r-m$. Furthermore, by  $({\sf
H}_2)$ it follows that the subvariety of $W$ defined by the set of
common zeros of the maximal minors of $(\partial\bfs{G}/\partial
\bfs{A_k})$ has codimension at least one in $W$.  Applying Theorem
\ref{theorem: eisenbud 18.15} we deduce the following result.
\begin{lemma}
$W\subset \A^r$ is a complete intersection of dimension $r-m$.
\end{lemma}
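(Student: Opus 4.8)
The plan is to verify the two hypotheses of Theorem~\ref{theorem: eisenbud 18.15} for the polynomials $R_1,\ldots,R_m\in\fq[X_1,\ldots,X_r]$ that define $V$, transporting the information we have about $G_1,\ldots,G_m$ on $W$ through the map given by the elementary symmetric polynomials. Wait---reading the statement again, the lemma is actually about $W$, not $V$; the discussion immediately preceding it has already done the work. So the proof is essentially a two-line invocation: first, by $({\sf H}_1)$, $G_1,\ldots,G_m$ form a regular sequence in $\fq[\bfs A_k]$, hence $W=\{G_1=0,\ldots,G_m=0\}$ is a set-theoretic complete intersection, necessarily of pure dimension $r-m$ (the number of variables $r$ minus the number of equations $m$, using the convention spelled out in the footnote to $({\sf H}_1)$ and in Section~\ref{subsec: complete intersections}). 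Second, by $({\sf H}_2)$ the Jacobian matrix $(\partial\bfs G/\partial\bfs A_k)$ has full rank $m$ at every point of $W$, so the subvariety of $W$ cut out by the vanishing of all maximal ($m\times m$) minors of this Jacobian is empty, and in particular has codimension at least one in $W$.

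With these two observations in hand, Theorem~\ref{theorem: eisenbud 18.15} (Eisenbud 18.15) applies verbatim with $F_i=G_i$ and the ambient variable set $\bfs A_k$ (which has exactly $r$ indeterminates, since one of the original $r$ coefficient variables $A_k$ has been omitted). The theorem then yields that $G_1,\ldots,G_m$ generate a radical ideal and that $W$ is an (ideal-theoretic) complete intersection. Combining this with the pure-dimensionality already noted gives precisely the assertion: $W\subset\A^r$ is a complete intersection of dimension $r-m$.

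There is essentially no obstacle here; the only point requiring a modicum of care is bookkeeping about the ambient space---checking that $\fq[\bfs A_k]$ is a polynomial ring in $r$ variables so that ``$m$ equations'' really does give expected dimension $r-m$, and that the notion of ``full rank'' in $({\sf H}_2)$ means rank $m$ (possible since $m<r$), which is exactly the nonvanishing of some maximal minor. One should also remark, for completeness, that the dimension statement does not even need Eisenbud's theorem---it is already a consequence of $({\sf H}_1)$ alone via the footnote's definition of regular sequence---so the role of $({\sf H}_2)$ and Theorem~\ref{theorem: eisenbud 18.15} is solely to upgrade ``set-theoretic'' to ``(ideal-theoretic) complete intersection'' and to record radicality of the defining ideal, which will be used later.
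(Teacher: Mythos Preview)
Your argument is correct and matches the paper's proof exactly: invoke $({\sf H}_1)$ to get a set-theoretic complete intersection of the right dimension, invoke $({\sf H}_2)$ to see the Jacobian minor locus is empty (hence of codimension $\ge 1$), and apply Theorem~\ref{theorem: eisenbud 18.15}. One small bookkeeping slip in your parenthetical: $\fq[\bfs A_k]$ has $r-1$ indeterminates, not $r$, since $A_k$ is omitted from the $r$ coefficient variables $A_{r-1},\ldots,A_0$; the dimension of $W\subset\A^r$ is still $r-m$ because $W$ sits in the full coefficient space with the free $A_k$-direction (equivalently, $W\cong W'\times\A^1$ where $W'\subset\A^{r-1}$ has dimension $(r-1)-m$).
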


%
%For this purpose,
Consider the following surjective morphism of affine
$\fq$--varieties:
\begin{align}\label{definition of Pir}
{\bfs\Pi^r}: \A^r & \rightarrow  \A^{r}
\\ \notag
\bfs{x} & \mapsto  (-\Pi_1(\bfs{x}),\ldots,(-1)^{r}\Pi_{r}(\bfs{x})).
\end{align}
It is easy to see that  $\bfs\Pi^r$ is a finite, dominant morphism
with $\bfs \Pi^r(V)=W$. %In particular, the preimage
%$(\bfs\Pi^r)^{-1}(Z)$ of an irreducible affine variety
%$Z\subset\A^r$ of dimension  $m$ is of pure dimension $m$.
%Observe that $V=(\bfs\Pi^r)^{-1}(W)$.
By hypothesis $({\sf H}_1)$ the variety
$W^j:=V(G_1,\ldots,G_j)\subset \A^r$ has pure dimension $r-j$ for
$1\leq j \leq m$. This implies that
$V^j:=(\bfs\Pi^r)^{-1}(W^j)=V(R_1\klk R_j)$ has pure dimension $r-j$
for $1\leq j \leq m$. We conclude that $R_1\klk R_{m}$ form a
regular sequence of $\fq[X_1\klk X_r]$, namely we have the following
result.
\begin{lemma}\label{lemma: V is complete inters}
$V$ is a set--theoretic complete intersection of dimension $r-m$.
\end{lemma}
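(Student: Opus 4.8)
The plan is to deduce the claim directly from Lemma \ref{theorem: eisenbud 18.15}-style reasoning, but transported along the finite morphism $\bfs\Pi^r$. Since the preceding paragraph already establishes that $V^j:=(\bfs\Pi^r)^{-1}(W^j)=V(R_1,\ldots,R_j)$ has pure dimension $r-j$ for $1\le j\le m$, the essential point is to argue that the chain of varieties $V=V^m\subsetneq V^{m-1}\subsetneq\cdots\subsetneq V^1\subsetneq\A^r$ has the dimension drops required by the definition of a regular sequence. First I would record that $W$ is a set--theoretic complete intersection of dimension $r-m$ by hypothesis $({\sf H}_1)$, hence each $W^j$ has pure dimension $r-j$; this is exactly the statement that $G_1,\ldots,G_m$ is a regular sequence rephrased in geometric terms (see Section \ref{subsec: complete intersections}).

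Next I would invoke the fact, stated in the algebraic--geometry preliminaries, that the preimage of an irreducible closed subset under a dominant finite morphism has pure dimension equal to that of the subset; applying this componentwise to each irreducible component of $W^j$ shows that $V^j=(\bfs\Pi^r)^{-1}(W^j)$ is of pure dimension $r-j$. In particular $\dim V^j=r-j$ for every $j$, so passing from $V^{j-1}$ to $V^j=V(R_1,\ldots,R_j)$ drops the dimension by exactly one at each step. By the standard equivalence between the ``regular sequence'' condition and the ``expected codimension at each step'' condition for polynomials over $\cfq$ (a polynomial that is neither zero nor a zero divisor modulo $(R_1,\ldots,R_{j-1})$ is precisely one that cuts the dimension down by one on every associated component), this yields that $R_1,\ldots,R_m$ form a regular sequence of $\fq[X_1,\ldots,X_r]$. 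Consequently $V=V(R_1,\ldots,R_m)$ is a set--theoretic complete intersection of pure dimension $r-m$, which is the assertion of Lemma \ref{lemma: V is complete inters}.

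The only slightly delicate point is the passage from ``each $V^j$ has pure dimension $r-j$'' to ``$R_1,\ldots,R_m$ is a regular sequence'': one must be sure that no $R_j$ becomes a zero divisor modulo $(R_1,\ldots,R_{j-1})$, i.e.\ that $R_j$ does not vanish identically on any irreducible component of $V^{j-1}$. But if it did, then that whole component (of dimension $r-j+1$) would lie in $V^j$, contradicting pure dimensionality $r-j$ of $V^j$; and $R_1$ is clearly nonzero since $\bfs\Pi^r$ is surjective and $G_1\ne 0$. So this is the main (and in fact only) obstacle, and it is dispatched by the purity statement already in hand. I would keep the proof to two or three lines, citing the purity of preimages under finite dominant morphisms and the hypothesis $({\sf H}_1)$, exactly as the surrounding exposition does for the analogous Lemma on $W$.
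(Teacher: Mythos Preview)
Your proposal is correct and follows essentially the same approach as the paper: the paper's argument (given in the paragraph immediately preceding the lemma rather than in a separate proof environment) also uses hypothesis $({\sf H}_1)$ to get pure dimension $r-j$ for each $W^j$, pulls this back through the finite dominant morphism $\bfs\Pi^r$ to obtain pure dimension $r-j$ for each $V^j$, and concludes that $R_1,\ldots,R_m$ is a regular sequence. Your write-up is simply more detailed, in particular making explicit the passage from ``each $V^j$ has pure dimension $r-j$'' to ``$R_j$ is not a zero divisor modulo $(R_1,\ldots,R_{j-1})$,'' which the paper leaves implicit.
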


Next we study the singular locus of $V$. %To do this, let   $W_{r-1}
%\subset \A^{r-1}$ be the affine variety defined by $G_1 \klk G_m$
%and we consider the following morphism of $\fq$--varieties:
%
%\begin{lemma} \label{lemma: Pi finite morphism}  $\Pi$ is a finite, dominant morphism.
%\end{lemma}
%\begin{proof}
%It is easy to see that  $\bfs \Pi$ is a surjective map. We shall prove that every fiber of $\bfs \
%Pi$ is finite.  Indeed, let $\bfs{a}:=(a_{r-1},\ldots,a_1)\in W_{r-1}$, we choose $b_0\in \fq$ and let $g$ be the polynomial $g:=T^r+a_{r-1}T^{r-1}+\cdots+a_1T+b_0$. We consider $\bfs{x}:=(x_1,\ldots,x_r)\in \cfq^{r}$  where $\{x_1,\ldots,x_r\}$ is the set of roots of $g$. It is clear that $\Pi(\bfs{x})=\bfs{a}$. Hence, we see that the fiber of $\bfs{a}$ is finite, more precisely we have that $|\Pi^{-1}(a)|=r!q$. This implies the statement of the lemma.
%\end{proof}
%For $\bfs{x}\in V$ and $\bfs{y}:=\bfs\Pi(\bfs{x})$, we denote by
%$\mathcal{T}_{\bfs{x}}V$ and $\mathcal{T}_{\bfs{y}} W_{r-1}$ the
%tangent spaces to $V$ at $\bfs{x}$ and to $W_{r-1}$ at $\bfs{y}$
%respectively. We also consider the differential map of $\bfs\Pi$ at
%$\bfs{x}$, namely
%
%\begin{align*}
%\mathrm{d}_{\bfs{x}}\bfs\Pi :  \mathcal{T}_{\bfs{x}}V &
%\rightarrow  \mathcal{T}_{\bfs{y}}W_{r-1} \\
%\bfs{v} & \mapsto  A(\bfs{x})\cdot \bfs{v},
%\end{align*}
%The main result of this section is an upper bound on the dimension
%of the singular locus of $V$. To prove such a bound,
For this purpose, we make some remarks concerning the Jacobian
matrix of $(\partial\bfs\Pi^r/\partial{\bfs X})$ of $\bfs\Pi^r$ with
respect to $X_1,\dots,X_r$. Denote by $A_r$ the $(r\times
r)$--Vandermonde matrix
$$
A_r:=(X_j^{i-1})_{1\leq i,j\leq r}.
$$
Taking into account the following well--known identities %that the partial
%derivatives of the elementary symmetric polynomials $\Pi_i$ satisfy
%the following identities
(see, e.g., \cite{LaPr02}):
$$
\frac{\partial \Pi_i}{\partial X_{j}}= \Pi_{i-1}-X_{j} \Pi_{i-2} +
X_{j}^2 \Pi_{i-3} +\cdots+ (-1)^{i-1} X_{j}^{i-1}\quad (1\leq i,j
\leq r),
$$
we conclude that $(\partial\bfs\Pi^r/\partial{\bfs X})$ can be
factored as
\begin{equation} \label{eq: factorization Jacobian elem sim pols}
\left(\frac{\partial {\bfs\Pi^r}}{\partial \bfs X}\right):=B_r\cdot
A_r
:=
\left(
\begin{array}{ccccc}
-1 & \ 0 & 0 &  \dots & 0
\\
\ \ \Pi_1 &  -1 & 0 &  &
\\
-\Pi_2 & \ \ \Pi_1 & -1 & \ddots & \vdots
\\
\vdots &\vdots  & \vdots & \ddots & 0
\\
(-1)^{r}\Pi_{r-1} & (-1)^{r-1}\Pi_{r-2} & (-1)^{r-2}\Pi_{r-3} & \cdots &\!\! -1
\end{array}
\!\!\right)
\cdot
A_r.
\end{equation}
Since $\det B_r=(-1)^r$, we see that
$$\det \left(\frac{\partial {\bfs\Pi^r}}{\partial \bfs X}\right)
=(-1)^{r} \prod_{1\le i < j\le r}(X_j-X_i).$$

A critical point in the study of the singular locus of $V$ is the
analysis of the zero locus of the $(r-1)\times(r-1)$ minors of
$\left({\partial {\bfs\Pi^r}}/{\partial \bfs X}\right)$. For this
purpose, we have the following result.
\begin{proposition}\label{prop: determinant minors order r-1}
Fix $k$ with $ 0 \leq k \leq r-1$ as in the introduction and $l$
with $ 1 \leq l  \leq r$. Denote by $M_{r-k,l}$ the $(r-1)\times
(r-1)$--matrix obtained by deleting the row $r-k$ and the column $l$
of $(\partial{\bfs\Pi^r}/\partial{\bfs X})$. Then
\begin{equation} \label{det maximal menor}
\det M_{r-k,l}=(-1)^{r-k-1}\Delta_l\cdot X_l^k,
\end{equation}
where $\Delta_l:=\prod_{1\le i < j\le r, \, \,   i,j \neq
l}(X_j-X_i).$
\end{proposition}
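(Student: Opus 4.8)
The plan is to compute the minor $\det M_{r-k,l}$ directly from the factorization $(\partial\bfs\Pi^r/\partial\bfs X)=B_r\cdot A_r$ of \eqref{eq: factorization Jacobian elem sim pols}, by applying the Cauchy--Binet formula to this product. Put $I:=\{1,\dots,r\}\setminus\{r-k\}$, $J:=\{1,\dots,r\}\setminus\{l\}$ and $K_p:=\{1,\dots,r\}\setminus\{p\}$, and for a matrix $N$ and index sets $R,S$ write $N_{R,S}$ for the submatrix with rows in $R$ and columns in $S$. Since $M_{r-k,l}=(B_rA_r)_{I,J}$, Cauchy--Binet gives
$$\det M_{r-k,l}=\sum_{p=1}^{r}\det\big((B_r)_{I,K_p}\big)\cdot\det\big((A_r)_{K_p,J}\big).$$
Thus the computation splits into two classical pieces: the $(r-1)\times(r-1)$ minors of the lower-triangular matrix $B_r$, and those of the Vandermonde matrix $A_r$ obtained by deleting one row (one exponent) and the column $l$.

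For the minors of $B_r$ I would first identify $B_r^{-1}$. From $\sum_{s\ge0}\Pi_s t^s=\prod_{j=1}^{r}(1+X_jt)$ one sees that $B_r$ is the (truncated) lower-triangular Toeplitz matrix of the sequence $\big((-1)^{s+1}\Pi_s\big)_{s\ge0}$, with generating series $-\prod_{j=1}^{r}(1-X_jt)$; hence $B_r^{-1}$ is the Toeplitz matrix of $-\prod_{j=1}^{r}(1-X_jt)^{-1}=-\sum_{s\ge0}h_s\,t^s$, that is $(B_r^{-1})_{ij}=-h_{i-j}$, where $h_s=h_s(X_1,\dots,X_r)$ is the complete homogeneous symmetric polynomial of degree $s$ (with $h_s=0$ for $s<0$). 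Since $(B_r)_{I,K_p}$ is $B_r$ with row $r-k$ and column $p$ removed, the cofactor identity $\det\big((B_r)_{I,K_p}\big)=(-1)^{(r-k)+p}\det(B_r)\,(B_r^{-1})_{p,\,r-k}$ together with $\det B_r=(-1)^r$ yields
$$\det\big((B_r)_{I,K_p}\big)=(-1)^{\,p-k+1}\,h_{\,p-r+k},$$
which vanishes unless $p\ge r-k$.

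For the minors of $A_r$, the matrix $(A_r)_{K_p,J}$ is the generalized Vandermonde matrix in the $r-1$ variables $X_j$ ($j\ne l$) with exponent set $\{0,1,\dots,r-1\}\setminus\{p-1\}$. By the bialternant (Jacobi--Trudi) formula its determinant equals $s_\mu\cdot\Delta_l$, evaluated at those variables, where $\mu$ is the partition attached to that exponent set; a short computation shows that $\mu$ is the single column $(1^{\,r-p})$, whence $s_\mu=e_{r-p}$ and $\det\big((A_r)_{K_p,J}\big)=e_{r-p}(\widehat X_l)\,\Delta_l$, $e_s(\widehat X_l)$ denoting the $s$-th elementary symmetric polynomial in $\{X_j:j\ne l\}$. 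Substituting both evaluations into the Cauchy--Binet sum and reindexing by $s:=r-p$ makes it collapse to
$$\det M_{r-k,l}=(-1)^{\,r-k+1}\,\Delta_l\sum_{s=0}^{k}(-1)^{s}\,h_{k-s}\,e_s(\widehat X_l),$$
and the inner sum is the coefficient of $t^k$ in $\big(\prod_{j=1}^{r}(1-X_jt)^{-1}\big)\big(\prod_{j\ne l}(1-X_jt)\big)=(1-X_lt)^{-1}$, hence equals $X_l^k$. Since $(-1)^{r-k+1}=(-1)^{r-k-1}$, this is precisely \eqref{det maximal menor}.

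The delicate part will be the sign bookkeeping: the cofactor identity for the $B_r$-minors, the reordering of rows and columns needed to bring $(A_r)_{K_p,J}$ into bialternant form, and the extraction of $\Delta_l$ from the Vandermonde each introduce sign factors depending on $r,p,k,l$, and one must verify that they combine into the alternating sign $(-1)^s$ that triggers the generating-function identity (equivalently, into the global $(-1)^{r-k-1}$). A convenient check of the global sign is the extreme case $p=r$, i.e.\ the minor $\det M_{r,l}$: there the rows $1,\dots,r-1$ of $B_r$ involve only columns $1,\dots,r-1$, so the computation is completely transparent and gives $\det M_{r,l}=(-1)^{r-1}\Delta_l$, in agreement with \eqref{det maximal menor}. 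Alternatively one can bypass Cauchy--Binet: after deleting column $l$, the columns of $(\partial\bfs\Pi^r/\partial\bfs X)$ satisfy a single syzygy, namely the coefficient vector of $\prod_{j\ne l}(T-X_j)$; multiplying this vector by $(B_r^{-1})^{\mathsf T}$ and using the same generating-function identity produces $\det M_{i,l}=(-1)^{\,i-1}\Delta_l\,X_l^{\,r-i}$ for every $i$, which for $i=r-k$ is the assertion.
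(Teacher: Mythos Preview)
Your proof is correct and follows essentially the same route as the paper: apply Cauchy--Binet to the factorization $B_rA_r$, evaluate the $(r-1)\times(r-1)$ minors of $B_r$ as complete homogeneous symmetric polynomials and those of $A_r$ as $e_{r-p}(\widehat X_l)\,\Delta_l$, and then collapse the resulting sum $\sum_{s=0}^k(-1)^sh_{k-s}\,e_s(\widehat X_l)$ to $X_l^k$. The only notable differences are in execution: the paper obtains the $B_r$-minors via a block decomposition and the Trudi formula for Toeplitz--Hessenberg determinants, and proves the final identity $\sum_{s}(-1)^sh_{k-s}e_s(\widehat X_l)=X_l^k$ by induction on $k$ using $\Pi_{k-i}=X_l\Pi_{k-i-1}^*+\Pi_{k-i}^*$, whereas you get the $B_r$-minors from the cofactor formula with $B_r^{-1}$ and dispatch the final identity in one line via the generating-function factorization $\big(\prod_j(1-X_jt)^{-1}\big)\prod_{j\ne l}(1-X_jt)=(1-X_lt)^{-1}$; your treatment of the last step is the cleaner of the two.
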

\begin{proof}
According to the factorization (\ref{eq: factorization Jacobian elem
    sim pols}), we have
$$M_{r-k,l}=B_{r}^{r-k} \cdot A_{r}^l,$$
where $B_{r}^{r-k}$ is the $(r-1)\times r$--submatrix of $B_r$
obtained by deleting its $(r-k)$th row and $A_{r}^l$ is the $r
\times (r-1)$--submatrix of $A_{r}$ obtained by deleting its $l$th
column. By the Cauchy--Binet formula, it follows that
$$\det M_{r-k,l}=\sum_{j=1}^r \det B_r^{r-k,j}\cdot \det A_r^{j,l},$$
where $B_{r}^{r-k,j}$ is the $(r-1)\times (r-1)$--matrix obtained by
removing the $j$th column of $B_r^{r-k}$ and $A_r^{j,l}$ is the
$(r-1)\times (r-1)$--matrix obtained by removing the $j$th row of
$A_r^l$.

From \cite[Lemma 2.1]{Ernst00} we deduce that
\begin{equation}\label{det A}
\det A_r^{j,l}= \Delta_l \cdot \Pi_{r-j}^*,
\end{equation}
where $\Pi_{r-j}^*=\Pi_{r-j}(X_1 \klk X_{l-1},X_{l+1} \klk X_r).$

Next, we obtain an explicit expression of $\det B_{r}^{r-k,j}$ for
$1\le j \le r$. Observe that $B_r^{r-k}$ has a block structure:
\begin{equation}\label{formula Br-1,l}
B_{r}^{r-k}:=\left(
                 \begin{array}{cc}
                  B_{r-k-1}  &  \mathbf{0}\\
                  *  &  \mathcal{T}_k^*\\
                 \end{array}
               \right),
\end{equation}
where $B_{r-k-1}$ is the $(r-k-1)\times(r-k-1)$ principal submatrix
of $B_r$ consisting on its first $r-k-1$ rows and columns and
$\mathcal{T}_k^*$ is the  $k\times(k+1)$--matrix
$$
\mathcal{T}_k^*:=\left(
\begin{array}{ccccccc}
\Pi_1 &\!\! -1 & 0 &  \dots & 0 & 0
\\
-\Pi_2 & \!\! \ddots & \ddots&   &\vdots & \vdots
\\
\vdots &\!\!\ddots  & \ddots & \ddots & 0 & 0
\\
\vdots &  & \ddots & \ddots &\!\! -1 &0
\\
(-1)^{k+1}\Pi_k \!\!&\!\! \dots & \dots & -\Pi_2& \Pi_1 &\!\!-1
\end{array}
\right).
$$
From \eqref{formula Br-1,l} we readily deduce that
\begin{equation}\label{det B}
\det B_r^{r-k,j}=\left\{\begin{array}{cl}
                   0 & \textrm{for }1\le j \le r-k-1, \\
                   (-1)^{r-1} & \textrm{for }j=r-k, \\
                   (-1)^{r-i-1} \det \mathcal{T}_i & \textrm{for
                   }j=r-k+i,\,1\le i\le k,
                 \end{array}\right.
\end{equation}
where $\mathcal{T}_i$ is the following $i \times i$
Toeplitz--Hessenberg matrix:
$$
\mathcal{T}_i:=\left(
\begin{array}{ccccc}
\Pi_1 & -1 & 0 &  \dots & 0
\\
-\Pi_2\ \ &  \ddots & \ddots&  & \vdots
\\
\vdots &\ddots  & \ddots & \ddots & 0
\\
\vdots &  & \ddots & \ddots &\!\! -1\,
\\
(-1)^{i+1}\Pi_{i} & \dots & \dots & -\Pi_2&\!\! \Pi_1
\end{array}
\!\right).
$$

By the Trudi formula (see \cite[Ch. VII]{Muir60}; see also
\cite[Theorem 1]{Merca13}) we deduce the following identity (see
\cite[Section 4]{Merca13}):
$$\det \mathcal{T}_i=H_i,$$
where $H_i:=H_i(X_1 \klk X_r)$ is the $i$th complete homogeneous
symmetric function. Therefore, combining \eqref{det A} and
\eqref{det B} we conclude that
\begin{align*}
\det M_{r-k,l} = \Delta_l\sum_{j=r-k}^r \det B_r^{r-k,j} \cdot
\Pi_{r-j}^* &=\Delta_l  \sum_{i=0}^k \det B_r^{r-k,\,i+r-k} \cdot
\Pi_{k-i}^* \\ \notag & =\Delta_l \sum_{i=0}^k (-1)^{r-i-1} H_i
\cdot \Pi^*_{k-i}.
\end{align*}

We claim that
\begin{equation}\label{Sk}
S(k):=\sum_{i=0}^k (-1)^{r-i-1} H_i \cdot
\Pi^*_{k-i}=(-1)^{r-k-1}X_l^k, \quad k=0 \klk r-1.
\end{equation}
We prove the claim arguing by induction on $k$. Since $H_0=
\Pi_0^*=1$, the case $k = 0$ follows immediately. Assume now that
\eqref{Sk} holds for $k-1$ with $k>0$, namely
\begin{equation}\label{HI}
(-1)^{r-1}\sum_{i=0}^{k-1}(-1)^i H_i\cdot
\Pi_{k-1-i}^*=(-1)^{r-k}X_l^{k-1}.
\end{equation}
%We shall to see that \eqref{Sk} is valid for $k$.
%
It is well known that (see, e.g., \cite[$7$.\S $1$, Exercise
$10$]{CoLiOs92})
$$\sum_{i=0}^k (-1)^i H_i\cdot \Pi_{k-i}=0.$$
Since $\Pi_{k-i}^*=\Pi_{k-i}(X_1 \klk X_{l-1},X_{l+1} \klk X_r)$, we
deduce that $\Pi_{k-i}= X_l \cdot \Pi_{k-i-1}^*+ \Pi_{k-i}^*$. As a
consequence, it follows that
$$
\sum_{i=0}^k(-1)^i H_i \cdot \Pi_{k-i}^*=X_l \sum_{i=0}^{k-1}
(-1)^{i-1} H_i \cdot \Pi_{k-i-1}^*.
$$
Combining this identity and the inductive hypothesis \eqref{HI}, we
conclude that
$$
S(k)=-X_l  \sum_{i=0}^{k-1} (-1)^{r-i-1} H_i \cdot \Pi_{k-i-1}^* =-
X_l\, (-1)^{r-k} X_l^{k-1} =(-1)^{r-k-1}X_l^k.
$$
This concludes the proof of the proposition.
\end{proof}

Denote by $(\partial \bfs{R}/\partial \bfs{X}):=(\partial
R_i/\partial X_j)_{1\le i\le m,1\leq j \le r}$ the Jacobian matrix
of $R_{1}\klk R_{m}$ with respect to  $X_1\klk X_r$.
\begin{theorem}\label{theorem: dimension singular locus V}
The set of $\bfs{x}\in V$ for which $(\partial \bfs{R}/\partial
\bfs{X})(\bfs{x})$ does not have full rank, has codimension at least
$2$. In particular, the singular locus $\Sigma$ of $V$ has
codimension at least $2$.
\end{theorem}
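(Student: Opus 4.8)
The plan is to exploit the factorization $(\partial\bfs\Pi^r/\partial\bfs X)=B_r\cdot A_r$ together with the chain rule. Write $R_i = G_i\circ\bfs\Pi^r$ (more precisely, $G_i$ composed with $\bfs\Pi^r$ with the $(r-k)$-th coordinate omitted, up to signs). Then by the chain rule $(\partial\bfs R/\partial\bfs X) = (\partial\bfs G/\partial\bfs A_k)\big|_{\bfs\Pi^r(\bfs x)}\cdot J$, where $J$ is the $(r-1)\times r$ submatrix of $(\partial\bfs\Pi^r/\partial\bfs X)$ obtained by deleting the row indexed by $A_{k}$, i.e.\ the $(r-k)$-th row. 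So on a point $\bfs x\in V$, the matrix $(\partial\bfs R/\partial\bfs X)(\bfs x)$ fails to have full rank either because $(\partial\bfs G/\partial\bfs A_k)$ fails to have full rank at $\bfs\Pi^r(\bfs x)\in W$ — excluded by hypothesis $({\sf H}_2)$ — or because the composite drops rank due to $J$ having a kernel that meets the row space in a bad way. Concretely, I would argue that if $(\partial\bfs R/\partial\bfs X)(\bfs x)$ is not of full rank $m$, then every $m\times m$ minor vanishes; using Cauchy--Binet to expand each such minor in terms of the $m\times m$ minors of $(\partial\bfs G/\partial\bfs A_k)$ (nonzero somewhere, by $({\sf H}_2)$) times the $m\times m$ minors of $J$, one is forced into a situation controlled by the $(r-1)\times(r-1)$ minors of $(\partial\bfs\Pi^r/\partial\bfs X)$ computed in Proposition \ref{prop: determinant minors order r-1}.

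The key computational input is Proposition \ref{prop: determinant minors order r-1}: the maximal minors of $J$ obtained by also deleting column $l$ equal $(-1)^{r-k-1}\Delta_l\, X_l^k$, where $\Delta_l=\prod_{i<j,\,i,j\ne l}(X_j-X_i)$. Thus the locus where $J$ drops rank below $r-1$ is contained in $\{X_l = X_{l'}\text{ for two pairs, or }X_l X_{l'}=0\text{ with a coincidence}\}$-type conditions — more precisely, a point $\bfs x$ is bad only if for every $l$, either $\Delta_l(\bfs x)=0$ or $x_l^k=0$. I would split into cases according to how many coordinates of $\bfs x$ coincide and how many vanish, and in each case identify the bad set as contained in a union of subvarieties of $\A^r$ of codimension $\ge 2$ (e.g.\ $\{X_i=X_j=X_s\}$, or $\{X_i=X_j,\ X_s=X_t\}$, or $\{X_i=X_j,\ X_s=0\}$, each of codimension $2$), then intersect with $V$ and invoke the Bézout inequality \eqref{eq: Bezout} to control degrees and, crucially, use that $\bfs\Pi^r$ is finite so that dimensions are preserved under $\bfs\Pi^r$ — combined with $({\sf H}_2)$ which already disposes of the "generic coincidence'' stratum where all $x_i$ are distinct.

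The main obstacle I anticipate is the careful bookkeeping of the stratification by coincidence/vanishing patterns of the coordinates, and showing that on each stratum the drop-of-rank condition genuinely cuts out at least one more equation \emph{inside} $V$ rather than being automatically satisfied. The delicate point is the interaction of the two phenomena — repeated roots ($\Delta_l=0$) versus a zero root ($x_l=0$) — because when $k\ge1$ the factor $X_l^k$ in \eqref{det maximal menor} means a single zero coordinate already kills one family of minors; one then needs a second, independent condition, and here hypotheses $({\sf H}_4)$ and $({\sf H}_5)$ (the discriminant locus $\mathcal D(W)$ has codimension $\ge1$ in $W$, and $(A_0\cdot\mathcal S_1)(W)$ has codimension $\ge1$ in $\mathcal D(W)$, so codimension $\ge2$ in $W$) are exactly what guarantee that the pullback under the finite map $\bfs\Pi^r$ of the "double coincidence / zero root plus coincidence'' locus has codimension $\ge2$ in $V$. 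Once the bad set is shown to have codimension $\ge2$, the statement about $\Sigma=\mathrm{Sing}(V)$ follows: $V$ is a set-theoretic complete intersection of pure dimension $r-m$ by Lemma \ref{lemma: V is complete inters}, so at any point where the Jacobian $(\partial\bfs R/\partial\bfs X)$ has full rank $m$ the tangent space has dimension $r-m=\dim_{\bfs x}V$ and the point is regular; hence $\Sigma$ is contained in the bad set, which has codimension $\ge2$.
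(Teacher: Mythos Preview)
Your proposal is essentially correct and follows the same route as the paper: chain rule, use $({\sf H}_2)$ to reduce to the rank of the $(r-1)\times r$ submatrix $J$ of $(\partial\bfs\Pi^r/\partial\bfs X)$, invoke Proposition~\ref{prop: determinant minors order r-1} to see that the bad points map under $\bfs\Pi^r$ into $(A_0\cdot\mathcal S_1)(W)$, and then use $({\sf H}_4)$--$({\sf H}_5)$ together with finiteness of $\bfs\Pi^r$.

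Two points to tighten. First, your Cauchy--Binet argument on $m\times m$ minors does not directly deliver the conclusion you need, namely that \emph{all} the $(r-1)\times(r-1)$ minors $\det M_{r-k,l}(\bfs x)$ vanish. The paper's mechanism is cleaner: if $(\partial\bfs R/\partial\bfs X)(\bfs x)$ has rank $<m$, pick a nonzero $\bfs v$ in its left kernel; since $(\partial\bfs G/\partial\bfs A_k)(\bfs\Pi(\bfs x))$ has full row rank by $({\sf H}_2)$, the vector $\bfs w:=\bfs v\cdot(\partial\bfs G/\partial\bfs A_k)(\bfs\Pi(\bfs x))\in\cfq^{\,r-1}$ is nonzero, and $\bfs w\cdot J(\bfs x)=0$ forces $\mathrm{rank}\,J(\bfs x)<r-1$, hence every $\det M_{r-k,l}(\bfs x)=0$. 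This replaces your ``one is forced into a situation controlled by\dots'' with a one-line linear-algebra argument. Second, the B\'ezout inequality is not needed here: you only need a dimension statement, and that comes straight from $({\sf H}_4)$--$({\sf H}_5)$ plus the fact that $\bfs\Pi^r$ is finite (preimages of pure-dimensional closed sets under a finite dominant morphism keep the dimension). Your stratification in $\A^r$ by sets like $\{X_i=X_j,\ X_s=X_t\}$ is morally right, but the paper sidesteps any ad~hoc casework by observing directly that the associated polynomial $g=\prod_l(T-x_l)$ either has $0$ as a (multiple) root or has $\gcd(g,g')$ of degree $\ge 2$, so $\bfs\Pi^r(\bfs x)\in (A_0\cdot\mathcal S_1)(W)$ in one stroke.
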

\begin{proof}
By the chain rule, we have the equality
    $$
    \left(\frac{\partial \bfs{R}}{\partial
        \bfs{X}}\right)=\left(\frac{\partial \bfs{G}}{\partial
        \bfs{A}}\circ\bfs\Pi\right)\cdot
    \left(\frac{\partial\bfs\Pi}{\partial \bfs{X}}\right),
    $$
where $\bfs \Pi:=(-\Pi_1,\ldots,
(-1)^{r-k-1}\Pi_{r-k-1},(-1)^{r-k+1}\Pi_{r-k+1},\ldots,(-1)^{r}\Pi_{r})$.
Fix a point $\bfs{x}:=(x_1,\ldots,x_r)\in V$ such that $(\partial
\bfs{R}/\partial \bfs{X})(\bfs{x})$ does not have full rank, and let
$\bfs{v}\in\A^{m}$ be a nonzero element in the left kernel of
$(\partial \bfs{R}/\partial \bfs{X})(\bfs{x})$. We have
    $$\bfs{0}=
    \bfs{v}\cdot \left(\frac{\partial \bfs{R}}{\partial
        \bfs{X}}\right)(\bfs{x})=\bfs{v}\cdot\left(\frac{\partial
        \bfs{G}}{\partial \bfs{A}}\right)\big(\bfs\Pi(\bfs{x})\big)\cdot
    \left(\frac{\partial\bfs\Pi}{\partial \bfs{X}}\right)(\bfs{x}).$$
Since by hypothesis ($ {\sf H}_2$) the Jacobian matrix
$(\partial\bfs{G}/\partial \bfs{A})\big(\bfs\Pi(\bfs{x})\big)$ has
full rank, we see that $\bfs{w}:=\bfs{v}\cdot\left({\partial
\bfs{G}}/{\partial \bfs{A}}\right)\big(\bfs
\Pi(\bfs{x})\big)\in\A^{r-1}$ is nonzero. As $\bfs{w}\cdot
\left({\partial\bfs\Pi}/{\partial \bfs{X}}\right)(\bfs{x})=
\bfs{0}$, all the maximal minors of
$\left({\partial\bfs\Pi}/{\partial \bfs{X}}\right)(\bfs{x})$ must be
zero. These minors are the determinants $\det M_{r-k,l}(\bfs{x})$,
where $M_{r-k,l}$ are the matrices of Proposition \ref{prop:
determinant minors order r-1}.

Since $\det M_{r-k,l}(\bfs x)=0$ for $1\leq l\leq r$, Proposition
\ref{prop: determinant minors order r-1} implies
$$x_i^k\Delta_i(x)=x_j^{k}\Delta_{j}(x)=0\quad (1\leq i <j\leq r).$$
It follows that $\bfs x$ cannot have its $r$ coordinates pairwise
distinct. As a consequence, either $\bfs x$ has $r-1$
pairwise--distinct coordinates, one of them being equal to zero, or
$\bfs x$ has at most $r-2$ pairwise--distinct coordinates. Let
$$g:=(T-x_1) \cdots (T-x_r)=T^r -\Pi_1(\bfs x)T^{r-1}+ \cdots
+(-1)^{r} \Pi_{r}(\bfs x).$$
Observe that $\bfs \Pi^r(\bfs x) \in W$. If there is a coordinate
$x_i=0$, then the constant coefficient of $g$ is zero. On the other
hand, if $\bfs x$ has at most $r-2$ pairwise--distinct coordinates,
then there exist $i,j,l,h \in \{1 \klk r\}$ with $i<j, l<h$ and
$\{i,j\} \cap \{k,l\}= \emptyset$ such that  $x_i=x_j$ and
$x_h=x_l$. If $x_i\not= x_h$, then $g$ has two distinct multiple
roots, while in the case $x_i=x_h$, $g$ has a root of multiplicity
at least $4$. In both cases $g$ and $g'$ have a common factor of
degree at least 2, which implies that
$$ \mathrm{Disc}(g)=0, \, \, \mathrm{Subdisc}(g)=0,$$
namely $g\in\mathcal{S}_1(W)$. In either case, $\bfs \Pi^r(\bfs x)
\in (A_0\cdot\mathcal{S}_1)(W)$. According to (${\sf H}_4$) and
(${\sf H}_5$), $(A_0\cdot\mathcal{S}_1)(W)$ has codimension at least
$2$ in $W$. Since $\bfs \Pi^r$ is a finite morphism, we have that
$(\bfs \Pi ^r)^{-1}\big((A_0\cdot\mathcal{S}_1)(W)\big)$ has
codimension at least $2$ in $V$. In particular, the set of points
$\bfs{x}\in V$ with $\mathrm{rank}(\partial \bfs{R}/\partial
\bfs{X})(\bfs{x})< m$ is contained in a subvariety of codimension
$2$ of $V$.

Now let $\bfs x$ be an arbitrary point of $\Sigma$. By Lemma
\ref{lemma: V is complete inters} we have $\dim T_{\bfs x}V >r-m$.
It follows that $\mathrm{rank}(\partial \bfs{R}/\partial
\bfs{X})(\bfs{x})<m$, for otherwise we would have $\dim T_{\bfs x}V
\leq r-m$, contradicting the hypothesis that $\bfs x$ is a singular
point of $V$. Therefore, from the first assertion the theorem
follows.
\end{proof}

From Lemma \ref{lemma: V is complete inters} and  Theorem
\ref{theorem: dimension singular locus V} we obtain further
consequences concerning the polynomials $R_i$ and the variety $V$.
Theorem \ref{theorem: dimension singular locus V} shows in
particular that the set of points $\bfs{x}\in V$ for which
$(\partial \bfs{R}/\partial \bfs{X})(\bfs{x})$ does not have full
rank has codimension at least one in $V$. Since $R_{1}\klk R_{m}$
form a regular sequence, by Theorem \ref{theorem: eisenbud 18.15} we
conclude that $R_{1}\klk R_{m}$ define a radical ideal of
$\fq[X_1\klk X_r]$, and thus $V$ is a complete intersection.
%Finally, the B\'ezout inequality  implies $\deg V\le
%\prod_{i=1}^{m}\deg R_i$.
In other words, we have the following result.
\begin{corollary}\label{coro: radicality and degree Vr}
$R_{1}\klk R_{m}$ define a radical ideal and $V$ is a complete
intersection. %with $\deg V\le\prod_{i=1}^{m}\deg R_i$.
\end{corollary}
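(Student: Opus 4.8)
The plan is to deduce this directly from the Jacobian criterion for complete intersections, Theorem \ref{theorem: eisenbud 18.15}, applied to $R_1\klk R_m\in\fq[X_1\klk X_r]$. That criterion requires two things: that $R_1\klk R_m$ form a regular sequence, and that the subvariety of $V=V(R_1\klk R_m)$ defined by the vanishing of all maximal minors of the Jacobian matrix $(\partial\bfs R/\partial\bfs X)$ has codimension at least one in $V$. Both ingredients are already in hand.

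For the first, I would simply recall Lemma \ref{lemma: V is complete inters}: using that $\bfs\Pi^r$ is a finite, dominant morphism and that, by $({\sf H}_1)$, each $W^j=V(G_1\klk G_j)$ has pure dimension $r-j$, one gets that $V^j=(\bfs\Pi^r)^{-1}(W^j)=V(R_1\klk R_j)$ has pure dimension $r-j$ for $1\le j\le m$, which is precisely the statement that $R_1\klk R_m$ is a regular sequence in $\fq[X_1\klk X_r]$. For the second, I would invoke Theorem \ref{theorem: dimension singular locus V}. Since $r>m$, the maximal minors of $(\partial\bfs R/\partial\bfs X)$ are its $m\times m$ minors, and their common zero locus on $V$ is exactly the set of $\bfs x\in V$ at which $(\partial\bfs R/\partial\bfs X)(\bfs x)$ fails to have full rank $m$. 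Theorem \ref{theorem: dimension singular locus V} states that this set has codimension at least $2$ in $V$, hence in particular codimension at least $1$, which is the hypothesis needed.

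Putting these together, Theorem \ref{theorem: eisenbud 18.15} gives that $(R_1\klk R_m)$ is a radical ideal of $\fq[X_1\klk X_r]$; since $V$ has pure dimension $r-m$ and its ideal is generated by the $m$ polynomials $R_1\klk R_m$, it is an ideal--theoretic complete intersection.

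There is no real obstacle in this argument: all the substantive work has already been carried out, on the one hand in Lemma \ref{lemma: V is complete inters} (regular sequence, via finiteness of $\bfs\Pi^r$) and on the other in Theorem \ref{theorem: dimension singular locus V}, whose proof reduces the bad locus, again via the finiteness of $\bfs\Pi^r$ and the minor computation of Proposition \ref{prop: determinant minors order r-1}, to $(A_0\cdot\mathcal{S}_1)(W)$ and then applies $({\sf H}_4)$--$({\sf H}_5)$. The only point requiring a moment's care is the bookkeeping that matches ``$\mathrm{rank}(\partial\bfs R/\partial\bfs X)(\bfs x)<m$'' with ``all maximal minors of $(\partial\bfs R/\partial\bfs X)$ vanish at $\bfs x$,'' which is immediate since $m<r$.
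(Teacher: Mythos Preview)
Your proposal is correct and follows essentially the same approach as the paper: the paper's argument (given in the paragraph immediately preceding the corollary) combines Lemma \ref{lemma: V is complete inters} for the regular sequence, Theorem \ref{theorem: dimension singular locus V} for the codimension-at-least-one condition on the Jacobian locus, and then applies Theorem \ref{theorem: eisenbud 18.15}. Your write-up is slightly more expansive in spelling out the identification of the maximal-minor locus with the rank-drop locus, but the logic is identical.
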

%
%----------------------------------------------------------------------
%----------------------------------------------------------------------
%----------------------------------------------------------------------
%----------------------------------------------------------------------
%----------------------------------------------------------------------
%----------------------------------------------------------------------
%----------------------------------------------------------------------
%----------------------------------------------------------------------
%
\subsection{The geometry of the projective closure}
Consider the embedding of $\A^r$ into the projective space $\Pp^r$
defined by the mapping $(x_1,\ldots, x_r)\mapsto(1:x_1:\dots:x_r)$.
The closure $\mathrm{pcl}(V)\subset\Pp^r$ of the image of $V$ under
this embedding in the Zariski topology of $\Pp^r$ is called the
projective closure of $V$. The points of $\mathrm{pcl}(V)$ lying in
the hyperplane $\{X_0=0\}$ are called the points of
$\mathrm{pcl}(V)$ at infinity.

Denote by $F^h\in\fq[X_0,\ldots,X_r]$ the homogenization of each
$F\in\fq[X_1,\ldots,X_r]$, and let $(R_{1}\klk R_{m})^h$ be the
ideal generated by all the polynomials $F^h$ with $F\in (R_{1}\klk
R_{m})$. We have  that $(R_{1}\klk R_{m})^h$ is radical because
$(R_{1}\klk R_{m})$ is a radical ideal (see, e.g., \cite[\S I.5,
Exercise 6]{Kunz85}). It is well known that $\mathrm{pcl}(V)$ is the
$\fq$--variety of $\mathbb{P}^r$ defined by $(R_{1}\klk R_{m})^h$
(see, e.g., \cite[\S I.5, Exercise 6]{Kunz85}). Furthermore,
$\mathrm{pcl} (V)$ has pure dimension $r-m$ (see, e.g.,
\cite[Propositions I.5.17 and II.4.1]{Kunz85}) and degree equal to
$\deg V$ (see, e.g., \cite[Proposition 1.11]{CaGaHe91}).

Next we discuss the behavior of $\mathrm{pcl} (V)$ at infinity.
Consider the decomposition of each $R_i$ into its homogeneous
components, namely
$$R_i=R_i^{d_i}+R_i^{d_i-1}+\cdots+R_i^{0},$$
where each $R_i^j\in\fq[X_1\klk X_r]$ is homogeneous of degree $j$
or zero, $R_i^{d_i}$ being nonzero for $1\le i\le m$. The
homogenization of each $R_i$ is the polynomial
\begin{equation}\label{eq: Ri homogenization}
R_i^h=R_i^{d_i}+R_i^{d_i-1}X_0+\cdots+R_i^{0}X_0^{d_i}.
\end{equation}
It follows that $R_i^h(0,X_1\klk X_r)=R_i^{d_i}$ for $1\leq i \leq
m$. To express each $R_i^{d_i}$ in terms of the component
$G_i^{\wt}$ of highest weight of $G_i$, let $A_0^{i_0}\cdots
A_{k-1}^{i_{k-1}}A_{k+1}^{i_{k+1}}\cdots A_{r-1}^{i_{r-1}}$ be a
monomial arising with nonzero coefficients in the dense
representation of $G_i$. Then its weight
$$\wt(A_0^{i_0}\cdots
A_{k-1}^{i_{k-1}}A_{k+1}^{i_{k+1}}A_{r-1}^{i_{r-1}})=\mathop{\sum_{j=0}^{r-1}}_
{j\not=k} (r-j) i_j$$
equals the degree of the corresponding monomial $\Pi_r^{i_0} \cdots
\Pi_{r-k+1}^{i_{k-1}}\Pi_{r-k-1}^{i_{k+1}}\cdots \Pi_{1}^{i_{r-1}}$
of $R_i$. We deduce the following result.
\begin{lemma}\label{lemma: rel between R^di and S^wt}
$R_i^{d_i}=G_i^{\wt}(-\Pi_1 \klk
(-1)^{r-k-1}\Pi_{r-k-1},(-1)^{r-k+1}\Pi_{r-k+1} \klk
(-1)^{r}\Pi_{r})$ for $1\leq i \leq m$. In particular, $\deg
R_i=\wt(G_i)$ for $1\leq i \leq m$.
\end{lemma}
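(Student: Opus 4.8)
The plan is to exploit the fact that the $\fq$--algebra homomorphism effecting the substitution defining $R_i$ is injective and ``weight preserving'' in a precise sense. Write $\Phi\colon\fq[\bfs{A_k}]\to\fq[X_1\klk X_r]$ for the homomorphism sending $A_j\mapsto(-1)^{r-j}\Pi_{r-j}$ for $0\le j\le r-1$, $j\ne k$, so that $R_i=\Phi(G_i)$ by the very definition of $R_i$.

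First I would record the elementary observation already contained in the displayed computation preceding the statement: since $\Pi_{r-j}$ is homogeneous of degree $r-j=\wt(A_j)$ in $X_1\klk X_r$, a monomial $\prod_{j\ne k}A_j^{i_j}$ of weight $\sum_{j\ne k}(r-j)i_j=w$ is mapped by $\Phi$ to a scalar multiple of $\prod_{j\ne k}\Pi_{r-j}^{i_j}$, which is homogeneous of degree $w$. Hence, if $P\in\fq[\bfs{A_k}]$ is isobaric of weight $w$ with respect to $\wt$, then $\Phi(P)$ is homogeneous of degree $w$, or zero.

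Next I would decompose $G_i$ into its isobaric components, $G_i=\sum_{w=0}^{\wt(G_i)}G_{i,w}$ with $G_{i,\wt(G_i)}=G_i^{\wt}\ne0$, and apply $\Phi$ termwise to get $R_i=\sum_{w=0}^{\wt(G_i)}\Phi(G_{i,w})$, where each $\Phi(G_{i,w})$ is homogeneous of degree $w$ or zero by the previous step. The only point that requires an argument is that the top term $\Phi(G_i^{\wt})$ does not vanish. For this I would invoke the algebraic independence of $\Pi_1\klk\Pi_r$ over $\fq$: writing $\Phi$ as the composition of the $\fq$--algebra automorphism $A_j\mapsto(-1)^{r-j}A_j$ of $\fq[\bfs{A_k}]$ with the map $A_j\mapsto\Pi_{r-j}$, the latter is injective, hence so is $\Phi$. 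Therefore $\Phi(G_i^{\wt})$ is a nonzero homogeneous polynomial of degree exactly $\wt(G_i)$, strictly larger than the degree $w$ of every other summand $\Phi(G_{i,w})$, so no cancellation in top degree is possible.

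Consequently, the homogeneous component of $R_i$ of highest degree is $R_i^{d_i}=\Phi(G_i^{\wt})=G_i^{\wt}\bigl(-\Pi_1\klk(-1)^{r-k-1}\Pi_{r-k-1},(-1)^{r-k+1}\Pi_{r-k+1}\klk(-1)^{r}\Pi_{r}\bigr)$, and $d_i=\deg R_i=\wt(G_i)$ for $1\le i\le m$, as claimed. I expect the only genuine obstacle to be the non--vanishing of $\Phi(G_i^{\wt})$; once the algebraic independence of the elementary symmetric polynomials is used, everything else is bookkeeping relating degrees in $X_1\klk X_r$ to weights in $\bfs{A_k}$.
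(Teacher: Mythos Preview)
Your argument is correct and follows essentially the same route as the paper: both hinge on the observation that a monomial in the $A_j$ of weight $w$ is sent to a homogeneous polynomial of degree $w$ in $X_1\klk X_r$, so the isobaric decomposition of $G_i$ maps to the homogeneous decomposition of $R_i$. You are simply more explicit than the paper in justifying the non--vanishing of the top piece via the algebraic independence of $\Pi_1\klk\Pi_r$, a point the paper leaves tacit.
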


Denote by $(\partial \bfs{R}^{\bfs{d}}/\partial \bfs{X}):=(\partial
R_i^{d_i}/\partial X_j)_{1\le i\le m,1\leq j \le r}$ the Jacobian
matrix of $R_{1}^{d_1}\klk R_{m}^{d_m}$ with respect to  $X_1\klk
X_r$. Let $\Sigma^{\infty}\subset\mathbb{P}^r$ be the singular locus
of $\mathrm{pcl}(V)$ at infinity, namely the set of singular points
of $\mathrm{pcl}(V)$ lying in the hyperplane $\{X_0=0\}$. We have
the following result.
\begin{lemma}\label{lemma: singular locus Vr at infinity}
The set of points $\bfs{x} \in
V(R_1^{d_1},\ldots,R_m^{d_m})\subset\Pp^{r-1}$ for which $(\partial
\bfs{R}^{\bfs{d}}/\partial \bfs{X})(\bfs{x})$ has not full rank, has
codimension at least 1 in $V(R_1^{d_1}\klk R_m^{d_m})$. In
particular, the singular locus $\Sigma^{\infty}\subset\mathbb{P}^r$
at infinity has dimension at most $r-m-2$.
\end{lemma}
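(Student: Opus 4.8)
The plan is to imitate, in the projective setting at infinity, the argument used for Theorem~\ref{theorem: dimension singular locus V}, exploiting the product structure of the Jacobian $(\partial\bfs{R}^{\bfs d}/\partial\bfs X)$ via the chain rule and the hypotheses $({\sf H}_3)$ and $({\sf H}_6)$ on the highest--weight components $G_1^{\wt}\klk G_m^{\wt}$. First I would observe, using Lemma~\ref{lemma: rel between R^di and S^wt}, that each $R_i^{d_i}$ is obtained from $G_i^{\wt}$ by substituting the elementary symmetric polynomials (up to signs), so that the map $\bfs\Pi$ of Theorem~\ref{theorem: dimension singular locus V} restricts to a finite dominant morphism $V(R_1^{d_1}\klk R_m^{d_m})\to V(G_1^{\wt}\klk G_m^{\wt})$, where both are viewed as cones in $\A^r$ (or, after projectivizing, in $\Pp^{r-1}$). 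Since $G_1^{\wt}\klk G_m^{\wt}$ form a regular sequence by $({\sf H}_3)$, the variety $V(G_1^{\wt}\klk G_m^{\wt})$ has pure dimension $r-m$, and by finiteness of $\bfs\Pi^r$ so does $V(R_1^{d_1}\klk R_m^{d_m})$.

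Next I would apply the chain rule exactly as in the proof of Theorem~\ref{theorem: dimension singular locus V}: if $\bfs x\in V(R_1^{d_1}\klk R_m^{d_m})$ is a point where $(\partial\bfs R^{\bfs d}/\partial\bfs X)(\bfs x)$ fails to have full rank, pick a nonzero vector $\bfs v$ in its left kernel; writing $(\partial\bfs R^{\bfs d}/\partial\bfs X)=\big((\partial\bfs G^{\wt}/\partial\bfs A)\circ\bfs\Pi\big)\cdot(\partial\bfs\Pi/\partial\bfs X)$, hypothesis $({\sf H}_3)$ (which grants $({\sf H}_2)$ for $G_1^{\wt}\klk G_m^{\wt}$) guarantees that $\bfs w:=\bfs v\cdot(\partial\bfs G^{\wt}/\partial\bfs A)(\bfs\Pi(\bfs x))$ is a nonzero vector annihilating $(\partial\bfs\Pi/\partial\bfs X)(\bfs x)$. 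By Proposition~\ref{prop: determinant minors order r-1} all the $(r-1)\times(r-1)$ minors $\det M_{r-k,l}(\bfs x)=(-1)^{r-k-1}\Delta_l(\bfs x)\,x_l^k$ vanish, so $\bfs x$ cannot have $r$ pairwise--distinct coordinates. Hence either some coordinate of $\bfs x$ is zero, or $\bfs x$ has at most $r-2$ pairwise--distinct coordinates, in which case the associated polynomial $g:=\prod_{i=1}^r(T-x_i)$ has either two multiple roots or a root of multiplicity $\ge 4$; in all these cases $\bfs\Pi^r(\bfs x)\in\mathcal{D}\big(V(G_1^{\wt}\klk G_m^{\wt})\big)$. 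By hypothesis $({\sf H}_6)$ this discriminant locus has codimension at least one in $V(G_1^{\wt}\klk G_m^{\wt})$, and since $\bfs\Pi^r$ is finite, its preimage has codimension at least one in $V(R_1^{d_1}\klk R_m^{d_m})$, which gives the first assertion.

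Finally, to deduce the statement about $\Sigma^\infty$, I would recall that $\mathrm{pcl}(V)$ is cut out at infinity by $R_1^h(0,\bfs X)\klk R_m^h(0,\bfs X)=R_1^{d_1}\klk R_m^{d_m}$, so $\mathrm{pcl}(V)\cap\{X_0=0\}=V(R_1^{d_1}\klk R_m^{d_m})\subset\Pp^{r-1}$ has pure dimension $r-m-1$. A point at infinity of $\mathrm{pcl}(V)$ is singular for $\mathrm{pcl}(V)$ only if the full Jacobian of $R_1^h\klk R_m^h$ drops rank there; restricting to $X_0=0$, this forces $(\partial\bfs R^{\bfs d}/\partial\bfs X)$ to drop rank, so $\Sigma^\infty$ is contained in the codimension--$\ge 1$ subvariety of $V(R_1^{d_1}\klk R_m^{d_m})$ just identified, hence $\dim\Sigma^\infty\le (r-m-1)-1=r-m-2$. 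The main obstacle I anticipate is the bookkeeping at infinity: one must be careful that dropping rank of the \emph{homogenized} Jacobian at a point with $X_0=0$ really does reduce to dropping rank of $(\partial\bfs R^{\bfs d}/\partial\bfs X)$ (the $X_0$--column contributes only lower--degree terms $R_i^{d_i-1},\dots$ multiplied by powers of $X_0$, which vanish at infinity), and that the finiteness/dimension transfer through $\bfs\Pi^r$ is applied to varieties of the correct (pure) dimension; the geometric heart of the argument is otherwise identical to Theorem~\ref{theorem: dimension singular locus V}.
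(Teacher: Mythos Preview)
Your approach matches the paper's: work affinely with the cone defined by $R_1^{d_1},\ldots,R_m^{d_m}$, use the chain rule and $({\sf H}_3)$ to force the maximal minors of $(\partial\bfs\Pi/\partial\bfs X)$ to vanish, conclude via Proposition~\ref{prop: determinant minors order r-1} that the coordinates are not pairwise distinct, land in the discriminant locus, and invoke $({\sf H}_6)$ together with finiteness of $\bfs\Pi^r$.

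Two small points to tighten. First, the finer case split (``some coordinate zero'' versus ``$\le r-2$ distinct values'') is a leftover from Theorem~\ref{theorem: dimension singular locus V}, where codimension~$2$ was needed; here only codimension~$1$ is required, and ``not all pairwise distinct'' already gives $\bfs\Pi^r(\bfs x)\in\mathcal{D}(V(G_1^{\wt}\klk G_m^{\wt}))$, so the paper simply stops there. Second, avoid asserting the \emph{equality} $\mathrm{pcl}(V)\cap\{X_0=0\}=V(R_1^{d_1}\klk R_m^{d_m})$: the identification $\mathrm{pcl}(V)=V(R_1^h\klk R_m^h)$ is Theorem~\ref{theorem: proj closure of Vr is abs irred}, which \emph{uses} the present lemma, so invoking it here would be circular. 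All you need (and all the paper uses) is the containment $\Sigma^\infty\subset V(R_1^{d_1}\klk R_m^{d_m})$, which follows because each $R_i^h$ vanishes on $\mathrm{pcl}(V)$; then full rank of $(\partial\bfs R^{\bfs d}/\partial\bfs X)(\bfs x)$ alone forces $\dim\mathcal{T}_{\bfs x}\mathrm{pcl}(V)\le r-m$ and hence nonsingularity, without knowing that the $R_i^h$ generate the ideal.
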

\begin{proof}
Consider the affine variety $V_{\mathrm{aff}}(R_1^{d_1}\klk
R_m^{d_m})\subset\A^r$ defined by $R_1^{d_1}\klk R_m^{d_m}$.
Hypothesis (${\sf H}_3$) asserts that $G_1^{\wt}\klk G_m^{\wt}$
satisfy hypotheses (${\sf H}_1$) and (${\sf H}_2$). Therefore, Lemma
\ref{lemma: V is complete inters} proves that
$V_{\mathrm{aff}}(R_1^{d_1}\klk R_m^{d_m})$ is a set--theoretic
complete intersection of dimension $r-m$. Denote by
$\Sigma_{\mathrm{aff}}^\infty$ the set of points $\bfs x\in
V_{\mathrm{aff}}(R_1^{d_1}\klk R_m^{d_m})$ as in the statement of
the lemma. Arguing as in the proof of Theorem \ref{theorem:
dimension singular locus V} we conclude that any $\bfs
x\in\Sigma_{\mathrm{aff}}^\infty$ cannot have its $r$ coordinates
pairwise distinct. This implies that $\bfs
\Pi^r(\Sigma_{\mathrm{aff}}^\infty)$ is contained in the
discriminant locus $\mathcal{D}(V(G_1^{\wt}\klk G_m^{\wt}))$. By
hypothesis $({\sf H}_6)$ we have that $\mathcal{D}(V(G_1^{\wt}\klk
G_m^{\wt}))$ has codimension at least 1 in $V(G_1^{\wt}\klk
G_m^{\wt})=\bfs\Pi^r(V_{\mathrm{aff}}(R_1^{d_1}\klk R_m^{d_m}))$.
Since $\bfs \Pi^r$ is a finite morphism, we deduce that
$\Sigma_{\mathrm{aff}}^\infty$ has codimension at least 1 in
$V_{\mathrm{aff}}(R_1^{d_1}\klk R_m^{d_m})$. The first assertion of
the lemma follows.

Now let $\bfs{x}:=(0:x_1:\dots: x_r)$ be an arbitrary point of
$\Sigma^{\infty}$. Since each $R_i^{h}$ vanishes identically in
$\mathrm{pcl}(V)$, we have $R_i^{h}(\bfs{x})=R_i^{d_i}(x_1\klk
x_r)=0$ for $1\le i\le m$. Further, $(\partial
\bfs{R}^{\bfs{d}}/\partial \bfs{X})(\bfs{x})$ does not have full
rank, since otherwise we would have $\dim
\mathcal{T}_{\bfs{x}}(\mathrm{pcl}(V))\le r-m$, which would imply
that $\bfs{x}$ is a nonsingular point of $\mathrm{pcl}(V)$,
contradicting thus the hypothesis on $\bfs{x}$. It follows that
$\Sigma^{\infty}$ has codimension at least 1 in
$V(R_1^{d_1},\ldots,R_m^{d_m})$, and thus dimension at most $r-m-2$.
\end{proof}

Our next result concerns the projective variety
$V(R_1^{d_1},\ldots,R_m^{d_m})\subset \Pp^{r-1}$.
\begin{lemma}\label{Lemma: property variety Ri^di}
$V(R_1^{d_1},\ldots,R_m^{d_m})\subset \Pp^{r-1}$ is a complete
intersection of dimension $r-m-1$, degree $\prod_{i=1}^{m}d_i$ and
singular locus of dimension at most $r-m-2$.
\end{lemma}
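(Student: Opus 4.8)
The plan is to assemble the claimed properties of $V(R_1^{d_1},\ldots,R_m^{d_m})\subset\Pp^{r-1}$ from the pieces already established, treating the three assertions (complete intersection, dimension and degree, singular locus) in turn. First I would record that by hypothesis $({\sf H}_3)$ the forms $G_1^{\wt},\ldots,G_m^{\wt}$ satisfy $({\sf H}_1)$ and $({\sf H}_2)$, so Lemma \ref{lemma: V is complete inters} applies to them: the affine variety $V_{\mathrm{aff}}(R_1^{d_1},\ldots,R_m^{d_m})\subset\A^r$ is a set--theoretic complete intersection of dimension $r-m$, which means $R_1^{d_1},\ldots,R_m^{d_m}$ form a regular sequence in $\fq[X_1,\ldots,X_r]$. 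Since these polynomials are homogeneous, the regular sequence property passes to the projective setting, so $V(R_1^{d_1},\ldots,R_m^{d_m})\subset\Pp^{r-1}$ is a set--theoretic complete intersection of pure dimension $(r-1)-m=r-m-1$.

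Next I would upgrade ``set--theoretic'' to ``ideal--theoretic'' complete intersection by invoking the Jacobian criterion (Theorem \ref{theorem: eisenbud 18.15}, in its projective/homogeneous form). Lemma \ref{lemma: singular locus Vr at infinity} already shows that the set of points of $V(R_1^{d_1},\ldots,R_m^{d_m})$ at which $(\partial\bfs R^{\bfs d}/\partial\bfs X)$ fails to have full rank has codimension at least $1$ in $V(R_1^{d_1},\ldots,R_m^{d_m})$; this is exactly the hypothesis needed to conclude that $R_1^{d_1},\ldots,R_m^{d_m}$ generate a radical ideal, and hence that $V(R_1^{d_1},\ldots,R_m^{d_m})$ is a complete intersection. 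The degree then follows from the B\'ezout theorem \eqref{eq: Bezout theorem}: $\deg V(R_1^{d_1},\ldots,R_m^{d_m})=\prod_{i=1}^m d_i$, where $d_i=\deg R_i^{d_i}=\deg R_i=\wt(G_i)$ by Lemma \ref{lemma: rel between R^di and S^wt}.

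For the statement on the singular locus, the key observation is that for a complete intersection $V(R_1^{d_1},\ldots,R_m^{d_m})$ a point $\bfs x$ is singular precisely when $(\partial\bfs R^{\bfs d}/\partial\bfs X)(\bfs x)$ drops rank: if the rank is full then $\dim\mathcal{T}_{\bfs x}\le r-m-1=\dim_{\bfs x}V(R_1^{d_1},\ldots,R_m^{d_m})$, so $\bfs x$ is regular. Therefore the singular locus is contained in the rank--deficiency locus, which by the first assertion of Lemma \ref{lemma: singular locus Vr at infinity} has codimension at least $1$, i.e.\ dimension at most $r-m-2$. Assembling these three points completes the proof.

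I do not expect a serious obstacle here: this lemma is essentially a bookkeeping consolidation of Lemmas \ref{lemma: V is complete inters}, \ref{lemma: rel between R^di and S^wt}, \ref{lemma: singular locus Vr at infinity}. The only point requiring a word of care is the passage from the affine regular sequence / radicality statements to the projective variety $V(R_1^{d_1},\ldots,R_m^{d_m})\subset\Pp^{r-1}$ cut out by the \emph{same} homogeneous polynomials viewed in $\Pp^{r-1}$ (as opposed to the variety at infinity of $\mathrm{pcl}(V)$ sitting in the hyperplane $\{X_0=0\}$ of $\Pp^r$); one should note these two projective varieties are identified via the coordinates $(X_1:\cdots:X_r)$, so that dimensions and degrees match, and that homogeneity guarantees the regular sequence and radicality properties are insensitive to whether we work in the affine cone or in projective space.
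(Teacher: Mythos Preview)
Your proposal is correct and follows essentially the same approach as the paper: both use $({\sf H}_3)$ together with Lemma \ref{lemma: V is complete inters} to get pure dimension $r-m-1$, invoke Lemma \ref{lemma: singular locus Vr at infinity} and Theorem \ref{theorem: eisenbud 18.15} to obtain radicality and hence a genuine complete intersection, deduce the degree from B\'ezout, and identify the singular locus with the rank--deficiency locus. Your extra remarks on the affine--projective passage and the identification $d_i=\wt(G_i)$ are sound but not needed beyond what the paper uses implicitly.
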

\begin{proof}
Since $G_1^{\wt}\klk G_m^\wt$ satisfy hypothesis (${\sf H}_1$),
Lemma \ref{lemma: V is complete inters} shows that
$V(R_1^{d_1},\ldots,R_m^{d_m})$ is of pure dimension $r-m-1$.
Furthermore, Lemma \ref{lemma: singular locus Vr at infinity} shows
that the set of $\bfs{x} \in V(R_1^{d_1},\ldots,R_m^{d_m})$ for
which $(\partial \bfs{R}^{\bfs{d}}/\partial \bfs{X})(\bfs{x})$ has
not full rank, has codimension at least 1 in $V(R_1^{d_1}\klk
R_m^{d_m})$. Then Theorem \ref{theorem: eisenbud 18.15} proves that
$R_1^{d_1},\ldots,R_m^{d_m}$ define a radical ideal, and therefore
$V(R_1^{d_1}\klk R_m^{d_m})$ is a complete intersection.

In particular, the singular locus of $V(R_1^{d_1} \klk R_m^{d_m})$
is the set of $\bfs{x} \in V(R_1^{d_1},\ldots,R_m^{d_m})$ for which
$(\partial \bfs{R}^{\bfs{d}}/\partial \bfs{X})(\bfs{x})$ has not
full rank, and hence it has dimension at most $r-m-2$. Finally, the
B\'ezout theorem \eqref{eq: Bezout theorem} proves the assertion
concerning the degree.
\end{proof}

Now we prove our main result concerning $\mathrm{pcl}(V)$.
\begin{theorem}\label{theorem: proj closure of Vr is abs irred}
The identity $\mathrm{pcl}(V)=V(R_1^{h},\ldots,R_m^{h})$ holds and
$\mathrm{pcl}(V)$ is a normal complete intersection of dimension
$r-m$ and degree $\prod_{i=1}^r d_i$.
\end{theorem}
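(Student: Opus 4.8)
The plan is to prove the three assertions in sequence, using the geometric facts about $V$ established in Corollary~\ref{coro: radicality and degree Vr} together with the analysis of the points at infinity carried out in Lemmas~\ref{lemma: rel between R^di and S^wt}--\ref{Lemma: property variety Ri^di}. First I would show $\mathrm{pcl}(V)=V(R_1^h,\ldots,R_m^h)$. One inclusion is immediate: since each $R_i^h$ vanishes on the affine chart $V$ and is homogeneous, it vanishes on $\mathrm{pcl}(V)$, so $\mathrm{pcl}(V)\subseteq V(R_1^h,\ldots,R_m^h)$. For the reverse inclusion, the key point is to control the dimension of the part of $V(R_1^h,\ldots,R_m^h)$ at infinity. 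Setting $X_0=0$ in $R_i^h$ gives $R_i^{d_i}$ by \eqref{eq: Ri homogenization}, so the intersection of $V(R_1^h,\ldots,R_m^h)$ with $\{X_0=0\}$ is exactly $V(R_1^{d_1},\ldots,R_m^{d_m})\subset\Pp^{r-1}$, which by Lemma~\ref{Lemma: property variety Ri^di} has dimension $r-m-1$. Since $R_1^h,\ldots,R_m^h$ form a regular sequence in $\fq[X_0,\ldots,X_r]$ (because $V(R_1^h,\ldots,R_m^h)$, having an affine chart of pure dimension $r-m$ and a part at infinity of dimension $r-m-1$, is of pure dimension $r-m$, so each successive hypersurface section drops the dimension by one), $V(R_1^h,\ldots,R_m^h)$ is a set-theoretic complete intersection of pure dimension $r-m$; its affine part is $V$ and its part at infinity has dimension $r-m-1<r-m$, hence cannot be a component, so $V(R_1^h,\ldots,R_m^h)=\overline{V}=\mathrm{pcl}(V)$.

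Next I would establish that $\mathrm{pcl}(V)$ is an ideal-theoretic complete intersection. Since $(R_1,\ldots,R_m)$ is a radical ideal by Corollary~\ref{coro: radicality and degree Vr}, its homogenization $(R_1,\ldots,R_m)^h$ is radical (cited already from \cite[\S I.5, Exercise 6]{Kunz85}), and by the same reference $(R_1,\ldots,R_m)^h=I(\mathrm{pcl}(V))$. It remains to see that this ideal is generated by $R_1^h,\ldots,R_m^h$. Here I would invoke Theorem~\ref{theorem: eisenbud 18.15} in its projective form: $R_1^h,\ldots,R_m^h$ form a regular sequence (as just argued), so it suffices to check that the locus in $V(R_1^h,\ldots,R_m^h)=\mathrm{pcl}(V)$ where the Jacobian $(\partial \bfs R^h/\partial(X_0,\ldots,X_r))$ fails to have full rank has codimension at least one. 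On the affine chart $X_0=1$ this locus is, by Theorem~\ref{theorem: dimension singular locus V}, of codimension at least $2$ in $V$; at infinity it is contained in the set of Lemma~\ref{lemma: singular locus Vr at infinity}, of dimension at most $r-m-2$, hence of codimension at least $2$ in $\mathrm{pcl}(V)$. So the bad locus has codimension $\ge 1$ (in fact $\ge 2$), Theorem~\ref{theorem: eisenbud 18.15} applies, and $\mathrm{pcl}(V)$ is a complete intersection with $I(\mathrm{pcl}(V))=(R_1^h,\ldots,R_m^h)$; the B\'ezout theorem \eqref{eq: Bezout theorem} then gives $\deg\mathrm{pcl}(V)=\prod_{i=1}^m d_i=\prod_i\wt(G_i)$.

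Finally, normality: by definition I need $\mathrm{Sing}(\mathrm{pcl}(V))$ to have codimension at least $2$ in $\mathrm{pcl}(V)$. For a complete intersection, the singular locus is precisely the locus where the Jacobian of a minimal generating system drops rank, which is exactly the bad locus just bounded: codimension $\ge 2$ on the affine chart by Theorem~\ref{theorem: dimension singular locus V} and dimension $\le r-m-2$ at infinity by Lemma~\ref{lemma: singular locus Vr at infinity}, so codimension $\ge 2$ overall. Hence $\mathrm{pcl}(V)$ is a normal complete intersection. (As a bonus, Theorem~\ref{theorem: normal complete int implies irred} then yields that $\mathrm{pcl}(V)$, and therefore $V$, is absolutely irreducible, though that is not part of the present statement.)

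I expect the only real subtlety to be the verification that $R_1^h,\ldots,R_m^h$ form a regular sequence in $\fq[X_0,\ldots,X_r]$ — equivalently that $\mathrm{pcl}(V)$ has no embedded or low-dimensional components lurking at infinity beyond $V(R_1^{d_1},\ldots,R_m^{d_m})$. This is handled cleanly by the dimension count above: the affine part has pure dimension $r-m$ (Lemma~\ref{lemma: V is complete inters}) and the part at infinity has dimension exactly $r-m-1$ (Lemma~\ref{Lemma: property variety Ri^di}), so $V(R_1^h,\ldots,R_m^h)$ has pure dimension $r-m$, which for $m$ hypersurfaces in $\Pp^r$ is the complete-intersection dimension and forces the regular-sequence property. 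Everything else is a direct bookkeeping of the codimension estimates already proved.
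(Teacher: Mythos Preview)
Your proof is correct and uses the same key ingredients as the paper (Lemmas~\ref{lemma: V is complete inters}, \ref{lemma: singular locus Vr at infinity}, \ref{Lemma: property variety Ri^di}, Theorem~\ref{theorem: dimension singular locus V}, and Theorem~\ref{theorem: eisenbud 18.15}), but the logical order differs in one place worth noting. The paper first shows that $V(R_1^h,\ldots,R_m^h)$ is a normal complete intersection, invokes Theorem~\ref{theorem: normal complete int implies irred} to deduce absolute irreducibility, and only then obtains the equality $\mathrm{pcl}(V)=V(R_1^h,\ldots,R_m^h)$ from the obvious inclusion together with irreducibility and equality of dimensions. Your route to the equality is more direct: since $V(R_1^h,\ldots,R_m^h)$ has pure dimension $r-m$ while its intersection with $\{X_0=0\}$ has dimension $r-m-1$, no irreducible component can lie entirely at infinity, so the projective variety coincides with the closure of its affine part $V$. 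This avoids the detour through Hartshorne connectedness for the equality itself; the paper's ordering, on the other hand, yields absolute irreducibility along the way, which is exactly what is needed immediately afterward for Corollary~\ref{theo: W absolutely irreducible}.
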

\begin{proof}
Observe that the following inclusions hold:
    \begin{align*}
    V(R_1^{h},\ldots,R_m^{h})\cap\{X_0\not=0\}&
    \subset V(R_1,\ldots,R_m),\\
    V(R_1^{h},\ldots,R_m^{h})\cap\{X_0=0\}&\subset
    V(R_{1}^{d_1},\ldots,R_{m}^{d_m}).
    \end{align*}
Lemma \ref{Lemma: property variety Ri^di} proves that
$V(R_1^{d_1},\ldots,R_m^{d_m})\subset \Pp^{r-1} $ is a complete
intersection of dimension $r-m-1$ and singular locus of codimension
at least $1$. On the other hand, Lemma \ref{lemma: V is complete
inters} and Theorem \ref{theorem: dimension singular locus V} show
that $V(R_1,\ldots,R_m)\subset\A^r$ is of pure dimension $r-m$ and
its singular locus has codimension at least $2$. We conclude that
the same holds with $V(R_1^{h},\ldots,R_m^{h})\subset\Pp^r$. Since
it is defined by $m$ polynomials, it is a set--theoretic complete
intersection. Further, by Theorem \ref{theorem: dimension singular
locus V} and Lemma \ref{lemma: singular locus Vr at infinity} the
set of points $\bfs x\in V(R_1^h,\ldots,R_m^h)$ for which $(\partial
\bfs{R}^{\bfs{h}}/\partial \bfs X)(\bfs x)$ has not full rank, has
codimension at least 2 in $V(R_1^h,\ldots,R_m^h)$. Then Theorem
\ref{theorem: eisenbud 18.15} proves that $R_1^{h},\ldots,R_m^{h}$
define a radical ideal and therefore $V(R_1^{h},\ldots,R_m^{h})$ is
a normal complete intersection. By Theorem \ref{theorem: normal
complete int implies irred} it follows that
$V(R_1^{h},\ldots,R_m^{h})$ is absolutely irreducible.

It is clear that $\mathrm{pcl}(V)\subset V(R_1^{h},\ldots,R_m^{h})$.
Being both of pure dimension $r-m$ and $V(R_1^{h},\ldots,R_m^{h})$
absolutely irreducible, the identity of the statement of the theorem
follows. Finally, since $R_1^h,\ldots,R_m^h$ define a radical ideal,
the B\'ezout theorem \eqref{eq: Bezout theorem} proves the assertion
on the degree.
\end{proof}

We end the section with the following result, which allows us to
control the number of $\fq$--rational points of $\mathrm{pcl}(V)$ at
infinity.
\begin{remark}\label{remark: dimension of pcl(V) at infinity}
$V_{ \infty}:=\mathrm{pcl}(V)\cap \{X_0=0\}\subset \Pp ^{r-1}$ has
dimension $r-m-1$. Indeed, recall that $\mathrm{pcl}(V)$ has pure
dimension $r-m$. Hence, each irreducible component of
$\mathrm{pcl}(V)\cap \{X_0=0\}$ has dimension at least $r-m-1$. From
(\ref{eq: Ri homogenization}) we deduce that $\mathrm{pcl}(V)\cap
\{X_0=0\}\subset V(R_1^{d_1}\klk R_m^{d_m})$. By Lemma \ref{Lemma:
property variety Ri^di} we have that $V(R_1^{d_1},\ldots,R_m^{d_m})$
has dimension $r-m-1$. It follows that $\mathrm{pcl}(V)\cap
\{X_0=0\}$ also has dimension $r-m-1$.
\end{remark}
%
%----------------------------------------------------------------------
%----------------------------------------------------------------------
%----------------------------------------------------------------------
%----------------------------------------------------------------------
%
\subsection{Estimates on the number of $\fq$--rational points of $W$}
The results on $V$ allows us to estimate the number of
$\fq$--rational points of $W$. We start with the following result.
\begin{corollary}\label{theo: W absolutely irreducible}
$W\subset \A^r$ is absolutely irreducible.
\end{corollary}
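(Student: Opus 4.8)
The plan is to transfer absolute irreducibility from $\mathrm{pcl}(V)$ to $W$ using the finite surjective morphism $\bfs\Pi^r\colon V\to W$. First I would recall the chain of facts already established: by Theorem \ref{theorem: proj closure of Vr is abs irred}, $\mathrm{pcl}(V)\subset\Pp^r$ is absolutely irreducible, hence $V=\mathrm{pcl}(V)\cap\{X_0\neq0\}$ is absolutely irreducible as a nonempty open subset of an irreducible variety (it is nonempty since $V$ has dimension $r-m\ge 0$ and is not contained in the hyperplane at infinity). Now $\bfs\Pi^r$ restricts to a dominant morphism from the irreducible affine variety $V$ onto $W$ (we have $\bfs\Pi^r(V)=W$ as noted after \eqref{definition of Pir}), so $W=\overline{\bfs\Pi^r(V)}$ is the closure of the image of an absolutely irreducible variety under a morphism, and the continuous image of an irreducible set is irreducible. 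Passing to $\cfq$ throughout (all the cited statements hold over $\cfq$ as well, since $\bfs\Pi^r$ is defined over $\fq\subset\cfq$), this shows $W$ is absolutely irreducible.

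Concretely, the key steps in order are: (1) invoke Theorem \ref{theorem: proj closure of Vr is abs irred} to get that $\mathrm{pcl}(V)$ is absolutely irreducible; (2) deduce that $V$ is absolutely irreducible, being a nonempty Zariski--open subset of $\mathrm{pcl}(V)$; (3) recall from Lemma \ref{lemma: V is complete inters} and the discussion around \eqref{definition of Pir} that $\bfs\Pi^r\colon\A^r\to\A^r$ is finite and dominant with $\bfs\Pi^r(V)=W$, so in particular $\bfs\Pi^r|_V\colon V\to W$ is dominant; (4) conclude that $W$, being the Zariski closure of the image of the irreducible variety $V$ under the morphism $\bfs\Pi^r$, is irreducible, and since all of this is valid over $\cfq$, absolutely irreducible. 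One should note for step (2) that $V\neq\emptyset$: indeed $V$ has pure dimension $r-m\ge 1>0$ by Lemma \ref{lemma: V is complete inters} (recalling $m<r$), so $V$ is genuinely nonempty and not contained in $\{X_0=0\}$.

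I do not expect a serious obstacle here; the statement is essentially a formal consequence of Theorem \ref{theorem: proj closure of Vr is abs irred} together with the elementary fact that images and closures of irreducible sets are irreducible. The only point that deserves a word of care is making sure the reasoning is applied over the algebraic closure $\cfq$ rather than merely over $\fq$ — that is, one argues that the $\cfq$--variety $V$ is $\cfq$--irreducible, and since $\bfs\Pi^r$ is defined over $\fq$ (hence over $\cfq$), its image closure $W$ is $\cfq$--irreducible, which is exactly the definition of $W$ being absolutely irreducible. Alternatively, one could bypass $V$ and argue directly that the projective closure $\mathrm{pcl}(W)$ inherits absolute irreducibility, but routing through the already--proven statement for $\mathrm{pcl}(V)$ via the finite surjection is the cleanest path.
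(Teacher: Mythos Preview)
Your proposal is correct and follows essentially the same route as the paper: use Theorem~\ref{theorem: proj closure of Vr is abs irred} (together with Theorem~\ref{theorem: normal complete int implies irred}, since the statement of Theorem~\ref{theorem: proj closure of Vr is abs irred} only asserts that $\mathrm{pcl}(V)$ is a \emph{normal} complete intersection) to obtain that $\mathrm{pcl}(V)$, hence $V$, is absolutely irreducible, and then transfer this to $W$ via the surjection $\bfs\Pi^r(V)=W$. The only adjustment needed is to cite Theorem~\ref{theorem: normal complete int implies irred} alongside Theorem~\ref{theorem: proj closure of Vr is abs irred} in step~(1), since absolute irreducibility is not part of the formal statement of the latter.
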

\begin{proof}
By Theorems \ref{theorem: proj closure of Vr is abs irred} and
\ref{theorem: normal complete int implies irred} we have that $\mathrm{pcl}(V)$ is absolutely irreducible. As a
consequence, $V$ is absolutely irreducible. Since $\bfs\Pi^r(V)=W$,
the assertion follows.
\end{proof}

As $|\mathcal{A}|=|W(\fq)|$, we obtain estimates on the number of
elements of $\mathcal{A}$. Combining Corollary \ref{theo: W
absolutely irreducible} with \cite[Theorem 7.1]{CaMa06}, for $q
> \delta_{\bfs G}:=\deg(G_1)\cdots\deg(G_m)$ we have the following
estimate:
$$
\big||\mathcal{A}|-q^{r-m}\big| \le (\delta_{\bfs G}-1)(\delta_{\bfs
G}-2)q^{r-m-{1}/{2}}+5 \delta_{\bfs G}^{13/3}q^{r-m-1}.
$$
On the other hand, according to \cite[Corollary 7.2]{CaMa06}, if $q
>15\delta_{\bfs G}^{13/3}$, then
$$
\big||\mathcal{A}|-q^{r-m}\big| \le (\delta_{\bfs G}-1)(\delta_{\bfs
G}-2)q^{r-m-{1}/{2}}+7\delta_{\bfs G}^2q^{r-m-1}.
$$
We easily deduce the following result. %have the following
%lower bound for $\mathcal{A}$
%\begin{equation*}
%|\mathcal{A}|\geq q^{r-m}\big( 1-\delta_{\bfs G}^2q^{-1/2}-5 \delta_{\bfs G}^{13/3}q^{-1}\big).
%\end{equation*}
%Furthermore, if $q>2(\delta_{\bfs G}^2 q^{1/2}+5 \delta_{\bfs G}^{13/3})$ we deduce that
%$$|\mathcal{A}|>\frac{1}{2}q^{r-m}.$$
%Finally we obtain the following result.
%
\begin{theorem} \label{estimation A}
For $q >15\delta_{\bfs G}^{13/3}$, we have
$$
|\mathcal{A}|\geq q^{r-m}\bigg(1-\frac{3\delta_{\bfs
G}^{{13}/{6}}}{q^{{1}/{2}}}\bigg)\textrm{ and
}|\mathcal{A}|^{-1}\leq q^{m-r}\bigg(1+\frac{15\delta_{\bfs
G}^{{13}/{6}}}{q^{{1}/{2}}}\bigg).
$$
Further,
$$
|\mathcal{A}|\geq \frac{1}{2}q^{r-m}.
$$
\end{theorem}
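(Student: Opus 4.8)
The plan is to derive everything from the two displayed estimates that precede the theorem, which come from \cite[Theorem 7.1 and Corollary 7.2]{CaMa06} applied to the absolutely irreducible variety $W$ (Corollary \ref{theo: W absolutely irreducible}) together with the identity $|\mathcal{A}|=|W(\fq)|$. Under the hypothesis $q>15\delta_{\bfs G}^{13/3}$ the second of these gives
$$
|\mathcal{A}|\ge q^{r-m}-(\delta_{\bfs G}-1)(\delta_{\bfs G}-2)q^{r-m-1/2}-7\delta_{\bfs G}^2q^{r-m-1}.
$$
First I would bound the two negative terms by a single multiple of $q^{r-m-1/2}$. Since $(\delta_{\bfs G}-1)(\delta_{\bfs G}-2)\le\delta_{\bfs G}^2$ and, for $q>15\delta_{\bfs G}^{13/3}$, one has $7\delta_{\bfs G}^2/q^{1/2}\le 7\delta_{\bfs G}^2/(15^{1/2}\delta_{\bfs G}^{13/6})\le 2\delta_{\bfs G}^{-1/6}\le 2$, the term $7\delta_{\bfs G}^2q^{r-m-1}$ is at most $2\delta_{\bfs G}^2q^{r-m-1/2}$, hence the two negative terms together are at most $3\delta_{\bfs G}^2q^{r-m-1/2}$. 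To reach the stated bound $q^{r-m}\big(1-3\delta_{\bfs G}^{13/6}q^{-1/2}\big)$ I then only need $3\delta_{\bfs G}^2\le 3\delta_{\bfs G}^{13/6}$, which is immediate since $\delta_{\bfs G}\ge 1$. This proves the first inequality.

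For the second inequality, write $|\mathcal{A}|\ge q^{r-m}(1-\varepsilon)$ with $\varepsilon:=3\delta_{\bfs G}^{13/6}q^{-1/2}$, so that $|\mathcal{A}|^{-1}\le q^{m-r}(1-\varepsilon)^{-1}$; it then suffices to check $(1-\varepsilon)^{-1}\le 1+5\varepsilon$, i.e. $1\le(1+5\varepsilon)(1-\varepsilon)=1+4\varepsilon-5\varepsilon^2$, i.e. $5\varepsilon\le 4$. The constraint $q>15\delta_{\bfs G}^{13/3}$ gives $q^{1/2}>15^{1/2}\delta_{\bfs G}^{13/6}$, hence $\varepsilon<3/15^{1/2}<1$, and in fact $5\varepsilon<15/15^{1/2}=\sqrt{15}<4$. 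Thus $(1-\varepsilon)^{-1}\le 1+5\varepsilon=1+15\delta_{\bfs G}^{13/6}q^{-1/2}$, which is the claimed bound. Finally, the bound $|\mathcal{A}|\ge\frac12 q^{r-m}$ follows from the first inequality because $\varepsilon<3/\sqrt{15}<1/2$.

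None of the steps is a genuine obstacle; the only thing to be careful about is keeping the numerical inequalities consistent, in particular verifying that the single threshold $q>15\delta_{\bfs G}^{13/3}$ is strong enough to absorb the $q^{r-m-1}$ error term into the $q^{r-m-1/2}$ term and to make $\varepsilon$ small enough for the elementary estimate $(1-\varepsilon)^{-1}\le 1+5\varepsilon$ and for $\varepsilon<1/2$. I would double-check the constant $15^{1/2}>3$ used throughout; everything else is bookkeeping with $\delta_{\bfs G}\ge 1$.
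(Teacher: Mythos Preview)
Your arguments for the first two inequalities are correct and are essentially what the paper has in mind (the paper does not spell out a proof, saying only that the result is ``easily deduced'' from the displayed estimates of \cite{CaMa06}). There is, however, a genuine numerical error in your last step: you assert $3/\sqrt{15}<1/2$, but in fact $3/\sqrt{15}=\sqrt{3/5}\approx 0.775$. Thus from the first inequality you only obtain $\varepsilon<0.8$, which is just enough for $(1-\varepsilon)^{-1}\le 1+5\varepsilon$ but \emph{not} for $1-\varepsilon\ge 1/2$. Deducing $|\mathcal{A}|\ge\tfrac12 q^{r-m}$ from $|\mathcal{A}|\ge q^{r-m}(1-\varepsilon)$ alone would require $q>36\,\delta_{\bfs G}^{13/3}$, stronger than the stated hypothesis.

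The repair is to go back to the sharper \cite{CaMa06} bound rather than the already--weakened first inequality, and verify directly that
\[
\frac{(\delta_{\bfs G}-1)(\delta_{\bfs G}-2)}{q^{1/2}}+\frac{7\delta_{\bfs G}^2}{q}<\frac12
\]
whenever $q>15\,\delta_{\bfs G}^{13/3}$. For $\delta_{\bfs G}\in\{1,2\}$ the first summand vanishes and the second is below $7/15<1/2$ (resp.\ $28/(15\cdot 2^{13/3})<0.1$). For $\delta_{\bfs G}\ge 3$ one bounds the first summand by $\delta_{\bfs G}^2/q^{1/2}<\delta_{\bfs G}^{-1/6}/\sqrt{15}\le 3^{-1/6}/\sqrt{15}<0.22$ and the second by $(7/15)\,\delta_{\bfs G}^{-7/3}\le(7/15)\,3^{-7/3}<0.04$. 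In every case the sum is below $1/2$, and the third assertion follows.
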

%
%----------------------------------------------------------------------
%----------------------------------------------------------------------
%----------------------------------------------------------------------
%----------------------------------------------------------------------
%----------------------------------------------------------------------
%----------------------------------------------------------------------
%----------------------------------------------------------------------
%----------------------------------------------------------------------
%
\section{The distribution of factorization patterns in $\mathcal{A}$}
\label{the factorization patterns}
Let $\lambda_1,\dots,\lambda_r$ be nonnegative integers such that
$\lambda_1+2\lambda_2+\cdots+r\lambda_r=r$. Denote by ${\mathcal
P}_{\bfs \lambda}$ the set of $f\in \fq[T]_r$ with factorization
pattern $\bfs \lambda:=1^{\lambda_1}2^{\lambda_2}\cdots
r^{\lambda_r}$, namely having exactly $\lambda_i$ monic irreducible
factors over $\fq$ of degree $i$ (counted with multiplicity) for
$1\le i\le r$. Further, for $\mathcal{S}\subset\fq[T]_r$ we denote
$\mathcal{S}_{\bfs
\lambda}:=\mathcal{S}\cap\mathcal{P}_{\bfs\lambda}$. In this section
we estimate the number $|\mathcal{A}_{\bfs\lambda}|$ of elements of
$\mathcal{A}$ with factorization pattern $\bfs\lambda$, where
$\mathcal{A}\subset\fq[T]_r$ is the family of \eqref{non-linear
family A}.
%
%----------------------------------------------------------------------
%----------------------------------------------------------------------
%----------------------------------------------------------------------
%----------------------------------------------------------------------
%
\subsection{Factorization patterns and roots}
Following the approach of \cite{CeMaPe17}, we show that the set
$\mathcal{A}_{\bfs\lambda}$ can be expressed in terms of certain
symmetric polynomials.

Let $f\in\fq[T]_r$ and $m\in \fq[T]$ a monic irreducible factor of
$f$ of degree $i$. Then $m$ is the minimal polynomial of a root
$\alpha$ of $f$ with $\fq(\alpha)=\fqi$. Denote by $\mathbb G_i$ the
Galois group $\mbox{Gal}(\fqi,\fq)$ of $\fqi$ over $\fq$. We may
express $m$ in the following way:
$$m=\prod_{\sigma\in\mathbb G_i}(T-\sigma(\alpha)).$$
Hence, each irreducible factor $m$ of $f$ is uniquely determined by
a root $\alpha$ of $f$ (and its orbit under the action of the Galois
group of $\cfq$ over $\fq$), and this root belongs to a field
extension of $\fq$ of degree $\deg m$. Now, for
$f\in\mathcal{P}_{\bfs \lambda}$, there are $\lambda_1$ roots of $f$
in $\fq$, say $\alpha_1,\dots,\alpha_{\lambda_1}$ (counted with
multiplicity), which are associated with the irreducible factors of
$f$ in $\fq[T]$ of degree 1; we may choose $\lambda_2$ roots of $f$
in $\fqtwo\setminus\fq$ (counted with multiplicity), say
$\alpha_{\lambda_1+1},\dots, \alpha_{\lambda_1+\lambda_2}$, which
are associated with the $\lambda_2$ irreducible factors of $f$ of
degree 2, and so on. From now on we assume that a choice of
$\lambda_1\plp\lambda_r$ roots $\alpha_1\klk\alpha_{\lambda_1
    \plp\lambda_r}$ of $f$ in $\cfq$ is made in such a way that each
monic irreducible factor of $f$ in $\fq[T]$ is associated with one
and only one of these roots.

Our aim is to express the factorization of $f$ into irreducible
factors in $\fq[T]$ in terms of the coordinates of the chosen
$\lambda_1\plp \lambda_r$ roots of $f$ with respect to certain bases
of the corresponding extensions $\fq\hookrightarrow\fqi$ as
$\fq$--vector spaces. To this end, we express the root associated
with each irreducible factor of $f$ of degree $i$ in a normal basis
$\Theta_i$ of the field extension $\fq\hookrightarrow \fqi$.

Let $\theta_i\in \fqi$ be a normal element and $\Theta_i$ the normal
basis of the extension $\fq\hookrightarrow\fqi$ generated by
$\theta_i$, i.e.,
$$\Theta_i=\left \{\theta_i,\cdots, \theta_i^{q^{i-1}}\right\}.$$
The Galois group $\mathbb G_i$ is cyclic and the Frobenius map
$\sigma_i:\fqi\to\fqi$, $\sigma_i(x):=x^q$ is a generator of
$\mathbb{G}_i$. Thus, the coordinates in the basis $\Theta_i$ of all
the elements in the orbit of a root $\alpha_k\in\fqi$ of an
irreducible factor of $f$ of degree $i$ are the cyclic permutations
of the coordinates of $\alpha_k$ in the basis $\Theta_i$.

The vector that gathers the coordinates of all the roots
$\alpha_1\klk\alpha_{\lambda_1+\dots+\lambda_r}$ we choose to
represent the irreducible factors of $f$ in the normal bases
$\Theta_1\klk \Theta_r$ is an element of $\fq^r$, which is denoted
by ${\bfs x}:=(x_1,\dots,x_r)$. Set
\begin{equation}\label{eq: fact patterns: ell_ij}
\ell_{i,j}:=\sum_{k=1}^{i-1}k\lambda_k+(j-1)\,i
\end{equation}
for $1\le j \le \lambda_i$ and $1\le i \le r$. Observe that the
vector of coordinates of a root
$\alpha_{\lambda_1\plp\lambda_{i-1}+j}\in\fqi$ is the sub-array
$(x_{\ell_{i,j}+1},\dots,x_{\ell_{i,j}+i})$ of $\bfs x$. With these
notations, the $\lambda_i$ irreducible factors of $f$ of degree $i$
are the polynomials
\begin{equation}\label{eq: fact patterns: gij}m_{i,j}=\prod_{\sigma\in\mathbb G_i}
\Big(T-\big(x_{\ell_{i,j}+1}\sigma(\theta_i)+\dots+
x_{\ell_{i,j}+i}\sigma(\theta_i^{q^{i-1}})\big)\Big)
\end{equation}
for $1\le j \le \lambda_i$. In particular,
\begin{equation}\label{eq: fact patterns: f factored with g_ij}
f=\prod_{i=1}^r\prod_{j=1}^{\lambda_i}m_{i,j}.
\end{equation}

Let $X_1\klk X_r$ be indeterminates over $\cfq$, set $\bfs
X:=(X_1,\dots,X_r)$ and consider the polynomial $M\in
\fq[\bfs{X},T]$ defined as
\begin{equation}\label{eq: fact patterns: pol G}
M:=\prod_{i=1}^r\prod_{j=1}^{\lambda_i}M_{i,j},\quad
M_{i,j}:=\prod_{\sigma\in\mathbb G_i}
\Big(T-\big(X_{\ell_{i,j}+1}\sigma(\theta_i)+
\dots+X_{\ell_{i,j}+i}\sigma(\theta_i^{q^{i-1}})\big)\Big),
\end{equation}
where the $\ell_{i,j}$ are defined as in \eqref{eq: fact patterns:
    ell_ij}. Our previous arguments show that $f\in\fq[T]_r$ has
factorization pattern ${\bfs \lambda}$ if and only if there exists
$\bfs x\in\fq^r$ with $f=M({\bfs x},T)$.

To discuss how many elements $\bfs x\in\fq^r$ yield an arbitrary
polynomial $f=M(\bfs x,T)\in\mathcal{P}_{\bfs\lambda}$, %. For
%$\alpha\in\fqi$, we have $\fq(\alpha)=\fqi$ if and only if its orbit
%under the action of the Galois group $\G_i$ has exactly $i$
%elements. In particular, if $\alpha$ is expressed by its coordinate
%vector $\bfs x\in\fq^i$ in the normal basis $\Theta_i$, then the
%coordinate vectors of the elements of the orbit of $\alpha$ form a
%cycle of length $i$, because the Frobenius map $\sigma_i\in{\mathbb
%    G}_i$ permutes cyclically the coordinates. As a consequence, there
%is a bijection between cycles of length $i$ in $\fq^i$ and elements
%$\alpha\in\fqi$ with $\fq(\alpha)=\fqi$.
%
%To make this relation more precise,
we introduce the notion of an array of type $\bfs \lambda$. Let
$\ell_{i,j}$ $(1\le i\le r,\ 1\le j\le\lambda_i)$ be defined as in
\eqref{eq: fact patterns: ell_ij}. We say that ${\bfs x}=(x_1,\dots,
x_r)\in\fq^r$ is of {\em type
    $\bfs \lambda$} if and only if each sub-array $\bfs
x_{i,j}:=(x_{\ell_{i,j}+1},\dots,x_{\ell_{i,j}+i})$ is a cycle of
length $i$. The following result relates the set
$\mathcal{P}_{\boldsymbol{\lambda}}$ with the set of elements of
$\fq^r$ of type $\bfs \lambda$ (see \cite[Lemma 2.2]{CeMaPe17}).
\begin{lemma}
\label{lemma: fact patterns: G(x,T) with fact pat lambda} For any
${\bfs x}=(x_1,\dots, x_r)\in \fq^r$, the polynomial $f:=M({\bfs
x},T)$ has factorization pattern $\bfs \lambda$ if and only if
${\bfs x}$ is of type $\bfs \lambda$. Furthermore, for each
square--free polynomial $f\in \mathcal{P}_{\bfs \lambda}$ there are
$w({\bfs\lambda}):=\prod_{i=1}^r i^{\lambda_i}\lambda_i!$ different
${\bfs x}\in \fq^r $ with $f=M({\bfs x},T)$.
\end{lemma}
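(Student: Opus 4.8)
The plan is to prove both directions of the equivalence and then count preimages. First I would establish the forward and backward implications of the statement ``$f:=M(\bfs x,T)$ has factorization pattern $\bfs\lambda$ if and only if $\bfs x$ is of type $\bfs\lambda$''. For the ``if'' direction, suppose $\bfs x$ is of type $\bfs\lambda$, so each sub-array $\bfs x_{i,j}=(x_{\ell_{i,j}+1},\dots,x_{\ell_{i,j}+i})$ corresponds to an element $\alpha_{i,j}:=x_{\ell_{i,j}+1}\theta_i+\dots+x_{\ell_{i,j}+i}\theta_i^{q^{i-1}}\in\fqi$ whose orbit under $\mathbb G_i$ has length exactly $i$ (this is what ``is a cycle of length $i$'' encodes in terms of the normal basis $\Theta_i$: the cyclic shifts of the coordinate vector are pairwise distinct). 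Then $M_{i,j}=\prod_{\sigma\in\mathbb G_i}(T-\sigma(\alpha_{i,j}))$ is the minimal polynomial of $\alpha_{i,j}$ over $\fq$, hence monic irreducible of degree $i$; consequently $f=\prod_{i,j}M_{i,j}$ is a product of $\lambda_i$ monic irreducible factors of degree $i$ for each $i$, i.e. $f\in\mathcal P_{\bfs\lambda}$.

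For the ``only if'' direction I would argue contrapositively: if some sub-array $\bfs x_{i,j}$ is \emph{not} a cycle of length $i$, then the orbit of the corresponding $\alpha_{i,j}\in\fqi$ under $\mathbb G_i$ has length $d\mid i$ with $d<i$, so $\alpha_{i,j}$ actually lies in $\fq_{q^d}$ and $M_{i,j}=(\text{minimal polynomial of }\alpha_{i,j})^{i/d}$ is an $i/d$-th power of an irreducible polynomial of degree $d$. This changes the multiset of irreducible factors of $f$ away from the prescription $1^{\lambda_1}\cdots r^{\lambda_r}$: one must check the bookkeeping shows the resulting pattern cannot be $\bfs\lambda$. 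The cleanest way is to count total degree contributions: the factor of degree $i$ contributed $i$ to the degree but now contributes it via smaller-degree irreducibles, and a comparison with the fixed target pattern shows inconsistency. This is essentially the argument of \cite[Lemma 2.2]{CeMaPe17}, which I would invoke, supplying the short verification above.

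For the counting statement, fix a square-free $f\in\mathcal P_{\bfs\lambda}$. By the equivalence just proved, the $\bfs x\in\fq^r$ with $f=M(\bfs x,T)$ are exactly those of type $\bfs\lambda$ yielding that particular $f$. Such an $\bfs x$ amounts to: (i) a partition of the $r$ roots of $f$ into the irreducible factors — but $f$ being square-free, its $\lambda_i$ irreducible factors of degree $i$ are distinct, and assigning the $j$-th slot (for $1\le j\le\lambda_i$) to one of these $\lambda_i$ factors can be done in $\lambda_i!$ ways for each $i$; (ii) for each chosen factor of degree $i$, a choice of which root $\alpha$ in its Galois orbit to place in the sub-array, i.e. $i$ choices, since the orbit has exactly $i$ elements and each gives a distinct cyclic shift of the coordinate vector in the normal basis $\Theta_i$. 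Multiplying, the number of valid $\bfs x$ is $\prod_{i=1}^r i^{\lambda_i}\lambda_i! = w(\bfs\lambda)$. Distinctness of the resulting $\bfs x$'s across all these choices uses square-freeness (distinct irreducible factors) together with the fact that a normal basis representation is unique, so different orbit elements or different factor-to-slot assignments give genuinely different coordinate vectors.

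The main obstacle is the ``only if'' direction and, relatedly, making the counting airtight: one must be careful that when $\bfs x$ is \emph{not} of type $\bfs\lambda$ the factorization pattern genuinely differs (the subtlety is that collisions among several non-cyclic sub-arrays could conceivably reshuffle degrees back into the pattern $\bfs\lambda$ — this needs the degree-contribution comparison to rule out), and in the count one must ensure no $\bfs x$ is counted twice, which is exactly where square-freeness of $f$ is essential. Since these points are handled in \cite[Lemma 2.2]{CeMaPe17}, I would present the construction and the count explicitly and cite that lemma for the verification that the correspondence is bijective onto the type-$\bfs\lambda$ arrays mapping to a fixed square-free $f$.
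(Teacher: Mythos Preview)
Your proposal is correct and matches the paper's approach: the paper does not prove this lemma at all but simply cites \cite[Lemma 2.2]{CeMaPe17}, which is exactly what you do, while additionally supplying a sound sketch of the argument (the ``if'' direction via minimal polynomials in the normal basis, the ``only if'' via the largest failing index, and the $\prod_i i^{\lambda_i}\lambda_i!$ count via slot permutations times orbit-representative choices). Your identification of the one genuine subtlety---ruling out that several non-cyclic sub-arrays could conspire to restore the pattern $\bfs\lambda$---is apt, and the fix (look at the largest $i$ for which some $\bfs x_{i,j}$ fails to be a length-$i$ cycle; then no slot of any index can supply an irreducible factor of degree $i$ to compensate) is straightforward and is what \cite{CeMaPe17} does.
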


Consider the polynomial $M$ of \eqref{eq: fact patterns: pol G} as
an element of $\fq[\bfs X][T]$. We shall express the coefficients of
$M$ by means of the vector of linear forms $\bfs Y:=(Y_1\klk Y_r)$,
with $Y_i\in\cfq[\bfs X]$ defined in the following way for $1\le
i\le r$:
\begin{equation}\label{eq: fact patterns: def linear forms Y}
(Y_{\ell_{i,j}+1},\dots,Y_{\ell_{i,j}+i})^{t}:=A_{i}\cdot
(X_{\ell_{i,j}+1},\dots, X_{\ell_{i,j}+i})^{t} \quad(1\le j\le
\lambda_i,\ 1\le i\le r),
\end{equation}
where $A_i\in\fqi^{i\times i}$ is the matrix
$$A_i:=\left(\sigma(\theta_i^{q^{h}})\right)_{\sigma\in {\mathbb G}_i,\, 0\le h\le i-1}.$$
According to (\ref{eq: fact patterns: pol G}), we may express the
polynomial $M$ as
$$M=\prod_{i=1}^r\prod_{j=1}^{\lambda_i}\prod_{s=1}^i(T-Y_{\ell_{i,j}+s})=
\prod_{i=1}^r(T-Y_i)=T^r+\sum_{i=1}^r(-1)^i\,(\Pi_i(\bfs Y))\,
T^{r-i},$$
where $\Pi_1(\bfs Y)\klk \Pi_r(\bfs Y)$ are the elementary symmetric
polynomials of $\fq[\bfs Y]$. By \eqref{eq: fact patterns: pol G} we
see that $M$ belongs to $\fq[{\bfs X},T]$, which in particular
implies that $\Pi_i(\bfs Y)$ belongs to $\fq[{\bfs X}]$ for $1\le
i\le r$. Combining these arguments with Lemma \ref{lemma: fact
patterns: G(x,T) with fact pat lambda} we obtain the following
result.
\begin{lemma}
    \label{lemma: fact patterns: sym pols and pattern lambda} A
    polynomial $f:=T^r+a_{r-1}T^{r-1}\plp a_0\in\fq[T]_r$ has
    factorization pattern $\bfs \lambda$ if and only if there exists
    $\bfs{x}\in\fq^r$ of type $\bfs \lambda$ such that
    \begin{equation}\label{eq: fact patterns: sym pols and pattern lambda}
    a_i= (-1)^{{r-i}}\,\Pi_{r-i}(\bfs Y(\bfs x)) \quad(0\le i\le r-1).
    \end{equation}
    In particular, for $f$ square--free, there are $w(\bfs \lambda)$
    elements $\bfs x$ for which (\ref{eq: fact patterns: sym pols and
        pattern lambda}) holds.
\end{lemma}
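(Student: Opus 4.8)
The plan is to establish Lemma \ref{lemma: fact patterns: sym pols and pattern lambda} by combining the description of $M$ in terms of the elementary symmetric polynomials of the linear forms $\bfs Y$ with the already-proved Lemma \ref{lemma: fact patterns: G(x,T) with fact pat lambda}. First I would recall that, for any $\bfs x\in\fq^r$, the polynomial $f:=M(\bfs x,T)$ factors as $f=\prod_{i=1}^r(T-Y_i(\bfs x))$, so that expanding this product gives $f=T^r+\sum_{i=1}^r(-1)^i\Pi_i(\bfs Y(\bfs x))\,T^{r-i}$; comparing coefficients with $f=T^r+a_{r-1}T^{r-1}\plp a_0$ yields exactly the identities $a_i=(-1)^{r-i}\Pi_{r-i}(\bfs Y(\bfs x))$ for $0\le i\le r-1$. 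This is essentially a bookkeeping step, already carried out in the paragraph preceding the statement.

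Next I would chain this with Lemma \ref{lemma: fact patterns: G(x,T) with fact pat lambda}. For the forward implication: if $f$ has factorization pattern $\bfs\lambda$, then by that lemma there exists $\bfs x\in\fq^r$ of type $\bfs\lambda$ with $f=M(\bfs x,T)$, and then the coefficient comparison above gives \eqref{eq: fact patterns: sym pols and pattern lambda}. For the converse: if there is $\bfs x\in\fq^r$ of type $\bfs\lambda$ satisfying \eqref{eq: fact patterns: sym pols and pattern lambda}, then the coefficients of $f$ coincide with those of $M(\bfs x,T)$, hence $f=M(\bfs x,T)$ with $\bfs x$ of type $\bfs\lambda$, so Lemma \ref{lemma: fact patterns: G(x,T) with fact pat lambda} gives that $f$ has factorization pattern $\bfs\lambda$. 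The final claim on the number of such $\bfs x$ when $f$ is square-free is immediate from the corresponding count $w(\bfs\lambda)$ in Lemma \ref{lemma: fact patterns: G(x,T) with fact pat lambda}, together with the observation that the map $\bfs x\mapsto(a_0,\dots,a_{r-1})$ determined by \eqref{eq: fact patterns: sym pols and pattern lambda} is exactly $\bfs x\mapsto$ coefficients of $M(\bfs x,T)$, so the fibers coincide.

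The one point that needs a word of care — and which I expect to be the only genuine (if minor) obstacle — is making sure that $\bfs x$ being of \emph{type} $\bfs\lambda$ is the right hypothesis, rather than merely asserting $f=M(\bfs x,T)$: one must invoke Lemma \ref{lemma: fact patterns: G(x,T) with fact pat lambda} in the direction that translates "type $\bfs\lambda$" into "factorization pattern $\bfs\lambda$", and conversely. Everything else is a direct substitution, using that $\Pi_i(\bfs Y)\in\fq[\bfs X]$ so that \eqref{eq: fact patterns: sym pols and pattern lambda} makes sense over $\fq$. Thus the proof is short: expand $M=\prod_i(T-Y_i)$, identify coefficients, and quote the previous lemma in both directions.
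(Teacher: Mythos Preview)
Your proposal is correct and matches the paper's approach exactly: the paper does not give a separate proof block but derives the lemma in the paragraph immediately preceding it by expanding $M=\prod_i(T-Y_i)=T^r+\sum_i(-1)^i\Pi_i(\bfs Y)T^{r-i}$, noting $\Pi_i(\bfs Y)\in\fq[\bfs X]$, and then saying ``combining these arguments with Lemma~\ref{lemma: fact patterns: G(x,T) with fact pat lambda} we obtain the following result.'' Your write-up is, if anything, slightly more explicit about the two directions and the fiber count than the paper itself.
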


Recall that the family $\mathcal{A}$ of \eqref{non-linear family A}
is defined by polynomial $G_1 \klk G_m \in \cfq[\bfs{A_k}]$, for a
fixed $k$ with $0 \leq k \leq r-1$. As a consequence, we may express
the condition that an element of $\mathcal{A}$ has factorization
pattern $\bfs \lambda$ in terms of the elementary symmetric
polynomials $\Pi_1 \klk \Pi_{r-k-1},\Pi_{r-k+1} \klk \Pi_{r}$ of
$\fq[\bfs Y]$.
\begin{corollary}\label{coro: fact patterns: systems pattern lambda}
A polynomial $f:=T^r+a_{r-1}T^{r-1}\plp a_0\in\fq[T]_r$ belongs to
$\mathcal{A}_{\bfs \lambda}$ if and only if there exists
$\bfs{x}\in\fq^r$ of type $\bfs \lambda$ such that \eqref{eq: fact
        patterns: sym pols and pattern lambda} and
    \begin{equation}\label{eq: fact patterns: systems pattern lambda}
    G_j\big(-\Pi_{1}\klk
    (-1)^{r-k-1}\Pi_{r-k-1},(-1)^{r-k+1}\Pi_{r-k+1}\klk (-1)^r \Pi_{r}
    \big)(\bfs Y(\bfs x))=0 \quad(1\le j\le m)
    \end{equation}
hold, where $G_1\klk G_m$ are the polynomials defining the family
$\mathcal{A}$. In particular, if $f:=M(\bfs
x,T)\in\mathcal{A}_{\bfs\lambda}$ is square--free, then there are
$w(\bfs \lambda)$ elements $\bfs x$ for which (\ref{eq: fact
patterns: systems pattern lambda}) holds.
\end{corollary}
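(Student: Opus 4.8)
The plan is to read the statement off directly from Lemma~\ref{lemma: fact patterns: sym pols and pattern lambda} together with the definition \eqref{non-linear family A} of $\mathcal{A}$, using the fact that $\mathcal{A}_{\bfs\lambda}=\mathcal{A}\cap\mathcal{P}_{\bfs\lambda}$, so that the condition ``$f\in\mathcal{A}$'' and the condition ``$f$ has factorization pattern $\bfs\lambda$'' decouple and may be imposed independently.

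First I would settle the index and sign bookkeeping, which is the only delicate point. Writing $f=T^r+a_{r-1}T^{r-1}\plp a_0$, the equalities \eqref{eq: fact patterns: sym pols and pattern lambda} read $a_j=(-1)^{r-j}\Pi_{r-j}(\bfs Y(\bfs x))$ for $0\le j\le r-1$. Substituting these values for the arguments $(A_{r-1},\ldots,A_{k+1},A_{k-1},\ldots,A_0)$ on which $G_1\klk G_m$ act --- recall that the slot $A_k$ is omitted --- the $A_j$--slot receives $(-1)^{r-j}\Pi_{r-j}(\bfs Y(\bfs x))$; letting $j$ run from $r-1$ down to $k+1$ and then from $k-1$ down to $0$ produces exactly the tuple $\big(-\Pi_1\klk (-1)^{r-k-1}\Pi_{r-k-1},(-1)^{r-k+1}\Pi_{r-k+1}\klk (-1)^r\Pi_r\big)(\bfs Y(\bfs x))$ occurring in \eqref{eq: fact patterns: systems pattern lambda}. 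Hence, under \eqref{eq: fact patterns: sym pols and pattern lambda}, the membership condition $G_j(a_{r-1},\ldots,a_{k-1},a_{k+1},\ldots,a_0)=0$ $(1\le j\le m)$ defining $\mathcal{A}$ is literally \eqref{eq: fact patterns: systems pattern lambda}.

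Granting this, the equivalence is immediate. If $f\in\mathcal{A}_{\bfs\lambda}$, then $f\in\mathcal{P}_{\bfs\lambda}$, so Lemma~\ref{lemma: fact patterns: sym pols and pattern lambda} furnishes $\bfs x\in\fq^r$ of type $\bfs\lambda$ satisfying \eqref{eq: fact patterns: sym pols and pattern lambda}, and $f\in\mathcal{A}$ then translates, via the bookkeeping above, into \eqref{eq: fact patterns: systems pattern lambda}. Conversely, if some $\bfs x\in\fq^r$ of type $\bfs\lambda$ satisfies \eqref{eq: fact patterns: sym pols and pattern lambda} and \eqref{eq: fact patterns: systems pattern lambda}, then Lemma~\ref{lemma: fact patterns: sym pols and pattern lambda} gives $f=M(\bfs x,T)\in\mathcal{P}_{\bfs\lambda}$, while \eqref{eq: fact patterns: systems pattern lambda} gives $f\in\mathcal{A}$ by the same translation, whence $f\in\mathcal{A}_{\bfs\lambda}$.

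For the final assertion, suppose $f=M(\bfs x,T)\in\mathcal{A}_{\bfs\lambda}$ is square--free. By the last sentence of Lemma~\ref{lemma: fact patterns: sym pols and pattern lambda} (which rests on Lemma~\ref{lemma: fact patterns: G(x,T) with fact pat lambda}) there are exactly $w(\bfs\lambda)$ vectors $\bfs x\in\fq^r$ with $f=M(\bfs x,T)$, that is, exactly $w(\bfs\lambda)$ vectors satisfying \eqref{eq: fact patterns: sym pols and pattern lambda}. Since the left--hand sides of \eqref{eq: fact patterns: systems pattern lambda} depend on $\bfs x$ only through the coefficients of $f=M(\bfs x,T)$, and these coefficients satisfy \eqref{eq: fact patterns: systems pattern lambda} because $f\in\mathcal{A}$, every one of these $w(\bfs\lambda)$ vectors satisfies \eqref{eq: fact patterns: systems pattern lambda}, and no $\bfs x$ outside this set satisfies \eqref{eq: fact patterns: sym pols and pattern lambda} and \eqref{eq: fact patterns: systems pattern lambda} simultaneously. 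I do not expect a genuine obstacle here: the argument is pure bookkeeping plus the observation that $\mathcal{A}_{\bfs\lambda}=\mathcal{A}\cap\mathcal{P}_{\bfs\lambda}$ lets Lemma~\ref{lemma: fact patterns: sym pols and pattern lambda} be applied verbatim.
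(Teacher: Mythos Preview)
Your proof is correct and matches the paper's (implicit) reasoning: the corollary is stated there without proof, as an immediate consequence of Lemma~\ref{lemma: fact patterns: sym pols and pattern lambda} together with the definition of $\mathcal{A}$, which is exactly the decoupling $\mathcal{A}_{\bfs\lambda}=\mathcal{A}\cap\mathcal{P}_{\bfs\lambda}$ you spell out. Your careful verification of the index and sign bookkeeping is more explicit than anything the paper provides, but the underlying argument is the same.
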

%
%----------------------------------------------------------------------
%----------------------------------------------------------------------
%----------------------------------------------------------------------
%----------------------------------------------------------------------
%
\subsection{The number of polynomials in $\mathcal{A}_{\bfs \lambda}$}
Given a factorization pattern $\bfs{\lambda}$, in this section we
estimate the number of elements of $\mathcal{A}_{\bfs\lambda}$. For
this purpose, in Corollary \ref{coro: fact patterns: systems pattern
lambda} we associate to $\mathcal{A}_{\bfs \lambda}$ the polynomials
$R_1\klk R_m\in\fq[\bfs X]$ defined as follows:
\begin{equation}\label{eq: geometry: def R_j}
R_j:= G_j\big(-\Pi_{1}\klk
    (-1)^{r-k-1}\Pi_{r-k-1},(-1)^{r-k+1}\Pi_{r-k+1}\klk (-1)^r \Pi_{r}
    \big)(\bfs Y(\bfs x)).
\end{equation}
%
%Considering the linear change of coordinates defined by $\bfs
%Y:=(Y_1\klk Y_r)$, where $\bfs Y$ is the vector of linear forms of
%$\cfq[\bfs X]$ of \eqref{eq: fact patterns: def linear forms Y}, we
%may express each $R_j$ as a polynomial in the  $r-1$ elementary
%symmetric polynomials $\Pi_1 \klk \Pi_{r-k-1},{\Pi_{r-k+1}} \klk
%\Pi_{r}$ of $\fq[\bfs Y]$.
%
Let $V:=V(R_1 \klk R_m) \subset \A^r$ be the variety defined by
$R_1\klk R_m$. Since $G_1 \klk G_m$ satisfy hypotheses $({\sf
H}_1)$--$({\sf H}_6)$, by Lemma \ref{lemma: V is complete inters},
Corollary \ref{coro: radicality and degree Vr},  Theorem
\ref{theorem: proj closure of Vr is abs irred} and Remark
\ref{remark: dimension of pcl(V) at infinity} we obtain the
following result.

\begin{theorem}\label{Theo:geometry projective clousure}
    Let $ m, r$ be positive integers with $m<r$.
    \begin{enumerate}
        \item $V\subset \A^r$ is a complete intersection of dimension $r-m$.
        \item The projective variety $\mathrm{pcl}(V)\subset \Pp^r$ is a normal
        complete intersection of dimension $r-m$ and degree $\prod_{i=1}^m d_i$,
        where $d_i:=\deg(R_i)=\wt(G_i)$
        for $1\le i\le m$.
        \item  $V_{\infty} :=\mathrm{pcl}(V)\cap \{Y_0=0\}\subset \Pp^{r-1}$ has dimension $r-m-1$.
    \end{enumerate}
\end{theorem}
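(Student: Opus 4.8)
The plan is to deduce the three assertions from the results of Section \ref{sec: estimate of |A|} by interposing a linear change of coordinates. Recall from \eqref{eq: fact patterns: def linear forms Y} that the vector $\bfs Y=(Y_1\klk Y_r)$ is obtained from $\bfs X$ by the block--diagonal matrix whose $i$th diagonal block is $A_i:=(\sigma(\theta_i^{q^h}))_{\sigma\in\mathbb G_i,\,0\le h\le i-1}$. Since $\theta_i$ is a normal element of $\fqi$, the set $\{\theta_i,\theta_i^q\klk\theta_i^{q^{i-1}}\}$ is an $\fq$--basis of $\fqi$, so each $A_i$ is invertible; hence $L\colon\A^r\to\A^r$, $L(\bfs X):=\bfs Y(\bfs X)$, is a linear automorphism of $\A^r$. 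It is defined over a finite extension of $\fq$ rather than over $\fq$, but this is harmless: the $R_j$ of \eqref{eq: geometry: def R_j} were already shown to have coefficients in $\fq$ (so $V$ is $\fq$--definable), while every attribute we must transfer — being a complete intersection, dimension, degree, normality, dimension of the part at infinity, absolute irreducibility — is an attribute of the underlying $\cfq$--variety and is preserved under $\cfq$--isomorphisms.

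Write $\widehat R_j:=G_j\big(-\Pi_1\klk(-1)^{r-k-1}\Pi_{r-k-1},(-1)^{r-k+1}\Pi_{r-k+1}\klk(-1)^r\Pi_r\big)\in\fq[\bfs X]$ for the polynomials attached to $\mathcal{A}$ in Section \ref{sec: estimate of |A|}, and $\widehat V:=V(\widehat R_1\klk\widehat R_m)$ for the associated variety. Comparing with \eqref{eq: geometry: def R_j} gives $R_j=\widehat R_j\circ L$, hence $V=L^{-1}(\widehat V)$. As $L$ is linear and invertible, $\deg R_j=\deg\widehat R_j=\wt(G_j)$ by Lemma \ref{lemma: rel between R^di and S^wt}, which identifies $d_j$ with $\wt(G_j)$. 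By Lemma \ref{lemma: V is complete inters} and Corollary \ref{coro: radicality and degree Vr}, $\widehat V$ is a complete intersection of dimension $r-m$ and $(\widehat R_1\klk\widehat R_m)$ is a radical ideal; applying the algebra automorphism $P\mapsto P\circ L$ of $\cfq[\bfs X]$, and using that $\fq$ is perfect to pass between $\fq$ and $\cfq$, one gets that $(R_1\klk R_m)$ is radical and that $V=L^{-1}(\widehat V)$ is a complete intersection of dimension $r-m$. This is assertion (1).

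For the projective statements, extend $L$ to the projective--linear automorphism $\overline L$ of $\Pp^r$, $(X_0:X_1:\cdots:X_r)\mapsto(X_0:Y_1(\bfs X):\cdots:Y_r(\bfs X))$, which fixes the hyperplane at infinity $\{X_0=0\}$. Since $L$ is homogeneous of degree $1$, one has $R_j^h=\widehat R_j^h\circ\overline L$, so $V(R_1^h\klk R_m^h)=\overline L^{-1}\big(V(\widehat R_1^h\klk\widehat R_m^h)\big)$; and, $\overline L$ fixing $\{X_0=0\}$, taking projective closures commutes with $\overline L$, whence $\mathrm{pcl}(V)=\overline L^{-1}(\mathrm{pcl}(\widehat V))$. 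By Theorem \ref{theorem: proj closure of Vr is abs irred}, $\mathrm{pcl}(\widehat V)=V(\widehat R_1^h\klk\widehat R_m^h)$ is a normal complete intersection of dimension $r-m$ and degree $\prod_{i=1}^m d_i$, and all of these properties are invariant under the $\cfq$--automorphism $\overline L$; this yields assertion (2). Finally, Remark \ref{remark: dimension of pcl(V) at infinity} gives $\dim\big(\mathrm{pcl}(\widehat V)\cap\{X_0=0\}\big)=r-m-1$, and intersecting $\mathrm{pcl}(V)=\overline L^{-1}(\mathrm{pcl}(\widehat V))$ with $\{X_0=0\}=\overline L^{-1}(\{X_0=0\})$ shows that $V_\infty=\overline L^{-1}\big(\mathrm{pcl}(\widehat V)\cap\{X_0=0\}\big)$ also has dimension $r-m-1$ (here the hyperplane denoted $\{Y_0=0\}$ in the statement is simply the homogenizing hyperplane $\{X_0=0\}$). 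This is assertion (3).

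There is no deep difficulty here; the only point demanding care is the bookkeeping around $L$ — namely that, although $L$ is not $\fq$--rational, this does not affect the $\cfq$--geometric invariants being transported, and that $\overline L$ fixes the hyperplane at infinity, so homogenization and slicing by $\{X_0=0\}$ behave correctly under $\overline L$. If one prefers to avoid invoking $L$, the alternative is to rerun the proofs of Section \ref{sec: estimate of |A|} verbatim with the composite symmetric polynomials $\Pi_i(\bfs Y)$ in place of $\Pi_i(\bfs X)$: the Jacobian factorization \eqref{eq: factorization Jacobian elem sim pols} and Proposition \ref{prop: determinant minors order r-1} then get multiplied on the right by the invertible matrix of $L$, leaving the rank estimates for the singular loci — and hence all the conclusions of Lemma \ref{lemma: V is complete inters}, Corollary \ref{coro: radicality and degree Vr}, Theorem \ref{theorem: proj closure of Vr is abs irred} and Remark \ref{remark: dimension of pcl(V) at infinity} — unchanged.
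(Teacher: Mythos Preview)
Your proof is correct and follows the same approach as the paper: both reduce Theorem \ref{Theo:geometry projective clousure} to the results of Section \ref{sec: estimate of |A|} (Lemma \ref{lemma: V is complete inters}, Corollary \ref{coro: radicality and degree Vr}, Theorem \ref{theorem: proj closure of Vr is abs irred}, Remark \ref{remark: dimension of pcl(V) at infinity}). The paper simply cites these results without further comment, whereas you make explicit the linear change of coordinates $L:\bfs X\mapsto\bfs Y(\bfs X)$ needed to identify the polynomials $R_j$ of \eqref{eq: geometry: def R_j} with those of Section \ref{sec: estimate of |A|}, and you correctly observe that $L$ being defined only over an extension of $\fq$ is harmless for the $\cfq$--geometric invariants at stake. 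Your write--up is thus a more careful version of the paper's own argument.
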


Now we estimate the number of $\fq$--rational points of $V$.
According to Theorem \ref{Theo:geometry projective clousure},
$\mathrm{pcl}(V)\subset\Pp^r$ is a normal complete intersection
defined over $\fq$, of dimension $r-m$ and multidegree $\bfs
d:=(d_1\klk d_m)$. Therefore, \cite[Corollary 8.4]{CaMaPr15} implies
that the following estimate holds (see \cite{CaMa07},
\cite{GhLa02a}, \cite{GhLa02} and \cite{MaPePr16} for further
explicit estimates of this type):
$$
\big||\mathrm{pcl}(V)(\fq)|-p_{r-m}\big|\le (\delta
(D-2)+2)q^{r-m-\frac{1}{2}}+14 D^2 \delta^2q^{r-m-1}.
$$
where $p_{r-m}:=q^{r-m}+\cdots+ q+1=|\Pp^{r-m}(\fq)|$,
$\delta:=d_1\cdots d_m$ and $D:=\sum_{i=1}^m(d_i-1)$.

On the other hand, the B\'ezout inequality \eqref{eq: Bezout}
implies $\deg V_{\infty}\le\delta$. Then by Theorem
\ref{Theo:geometry projective clousure} and \eqref{eq: upper bound
-- projective gral} we have
$$
\big|V_{\infty}(\fq)\big|\le \delta p_{r-m-1}.
$$
It follows that
\begin{align}\label{ineq: estimation Vr}
\big||V(\fq)|-q^{r-m}\big|& =
\big||\mathrm{pcl}(V)(\fq)|-|V_{\infty} (\fq)|-
p_{r-m}+p_{r-m-1}\big|\nonumber\\[1ex]
& \le
\big||\mathrm{pcl}(V)(\fq)|-p_{r-m}\big|+
\big|V_{\infty}(\fq)\big|+ 2{q^{r-m-1}}
\nonumber\\[1ex]
%&\le (\delta(D-2)+2)q^{r-m-1/2}+14 D^2\delta^2q^{r-m-1}+2 \delta q^{r-m-1}+2{q^{r-m-1}} \nonumber\\[1ex]
& \le \big((\delta(D-2)+2)q^{\frac{1}{2}}+14D^2 \delta^2+2
\delta+2\big)q^{r-m-1}.
\end{align}

Let $V^{=}$ be the subvariety of $V$ defined as
$$V^{ =}:=\mathop{\bigcup_{1\le i\le r}}_{ 1\leq j_1 <j_2\leq
    \lambda _i,\, \,  1\leq k_1 <k_2 \leq i}
V\cap\{Y_{\ell_{i,j_1}+k_1}=Y_{\ell_{i,j_2}+k_2}\},
$$
where $Y_{\ell_{i,j}+k}$ are the linear forms of \eqref{eq: fact
patterns: def linear forms Y}. Let $V^{ \neq}(\fq):=V(\fq)\backslash
V^{ =}(\fq)$.
We claim that $V\cap\{Y_{\ell_{i,j_1}+k_1}=Y_{\ell_{i,j_2}+k_2}\}$
has dimension at most $r-m-1$ for every $1\le i\le r$, $1\leq j_1
<j_2\leq \lambda _i$ and $1\leq k_1 <k_2 \leq i$. Indeed, let $\bfs
x\in V\cap\{Y_{\ell_{i,j_1}+k_1}=Y_{\ell_{i,j_2}+k_2}\}$ for
$i,j_1,j_2,k_1,k_2$ as above. By \eqref{eq: fact patterns: pol G} we
conclude that $M(\bfs x,T)$ is not square--free, and therefore
$\Pi^r(\bfs Y(\bfs x))\in\mathcal{D}(W)$. Since $G_1\klk G_m$
satisfy $({\sf H}_4)$, it follows that $\dim \mathcal{D}(W)\le
r-m-1$, and the fact that $\Pi^r$ is a finite morphism implies that
$\dim(\Pi^r)^{-1}(\mathcal{D}(W))\le r-m-1$. This proves our claim.

The claim implies $\dim V^{ =}\le r-m-1$. By the B\'ezout inequality
\eqref{eq: Bezout} we have
$$\deg V^{ =}\le \deg V\sum_{i=1}^r\frac{i^2\lambda_i^2}{4}\le \frac{r^2}{4}\delta.$$
As a consequence, by \eqref{eq: upper bound -- affine gral} we see
that
\begin{equation}\label{eq: estimates: upper bound V^=(fq)}
|V^{ =}(\fq)|\le \deg V^{ =}\,q^{r-m-1}\le \frac{r^2\delta}{4}\,
q^{r-m-1}.
\end{equation}

Finally, combining \eqref{ineq: estimation Vr} and \eqref{eq:
estimates: upper bound V^=(fq)} we obtain the following result.
\begin{theorem}\label{ineq:estimation Vr distintos}
    For $m<r$ we have
    \begin{align*}
    \big||V^{ \neq}(\fq)|-q^{r-m}\big|&\le q^{r-m-1}
    \Big((\delta (D-2)+2)q^{\frac{1}{2}}+14 D^2 \delta^2+2\delta+2+r^2\delta/4\Big),
    \end{align*}
    where $\delta:=\prod_{i=1}^m \wt(G_i)$ and $D:=\sum_{i=1}^m (\wt(G_i)-1).$
\end{theorem}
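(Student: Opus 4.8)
The plan is to read off the statement from two facts already in hand: the estimate for $|V(\fq)|$ in \eqref{ineq: estimation Vr} and the bound for $|V^{=}(\fq)|$ in \eqref{eq: estimates: upper bound V^=(fq)}. First I would observe that, since $V^{=}$ is a finite union of subvarieties of $V$, we have $V^{=}(\fq)\subseteq V(\fq)$, so that $V^{\neq}(\fq)=V(\fq)\setminus V^{=}(\fq)$ gives the exact count $|V^{\neq}(\fq)|=|V(\fq)|-|V^{=}(\fq)|$. Consequently
$$\big||V^{\neq}(\fq)|-q^{r-m}\big|\le\big||V(\fq)|-q^{r-m}\big|+|V^{=}(\fq)|.$$

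Next I would plug in the two estimates. By \eqref{ineq: estimation Vr} the first summand is at most $\big((\delta(D-2)+2)q^{1/2}+14D^2\delta^2+2\delta+2\big)q^{r-m-1}$, and by \eqref{eq: estimates: upper bound V^=(fq)} the second is at most $(r^2\delta/4)\,q^{r-m-1}$. Adding these and factoring out $q^{r-m-1}$ produces exactly the claimed bound, with $\delta=\prod_{i=1}^m\wt(G_i)$ and $D=\sum_{i=1}^m(\wt(G_i)-1)$. No cancellation or optimization is needed; the constants simply accumulate.

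There is no real obstacle at this last step — the substance has all been invested upstream. The estimate \eqref{ineq: estimation Vr} is the deep ingredient: it uses Theorem \ref{Theo:geometry projective clousure}, namely that $\mathrm{pcl}(V)$ is a normal complete intersection of dimension $r-m$ and multidegree $(d_1,\dots,d_m)$ — hence absolutely irreducible — together with the point-counting estimate of \cite[Corollary 8.4]{CaMaPr15} and the bound \eqref{eq: upper bound -- projective gral} applied to $V_\infty$. The estimate \eqref{eq: estimates: upper bound V^=(fq)}, in turn, comes from the dimension count $\dim V^{=}\le r-m-1$ (which invokes hypothesis $({\sf H}_4)$ and the finiteness of $\bfs\Pi^r$), the B\'ezout inequality \eqref{eq: Bezout} together with $\sum_i i^2\lambda_i^2\le\big(\sum_i i\lambda_i\big)^2=r^2$, and finally \eqref{eq: upper bound -- affine gral}. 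The only point I would take care to verify is that $|V^{\neq}(\fq)|=|V(\fq)|-|V^{=}(\fq)|$ holds as an equality rather than merely an inequality, which is immediate from $V^{=}\subseteq V$.
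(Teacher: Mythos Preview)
Your proposal is correct and follows exactly the same approach as the paper: apply the triangle inequality $\big||V^{\neq}(\fq)|-q^{r-m}\big|\le\big||V(\fq)|-q^{r-m}\big|+|V^{=}(\fq)|$, then invoke \eqref{ineq: estimation Vr} and \eqref{eq: estimates: upper bound V^=(fq)}. The paper's proof is in fact slightly terser than yours, omitting the explicit justification that $V^{=}(\fq)\subseteq V(\fq)$ and the recapitulation of the upstream ingredients.
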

\begin{proof}
By \eqref{eq: estimates: upper bound V^=(fq)}, $|V^{ =}(\fq)|\le
r^2\delta\, q^{r-m-1}/4$. Then, from \eqref{ineq: estimation Vr} we
deduce that
    \begin{align*}
    \big||V^{ \neq}(\fq)|-q^{r-m}\big|& \leq
    \big||V(\fq)|-q^{r-m}\big|+\big|V^{ =}(\fq)\big|\nonumber\\[1ex]
    & \le \big((\delta(D-2)+2)q^{\frac{1}{2}}+14D^2 \delta^2+2 \delta+2\big)q^{r-m-1}
    + \frac{r^2\delta}{4} q^{r-m-1}.
    \end{align*}
This shows the statement of the theorem.
\end{proof}

Next we use Corollary \ref{coro: fact patterns: systems pattern
lambda} to relate $|V(\fq)|$ to the quantity
$|\mathcal{A}_{\bfs{\lambda}}|$. More precisely, let $\bfs x:=(\bfs
x_{i,j}:1\le i\le r,1\le j\le \lambda_i)\in\fq^r$ be an
$\fq$--rational zero of $R_1\klk R_m$ of type $\bfs\lambda$. Then
$\bfs x$ is associated with $f\in\mathcal{A}_{\bfs\lambda}$ having
$Y_{\ell_{i,j}+k}(\bfs x_{i,j})$ as an $\fqi$--root for $1\le i\le
r$, $1\le j\le\lambda_i$ and $1\le k\le i$, where $Y_{\ell_{i,j}+k}$
are the linear forms of \eqref{eq: fact patterns: def linear forms
Y}.

Let $\mathcal{A}_{\bfs\lambda}^{sq}:=\{f\in
\mathcal{A}_{\bfs\lambda}: f \mbox{ is square--free}\}$ and
$\mathcal{A}_{\bfs\lambda}^{nsq}:=\mathcal{A}_{\bfs\lambda}\setminus
\mathcal{A}_{\bfs\lambda}^{sq}$. Corollary \ref{coro: fact patterns:
systems pattern lambda} shows that any element $f\in
\mathcal{A}_{\bfs\lambda}^{sq}$ is associated with
$w(\bfs\lambda):=\prod_{i=1}^r i^{\lambda_i}\lambda_i!$ common
$\fq$--rational zeros of $R_1\klk R_m$ of type $\bfs\lambda$.
Observe that $\bfs x\in\fq^r$ is of type $\bfs\lambda$ if and only
if $Y_{\ell_{i,j}+k_1}(\bfs x) \neq Y_{\ell_{i,j}+k_2}(\bfs x)$ for
$1\leq i\leq r$, $1\leq j\leq \lambda_i$ and $1\leq k_1 <k_2 \leq
i$. Furthermore, an $\bfs x\in\fq^r$ of type $\bfs\lambda$ is
associated with $f\in\mathcal{A}_{\bfs\lambda}^{sq}$ if and only if
$Y_{\ell_{i,j_1}+k_1}(\bfs x) \neq Y_{\ell_{i,j_2}+k_2}(\bfs x)$ for
$1\leq i\leq r$, $1\leq j_1<j_2\leq \lambda_i$ and $1\leq k_1 <k_2
\leq i$. As a consequence, we see that
$|\mathcal{A}_{\bfs\lambda}^{sq}| =\mathcal{T}(\bfs\lambda)
\big|V^{\neq}(\fq)\big|$, where $\mathcal{T}(\bfs\lambda):=1/w(\bfs
\lambda)$. This implies
$$
\big||\mathcal{A}_{\bfs\lambda}^{sq}|
-\mathcal{T}(\bfs\lambda)\,q^{r-m}\big| =
\mathcal{T}(\bfs\lambda)\,\big||V^{
     \neq}(\fq)|-q^{r-m}\big|.
$$
From Theorem \ref{ineq:estimation Vr distintos} we deduce that
\begin{align*}
\big||\mathcal{A}_{\bfs\lambda}^{sq}|
-\mathcal{T}(\bfs\lambda)\,q^{r-m}\big| &\le
\,\mathcal{T}(\bfs\lambda)q^{r-m-1}\big((\delta
(D-2)+2)q^{\frac{1}{2}}+14 D^2
\delta^2+2\delta+2+r^2\delta/4\big)\\
&\le \,\mathcal{T}(\bfs\lambda)q^{r-m-1}\big((\delta
(D-2)+2)q^{\frac{1}{2}}+14 D^2 \delta^2+r^2\delta\big).
\end{align*}
Now we are able to estimate $|\mathcal{A}_{\bfs \lambda}|$. We have
\begin{align}
\big||\mathcal{A}_{\bfs\lambda}|
-\mathcal{T}(\bfs\lambda)\,q^{r-m}\big|&=
\big||\mathcal{A}_{\bfs\lambda}^{sq}|+
|\mathcal{A}_{\bfs\lambda}^{nsq}|-\mathcal{T}(\bfs\lambda)q^{r-m}\big|\nonumber\\
&\le\mathcal{T}(\bfs\lambda)q^{r-m-1}\big((\delta
(D-2)+2)q^{\frac{1}{2}}+14 D^2 \delta^2+r^2\delta\big)+
|\mathcal{A}_{\bfs\lambda}^{nsq}|. \label{eq: estimates: estimate
    A_lambda aux}
\end{align}
It remains to bound $|\mathcal{A}_{\bfs\lambda}^{nsq}|$. To this
end, we observe that $f\in \mathcal{A}$ is not square--free if and
only if its discriminant is equal to zero, namely it belongs to the
discriminant locus $\mathcal{D}(W)$. By hypothesis $({\sf H}_4)$ the
discriminant locus $\mathcal{D}(W)$ has dimension at most $r-m-1$.
Further, by the B\'ezout inequality \eqref{eq: Bezout} we have
$$\deg\mathcal{D}(W)\le \deg W\cdot \deg \{\bfs a_0 \in \A^r :
\mathrm{Disc}(F(\bfs A_0, T))|_{\bfs A_0=\bfs a_0}= 0\}\le
\delta_{\bfs G}\, r(r-1)\le \delta\, r^2.$$
Then \eqref{eq: upper bound -- affine gral} implies
\begin{equation}\label{eq: estimates: upper bound discr locus}
|\mathcal{A}_{\bfs\lambda}^{nsq}|\le |\mathcal{A}^{nsq}|\le
\delta_{\bfs G}\, r(r-1) \,q^{r-m-1}\le \delta\, r^2q^{r-m-1}.
\end{equation}
Hence, combining \eqref{eq: estimates: estimate A_lambda aux} and
\eqref{eq: estimates: upper bound discr locus} we conclude that
\begin{align*}
\big||\mathcal{A}_{\bfs\lambda}|
-\mathcal{T}(\bfs\lambda)\,q^{r-m}\big|&\le
q^{r-m-1}\Big(\mathcal{T}(\bfs\lambda)\big((\delta
(D-2)+2)q^{\frac{1}{2}}\!+\!14 D^2
\delta^2+r^2\delta\big)+r^2\delta\Big).
\end{align*}
In other words, we have the following result.
\begin{theorem}\label{theorem: estimate fact patterns}
    For $m<r$, we have that
    \begin{align*}
    \big||\mathcal{A}_{\bfs\lambda}^{sq}|
    -\mathcal{T}(\bfs\lambda)\,q^{r-m}\big|& \leq \mathcal{T}(\bfs\lambda)q^{r-m-1}
    \big((\delta (D-2)+2)q^{\frac{1}{2}}+14 D^2 \delta^2+ r^2\delta\big),\\
    \big||\mathcal{A}_{\bfs\lambda}|
    -\mathcal{T}(\bfs\lambda)\,q^{r-m}\big|&\le
    q^{r-m-1}\Big(\mathcal{T}(\bfs\lambda)
    \big((\delta (D-2)+2)q^{\frac{1}{2}}+14 D^2 \delta^2+r^2\delta\big)+r^2\delta\Big),
    \end{align*}
    where $\delta:=\prod_{i=1}^m \wt(G_i)$ and $D:=\sum_{i=1}^m(\wt(G_i)-1)$.
\end{theorem}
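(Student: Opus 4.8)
The plan is to combine the point--counting estimate of Theorem \ref{ineq:estimation Vr distintos} with the combinatorial dictionary of Corollary \ref{coro: fact patterns: systems pattern lambda}, and then to bound the contribution of the non--square--free elements of $\mathcal{A}_{\bfs\lambda}$ by a crude Bézout plus point--counting argument.

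First I would treat $\mathcal{A}_{\bfs\lambda}^{sq}$. By Corollary \ref{coro: fact patterns: systems pattern lambda}, an $\fq$--rational zero $\bfs x$ of $R_1\klk R_m$ of type $\bfs\lambda$ is associated with an element of $\mathcal{A}_{\bfs\lambda}$, and, invoking Lemma \ref{lemma: fact patterns: G(x,T) with fact pat lambda}, for $f$ square--free there are exactly $w(\bfs\lambda)=\prod_{i=1}^r i^{\lambda_i}\lambda_i!$ such $\bfs x$. The point is that an $\bfs x\in\fq^r$ is of type $\bfs\lambda$ and yields a square--free $f=M(\bfs x,T)$ precisely when the linear forms $Y_{\ell_{i,j}+k}$ of \eqref{eq: fact patterns: def linear forms Y} attached to irreducible factors of the same degree are pairwise distinct at $\bfs x$, both within a block and across blocks; this is exactly the condition that $\bfs x\in V^{\neq}(\fq):=V(\fq)\setminus V^{=}(\fq)$. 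Hence $\bfs x\mapsto M(\bfs x,T)$ restricts to a $w(\bfs\lambda)$--to--$1$ surjection $V^{\neq}(\fq)\to\mathcal{A}_{\bfs\lambda}^{sq}$, so that $|\mathcal{A}_{\bfs\lambda}^{sq}|=\mathcal{T}(\bfs\lambda)\,|V^{\neq}(\fq)|$ with $\mathcal{T}(\bfs\lambda)=1/w(\bfs\lambda)$. Multiplying the bound of Theorem \ref{ineq:estimation Vr distintos} by $\mathcal{T}(\bfs\lambda)$ and absorbing the lower--order summand $2\delta+2+r^2\delta/4$ into $r^2\delta$ gives the first inequality.

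For $|\mathcal{A}_{\bfs\lambda}|$ I would write $|\mathcal{A}_{\bfs\lambda}|=|\mathcal{A}_{\bfs\lambda}^{sq}|+|\mathcal{A}_{\bfs\lambda}^{nsq}|$ and use the triangle inequality, so that it remains to bound $|\mathcal{A}_{\bfs\lambda}^{nsq}|\le|\mathcal{A}^{nsq}|$. Since $f\in\mathcal{A}$ fails to be square--free exactly when $\mathrm{Disc}(f)=0$, the set $\mathcal{A}^{nsq}$ consists of the $\fq$--rational points of the discriminant locus $\mathcal{D}(W)$, which by $({\sf H}_4)$ has dimension at most $r-m-1$. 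The Bézout inequality \eqref{eq: Bezout} gives $\deg\mathcal{D}(W)\le\deg W\cdot r(r-1)\le\delta_{\bfs G}\,r(r-1)\le\delta\,r^2$ (using $\deg G_i\le\wt(G_i)$), so \eqref{eq: upper bound -- affine gral} yields $|\mathcal{A}^{nsq}|\le\delta\,r^2\,q^{r-m-1}$. Adding this to the estimate for $|\mathcal{A}_{\bfs\lambda}^{sq}|$ produces the second inequality.

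The substantive work --- the normality and absolute irreducibility of $\mathrm{pcl}(V)$ and the resulting estimate for $|V^{\neq}(\fq)|$ --- is already available (Theorems \ref{Theo:geometry projective clousure} and \ref{ineq:estimation Vr distintos}), so the only remaining care is bookkeeping: checking that the fibres of the map $V^{\neq}(\fq)\to\mathcal{A}_{\bfs\lambda}^{sq}$ have constant size $w(\bfs\lambda)$ and that the simplifications of the lower--order terms are legitimate. I expect the mild inequality $2\delta+2+r^2\delta/4\le r^2\delta$ needed in both cases to be the only point requiring a moment's attention.
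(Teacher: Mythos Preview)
Your proposal is correct and follows essentially the same route as the paper: both establish $|\mathcal{A}_{\bfs\lambda}^{sq}|=\mathcal{T}(\bfs\lambda)\,|V^{\neq}(\fq)|$ via the $w(\bfs\lambda)$--to--$1$ correspondence of Corollary~\ref{coro: fact patterns: systems pattern lambda}, invoke Theorem~\ref{ineq:estimation Vr distintos}, absorb $2\delta+2+r^2\delta/4$ into $r^2\delta$, and then bound $|\mathcal{A}_{\bfs\lambda}^{nsq}|\le|\mathcal{A}^{nsq}|$ by combining $({\sf H}_4)$, the B\'ezout inequality, and \eqref{eq: upper bound -- affine gral}. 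Your remark that $\deg G_i\le\wt(G_i)$ justifies $\delta_{\bfs G}\le\delta$ is exactly the step the paper uses implicitly in \eqref{eq: estimates: upper bound discr locus}.
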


As we show in Section \ref{subsec: examples CeMaPe17}, Theorem
\ref{theorem: estimate fact patterns} extends \cite[Theorem
4.2]{CeMaPe17}. More precisely, Theorem \ref{theorem: estimate fact
patterns} holds for families defined by linearly--independent linear
polynomials $G_1,\ldots,G_m\in \fq[A_{r-1},\ldots, A_2]$ with
$\mathrm{char}(\fq)$ not dividing $r(r-1)$, and
linearly--independent linear polynomials $G_1,\ldots,G_m\in
\fq[A_{r-1},\ldots, A_3]$ with $\mathrm{char}(\fq)>2$. The latter is
precisely \cite[Theorem 4.2]{CeMaPe17}.
\section{Examples of linear and nonlinear families}
\label{sec: examples}
In this section we exhibit examples of linear and nonlinear families
of polynomials satisfying hypotheses $({\sf H}_1)$--$({\sf H}_6)$.
Therefore, the estimate of Theorem \ref{theorem: estimate fact
patterns} is valid for these families.
%Finally, we consider the case of a nonlinear family of polynomials.
% We discuss that the hypotheses $(H_1)-(H_6)$
%
%----------------------------------------------------------------------
%----------------------------------------------------------------------
%----------------------------------------------------------------------
%----------------------------------------------------------------------
%
\subsection{The linear families of \cite{CeMaPe17}}
\label{subsec: examples CeMaPe17}
Suppose that $\mathrm{char}(\fq)>3$. Let $r,m,n$ be positive
integers with $2\leq  n \leq r-m$ and
$L_1,\ldots,L_m\in\fq[A_{r-1},\ldots,A_n]$ linear forms which are
linearly independent. In \cite{CeMaPe17} the distribution of
factorization patterns of the following linear family is considered:
\begin{equation}\label{eq: family paper CMP17}
\mathcal{A}:=\left\{T^r+a_{r-1}T^{r-1}\plp a_0\in\fq[T]:
L_j(a_{r-1}\klk a_n)=0\quad (1\le j\le m)\right\}.
\end{equation}
Assume without loss of generality that the Jacobian matrix
$(\partial L_i/\partial A_j)_{1\le i\le m,\,n\le j\le r-1}$ is lower
triangular in row echelon form and denote by $1\le i_1<\cdots<i_m\le
r-n$ the positions corresponding to the pivots. We have the
following result.
\begin{lemma}\label{lemma: example MPP satisfies H1-H6}
If either $n=2$ and $\mathrm{char}(\fq)$ does not divide $r(r-1)$ or
$n \geq 3$, then $L_1 \klk L_m$ satisfy hypotheses $({\sf
H}_1)$--$({\sf H}_6)$.
\end{lemma}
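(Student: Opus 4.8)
The plan is to verify $({\sf H}_1)$--$({\sf H}_6)$ directly, driven by two structural facts: $W=V(L_1\klk L_m)\subset\A^r$ is an affine linear subspace of dimension $r-m$, and after the row--echelon normalisation each highest--weight component $L_j^{\wt}$ is a nonzero scalar multiple of a single pivot variable $A_{j_l}$ with $n\le j_l\le r-1$, the indices $j_1\klk j_m$ being pairwise distinct. Given this, $({\sf H}_1)$ and $({\sf H}_2)$ are immediate: linearly independent linear forms form a regular sequence (each $V(L_1\klk L_i)$ has codimension $i$), and the Jacobian $(\partial\bfs{L}/\partial\bfs{A_k})$ is the constant matrix of coefficient vectors of $L_1\klk L_m$, hence of full rank everywhere. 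Since $L_j^{\wt}$ is a scalar multiple of $A_{j_l}$, the variety $V(L_1^{\wt}\klk L_m^{\wt})$ is the coordinate subspace $\{A_{j_1}=\cdots=A_{j_m}=0\}$, which again yields a regular sequence with full--rank constant Jacobian, so $({\sf H}_3)$ holds; and since $j_l\ge n\ge 2$ the coefficients $a_0,a_1$ are free on this subspace, so specialising $f=T^r+a_1T+a_0$ with $a_1=1$ and noting that $\mathrm{Disc}(f)=\mathrm{Res}(f,f')$ is a nonzero polynomial in $a_0$ (treating $\mathrm{char}(\fq)\nmid r$ and $\mathrm{char}(\fq)\mid r$ separately, using $f'=a_1$ in the latter) gives $({\sf H}_6)$.

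For $({\sf H}_4)$ it suffices to exhibit a square--free polynomial in $W$: since $L_1\klk L_m$ involve only $A_{r-1}\klk A_n$ and the map sending $(x_1\klk x_r)$ to the top $r-n$ coefficients of $\prod_i(T-x_i)$ is surjective onto $\A^{r-n}$, one chooses pairwise distinct $x_1\klk x_r\in\cfq$ so that those coefficients satisfy $L_1=\cdots=L_m=0$, and for generic lower coefficients the resulting polynomial is square--free. For the first part of $({\sf H}_5)$, that $\{a_0=0\}$ meets $\mathcal{D}(W)$ in codimension at least one, the substitution $f=T\cdot g$ reduces matters to a linear family $W'$ of monic degree--$(r-1)$ polynomials cut out by $m$ linear forms in the coefficients $b_{r-2}\klk b_{n-1}$ of $g$: the polynomial $Tg$ is non--square--free iff $g$ is non--square--free or $g(0)=0$, and both loci have dimension at most $r-m-2$, the first by the $({\sf H}_4)$ argument applied to $W'$ (using $n-1\le(r-1)-m$), the second because the constant term $b_0$ of $g$ equals $a_1$ and hence is free.

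The crux is the remaining part of $({\sf H}_5)$: that $\mathcal{S}_1(W)$ has codimension at least one in $\mathcal{D}(W)$, equivalently that the generic multiple--root polynomial of $W$ has exactly one double root and no worse singularity. The plan is to parametrise $\mathcal{D}(W)$ via the finite map $(\alpha,h)\mapsto(T-\alpha)^2h$ from $\A^1\times\cfq[T]_{r-2}$ onto the discriminant hypersurface; its restriction $\Phi\colon S\to\mathcal{D}(W)$ to $S:=\{(\alpha,h):(T-\alpha)^2h\in W\}$ is a finite surjection, so $\dim S=r-m-1$. Expanding $(T-\alpha)^2h$, the constraints $L_j=0$ involve $\alpha$ and the top $r-n$ coefficients of $h$, so the bottom $n-2$ coefficients of $h$ -- in particular its constant term when $n\ge 3$ -- stay free on $S$; hence for $n\ge 3$ the open conditions $h(\alpha)\neq 0$ and $\mathrm{Disc}(h)\neq 0$ cannot be identically violated on any component of $S$, and since $\mathcal{S}_1(W)\subset\Phi\big(\{h(\alpha)=0\}\cup\{\mathrm{Disc}(h)=0\}\big)$ it follows that $\mathcal{S}_1(W)$ has dimension at most $r-m-2$.

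When $n=2$ no coefficient of $h$ is free on $S$, and the hypothesis $\mathrm{char}(\fq)\nmid r(r-1)$ is exactly what is needed to prevent a component of $S$ from being forced into $\{h(\alpha)=0\}$ (a triple root) or $\{\mathrm{Disc}(h)=0\}$ (a second double root); this case coincides with \cite[Theorem 4.2]{CeMaPe17}, whose argument I would invoke. The standing assumption $\mathrm{char}(\fq)>3$ is used throughout to keep this stratification honest, since in characteristic $2$ or $3$ the map $(\alpha,h)\mapsto(T-\alpha)^2h$ and the (sub)discriminant degenerate (squaring or cubing kills derivatives and the discriminant locus jumps). Controlling this fine stratification of $\mathcal{D}(W)$ -- i.e.\ ruling out that every multiple--root polynomial in $W$ has a very degenerate root pattern -- is the only real obstacle; the verifications of $({\sf H}_1)$--$({\sf H}_4)$ and $({\sf H}_6)$ are bookkeeping with dimensions.
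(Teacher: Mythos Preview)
Your verifications of $({\sf H}_1)$--$({\sf H}_3)$ agree with the paper's, and your geometric treatments of $({\sf H}_4)$, $({\sf H}_6)$, and the $\{A_0=0\}$ half of $({\sf H}_5)$ are sound alternatives to the paper's explicit monomial computations in $\cfq[W]\simeq\cfq[A_k:k\in\mathcal{J}]$.

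The genuine gap is the subdiscriminant half of $({\sf H}_5)$ when $n=2$. Your appeal to \cite[Theorem~4.2]{CeMaPe17} is misplaced: that result is the factorization--pattern estimate for the families with $L_j\in\fq[A_{r-1},\ldots,A_3]$, i.e.\ precisely the case $n\ge 3$ (see the remark after Theorem~\ref{theorem: estimate fact patterns}). The case $n=2$ with $\mathrm{char}(\fq)\nmid r(r-1)$ is new to the present paper, so there is nothing in \cite{CeMaPe17} to invoke. And your parametrisation argument genuinely stalls here: when $n=2$ no coefficient of $h$ is free on $S$, so you have no direct mechanism to prevent an entire component of $S$ from lying in $\{h(\alpha)=0\}\cup\{\mathrm{Disc}(h)=0\}$. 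The paper handles this by a completely different, algebraic route: it exhibits explicit nonzero monomials in the classes $\mathcal{R}$ and $\mathcal{S}_1$ of $\mathrm{Disc}(F)$ and $\mathrm{Subdisc}(F)$ in $\cfq[W]$ (case--splitting on whether $\mathrm{char}(\fq)$ divides $r$, $r-1$, or neither), then invokes the \emph{irreducibility} of $\mathcal{R}$ in $\cfq[A_k:k\in\mathcal{J}]$ from \cite[Theorem~A.3]{MaPePr14}, so that $\cfq[W]/(\mathcal{R})$ is a domain, and finally observes that $\mathcal{S}_1$ cannot be a multiple of $\mathcal{R}$ because $\max\{\deg_{A_1}\mathcal{R},\deg_{A_2}\mathcal{R}\}=r$ while $\max\{\deg_{A_1}\mathcal{S}_1,\deg_{A_2}\mathcal{S}_1\}=r-1$. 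That irreducibility input is the key lemma your approach is missing.

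Even for $n\ge 3$ your sketch for $\mathrm{Disc}(h)\not\equiv 0$ needs one more sentence: freeness of $c_0$ handles $h(\alpha)$ (affine in $c_0$), and since $h'$ does not involve $c_0$, for any fixed values of the remaining coefficients with $h'\ne 0$ the condition $\mathrm{Disc}(h)=0$ picks out only finitely many $c_0$; so it remains to exclude components of $S$ on which $h'\equiv 0$, which forces $\mathrm{char}(\fq)\mid r-2$ and many coefficient vanishings --- dispatched for $n\ge 4$ by freeness of $c_1$, but requiring a separate word when $n=3$. The paper's monomial--and--irreducibility method sidesteps all of this uniformly.
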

\begin{proof}
It is clear that hypotheses $({\sf H}_1)$--$({\sf H}_2)$ hold.
Further, since the component of highest weight of $L_k$ is of the
form $L_k^{\wt}=b_{k,r-i_k}A_{r-i_k}$ for $1\le k\le m$, we conclude
that $({\sf H}_3)$ holds.

Now we analyze the validity of $({\sf H}_4)$.  Denote
$W:=V(L_1,\ldots,L_m)\subset\A^r$. It is clear that
$$\cfq[W]:=\cfq[A_{r-1} \klk A_0]/(L_1 \klk L_m) \simeq \cfq[A_k: k \in
\mathcal{J}]$$
is a domain, where $\mathcal{J}:=\{{r-1} \klk 0\}\setminus \{r-i_1
\klk r-i_m\}$. Therefore, it suffices to prove that the coordinate
class $\mathcal{R}$ defined by $\mathrm{Disc}(F(\bfs{A}_0,T))$ in
$\cfq[W]$ is a nonzero polynomial in $\cfq[A_k: k \in \mathcal{J}]$,
where $F(\bfs A_0,T):=T^r+A_{r-1}T^{r-1} +\dots+ A_0$ and $\bfs
A_0:=(A_{r-1} \klk A_0)$. If $\mathrm{char}(\fq)$ does not divide
$r(r-1)$, then the nonzero monomial $r^rA_0^{r-1}$ occurs in the
dense representation of $\mathcal{R}$. On the other hand, if  $
\mathrm{char}(\fq)$ divides $r$, then the nonzero  monomial $A_1^r$
occurs in the dense representation of $\mathcal{R}$. Finally, if
$\mathrm{char}(\fq)$ divides $r-1$, then we have the nonzero
monomial $A_0^{r-1}$ in the dense representation of $\mathcal{R}$.

Next we show that $({\sf H}_5)$ is fulfilled. For this purpose, we
first prove that $A_0, L_1 \klk L_m$,
$\mathrm{Disc}(F(\bfs{A}_0,T))$ form a regular sequence of
$\cfq[A_{r-1} \klk A_0]$. We observe that
$$\cfq[A_{r-1}\klk
A_0]/(A_0, L_1 \klk L_m) \simeq \cfq[A_k : k \in \mathcal{J}_1]$$
is domain, where $\mathcal{J}_1:=\mathcal{J}\setminus \{0\}$. Hence,
considering the class $\mathcal{R}_1$ of
$\mathrm{Disc}(F(\bfs{A}_0,T))$ as an element of $\cfq[A_k : k \in
\mathcal{J}_1]$, it is enough to prove that it is nonzero. Indeed,
if $\mathrm{char}(\fq)$ does not divide $r(r-1)$, then the monomial
$(-1)^{r-1}(r-1)^{r-1}A_1^r$ occurs in the dense representation
$\mathcal{R}_1$, while for $\mathrm{char}(\fq)$ dividing $r$, the
monomial $A_1^r$ appears in $\mathcal{R}_1$. Finally, for $n \geq 3$
and $\mathrm{char}(\fq)$ dividing $r-1$, we have the nonzero
monomial $(-1)^{r+1}A_1^2A_2^{r-1}$ in the dense representation of
$\mathcal{R}_1$.

Finally we prove that $L_1 \klk
L_m,\mathrm{Disc}(F(\bfs{A}_0,T)),\mathrm{Subdisc}(F(\bfs{A}_0,T))$
form a regular sequence in $\cfq[A_{r-1} \klk A_0]$. Recall that
$\cfq[A_{r-1} \klk A_0]/(L_1 \klk L_m) \simeq \cfq[A_k: k \in
\mathcal{J}]$ is a domain. Therefore, we may consider the classes
$\mathcal{R}$ and $\mathcal{S}_1$ of $\mathrm{Disc}(F(\bfs{A}_0,T))$
and $\mathrm{Subdisc}(F(\bfs{A}_0,T))$ modulo $(L_1 \klk L_m)$ as
elements of $\cfq[A_k: k \in \mathcal{J}]$. We have already shown
that $\mathcal{R}$ is nonzero. On the other hand, if
$\mathrm{char}(\fq)$ does not divide $r(r-1)$, then the nonzero
monomial $r(r-1)^{r-2}A_1^{r-2}$ occurs in the dense representation
of $\mathcal{S}_1$, while for $\mathrm{char}(\fq)$ dividing
$r(r-1)$, we have the nonzero monomial $2(-1)^r(r-2)^{r-2}A_2^{r-1}$
in the dense representation of $\mathcal{S}_1$. We conclude that
$\mathcal{S}_1$ is nonzero.

Further, \cite[Theorem A.3]{MaPePr14} or \cite[Teorema
3.1.7]{Perez16} show that $\mathcal{R}$ is an irreducible element of
$\cfq[A_k: k \in \mathcal{J}]$ and hence $ \mathbb{B}:=\cfq[A_k: k
\in \mathcal{J}]/(\mathcal{R})$ is a domain. Thus, it suffices to
see that the class of $\mathcal{S}_1$ in $\mathbb{B}$ is nonzero. If
not, then $\mathcal{S}_1$ would be a nonzero multiple of
$\mathcal{R}$ in $\cfq[A_k: k \in \mathcal{J}]$, which is not
possible because
$\text{max}\{\deg_{A_1}\mathcal{R},\deg_{A_2}\mathcal{R}\}=r$ and
$\text{max}\{\deg_{A_1}\mathcal{S}_1,\deg_{A_2}\mathcal{S}_1\}=
r-1$.

Finally, we prove that $({\sf H}_6)$ holds. The components of
highest weight of $L_1,\klk,L_m$ being of the form
$L_k^{\wt}=b_{k,r-i_k}A_{r-i_k}$ for $k=1 \klk m$, arguing as before
we readily see that $({\sf H}_6)$ holds.
\end{proof}

From Lemma \ref{lemma: example MPP satisfies H1-H6} it follows that
the family $\mathcal{A}$ of \eqref{eq: family paper CMP17} satisfies
the hypotheses of Theorem \ref{theorem: estimate fact patterns}.
Therefore, applying Theorem \ref{theorem: estimate fact patterns} we
obtain the following result.
\begin{theorem}
Suppose that $\mathrm{char}(\fq)>3$. Let $\mathcal{A}$ be the family
of \eqref{eq: family paper CMP17} and $\bfs{\lambda}$ a
factorization pattern. If either $\mathrm{char}(\fq)$ does not
divide $r(r-1)$ and $L_k\in \fq[A_{r-1},\ldots,A_2]$ for $1\leq
k\leq m$, or $L_k\in \fq[A_{r-1},\ldots,A_n]$ for $1\leq k\leq m$
and $ 3 \leq n \leq r-m$, then
\begin{align*}
\big||\mathcal{A}_{\bfs\lambda}^{sq}|
-\mathcal{T}(\bfs\lambda)\,q^{r-m}\big|& \leq
\mathcal{T}(\bfs\lambda)q^{r-m-1}
\big((\delta (D-2)+2)q^{\frac{1}{2}}+14 D^2 \delta^2+ r^2\delta\big),\\
\big||\mathcal{A}_{\bfs\lambda}|
-\mathcal{T}(\bfs\lambda)\,q^{r-m}\big|&\le
q^{r-m-1}\Big(\mathcal{T}(\bfs\lambda) \big((\delta
(D-2)+2)q^{\frac{1}{2}}+14 D^2
\delta^2+r^2\delta\big)+r^2\delta\Big),
\end{align*}
where $\delta:=\prod_{j=1}^m i_j$ and $D:=\sum_{j=1}^m(i_j-1)$.
\end{theorem}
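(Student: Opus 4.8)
The plan is to reduce the statement to the combination of Lemma~\ref{lemma: example MPP satisfies H1-H6} and Theorem~\ref{theorem: estimate fact patterns}; once the parameters are matched correctly, the result is immediate.

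First I would observe that the family $\mathcal{A}$ of \eqref{eq: family paper CMP17} is a family of the form \eqref{non-linear family A}: it is defined by the $m$ polynomials $G_i:=L_i$ ($1\le i\le m$), with the fixed index $k$ taken to be any index outside $\{n,\dots,r-1\}$ (for instance $k=0$, which is legitimate since the $L_j$ involve only $A_{r-1},\dots,A_n$ and $n\ge 2$). Under the hypotheses of the theorem --- either $\mathrm{char}(\fq)$ does not divide $r(r-1)$ and $n=2$, or $3\le n\le r-m$ --- Lemma~\ref{lemma: example MPP satisfies H1-H6} guarantees that $L_1,\dots,L_m$ satisfy hypotheses $({\sf H}_1)$--$({\sf H}_6)$. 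Consequently Theorem~\ref{theorem: estimate fact patterns} applies to $\mathcal{A}$.

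Next I would identify the quantities $\delta$ and $D$ of Theorem~\ref{theorem: estimate fact patterns}, which there are $\delta=\prod_{i=1}^m\wt(G_i)$ and $D=\sum_{i=1}^m(\wt(G_i)-1)$. Having put the Jacobian matrix $(\partial L_i/\partial A_j)_{1\le i\le m,\,n\le j\le r-1}$ in lower triangular row echelon form with pivot positions $1\le i_1<\cdots<i_m\le r-n$, the component of highest weight of $L_k$ is the monomial $L_k^{\wt}=b_{k,r-i_k}A_{r-i_k}$, so that $\wt(L_k)=\wt(A_{r-i_k})=r-(r-i_k)=i_k$ for $1\le k\le m$. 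Hence $\delta=\prod_{j=1}^m i_j$ and $D=\sum_{j=1}^m(i_j-1)$; substituting these values into the conclusion of Theorem~\ref{theorem: estimate fact patterns} yields exactly the two displayed inequalities.

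The substantive content is not in this theorem but in Lemma~\ref{lemma: example MPP satisfies H1-H6}; within the lemma the delicate hypotheses are $({\sf H}_4)$ and $({\sf H}_5)$, which require that $\mathrm{Disc}(F(\bfs A_0,T))$ and $\mathrm{Subdisc}(F(\bfs A_0,T))$ be nonzero modulo the ideal $(L_1,\dots,L_m)$ (and, for $({\sf H}_5)$, that $A_0$, $L_1,\dots,L_m$, $\mathrm{Disc}(F(\bfs A_0,T))$, $\mathrm{Subdisc}(F(\bfs A_0,T))$ form regular sequences in the appropriate order), which is handled there by exhibiting explicit nonzero monomials in the relevant coordinate rings. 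Granting the lemma, I expect no further obstacle: the theorem is a direct specialization of Theorem~\ref{theorem: estimate fact patterns}.
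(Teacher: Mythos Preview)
Your proposal is correct and follows exactly the paper's approach: invoke Lemma~\ref{lemma: example MPP satisfies H1-H6} to verify hypotheses $({\sf H}_1)$--$({\sf H}_6)$, then apply Theorem~\ref{theorem: estimate fact patterns}. Your explicit identification of $\wt(L_k)=i_k$ via the pivot structure (hence $\delta=\prod_j i_j$ and $D=\sum_j(i_j-1)$) is the only computation needed beyond the two cited results, and you carry it out correctly.
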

%
%----------------------------------------------------------------------
%%----------------------------------------------------------------------
%%----------------------------------------------------------------------
%%----------------------------------------------------------------------
%%
\subsection{A linear family from \cite{GaHoPa99}}
In \cite{GaHoPa99} there are experimental results on the number of
irreducible polynomials on certain families over $\fq$. Further, the
distribution of factorization patterns on general families of
polynomials of $\fq[T]$ of a given degree is stated as an open
problem. In particular, the family of polynomials we now discuss is
considered.

Suppose that $\mathrm{char}(\fq)>3$. For positive integers $s$ and
$r$ with $3\leq s \leq r-2$, let
\begin{equation}\label{eq: linear family GaoPanarioHowell}
\mathcal{A}:=\{T^r+g(T)T+1: \,\, g\in \fq[T] \,\, \text{and}\, \deg
g\leq s-1\}.
\end{equation}
Observe that $\mathcal{A}$ is isomorphic to the set of
$\fq$-rational points of the affine $\fq$--subvariety of $\A^r$
defined by the polynomials
$$G_1:=A_{0}-1,\ G_2:=A_{s+1},\ldots, G_{r-s}:=A_{r-1}.$$

We show that hypotheses $({\sf H}_1)$--$({\sf H}_6)$ are fulfilled.
It is easy to see that  $({\sf H}_1)$ and $({\sf H}_2)$ hold, since
$G_1\klk G_{r-s}$ are linearly--independent polynomials of degree 1.
Furthermore, taking into account that
$$G_1^{\wt}=A_{0},\ G_2^{\wt}=A_{s+1},\ldots, G_{r-s}^{\wt}=A_{r-1},$$
we immediately conclude that hypothesis $({\sf H}_3)$ holds.

Now we analyze the validity of hypotheses $({\sf H}_4)$ and $({\sf
H}_5)$. Let $W\subset \A^r$ be the $\fq$--variety defined by the
polynomials $G_1,\ldots,G_{r-s}$, and denote by
$\mathcal{D}(W)\subset\A^r$ and $\mathcal{S}_1(W)\subset \A^r$ the
discriminant locus and the first subdiscriminant locus of $W$
respectively.

We first prove that $\mathcal{D}(W)$ has codimension one in $W$. It
is clear that $G_1,\ldots,G_{r-s}$ form a regular sequence of
$\fq[A_{r-1},\ldots,A_0]$. Observe that
$$\cfq[W]=\cfq[A_{r-1},\ldots,A_0]/(G_1,\ldots,G_{r-s})\simeq
\cfq[A_s,\ldots,A_1]$$
is a domain. As a consequence, we may consider the coordinate
function $\mathcal{R}$ defined by $\mathrm{Disc}(F(\bfs{A}_0,T))$ as
an element of $\cfq[A_s \klk A_1]$, where
$\bfs{A}_0:=(A_{r-1},\ldots,A_0)$ and
$F(\bfs{A}_0,T):=T^r+A_{r-1}T^{r-1}+\dots+ A_0$. We observe that
$\mathcal{R}\not=0$ in $\cfq[A_s \klk A_1]$, because $F(\bfs A_0,T)$
is not a separable polynomial, and therefore it is not a zero
divisor of $\cfq[W]$. It follows that $\mathcal{D}(W)$ has
codimension one in $W$, namely hypothesis $({\sf H}_4)$ holds.

Next we show that $(A_0\cdot\mathcal{S}_1)(W)$ has codimension at
least one in $\mathcal{D}(W)$. Since $G_1:=A_0-1$ vanishes on $W$,
the coordinate function of $\cfq[W]$ defined by $A_0$ is a unit,
which implies $(A_0\cdot\mathcal{S}_1)(W)=\mathcal{S}_1(W)$.

In what follows, we shall use the following elementary property.
\begin{lemma}\label{lemma: regular sequences 1}
Let $F_1 \klk F_m \in \cfq[A_{0} \klk A_{r-1}]$. If $F_1 \klk F_m$
form a regular sequence in $\cfq(A_0 \klk A_i)[A_{i+1} \klk
A_{r-1}]$, then $F_1 \klk F_m$ form a regular sequence in $\cfq[A_0
\klk A_{r-1}]$.
\end{lemma}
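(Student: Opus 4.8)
The plan is to realize $\cfq(A_0,\ldots,A_i)[A_{i+1},\ldots,A_{r-1}]$ as a localization of $\cfq[A_0,\ldots,A_{r-1}]$ and then transfer the regular--sequence condition back along this localization. Write $R:=\cfq[A_0,\ldots,A_{r-1}]$ and $\widetilde{R}:=\cfq(A_0,\ldots,A_i)[A_{i+1},\ldots,A_{r-1}]$, and set $S:=\cfq[A_0,\ldots,A_i]\setminus\{0\}$, a multiplicative subset of $R$ consisting of nonzerodivisors. Then $\widetilde{R}=S^{-1}R$, the canonical map $R\to\widetilde{R}$ is injective, and since localization is exact one has $\widetilde{R}/(F_1,\ldots,F_j)\widetilde{R}=S^{-1}\big(R/(F_1,\ldots,F_j)R\big)$ for every $j$. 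Note also that $\widetilde{R}$, being a polynomial ring over a field, is a Cohen--Macaulay domain, so the regular--sequence criterion used throughout the paper (Theorem \ref{theorem: eisenbud 18.15} and the dimension characterization) applies to it as well.

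I would then argue by induction on $j$ that $F_1,\ldots,F_j$ is a regular sequence of $R$. For $j=1$ this is clear: $F_1$ is nonzero in $\widetilde{R}$, hence nonzero in $R$, and $R$ is a domain. For the inductive step, assume $F_1,\ldots,F_{j-1}$ is a regular sequence of $R$. To conclude that $F_j$ is a nonzerodivisor modulo $(F_1,\ldots,F_{j-1})$ in $R$, it suffices to show that the localization map $R/(F_1,\ldots,F_{j-1})R\to\widetilde{R}/(F_1,\ldots,F_{j-1})\widetilde{R}$ is injective: a nonzerodivisor on the target then restricts to a nonzerodivisor on the source, and the image of $F_j$ is a nonzerodivisor on the target because $F_1,\ldots,F_j$ is a regular sequence of $\widetilde{R}$. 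Injectivity of this map is equivalent to the statement that the image of $S$ in $R/(F_1,\ldots,F_{j-1})R$ consists of nonzerodivisors, i.e.\ that no associated prime of $(F_1,\ldots,F_{j-1})R$ meets $S$; and since $F_1,\ldots,F_{j-1}$ is a regular sequence of the Cohen--Macaulay ring $R$, the quotient $R/(F_1,\ldots,F_{j-1})R$ has no embedded primes, so this reduces to checking that no minimal prime of $(F_1,\ldots,F_{j-1})R$ meets $S$.

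The main obstacle is exactly this last verification: one must rule out that inverting $S$ deletes a component of $V(F_1,\ldots,F_{j-1})$ which would otherwise witness that $F_j$ is a zerodivisor. I expect to handle it by faithful flatness: in the situations where the lemma is invoked the initial generators $F_1,\ldots,F_{j-1}$ can be taken in the subring $R_0:=\cfq[A_{i+1},\ldots,A_{r-1}]$, so $(F_1,\ldots,F_{j-1})R$ is extended from $R_0$, the inclusion $R_0\hookrightarrow R=R_0[A_0,\ldots,A_i]$ is faithfully flat, and consequently $R/(F_1,\ldots,F_{j-1})R=\big(R_0/(F_1,\ldots,F_{j-1})R_0\big)[A_0,\ldots,A_i]$, whose associated primes are of the form $\mathfrak{q}[A_0,\ldots,A_i]$ with $\mathfrak{q}$ associated to $(F_1,\ldots,F_{j-1})R_0$; such a prime meets $\cfq[A_0,\ldots,A_i]$ only in $0$, hence does not meet $S$. (Alternatively the whole argument can be run with dimensions: the hypothesis gives $\dim V(F_1,\ldots,F_j)\widetilde{R}=(r-i-1)-j$, which already bounds by $r-j$ the dimension of every component of $V(F_1,\ldots,F_j)R$ that dominates the $A_0,\ldots,A_i$--coordinate space, and the faithful--flatness observation shows there are no other components.) With this in hand the induction step closes, yielding that $F_1,\ldots,F_m$ is a regular sequence of $\cfq[A_0,\ldots,A_{r-1}]$.
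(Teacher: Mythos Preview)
The paper does not actually prove this lemma; it is introduced as ``the following elementary property'' and left without argument. So there is nothing in the paper to compare your attempt against directly.

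Your reduction is correct up to the point you yourself flag as ``the main obstacle'': having written $\widetilde R=S^{-1}R$ with $S=\cfq[A_0,\ldots,A_i]\setminus\{0\}$, the inductive step does come down to showing that no minimal prime of $(F_1,\ldots,F_{j-1})R$ meets $S$. But your proposed resolution assumes that $F_1,\ldots,F_{j-1}$ lie in $R_0=\cfq[A_{i+1},\ldots,A_{r-1}]$, a hypothesis absent from the statement and not satisfied in the paper's own applications (where the $F_k$ are discriminants and subdiscriminants depending on all the variables). Without such an extra hypothesis the obstacle is real, because the lemma as stated is \emph{false}. Take $r=3$, $i=0$, and $F_1=A_0A_1$, $F_2=A_0A_2$ in $R=\cfq[A_0,A_1,A_2]$. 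In $\widetilde R=\cfq(A_0)[A_1,A_2]$ the factor $A_0$ is a unit, so $(F_1,F_2)\widetilde R=(A_1,A_2)$ and $F_1,F_2$ is a regular sequence there. In $R$, however, $A_1\cdot F_2=A_2\cdot F_1\in(F_1)$ while $A_1\notin(F_1)$, so $F_2$ is a zerodivisor modulo $F_1$; equivalently, the component $\{A_0=0\}$ of $V(F_1)$ persists in $V(F_1,F_2)$, giving $\dim V(F_1,F_2)=2>r-m=1$. This is precisely the ``deleted component'' phenomenon you anticipated, and it shows that no argument can close the gap without strengthening the hypotheses.
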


We shall also use the following property of regular sequences.
\begin{lemma} \label{lemma: regular sequences 2}
Let $F_1 \klk F_m \in \cfq[A_0 \klk A_{r-1}]$. For an assignment of
positive integer weights $\wt$ to the variables $A_0 \klk A_{r-1}$,
denote by $F_1^{\wt} \klk F_m^{\wt}$ the components of highest
weight of $F_1 \klk F_m$. If $F_1^{\wt} \klk F_m^{\wt}$ form a
regular sequence in $\cfq[A_0 \klk A_{r-1}]$, then $F_1 \klk F_m$
form a regular sequence in $\cfq[A_0 \klk A_{r-1}]$.
\end{lemma}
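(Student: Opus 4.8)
The plan is to reduce the assertion to an equality of dimensions. Recall that, as noted in the footnote to $({\sf H}_1)$ (and as follows from the Cohen--Macaulayness of $\cfq[A_0\klk A_{r-1}]$ together with Macaulay's unmixedness theorem, using that $\cfq[A_0\klk A_{r-1}]$ is a domain for the first step), a sequence $F_1\klk F_m$ of polynomials of $\cfq[A_0\klk A_{r-1}]$ is a regular sequence if and only if $V(F_1\klk F_i)\subset\A^r$ has dimension $r-i$ for $1\le i\le m$. Since the case in which some $F_j$ is a nonzero constant is immediate ($F_j=F_j^{\wt}$, and there is nothing to compare at that index), we may assume that all the $F_j$ --- equivalently, as all variables have positive weight, all the $F_j^{\wt}$ --- are non-constant; thus it suffices to show that $\dim V(F_1\klk F_i)=r-i$ for $1\le i\le m$.

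Since $F_1^{\wt}\klk F_m^{\wt}$ form a regular sequence, so does every initial segment $F_1^{\wt}\klk F_i^{\wt}$, and hence $\dim V(F_1^{\wt}\klk F_i^{\wt})=r-i$ for $1\le i\le m$ by the characterization just recalled. Krull's principal ideal theorem gives $\dim V(F_1\klk F_i)\ge r-i$, so only the reverse inequality is needed. To obtain it, I would pass to initial forms with respect to $\wt$: for an ideal $J\subset\cfq[A_0\klk A_{r-1}]$, let $J^{\wt}$ be the ideal generated by the components of highest weight $F^{\wt}$ of all $F\in J$. A standard flat--degeneration argument shows that $\cfq[A_0\klk A_{r-1}]/J$ and $\cfq[A_0\klk A_{r-1}]/J^{\wt}$ have the same Hilbert function with respect to the grading by weight, and therefore the same Krull dimension (see, e.g., \cite[Ch.~15]{Eisenbud95}). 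Applying this with $J:=(F_1\klk F_i)$, and observing that $F_1^{\wt}\klk F_i^{\wt}\in J^{\wt}$ since they are leading forms of elements of $J$, we get
\[
\dim V(F_1\klk F_i)=\dim\big(\cfq[A_0\klk A_{r-1}]/J^{\wt}\big)\le\dim V(F_1^{\wt}\klk F_i^{\wt})=r-i ,
\]
which combined with the previous inequality yields the required equality.

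The step I expect to be most delicate is the identity $\dim\cfq[A_0\klk A_{r-1}]/J=\dim\cfq[A_0\klk A_{r-1}]/J^{\wt}$. Because the grading by $\wt$ need not be standard, the relevant Hilbert functions are only eventually quasi--polynomial, so one must check that their order of growth --- hence the Krull dimension --- is preserved under passage to initial forms; this is exactly what the flat--degeneration theory for weight orders provides, and it is the only input beyond the elementary facts on regular sequences and dimensions used above. A minor additional point is that the equivalence recalled in the first paragraph relies on a set--theoretic complete intersection being of pure dimension, which is standard and is already used elsewhere in the paper.
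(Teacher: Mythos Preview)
Your argument is essentially correct and follows a genuinely different route from the paper's. The paper proceeds geometrically: it slices $V(F_1^{\wt}\klk F_j^{\wt})$ down to dimension zero with coordinate hyperplanes $A_{k_1}\klk A_{k_{r-j}}$, then uses the finite morphism $\bfs\phi\colon(a_0\klk a_{r-1})\mapsto(a_0^{\wt(0)}\klk a_{r-1}^{\wt(r-1)})$ to convert weighted--homogeneous polynomials into standard homogeneous ones. Since a zero--dimensional affine cone is $\{0\}$, the cited result \cite[Proposition~18]{PaSa04} (which handles the standard--graded case) shows that replacing leading forms by the full $F_j\circ\bfs\phi$ still yields a zero--dimensional variety; pulling back along $\bfs\phi$ gives $\dim V(F_1\klk F_j)\le r-j$. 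Your approach via initial ideals and the flat weight--degeneration (Eisenbud, Ch.~15) is the more standard textbook argument and avoids the auxiliary morphism and the external citation; the paper's route is more hands--on but outsources the core step to \cite{PaSa04}.

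One small point to tighten: your appeal to Krull's principal ideal theorem for the lower bound $\dim V(F_1\klk F_i)\ge r-i$ tacitly assumes $V(F_1\klk F_i)\neq\emptyset$, which you have not established. The cleanest fix within your framework is to note that, because $F_1^{\wt}\klk F_i^{\wt}$ form a regular sequence, every syzygy among them is Koszul, and a standard division argument then shows that $J^{\wt}=(F_1^{\wt}\klk F_i^{\wt})$ exactly (not merely $\supset$). Hence $\dim\cfq[A]/J^{\wt}=r-i$ on the nose, and your Hilbert--function equality gives $\dim\cfq[A]/J=r-i$ directly, with no separate lower bound needed. (Alternatively, the one--parameter homogenization $\tilde F_j\in\cfq[A,t]$ with $\tilde F_j|_{t=0}=F_j^{\wt}$ shows that no component of $V(\tilde F_1\klk\tilde F_i)\subset\A^{r+1}$ lies in $\{t=0\}$, forcing the fibre at $t=1$ to be nonempty.) The paper's proof implicitly handles nonemptiness because the auxiliary zero--dimensional set $\hat V_j$ it constructs is contained in $V_j$.
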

\begin{proof}
Let $V_j:=V(F_1\klk F_j)\subset \A^r$ for $1\le j\le m$. It is
enough to see that $V_j$ has codimension $j$ for $1 \leq j \leq m$.
By hypothesis, $V_j^{\wt}:=V(F_1^{\wt} \klk F_j^{\wt})\subset \A^r$
has pure dimension $r-j$. Therefore, there exist $1\le {k_1}<\cdots<
k_{r-j}\le m$ such that the variety $V:=V(F_1^{\wt} \klk
F_j^{\wt},A_{k_1} \klk A_{k_{r-j}} )\subset \A^r$ has dimension
zero. Consider the following morphism of affine $\fq$--varieties:
    \begin{align*}
    {\bfs\phi}: \A^r & \rightarrow  \A^{r}\\
    (a_0 \klk a_{r-1}) & \mapsto  (a_0^{\wt(0)},
    a_1^{\wt(1)},\ldots,a_{r-1}^{\wt(r-1)}),
    \end{align*}
where $\wt(0)\klk\wt(r-1)$ are the weights assigned to $A_0\klk
A_{r-1}$ respectively. It is clear that $\bfs \phi$ is a finite,
dominant morphism. %Indeed, it is sufficient to prove that the
%coordinate function $a_i$ of $\cfq[A_0,\ldots,A_{r-1}]$ defined by
%$A_i$ satisfied  the following monic equation $T^{r-i}-a_i^{r-i}$,
%with coefficients in $\cfq[A_0,\ldots,A_{r-1}]$.
Observe that, if $F\in\cfq[A_0,\ldots,A_{r-1}]$ is weighted
homogeneous, then $\bfs \phi(F)$ is homogeneous.

We have that $\bfs\phi(V) \subset \A^r$ is a zero--dimensional
affine cone. Since $\bfs\phi(V)$ is defined by the homogeneous
polynomials $F_i^{\wt}(A_0^{\wt(0)},\ldots,A_{r-1}^{\wt(r-1)})$,
$1\leq i \leq j$, and $A_{k_i}  ^{\wt(k_{i})}$, $1\leq i \leq r-j$,
it must be $\bfs \phi(V)=\{0\}$. Therefore, by, e.g.,
\cite[Proposition 18]{PaSa04}, the affine variety defined by the
polynomials
$$F_1(A_0^{\wt(0)},\ldots,A_{r-1}^{\wt(r-1)}),\ldots,
F_j(A_0^{\wt(0)},\ldots,A_{r-1}^{\wt(r-1)}),A_{k_1}^{\wt(k_1)} \klk
A_{k_{r-j}}^{\wt(k_{r-j})}$$
has dimension zero. Taking into account that $\bfs\phi$ is a finite
morphism, we conclude that the variety $\hat{V}_j\subset\A^r$
defined by $F_1,\ldots,F_j, A_{k_1},\ldots, A_{k_{r-j}}$ has also
dimension zero.

Finally, observe that the dimension of $V_j$ is at least $r-j$. On
the other hand, $0=\dim\hat{V}_j\ge\dim V_j-(r-j)$. This finishes
the proof of the lemma.
\end{proof}

It easy to see that $G_2 \klk G_{r-s}$ form a regular sequence in
$\cfq[A_{r-1} \klk A_0]$. Observe that $\cfq[A_{r-1} \klk A_0]/(G_2
\klk G_{r-s}) \simeq \cfq[A_{s}\klk A_0]$. Therefore, to conclude
that $({\sf H}_5)$ holds it suffices to prove that $\mathcal{G}_1$,
$\mathcal{S}_1$ and $\mathcal{R}$ form a regular sequence in
$\cfq[A_{s} \klk A_0]$, where $\mathcal{G}_1$, $\mathcal{R}$ and
$\mathcal{S}_1$ are the coordinate functions of $\cfq[A_{r-1} \klk
A_0]/(G_2 \klk G_{r-s})$ defined by $G_1$, $\mathrm{Disc}(F(\bfs
A_0,T))$ and $\mathrm{Subdisc}(F(\bfs A_0,T))$, respectively.
\begin{lemma}\label{lemma: H5 ej lineal: regular sequence}
$\mathcal{G}_1$, $\mathcal{S}_1$ and $\mathcal{R}$ form a regular
sequence in $\cfq[A_{s} \klk A_0]$.
\end{lemma}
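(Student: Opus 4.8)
The plan is to peel off $\mathcal{G}_1=A_0-1$, which is innocuous, and reduce the whole statement to a single dimension estimate for a discriminant--type locus of the family $\mathcal{A}$.

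First I would note that $\mathcal{G}_1=A_0-1$ is a nonzero element of the integral domain $\cfq[A_s\klk A_0]$, hence a nonzerodivisor, and that $\cfq[A_s\klk A_0]/(\mathcal{G}_1)\cong\cfq[A_s\klk A_1]$ is again an integral domain. Under this isomorphism $\mathcal{S}_1$ and $\mathcal{R}$ become $\mathrm{Subdisc}(F)$ and $\mathrm{Disc}(F)$, where $F:=T^{r}+A_{s}T^{s}+\cdots+A_{1}T+1$; and $\mathrm{Disc}(F)\neq 0$ in $\cfq[A_s\klk A_1]$ has already been checked above (it is exactly the content of the verification of $({\sf H}_4)$ for this family). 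Since $\cfq[A_s\klk A_1]$ is Cohen--Macaulay, it now suffices to prove the single claim that the Zariski closed set $V(\mathrm{Subdisc}(F),\mathrm{Disc}(F))\subset\A^{s}$ has dimension at most $s-2$. Indeed, granting this, $\mathrm{Subdisc}(F)$ cannot vanish on the hypersurface $V(\mathrm{Disc}(F))$, so $\mathrm{Subdisc}(F)\neq 0$ and is a nonzerodivisor; moreover $(\mathrm{Subdisc}(F))$ is an unmixed ideal of pure codimension $1$ in the Cohen--Macaulay ring $\cfq[A_s\klk A_1]$, so $\mathrm{Disc}(F)$ avoids all its associated primes as soon as $V(\mathrm{Subdisc}(F),\mathrm{Disc}(F))$ has codimension $2$; hence $\mathrm{Disc}(F)$ is a nonzerodivisor modulo $\mathrm{Subdisc}(F)$, and $\mathcal{G}_1,\mathcal{S}_1,\mathcal{R}$ is a regular sequence.

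To prove the claim I would first identify $V(\mathrm{Subdisc}(F),\mathrm{Disc}(F))$ geometrically. By the basic properties of subresultants, a point $\bfs{a}\in\A^{s}$ lies in it exactly when $\deg\gcd(F_{\bfs a},F_{\bfs a}')\ge 2$, i.e.\ when $F_{\bfs a}$ has at most $r-2$ distinct roots in $\cfq$. Since $\mathrm{char}(\fq)>3$, any root of $F_{\bfs a}$ of multiplicity divisible by $\mathrm{char}(\fq)$ already has multiplicity at least $5$; hence such an $F_{\bfs a}$ has either a root of multiplicity $\ge 3$ or two distinct roots of multiplicity $\ge 2$. Therefore $V(\mathrm{Subdisc}(F),\mathrm{Disc}(F))$ is contained in the union of the images, under the projection to $\A^{s}$, of the incidence varieties $\mathcal{I}_{1}:=\{(\bfs{a},\alpha)\in\A^{s}\times\A^{1}:(T-\alpha)^{3}\mid F_{\bfs a}\}$ and $\mathcal{I}_{2}:=\{(\bfs{a},\alpha,\beta)\in\A^{s}\times\A^{2}:\alpha\neq\beta,\ (T-\alpha)^{2}(T-\beta)^{2}\mid F_{\bfs a}\}$, and it is enough to bound $\dim\mathcal{I}_{1}$ and $\dim\overline{\mathcal{I}_{2}}$ by $s-2$. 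For $\mathcal{I}_{1}$ I would project to $\A^{1}$: the fibre over $\alpha=0$ is empty because $F_{\bfs a}(0)=1$, and for $\alpha\neq 0$ the condition $(T-\alpha)^{3}\mid F_{\bfs a}$ amounts to the affine--linear system $F_{\bfs a}(\alpha)=F_{\bfs a}'(\alpha)=F_{\bfs a}''(\alpha)=0$ in $\bfs{a}$, whose $3\times s$ coefficient matrix has rank $3$ (using $s\ge 3$ and $\mathrm{char}(\fq)\notin\{2,3\}$; the minor on the first three columns equals $2\alpha^{3}$), so every such fibre has dimension at most $s-3$ and $\dim\mathcal{I}_{1}\le 1+(s-3)=s-2$. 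For $\mathcal{I}_{2}$ I would project to $\A^{2}$: the fibre over $\{\alpha\beta=0\}$ is empty, and for $\alpha\neq\beta$, $\alpha\beta\neq 0$ the system $F_{\bfs a}(\alpha)=F_{\bfs a}'(\alpha)=F_{\bfs a}(\beta)=F_{\bfs a}'(\beta)=0$ has a confluent Vandermonde type coefficient matrix which has rank $\ge 3$ throughout (for instance the $3\times 3$ minor on rows $1,2,3$ and columns $1,2,3$ equals $\alpha^{2}\beta(\beta-\alpha)^{2}$) and rank $\min(4,s)$ off a proper curve in $\A^{2}$. When $s\ge 4$ this bounds the fibres by $s-4$ off a curve and by $s-3$ on that curve, giving $\dim\overline{\mathcal{I}_{2}}\le\max(2+(s-4),\,1+(s-3))=s-2$. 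When $s=3$ the four conditions are overdetermined, and I would argue instead that solving the $r-4$ equations ``coefficients of $T^{4},\dots,T^{r-1}$ vanish'' determines the cofactor $Q:=F_{\bfs a}/\big((T-\alpha)^{2}(T-\beta)^{2}\big)$ uniquely as a polynomial in $(\alpha,\beta)$ (the system is triangular with unit diagonal in the non--leading coefficients of $Q$), after which the remaining requirement ``constant term $=1$'' reads $\alpha^{2}\beta^{2}Q(0)=1$ with $Q(0)$ a known nonconstant polynomial in $(\alpha,\beta)$; hence the set of admissible $(\alpha,\beta)$ is a curve and $\dim\overline{\mathcal{I}_{2}}\le 1=s-2$. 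Combining these two bounds proves the claim, and with it the lemma.

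The main obstacle I expect is the $\mathcal{I}_{2}$ estimate, and within it the overdetermined case $s=3$: there a fibrewise constraint count over $\A^{2}$ does not immediately give codimension $2$, and one must genuinely show that the locus of admissible pairs $(\alpha,\beta)$ is a proper subvariety of the plane --- the cofactor computation above is the way I would do this, but one could equally exhibit a single pair $(\alpha_{0},\beta_{0})$ such that no member of $\mathcal{A}$ is divisible by $(T-\alpha_{0})^{2}(T-\beta_{0})^{2}$. A secondary, routine, point is keeping track of $\mathrm{char}(\fq)>3$ so that the (confluent) Vandermonde minors governing the ranks of these coefficient matrices do not vanish.
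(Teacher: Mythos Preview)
Your argument is correct and takes a genuinely different route from the paper's. After the common first step of killing $\mathcal{G}_1=A_0-1$, the paper works algebraically: it passes to the rational function field $\cfq(A_s\klk A_{i+1})$ in the ``tail'' variables, introduces the weight $\wt_i(A_j)=r-j$ on $A_0,\ldots,A_i$ (with $i\in\{2,3\}$), computes the highest--weight components $\mathcal{G}_1^{\wt_i},\mathcal{S}_1^{\wt_i},\mathcal{R}^{\wt_i}$ explicitly, shows that these form a regular sequence by a case analysis according to whether $\mathrm{char}(\fq)$ divides $r$, $r-1$, $r-2$ or none of them, and then descends to $\cfq[A_s\klk A_0]$ via Lemmas~\ref{lemma: regular sequences 2} and~\ref{lemma: regular sequences 1}. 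Your proof instead reduces everything to a single geometric dimension bound for $V(\mathrm{Disc}(F),\mathrm{Subdisc}(F))\subset\A^{s}$, proved by parametrizing the two possible degeneration types (a triple root, or two distinct double roots) with incidence varieties, projecting to the root space, and estimating fibre dimensions through confluent Vandermonde ranks. The payoff of your approach is that it is conceptually cleaner and avoids the four--fold characteristic case split; the payoff of the paper's approach is that it reuses the weighted--homogeneity lemmas already set up for hypothesis $({\sf H}_3)$ and $({\sf H}_6)$, and produces explicit expressions for the relevant leading forms (which may be of independent interest). One small remark: in your $s=3$ analysis you do not actually need $Q(0)$ to be nonconstant---the equation $\alpha^{2}\beta^{2}Q(0)=1$ already defines a proper hypersurface simply because its left--hand side has vanishing constant term.
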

\begin{proof}
We consider $\mathcal{R}, \mathcal{S}_1, \mathcal{G}_1$ as elements
of $\cfq(A_s \klk A_{i+1})[A_i,\ldots,A_0]$ for an appropriate
$i\in\{2,3\}$ and define a weight $\wt_i$ by setting
$$\wt_i(A_0):=r,\ \wt_i(A_1):=r-1,\ldots,\wt_i(A_i):=r-i.$$
Denote by $\mathcal{G}_1^{\wt_i}$, $\mathcal{R}^{\wt_i}$ and
$\mathcal{S}_1^{\wt_i}$ the components of highest weight of
$\mathcal{G}_1$, $\mathcal{R}$ and $\mathcal{S}_1$ respectively. We
have the following claim.
\begin{claim}
$\mathcal{G}_1^{\wt_i}$, $\mathcal{S}_1^{\wt_i}$ and
$\mathcal{R}^{\wt_i}$ form a regular sequence in $\cfq(A_s \klk
A_{i+1})[A_i,\ldots,A_0]$.
\end{claim}
\begin{proof}[Proof of Claim] Observe that
$$\cfq(A_s \klk A_{i+1})[A_i,\ldots,A_0]/(\mathcal{G}_1^{\wt_i}) \simeq
\cfq(A_s \klk A_{i+1})[A_i,\ldots,A_1]$$
is a domain. As a consequence, it suffices to prove that the
coordinate functions defined by $\mathcal{S}_1^{\wt_i}$ and
$\mathcal{R}^{\wt_i}$ in this quotient ring form a regular sequence.
With a slight abuse of notation, we shall also denote them by
$\mathcal{S}_1^{\wt_i}$ and $\mathcal{R}^{\wt_i}$.

The proof will be split into four parts, according to whether
$\mathrm{char}(\fq)$ divides $r$, $r-1$, $r-2$ or does not divide
$r(r-1)(r-2)$.\medskip

\noindent{\bf First case: $\mathrm{char}(\fq)$ divides $r$}. For
$i:=2$, it is easy to see that in $\cfq(A_s \klk A_3)[A_2,A_1]$,
\begin{align}
\mathcal{R}^{\wt_2}=A_1^r+(-1)^{r+1}2^{r-2}A_2^{r-1}A_1 ^2&\quad
\text{and} \quad\mathcal{S}_1^{\wt_2}=(2A_{2})^{r-1}.
\end{align}
Observe that $\mathcal{S}_1^{\wt_2}$ is a nonzero polynomial of
$\cfq(A_s \klk A_3)[A_2,A_1]$, and
$$\cfq(A_s \klk A_3)[A_2,A_1]/(\mathcal{S}_1^{\wt_2}) \simeq \cfq(A_s
\klk A_3)[A_1].$$
It follows that $\mathcal{R}^{\wt_2}$ is not a zero divisor in
$\cfq(A_s \klk A_3)[A_2,A_1]/(\mathcal{S}_1^{\wt_2})$, which
completes the proof of the claim in this case.
\medskip

\noindent{\bf Second case: $\mathrm{char}(\fq)$ divides $r-1$}. For
$i:=3$, we prove that $\mathcal{S}_1^{\wt_3}$ and
$\mathcal{R}^{\wt_3}$ form a regular sequence in $\cfq(A_s \klk
A_4)[A_3, A_2,A_1]$. Let $F:=T^r+A_3T^3+A_2T^2+A_1T$. It is easy to
see that $\mathcal{R}^{\wt_3}=\mathrm{Disc}(F)$ and
$\mathcal{S}_1^{\wt_3}=\mathrm{Subdisc}(F)$. Observe that
$F'=T^{r-1}+3A_3T^3+2A_2T^2+A_1$. By \cite[Lemma 7.1]{GeCzLa92} we
deduce that
$$\mathcal{R}^{\wt_3}=(-1)^{{r}({r-1})} \mathrm{Res}(F',G)\ \textrm{ and }
\ \mathcal{S}_1^{\wt_3}=(-1)^{(r-1)(r-2)}\mathrm{Subdisc}(F',G),$$
where $G:=-2A_3T^3-A_2T^2$ is the remainder of the division of $F$
by $F'$. Therefore, applying the Poisson formula, it is easy to see
that
$$\mathcal{R}^{\wt_3}=(-1)^{r+1}A_1^2A_2^{r-1}+
2^{r-1}A_1^2A_2^2A_3^{r-2}-2^{r-3}A_1^3A_3^{r-1}.$$
On the other hand, by, e.g., Theorem \cite[Theorem 2.5]{DaKrSz13},
we conclude that
\begin{align*}
\mathcal{S}_1^{\wt_3}&=2
A_2^{r-1}+(-1)^r2^{r-2}A_2^2A_3^{r-2}+2A_1A_2^{r-3}A_3+3
(-1)^{r+1}2^{r-2}A_1A_3^{r-1}\\
&=2\big(
A_2^{r-1}+A_1A_2^{r-3}A_3\big)+(-2)^{r-2}\big(A_2^2A_3^{r-2}-3
A_1A_3^{r-1}\big).
\end{align*}
In the second line we express $\mathcal{S}_1^{\wt_3}$ as the sum of
two homogeneous polynomials of degrees $r-1$ and $r$ without common
factors. Then \cite[Lemma 3.15]{Gibson98} proves that
$\mathcal{S}_1^{\wt_3}$ is an irreducible polynomial in $\cfq(A_s
\klk A_4)[A_3,A_2,A_1]$. Next suppose that $\mathcal{R}^{\wt_3}$ is
a zero divisor in $\cfq(A_s \klk
A_4)[A_3,A_2,A_1]/(\mathcal{S}_1^{\wt_3})$. Since
$\mathcal{S}_1^{\wt_3}$ is irreducible,  we have that
$\mathcal{R}^{\wt_3} \in (\mathcal{S}_1^{\wt_3})$, which is easily
shown to be not possible by a direct calculation. \medskip

\noindent{\bf Third case: $\mathrm{char}(\fq)$ divides $r-2$}. For
$i:=3$, we show that $\mathcal{S}_1^{\wt_3}$ and
$\mathcal{R}^{\wt_3}$ form a regular sequence in $\cfq(A_s \klk
A_4)[A_3, A_2,A_1]$. As in the previous case, if
$F:=T^r+A_3T^3+A_2T^2+A_1T$, then it can be seen that
$\mathcal{R}^{\wt_3}=\mathrm{Disc}(F)$ and
$\mathcal{S}_1^{\wt_3}:=\mathrm{Subdisc}(F)$. Since
$F'=2T^{r-1}+3A_3T^3+2A_2T^2+A_1$, from \cite[Lemma 7.1]{GeCzLa92}
it follows that
$$\mathcal{R}^{\wt_3}=(-1)^{{r}({r-1})} 2^{r-3}\mathrm{Res}(F',G)\ \textrm{ and }
\ \mathcal{S}_1^{\wt_3}=
(-1)^{(r-1)(r-2)}2^{r-3}\mathrm{Subdisc}(F',G),$$
where $G:=-\frac{1}{2}A_3T^3+\frac{1}{2}A_1T$ is the remainder the
division of $F$ by $F'$. By the Poisson formula we obtain

$$\mathcal{R}^{\wt_3}=\left\{
\begin{array}{rl}
4 A_1^3A_3^{r-1}-A_1^r
-2A_2A_1{\!\!}^{\frac{r+2}{2}}A_3{\!\!}^{\frac{r-2}{2}}-A_1^2A_2^2A_3^{r-2}&\textrm{
for }r\textrm{ even},\\[1ex]
4A_1^3A_3^{r-1}+A_1^r+4A_1^{\frac{r+3}{2}}A_3{\!\!}^{\frac{r-1}{2}}-A_1^2A_2^2A_3^{r-2}&\textrm{
for }r\textrm{ odd}.
\end{array}
\right.$$
In the same vein, by, e.g., \cite[Theorem 2.5]{DaKrSz13}, we see
that
$$\mathcal{S}_1^{\wt_3}=\left\{
\begin{array}{rl}
4A_2(A_1A_3)^{\frac{r-2}{2}}
+2A_2^2A_3^{r-2}+2A_1^{r-2}-6A_1A_3^{r-2}&\textrm{ for }r\textrm{
even},\\[1ex] 7(A_1A_3)^{\frac{r-1}{2}}
-2A_2^2A_3^{r-2}+2A_1^{r-2}+6A_1A_3^{r-1}&\textrm{ for }r\textrm{
odd}.
\end{array}
\right.$$
We observe that $\mathcal{S}_1^{\wt_3}$ is an irreducible polynomial
in $\cfq(A_s \klk A_4)[A_3,A_2,A_1]$. To see this it suffices to
apply the Eisenstein criterion, considering $\mathcal{S}_1^{\wt_3}$
as an element of the polynomial ring  $\cfq((A_s \klk
A_4)[A_3,A_1])[A_2]$ and the prime $(A_1)$. Next, suppose that
$\mathcal{R}^{\wt_3}$ is a zero divisor in $\cfq(A_s \klk
A_4)[A_3,A_2,A_1]/(\mathcal{S}_1^{\wt_3})$. Since
$\mathcal{S}_1^{\wt_3}$ is irreducible,  we have that
$\mathcal{R}^{\wt_3} \in (\mathcal{S}_1^{\wt_3})$, which can be
shown to be not possible by a direct calculation.
\medskip

\noindent{\bf Fourth case: $\mathrm{char}(\fq)$ does not divide
$r(r-1)(r-2)$}. For $i:=2$, we prove that $\mathcal{S}_1^{\wt_2}$
and $\mathcal{R}^{\wt_2}$ form a regular sequences in $\cfq(A_s \klk
A_3)[A_2,A_1]$. Arguing as before, we obtain
\begin{align*}
\mathcal{R}^{\wt_2}&=(1-r)^{r-1}A_1^r-(r-2)r^{-1}A_1^2A_2^{r-1},\\
\mathcal{S}_1^{\wt_2}&=r(r-1)^{r-2}A_1^{r-2}+2(2-r)^{r-2}A_2^{r-1}.
\end{align*}
By the Stepanov criterion (see, e.g., \cite[Lemma 6.54]{LiNi83}) we
deduce that $\mathcal{S}_1^{\wt_2}$ is an irreducible polynomial in
$\cfq(A_s \klk A_3)[A_2,A_1]$. Suppose that $\mathcal{R}^{\wt}$ is a
zero divisor in $\cfq(A_s \klk
A_3)[A_2,A_1]/(\mathcal{S}_1^{\wt_2})$. Since
$\mathcal{S}_1^{\wt_2}$ is irreducible, we have that
$\mathcal{R}^{\wt_2} \in (\mathcal{S}_1^{\wt_2})$, which can be seen
not to be the case by a direct calculation. Therefore, we deduce
that $\mathcal{S}_1^{\wt_2}$ and $\mathcal{R}^{\wt_2}$ form a
regular sequence in $\cfq(A_s \klk A_3)[A_2, A_1]$.
\end{proof}

By the claim and Lemma \ref{lemma: regular sequences 2} it follows
that $\mathcal{G}_1$, $\mathcal{S}_1$ and $\mathcal{R}$ form a
regular sequence in $\cfq(A_s \klk A_{i+1})[A_i,\ldots,A_0]$, and
Lemma \ref{lemma: regular sequences 1} implies that $\mathcal{G}_1$,
$\mathcal{S}_1$ and $\mathcal{R}$ form a regular sequence in
$\cfq[A_s \klk A_0]$.
\end{proof}

By Lemma \ref{lemma: H5 ej lineal: regular sequence} we conclude
that hypothesis $({\sf H}_5)$ holds. Finally, we prove that
hypothesis $({\sf H}_6)$ holds. The components of higher weight of
the polynomials $G_1,\ldots,G_{r-s}$ are $G_i^{\wt}=A_{s+i-1}$ for
$2\leq i \leq r-s$ and $G_1^{\wt}= A_0$. With the same arguments as
above, we see that $\mathcal{D}(W^{\wt})$ has codimension at least
one in $W^{\wt}$, where
$W^{\wt}:=V(G_1^{\wt},\ldots,G_{r-s}^{\wt})$.

Since the family \eqref{eq: linear family GaoPanarioHowell}
satisfies hypotheses $({\sf H}_1)$--$({\sf H}_6)$, from Theorem
\ref{theorem: estimate fact patterns} we deduce the following
result.
\begin{theorem}
Let $\mathcal{A}$ be the family \eqref{eq: linear family
GaoPanarioHowell} and $\bfs\lambda$ a factorization pattern. We have
    \begin{align*}
    \big||\mathcal{A}_{\bfs\lambda}^{sq}|
    -\mathcal{T}(\bfs\lambda)\,q^s\big|& \leq \mathcal{T}(\bfs\lambda)q^{s-1}
    \big((\delta (D-2)+2)q^{\frac{1}{2}}+14 D^2 \delta^2+ r^2\delta\big),\\
    \big||\mathcal{A}_{\bfs\lambda}|
    -\mathcal{T}(\bfs\lambda)\,q^s\big|&\le
    q^{s-1}\Big(\mathcal{T}(\bfs\lambda)
    \big((\delta (D-2)+2)q^{\frac{1}{2}}+14 D^2 \delta^2+r^2\delta\big)+r^2\delta\Big),
    \end{align*}
    where $\mathcal{A}_{\bfs\lambda}$ is the set of elements of $\mathcal{A}$
    with factorization pattern $\bfs\lambda$,
    $\mathcal{A}_{\bfs\lambda}^{sq}$ is
    the set of square--free elements of $\mathcal{A}_{\bfs\lambda}$,
    $\delta:=r\cdot(r-s-1)!$ and $D:=r-1+{(r-s-2)(r-s-1)}/{2}$.
\end{theorem}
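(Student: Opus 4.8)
The plan is simply to apply Theorem~\ref{theorem: estimate fact patterns}. The paragraphs and lemmas above (culminating in Lemma~\ref{lemma: H5 ej lineal: regular sequence}, with the auxiliary Lemmas~\ref{lemma: regular sequences 1} and~\ref{lemma: regular sequences 2}) already verify that the polynomials $G_1,\dots,G_{r-s}$ defining the family \eqref{eq: linear family GaoPanarioHowell} satisfy hypotheses $({\sf H}_1)$--$({\sf H}_6)$; consequently the only task remaining is to identify the codimension $m$ of $\mathcal{A}$ and to evaluate the two quantities $\delta=\prod_{i=1}^m\wt(G_i)$ and $D=\sum_{i=1}^m(\wt(G_i)-1)$ occurring in that theorem, and then to substitute.

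First I would note that $\mathcal{A}$ is cut out by the $m:=r-s$ polynomials $G_1=A_0-1$ and $G_i=A_{s+i-1}$ for $2\le i\le r-s$, so that $r-m=s$ and hence $q^{r-m}=q^{s}$; this already accounts for the appearance of $q^{s}$ in the statement. Next I would record the weights under the assignment $\wt(A_j)=r-j$: one has $\wt(G_1)=\wt(A_0)=r$, and for $2\le i\le r-s$, $\wt(G_i)=\wt(A_{s+i-1})=r-(s+i-1)=r-s-i+1$. Thus, as $i$ runs over $2,\dots,r-s$, the value $\wt(G_i)$ runs over $r-s-1,r-s-2,\dots,2,1$.

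From these weights the two computations are immediate:
$$\delta=\prod_{i=1}^{r-s}\wt(G_i)=r\cdot\prod_{j=1}^{r-s-1}j=r\,(r-s-1)!,$$
$$D=\sum_{i=1}^{r-s}\bigl(\wt(G_i)-1\bigr)=(r-1)+\sum_{j=0}^{r-s-2}j=r-1+\frac{(r-s-2)(r-s-1)}{2},$$
which are precisely the values of $\delta$ and $D$ stated in the theorem. (The hypothesis $3\le s\le r-2$ guarantees $m=r-s\ge 2$ and $r-s-1\ge 1$, so both expressions make sense, and $\mathrm{char}(\fq)>3$ is exactly what was used in verifying $({\sf H}_4)$ and $({\sf H}_5)$.) Substituting these identities together with $q^{r-m}=q^s$ into the two inequalities of Theorem~\ref{theorem: estimate fact patterns} yields the claimed bounds verbatim.

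As for the difficulty: there is essentially no obstacle. The substantive work—checking $({\sf H}_1)$--$({\sf H}_6)$, in particular the regular-sequence arguments for the discriminant and first subdiscriminant and the case analysis on whether $\mathrm{char}(\fq)$ divides $r$, $r-1$, $r-2$, or none of these—has already been carried out in the preceding part of the section, so what is left is only the elementary evaluation of a factorial-type product and an arithmetic-progression sum.
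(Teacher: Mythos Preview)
Your proposal is correct and follows essentially the same approach as the paper: both apply Theorem~\ref{theorem: estimate fact patterns} with $m=r-s$ and then compute $\delta$ and $D$ from the weights $\wt(G_1)=r$ and $\wt(G_i)=r-s-i+1$ for $2\le i\le r-s$. The paper phrases the computation in terms of $\deg R_i$ (which equals $\wt(G_i)$ by Lemma~\ref{lemma: rel between R^di and S^wt}), but the substance is identical.
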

\begin{proof}
We apply Theorem \ref{theorem: estimate fact patterns} with $m:=r-s$
to the polynomials
$$R_1:=(-1)^r\Pi_r-1,\ R_2:=(-1)^{r-s-1}\Pi_{r-s-1},\ldots, R_{r-s}:=-\Pi_1.$$
Therefore, we have
$$\delta:=\prod_{i=1}^{r-s} \deg R_i=r\cdot(r-s-1)!\textrm{ and }
D:=\sum_{i=1}^{r-s}(\deg R_i-1)=r-1+\frac{(r-s-2)(r-s-1)}{2}.$$
This finishes the proof.
\end{proof}
%
%%----------------------------------------------------------------------
%%----------------------------------------------------------------------
%%----------------------------------------------------------------------
%%----------------------------------------------------------------------
%%
\subsection{A nonlinear family}
Let $r, t_1 \klk t_r$ be positive integers with $r$ even. Suppose
that $\mathrm{char}(\fq)>3$ does not divide
$(r-1)(r+1)\big((r-1)^{r-1}+r^r\big)$. Consider the polynomial $G\in
\fq[A_1,\ldots,A_r]$ defined in the following way:
$$G:=\sum_{t_1+2t_2 +\ldots +r t_r=r}
(-1)^{\Delta(t_1,\ldots,t_r)}\frac{(t_1 +\dots+ t_r)!}{t_1!  \dots
t_r!} A_r^{t_1} \cdots A_1^{t_r},$$
where $\Delta(t_1,t_2,\ldots,t_r):= r-\sum_{i=1}^r t_i$. The
polynomial $G$ arises as the determinant of the $n \times n$ generic
Toeplitz--Hessenberg matrix, namely
$$
G=\det    \left(
    \begin{array}{ccccc}
        A_r & 1 & 0 &  \dots & 0
        \\
        \vdots &\ddots  & \ddots & \ddots & \vdots
        \\
        \vdots &  & \ddots & \ddots & 0
        \\
        A_1 & \ldots & \ldots & A_r&\!\! 1
    \end{array}
    \!\!\right).
$$
This is the well--known Trudi formula (see \cite[Ch. VII]{Muir60};
see also \cite[Theorem 1]{Merca13}). We also remark that the
polynomial $H_r:=G(\Pi_r,\ldots,\Pi_1)$ is critical in the study of
deep holes of the standard Reed--Solomon codes (see
\cite[Proposition 2.2]{CaMaPr12}).

We consider the following family of polynomials:
\begin{equation}\label{eq: nonlinear family}
\mathcal{A}_{\mathcal{N}}:=\{T^{r+1} +a_rT^r+ \dots+a_0:
G(a_r,\ldots, a_1)=0\}.
\end{equation}
Observe $\mathcal{A}_{\mathcal{N}}$ may be seen as the set of
$\fq$--rational points of the $\fq$--variety $W:=V(G) \subset
\A^{r+1}$. Let $\wt$ be the weight defined by $\wt(A_i):=r+1-i$ for
$i=0 \klk r$. We shall see that this family of polynomials satisfies
hypotheses $({\sf H}_1)$--$({\sf H}_6)$.

It is clear that $({\sf H}_1)$ holds, because $G$ is nonzero.
Further, since $G$ is a monic element of $\fq[A_r,\ldots,A_2][A_1]$
of degree $1$ in $A_1$, we have that
$$\nabla G(\bfs a_0)=
\bigg(\frac{\partial{G}}{\partial{A_r}}(\bfs a_0)\klk
\frac{\partial{G}}{\partial A_2}(\bfs a_0),1\bigg) \neq 0$$
for any $\bfs a_0 \in W$. We deduce that hypothesis $({\sf H}_2)$
holds.

Next we consider hypothesis $({\sf H}_3)$. Given an arbitrary
nonzero monomial
$$m_{G}:=\frac{(t_1 +\dots+ t_r)!}{t_1!  \dots
t_r!} A_r^{t_1} \dots A_1^{t_r}$$
arising in the dense representation of $G$, it is easy to see that
$\wt(m_G)=r$. It follows that $G$ is weighted homogeneous of
weighted degree $r$. Then $G^{\wt}=G$, which readily implies that
hypothesis $({\sf H}_3)$ holds.

Now we analyze the validity of hypothesis $({\sf H}_4)$, namely that
the discriminant locus $\mathcal{D}(W) \subset \A^{n+1}$ of $W$ has
codimension at least 1 in $W$. For this purpose, it suffices to show
that $\{G, \mathcal{R}\}$ form a regular sequence in $\cfq[A_r \klk
A_0]$, where $\mathcal{R}:=\mathrm{Disc}(F(\bfs A_0, T))$, $F(\bfs
A_0,T):=T^{r+1}+A_rT^r +\dots+ A_0$ and $\bfs A_0:=(A_r \klk A_0)$.

%Suppose that $\mathrm{char}(\fq) $ does not divide $n+1$.
We consider $G$ and $\mathcal{R}$ as elements of the polynomial ring
$\cfq(A_r,\ldots, A_2)[A_1, A_0]$ and the weight $\wt_1$ on
$\cfq(A_r,\ldots, A_2)[A_1, A_0]$ defined by setting
$$\wt_1(A_1):=r,\quad \wt_1(A_0):=r+1.$$
We claim that $G^{\wt_1}, \mathcal{R}^{\wt_1}$ form a regular
sequence in $\cfq(A_r,\klk A_2)[A_1,A_0]$. It is easy see that
$G^{\wt_1}=A_1$. Further, since $\cfq(A_r \klk A_2)[A_1,
A_0]/(G^{\wt_1}) \simeq \cfq(A_r \klk A_2)[A_0]$ is a domain, to
prove the claim it suffices to show that ${\mathcal{R}^{\wt_1}}$ is
nonzero modulo $(A_1)$. A direct calculation shows that
${\mathcal{R}^{\wt}}=(r+1)^{r+1}A_0^{r+1}$ modulo $(A_1)$, which
proves the claim. As a consequence of the claim and Lemma
\ref{lemma: regular sequences 2} we see that $G$ and $\mathcal{R}$
form a regular sequence in $\cfq(A_r \klk A_2)[A_1,A_0]$, and Lemma
\ref{lemma: regular sequences 1} implies that $G$ and $\mathcal{R}$
form a regular sequence in $\cfq[A_r \klk A_0]$. In other words,
hypothesis $({\sf H}_4)$ is satisfied.

%Suppose that $\mathrm{char}(\fq)$ divides $n+1$.
%We consider $G,\mathcal{R} \in \cfq(A_n, \klk A_3)[A_2, A_1, A_0]$. We shall prove that $G^{\wt},\mathcal{R}^{\wt}$ form a regular sequence in $\cfq(A_n \klk A_3)[A_2,A_1,A_0]$. Since $G^{\wt}=G$,  we deduce that $G^{\wt}=A_1 \in \cfq(A_n \klk A_3)[A_2,A_1,A_0]$. On the other hand, we  see that $\cfq(A_n \klk A_3)[A_2,A_1, A_0]/(G^\wt) \simeq \cfq(A_n \klk A_3)[A_2,A_0]$ is a domain. Moreover, it is easy to see that ${\mathcal{R}^{\wt}}
%$ in $\cfq(A_n \klk A_3)[A_2,A_0]$ is ${\mathcal{R}^{\wt}}:=(-1)^{n+1}2^{n+1}A_2^{n+1}A_0$. Thus, we have that ${\mathcal{R}^{\wt}}$  is a nonzero polynomial and it is not a zero divisor in $\cfq(A_n \klk A_3)[A_2,A_0]$. Therefore, we have that $G^{\wt}, \mathcal{R}^{\wt}$ form a regular sequence in $\cfq(A_n, \klk A_3)[A_2,A_1, A_0]$. By Lemmas \ref{lemma2} and \ref{lemma1} we conclude that $G, \mathcal{R}$ form a regular sequence in $\cfq[A_n \klk A_0]$.

Next we show that hypothesis $({\sf H}_5)$ holds. To this end, we
make the following claim.
\begin{claim}
$A_0$, $\mathcal{R}$ and $G$ form a regular sequence of
$\cfq[A_r,\ldots,A_0]$.
\end{claim}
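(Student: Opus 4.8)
The plan is to verify the three defining conditions of a regular sequence for $A_0,\mathcal{R},G$ in exactly this order. The first is immediate since $\cfq[A_r,\ldots,A_0]$ is a domain. For the second, $\cfq[A_r,\ldots,A_0]/(A_0)\simeq\cfq[A_r,\ldots,A_1]$ is again a domain, so it suffices to show that the image $\mathcal{R}_0$ of $\mathcal{R}$ is nonzero there. Now $F(\bfs A_0,T)|_{A_0=0}=T\cdot h$, where $h:=T^r+A_rT^{r-1}+\cdots+A_2T+A_1$; by multiplicativity of the discriminant together with $\mathrm{Res}(T,h)=h(0)=A_1$ we get $\mathcal{R}_0=\pm A_1^2\,\mathrm{Disc}(h)$, which is nonzero because $\mathrm{Disc}(h)$ is the discriminant of the generic monic polynomial of degree $r$, hence a nonzero element of $\cfq[A_r,\ldots,A_1]$.

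The real content is the third condition: $G$ must be a non--zero--divisor in $\cfq[A_r,\ldots,A_1]/(A_1^2\,\mathrm{Disc}(h))$. Since this is a principal ideal in a unique factorization domain, its set of zero divisors is contained in $(A_1)\cup\bigcup_\pi(\pi)$, the union running over the irreducible factors $\pi$ of $\mathrm{Disc}(h)$; so it is enough to prove $A_1\nmid G$ and $\gcd(G,\mathrm{Disc}(h))=1$. The first holds because $G$ contains the monomial $A_r^r$ (the term $t_1=r$, $t_2=\cdots=t_r=0$ of its defining sum, with coefficient $1$). For the second, recall that $G$ has degree $1$ in $A_1$ over $\cfq[A_r,\ldots,A_2]$ with a unit as leading coefficient (as already noted in the verification of $({\sf H}_2)$), so $G$ is irreducible and it is equivalent to show $G\nmid\mathrm{Disc}(h)$, that is, that $\mathrm{Disc}(h)$ does not vanish identically on the irreducible hypersurface $V(G)\subset\A^r$.

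To exhibit a point of $V(G)$ off $V(\mathrm{Disc}(h))$, take the point with $A_2=1$ and $A_1=A_3=\cdots=A_r=0$. For $r\ge 3$ no monomial of $G$ survives at this point, since a surviving monomial would be a power $A_2^{t_{r-1}}$ subject to $(r-1)t_{r-1}=r$, which has no nonnegative integer solution; hence the point lies in $V(G)$, and there $h=T^r+T=T(T^{r-1}+1)$, which is separable because $\mathrm{char}(\fq)\nmid r-1$, so $\mathrm{Disc}(h)\neq0$ at that point. For $r=2$ one checks directly that $(A_2,A_1)=(1,1)$ lies in $V(G)=\{A_1=A_2^2\}$ and that $\mathrm{Disc}(T^2+T+1)=-3\neq0$ since $\mathrm{char}(\fq)>3$. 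This yields $G\nmid\mathrm{Disc}(h)$ and finishes the proof.

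The delicate step is the last one: producing an explicit point of the Toeplitz--Hessenberg hypersurface $V(G)$ at which the associated degree-$r$ polynomial is separable, which forces one to read off exactly which monomials of $G$ vanish there and to invoke the stated hypotheses on $\mathrm{char}(\fq)$ (only $\mathrm{char}(\fq)\nmid r-1$ and $\mathrm{char}(\fq)>3$ are actually used here). Everything else is routine: the irreducibility of $G$ used above is a direct consequence of $G$ being monic of degree $1$ in $A_1$, and the nonvanishing of $\mathrm{Disc}(h)$ as a polynomial is the classical fact that the discriminant of the generic univariate polynomial is not identically zero.
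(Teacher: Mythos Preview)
Your argument is correct and takes a genuinely different route from the paper's. The paper proceeds as in the neighbouring claims: it passes to $\cfq(A_{r-1},\ldots,A_2)[A_r,A_1]$, introduces the weight $\wt_r(A_r)=1$, $\wt_r(A_1)=r$, identifies the highest--weight components $G^{\wt_r}=A_1\pm A_r^{\,r}$ and $\mathcal{R}^{\wt_r}=-(r-1)^{r-1}A_r^rA_1^r+r^rA_1^{r+1}$, proves $G^{\wt_r}$ irreducible via the Stepanov criterion, and checks $\mathcal{R}^{\wt_r}\not\equiv 0\pmod{G^{\wt_r}}$; Lemmas~\ref{lemma: regular sequences 2} and~\ref{lemma: regular sequences 1} then transport the conclusion back to $\cfq[A_r,\ldots,A_0]$. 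This step is exactly where the hypothesis $\mathrm{char}(\fq)\nmid (r-1)^{r-1}+r^r$ is consumed.

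You instead factor $\mathcal{R}|_{A_0=0}=\pm A_1^2\,\mathrm{Disc}(h)$ via multiplicativity, reduce the non--zero--divisor condition for $G$ to $A_1\nmid G$ and $G\nmid\mathrm{Disc}(h)$ by the unmixedness of principal ideals in a polynomial ring together with the irreducibility of $G$ (which you get for free from its being of degree~$1$ in $A_1$), and then exhibit a concrete point of $V(G)$ at which $h$ is separable. Your approach is more elementary---no weight machinery, no Stepanov criterion, no explicit discriminant computation---and, notably, it needs only $\mathrm{char}(\fq)\nmid r-1$ (and $\mathrm{char}(\fq)>3$ for the case $r=2$), not the stronger hypothesis $\mathrm{char}(\fq)\nmid (r-1)^{r-1}+r^r$ that the paper invokes here. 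The paper's approach, on the other hand, reuses the same template uniformly across all three regular--sequence claims in this subsection, which has its own expository value.
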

\begin{proof}
Since $\cfq[A_r,\ldots,A_0]/(A_0)\simeq \cfq[A_r,\ldots,A_1]$ and
$G\in \cfq[A_r,\ldots,A_1]$, we have to show that $\mathcal{R}$
modulo $(A_0)$, and $G$, form a regular sequence in
$\cfq[A_r,\ldots,A_1]$. We consider $G$ and $\mathcal{R}$ modulo
$(A_0)$ as elements of $\cfq(A_{r-1},\ldots,A_2)[A_r,A_1]$, with the
weight $\wt_r$ defined by $\wt_r(A_r):=1$ and $\wt_r(A_1):=r$. We
claim that $G^{\wt_r}$ and $\mathcal{R}^{\wt_r}$ form a regular
sequence in $\cfq(A_{r-1},\ldots,A_2)[A_r,A_1]$. First we observe
that
$$G^{\wt_r}=A_1+A_r^r,$$
and the Stepanov criterion (see, e.g., \cite[Lemma 6.54]{LiNi83})
proves that $G^{\wt_r}$ is an irreducible polynomial of
$\cfq(A_{r-1},\ldots,A_2)[A_r,A_1]$. Thus, it is enough to prove
that $\mathcal{R}^{\wt_r}$ is a nonzero polynomial of
$\cfq(A_{r-1},\ldots,A_2)[A_r,A_1]/(G^{\wt_r})$.
%
%Suppose that $\mathrm{char}(\fq) $ divides $n$.  We observe that
% $\mathcal{R}^{\wt}= A_n^nA_1^n \in\cfq[A_{n-1},\ldots,A_2][A_n,A_1]$. Thus, the class of $\mathcal{R}^{\wt}$ in $\cfq[A_{n-1},\ldots,A_2][A_n,A_1]/(G^{\wt})$ is $A_n^{n^2+n}$.
%
% Suppose that $\mathrm{char}(\fq)$ divides $n+1$. We have that $\mathcal{R}^{\wt}=(-1)^{n^2}2^{n-1} A_n^nA_1^n+(-1)^{n(n-1)}A_1^{n+1}$ in  $\cfq[A_{n-1},\ldots,A_2][A_n,A_1]$. Thus, the class of $\mathcal{R}^{\wt}$ in $\cfq[A_{n-1},\ldots,A_2][A_n,A_1]/(G^{\wt})$ is $(-1)^{n^2+n}2^{n-1}A_n^{n^2+n}+(-1)^{n^2+1}A_n^{n^2+n}$.
%
% Suppose that $\mathrm{char}(\fq)$ does not divide $n(n+1)$ and divides $n-1$. It easy  to see that $\mathcal{R}^{\wt}=(-1)^{n^2}n^n A_1^{n+1}$ in $\cfq[A_{n-1},\ldots,A_2][A_n,A_1]$. Thus, we have that the class of $\mathcal{R}^{\wt}$ in $\cfq[A_{n-1},\ldots,A_2][A_n,A_1]/(G^{\wt})$ is $(-1)^{n^2+n+1} n^n A_n^{n^2+n}$.
We have
\begin{align*}
\mathcal{R}^{\wt_r}&= - (r-1)^{r-1} A_r^rA_1^r+
r^rA_1^{r+1}\\
&\equiv -\big((r-1)^{r-1}+r^r\big)A_r^{r+r^2}
\textrm{ modulo }G^{\wt_r}.%,\\
%&=\left\{
%\begin{array}{cl} A_n^{n+n^2}&\textrm{for
%}\mathrm{char}(\fq)\textrm{ dividing }r,\\
%(-(n-1)^{n-1}+n^n) A_n^{n^2+n}&\textrm{for
%}\mathrm{char}(\fq)\textrm{ not dividing }r
%\end{array}\right.
\end{align*}
We conclude that $G^{\wt_r}$ and $\mathcal{R}^{\wt_r}$ form a
regular sequence in $\cfq(A_{r-1},\ldots,A_2)[A_r,A_1]$. Combining
Lemmas \ref{lemma: regular sequences 2} and \ref{lemma: regular
sequences 1} as before we deduce that $G$ and $\mathcal{R}$ modulo
$(A_0)$ form a regular sequence in $\cfq[A_r \klk A_1]$, which
implies that $A_0$, $\mathcal{R}$ and $G$ form a regular sequence of
$\cfq[A_r,\ldots,A_0]$.
\end{proof}
Next we make a second claim.
\begin{claim}
$G$, $\mathcal{R}$ and $\mathcal{S}_1$ form a regular sequence of
$\cfq[A_r \klk A_0]$.
\end{claim}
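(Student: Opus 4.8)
The plan is to leverage the verification of $({\sf H}_4)$ already carried out: there it was shown that $G$ and $\mathcal{R}$ form a regular sequence in $\cfq[A_r \klk A_0]$, so it suffices to prove that the image of $\mathcal{S}_1$ in $\cfq[A_r \klk A_0]/(G,\mathcal{R})$ is not a zero divisor, equivalently that $V(G,\mathcal{R},\mathcal{S}_1)\subset\A^{r+1}$ has codimension $3$. As in the proof of Lemma \ref{lemma: H5 ej lineal: regular sequence} and in the verification of $({\sf H}_4)$, I would attack this through a weighted-degree degeneration combined with Lemmas \ref{lemma: regular sequences 1} and \ref{lemma: regular sequences 2}.

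Concretely, regard $G,\mathcal{R},\mathcal{S}_1$ as elements of $\cfq(A_r,\ldots,A_3)[A_2,A_1,A_0]$ and equip this ring with the weight $\wt_2$ defined by $\wt_2(A_0):=r+1$, $\wt_2(A_1):=r$, $\wt_2(A_2):=r-1$, i.e.\ the restriction of the weight $\wt$ of the statement (the remaining variables being units). Since $G$ is $\wt$-weighted homogeneous of weighted degree $r$, while $\mathrm{Disc}(F(\bfs A_0,T))$ and $\mathrm{Subdisc}(F(\bfs A_0,T))$ are also $\wt$-weighted homogeneous, the highest-$\wt_2$-weight components are obtained by discarding all monomials involving $A_3,\ldots,A_r$: $G^{\wt_2}$ is the Toeplitz--Hessenberg determinant with $A_3=\cdots=A_r=0$, which for even $r\ge 4$ reduces to $\pm A_1$ (the case $r=2$ being handled directly), whereas $\mathcal{R}^{\wt_2}=\mathrm{Disc}(T^{r+1}+A_2T^2+A_1T+A_0)$ and $\mathcal{S}_1^{\wt_2}=\mathrm{Subdisc}(T^{r+1}+A_2T^2+A_1T+A_0)$; that these two components are nonzero is guaranteed by the hypothesis that $\mathrm{char}(\fq)$ does not divide $r+1$. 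As $\cfq(A_r,\ldots,A_3)[A_2,A_1,A_0]/(G^{\wt_2})\cong\cfq(A_r,\ldots,A_3)[A_2,A_0]$ is a domain, to see that $G^{\wt_2},\mathcal{R}^{\wt_2},\mathcal{S}_1^{\wt_2}$ form a regular sequence it is enough to check that, upon setting $A_1=0$, the polynomials $\mathrm{Disc}(T^{r+1}+A_2T^2+A_0)$ and $\mathrm{Subdisc}(T^{r+1}+A_2T^2+A_0)$ are nonzero and coprime in $\cfq(A_r,\ldots,A_3)[A_2,A_0]$, equivalently in $\cfq[A_2,A_0]$. Granting this, Lemma \ref{lemma: regular sequences 2} yields that $G,\mathcal{R},\mathcal{S}_1$ form a regular sequence in $\cfq(A_r,\ldots,A_3)[A_2,A_1,A_0]$, and Lemma \ref{lemma: regular sequences 1} then gives the claim over $\cfq[A_r \klk A_0]$.

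The main obstacle is this last coprimality statement for the discriminant and the first subdiscriminant of the trinomial $T^{r+1}+A_2T^2+A_0$. I would compute both explicitly --- either from closed formulas for discriminants and subdiscriminants of trinomials, or, as in the linear examples, by replacing $F$ by its pseudo-remainder modulo $F'$ and applying the Poisson formula --- obtaining expressions of the form (a power of $A_0$) plus monomials in $A_2$ and $A_0$; then I would show that one of them, say $\mathrm{Subdisc}$, is irreducible in $\cfq[A_2,A_0]$, e.g.\ by exhibiting it as a sum of two coprime weighted-homogeneous parts and invoking \cite[Lemma 3.15]{Gibson98}, or by an Eisenstein/Stepanov-type criterion, and conclude coprimality by a degree comparison ruling out $\mathrm{Subdisc}\mid\mathrm{Disc}$. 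Since the shape of these polynomials depends on the residue of $r$ modulo $\mathrm{char}(\fq)$, this requires a short case analysis according to whether $\mathrm{char}(\fq)$ divides $r$ or $r-2$ (by hypothesis it divides neither $r-1$ nor $r+1$), and in a degenerate case it may be necessary to replace $\wt_2$ by the analogous weight on $\cfq(A_r,\ldots,A_4)[A_3,A_2,A_1,A_0]$, reducing instead to $T^{r+1}+A_3T^3+A_2T^2+A_0$ --- precisely the mechanism used in the proof of Lemma \ref{lemma: H5 ej lineal: regular sequence}.
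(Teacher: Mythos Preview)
Your overall plan coincides with the paper's: work in $\cfq(A_r,\ldots,A_3)[A_2,A_1,A_0]$ with the weight $\wt_2$, observe that $G^{\wt_2}=\pm A_1$ (for even $r\ge 4$), reduce modulo $A_1$ to a coprimality statement for $\mathrm{Disc}$ and $\mathrm{Subdisc}$ of $T^{r+1}+A_2T^2+A_0$ in $\cfq[A_2,A_0]$, and finish via Lemmas~\ref{lemma: regular sequences 2} and~\ref{lemma: regular sequences 1}.

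The difference is only in the last step, and the paper's finish is considerably simpler than what you anticipate. Since $\mathrm{Subdisc}$ is weighted homogeneous of degree $r(r-1)$ for the weight $\wt(A_j)=r+1-j$, the only monomial $A_2^aA_0^b$ with $(r-1)a+(r+1)b=r(r-1)$ and $r$ even is $A_2^r$; hence $\mathrm{Subdisc}(T^{r+1}+A_2T^2+A_0)$ is a nonzero scalar multiple of $A_2^r$ (in fact $-2(r-1)^{r-1}A_2^r$, nonzero as $p\nmid 2(r-1)$). Coprimality with the discriminant then reduces to checking $\mathrm{Disc}(T^{r+1}+A_0)=(r+1)^{r+1}A_0^r\ne 0$, which holds since $p\nmid r+1$. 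So no irreducibility argument, no degree comparison, no case split on $\mathrm{char}(\fq)$, and no passage to $A_3$ are needed; in particular your proposed route of proving $\mathrm{Subdisc}$ irreducible would fail as stated (it is a pure power of $A_2$), though upon computing it explicitly you would of course recover the paper's shortcut.
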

%
%Suppose that $\mathrm{char}(\fq)$ divides $n$.
\begin{proof}
We consider $G$, $\mathcal{R}$ and $\mathcal{S}_1$ as elements of
$\cfq(A_r \klk A_3)[A_2,A_1,A_0]$, and consider the weight $\wt_2$
defined by $\wt_2(A_2):=r-1$, $\wt_2(A_1):=r$, $\wt_2(A_0):=r+1$. We
claim that $G^{\wt_2}$, $\mathcal{S}_1^{\wt_2}$ and
$\mathcal{R}^{\wt_2}$ form a regular sequence in $\cfq(A_r \klk
A_3)[A_2,A_1,A_0]$. Since $G^{\wt_2}=A_1$, we have that $\cfq(A_r
\klk A_3)[A_2,A_1,A_0]/(G^{\wt_2}) \simeq \cfq(A_r \klk
A_3)[A_2,A_0]$ is a domain. Therefore, it suffices to see that
$\mathcal{S}_1^{\wt_2}$ modulo $(A_1)$ and $\mathcal{R}^{\wt_2}$
modulo $(A_1)$ form a regular sequence in $\cfq(A_r \klk
A_3)[A_2,A_0]$. It is easy to see that
$$
\mathcal{S}_1^{\wt_2}\textrm{ modulo }(A_1)=-2(r-1)^{r-1}A_2^r.$$
%\begin{equation*}
%   \mathcal{S}_1^{\wt}:=(-1)^{(n-1)n}\big(2^{-1}A_0^{\frac{n-1}{2}}A_2^{\frac{n-1}{2}}((-1)^{n+1}+1)+2A_2^n\big).
%\end{equation*}
%Suppose that $n$ is even. Thus, we have that $\mathcal{S}_1^{\wt}= (-1)^{(n-1)n}2A_2^n$. Then,
%$\mathcal{S}_1^{\wt}$ is a nonzero polynomial then, it is not a zero divisor in $\cfq(A_n \klk A_3)[A_2,A_0]$.
Further, we have $\mathcal{R}^{\wt_2}\textrm{ modulo
}(A_1,A_2)=(r+1)^{r+1}A_{0}^r$. As a consequence, $G^{\wt_2}$,
$\mathcal{S}_1^{\wt_2}$ and  $\mathcal{R}^{\wt_2}$ form a regular
sequence in $\cfq(A_r \klk A_3)[A_2, A_1, A_0]$. From Lemmas
\ref{lemma: regular sequences 2} and \ref{lemma: regular sequences
1} it follows that $G$, $\mathcal{S}_1$ and $\mathcal{R}$ form a
regular sequence in $\cfq[A_r \klk A_0]$.
\end{proof}

From the first claim we conclude that $\mathcal{D}(W)\cap\{A_0=0\}$
has codimension two in $W$, while the second claim shows that
$\mathcal{S}_1(W)$ has codimension two in $W$. As a consequence,
$\mathcal{D}(W)\cap(A_0\cdot\mathcal{S}_1)(W)$ has codimension two
in $W$, that is, hypothesis $({\sf H}_5)$ is satisfied.

Finally, since $G^{\wt}=G$, we readily deduce that hypothesis $({\sf
H}_6)$ holds.

As a consequence of the fact that the family \eqref{eq: nonlinear
family} satisfies hypotheses $({\sf H}_1)$--$({\sf H}_6)$, we obtain
the following result.
\begin{theorem}
Let $\mathcal{A}_\mathcal{N}$ be the family \eqref{eq: nonlinear
family} and $\bfs\lambda$ a factorization pattern. We have
    \begin{align*}
    \big||\mathcal{A}_{\mathcal{N},\bfs\lambda}^{sq}|
    -\mathcal{T}(\bfs\lambda)\,q^{r-1}\big|& \leq \mathcal{T}(\bfs\lambda)q^{r-2}
    (r^2q^{\frac{1}{2}}+14r^4),\\
    \big||\mathcal{A}_{\mathcal{N},\bfs\lambda}|
    -\mathcal{T}(\bfs\lambda)\,q^{r-1}\big|&\le
    q^{r-2}\big(\mathcal{T}(\bfs\lambda)(r^2q^{\frac{1}{2}}+14 r^4)+r^3\big),
    \end{align*}
    where $\mathcal{A}_{\mathcal{N},\bfs\lambda}$ is the set of elements
    of $\mathcal{A}_\mathcal{N}$
    with factorization pattern $\bfs\lambda$ and
    $\mathcal{A}_{\mathcal{N},\bfs\lambda}^{sq}$ is
    the set of square--free elements of $\mathcal{A}_{\mathcal{N},\bfs\lambda}$.
\end{theorem}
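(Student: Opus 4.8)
The plan is to read off both estimates directly from Theorem \ref{theorem: estimate fact patterns}. Indeed, all the geometric content required — that the variety $V$ associated with the family and its projective closure $\mathrm{pcl}(V)$ are well-behaved complete intersections, that the relevant discriminant and subdiscriminant loci are small, and the ensuing point counts — is already encapsulated in that theorem, and its applicability to $\mathcal{A}_\mathcal{N}$ is guaranteed by the verification of hypotheses $({\sf H}_1)$--$({\sf H}_6)$ carried out above in this subsection. Concretely, I would apply Theorem \ref{theorem: estimate fact patterns} with $m:=1$ to the single polynomial $R_1\in\fq[\bfs X]$ attached to $\mathcal{A}_\mathcal{N}$ via \eqref{eq: geometry: def R_j}.

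The one quantity that must be made explicit before invoking the theorem is the weight of $G$, which determines the parameters $\delta$ and $D$. As already observed, every nonzero monomial $A_r^{t_1}\cdots A_1^{t_r}$ occurring in the dense representation of $G$ satisfies $t_1+2t_2+\cdots+rt_r=r$, so that under the weight $\wt(A_i):=r+1-i$ its weight equals $\sum_{j=1}^{r}j\,t_j=r$; hence $G$ is weighted homogeneous of weighted degree $r$, that is $\wt(G)=r$. Since $m=1$ this gives $\delta:=\wt(G)=r$ and $D:=\wt(G)-1=r-1$ in the notation of Theorem \ref{theorem: estimate fact patterns}.

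It then remains to substitute $\delta=r$ and $D=r-1$ into the two inequalities provided by Theorem \ref{theorem: estimate fact patterns} and to simplify the resulting polynomial expressions in $r$, using the coarse bounds $\delta(D-2)+2=(r-1)(r-2)\le r^2$ and $14D^2\delta^2=14(r-1)^2r^2\le 14r^4$ and absorbing the lower-order terms into the $14r^4$ and $r^3$ summands, which yields the announced estimates for $|\mathcal{A}_{\mathcal{N},\bfs\lambda}^{sq}|$ and $|\mathcal{A}_{\mathcal{N},\bfs\lambda}|$. I do not expect any genuine obstacle at this stage: the whole substance of the statement lies in checking $({\sf H}_1)$--$({\sf H}_6)$, which is complete, and what remains is the bookkeeping with $\delta$ and $D$ together with a routine coarsening of the constants in the error term.
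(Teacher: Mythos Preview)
Your proposal is correct and follows essentially the same route as the paper: apply Theorem \ref{theorem: estimate fact patterns} with $m=1$, observe that $\wt(G)=r$ so that $\delta=r$ and $D=r-1$, and then coarsen $(r-1)(r-2)\le r^2$ and $14(r-1)^2r^2+r^3\le 14r^4$ to reach the stated bounds. The paper's own proof does exactly this, first displaying the raw output of Theorem \ref{theorem: estimate fact patterns} with $r(r-3)+2$, $14(r-1)^2r^2$ and $r^3$ in place before the final simplification.
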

\begin{proof}
This is a simple consequence of Theorem \ref{theorem: estimate fact
patterns} with $m:=1$ and the polynomial
$$R_1:=G(-\Pi_1,\Pi_2,\ldots,(-1)^r\Pi_r).$$
As previously remarked, the weighted degree of $G$ is $r$, which
implies that $\deg R_1=r$. Therefore, we have
$$\delta:=\deg R_1=r\textrm{ and }
D:=\deg R_1-1=r-1.$$
As a consequence, Theorem \ref{theorem: estimate fact patterns}
implies
    \begin{align*}
    \big||\mathcal{A}_{\mathcal{N},\bfs\lambda}^{sq}|
    -\mathcal{T}(\bfs\lambda)\,q^{r-1}\big|& \leq \mathcal{T}(\bfs\lambda)q^{r-2}
    \big((r(r-3)+2)q^{\frac{1}{2}}+14 (r-1)^2 r^2+ r^3\big),\\
    \big||\mathcal{A}_{\mathcal{N},\bfs\lambda}|
    -\mathcal{T}(\bfs\lambda)\,q^{r-1}\big|&\le
    q^{r-2}\Big(\mathcal{T}(\bfs\lambda)
    \big((r(r-3)+2)q^{\frac{1}{2}}+14 (r-1)^2 r^2
    +r^3\big)+r^3\Big).
    \end{align*}
This immediately implies the statement of the theorem.
\end{proof}

%----------------------------------------------------------------------
%----------------------------------------------------------------------
%----------------------------------------------------------------------
%----------------------------------------------------------------------
%----------------------------------------------------------------------
%----------------------------------------------------------------------
%----------------------------------------------------------------------
%----------------------------------------------------------------------
%
\section{Average--case analysis of polynomial factorization over $\mathcal{A}$}
\label{sec: average-case complexity of factorization}
In this section we analyze the average--case complexity of the
classical factorization algorithm applied to any family
$\mathcal{A}$ as in \eqref{non-linear family A} satisfying
hypotheses $({\sf H}_1)$--$({\sf H}_6)$.

Given $f \in \fq[T]$, the classical factorization algorithm finds
the complete factorization $f=f_1^{e_1} \dots f_n^{e_n}$, where
$f_1, \dots, f_n$ are pairwise distinct monic irreducible
polynomials in $\fq[T]$ and $e_1, \dots,e_n$ are strictly positive
integers. The algorithm contains three main routines:
\begin{itemize}
    \item \textbf{elimination of repeated factors (ERF)}
    replaces a polynomial by a square--free one that contains
    all the irreducible factors of the original one with exponent
    $1$;
    \item \textbf{distinct--degree factorization (DDF)} splits
    a square--free polynomial into a product of polynomials
    whose irreducible factors have all the same degree;
    \item \textbf{equal--degree factorization (EDF)} splits
    completely a polynomial whose irreducible factors have all the same degree.
\end{itemize}
More precisely, the algorithm works as follows:
\begin{algorithm2}${}$

    \smallskip
    \begin{enumerate}
        \item [] Input: a monic polynomial $f \in \fq[T]$ of degree $r>0$.

        \item [] Output: the complete factorization of $f$ in $\fq[T]$.
        \begin{center}
            \textbf{factor procedure ($f\in \fq[T]$)}
        \end{center}

        \begin{enumerate}
            \item [] $a_f:=\mathrm{ERF}(f)$\quad  [$a_f$  is square--free]
            \item[] $\bfs b_f:=\mathrm{DDF}(a_f)$\quad [$\bfs b_f$ is a partial factorization into distinct degrees]

            \item[]  $F:=1$
            \item[] For $k$ from $1$ to $s$ ($s \leq r$) do
            \begin{enumerate}
                \item []$F:=F\cdot \mathrm{EDF}(b_f[k],k)$\quad [refines the distinct--degree factorization for \linebreak
                ${}$ \hskip 4.4cm polynomials of degree $k$]
            \end{enumerate}
            \item[] end do

            \item[] $c:=\mathrm{factor}(f/a_f)$

            \item[] Return $F\cdot c$.
        \end{enumerate}
    \end{enumerate}

\end{algorithm2}

In \cite{FlGoPa01}, the authors analyze the average--case complexity
of the classical factorization algorithm applied to all the monic
polynomials of degree $r$ of $ \fq [T] $. Unfortunately, the results
of this analysis cannot be directly applied to the family
$\mathcal{A}$, because there is a small probability that a random
monic polynomial of degree $r$ of $ \fq [T] $ belongs to
$\mathcal{A}$. As a consequence, we shall perform an analysis of the
behavior of this algorithm applied to elements of $\mathcal{A}$,
using the results on the distribution of factorization patterns of
Section \ref{the factorization patterns}.

Considering the uniform probability on $\mathcal{A}$, let
$\mathcal{X}: \mathcal{A} \rightarrow \mathbb{N}$ be the random
variable that counts the number $ \mathcal {X} (f) $ of arithmetic
operations in $ \fq $ performed by the classical factorization
algorithm to obtain the complete factorization  in $\fq[T]$ of any
$f\in\mathcal{A}$. We may describe this algorithm as consisting of
four stages, and thus the random variable $\mathcal{X}$ may be
decomposed as the sum of the random variables that count the cost of
each step of the algorithm. More precisely, we consider the random
variable $\mathcal{X}_1: \mathcal{A} \rightarrow \mathbb{N}$ that
counts the number of arithmetic operations in $\fq$ performed in the
ERF step, namely
\begin{equation}
\label{ERF} \mathcal{X}_1(f):=\mathrm{Cost}(\mathrm{ERF}(f)).
\end{equation}
Further, we introduce a random variable $\mathcal{X}_2: \mathcal{A}
\rightarrow \mathbb{N}$ that counts the number of arithmetic
operations in $\fq$ performed during the DDF step, namely
\begin{equation}
\label{DDF} \mathcal{X}_2(f):=\mathrm{Cost}(\mathrm{DDF}(a_f)),
\end{equation}
where $a_f:=\mathrm{ERF}(f)$ denotes the square--free polynomial
obtained after the ERF step on input $f$. Denote by
$$\bfs b_f:=\mathrm{DDF}(a_f)=(b_f(1),\dots, b_f(s))$$
the vector of polynomials obtained by applying the DDF step to the
monic square--free polynomial $a_f:=\mathrm{ERF}(f)$, where $s$ is
the degree of the largest irreducible factor of $a_f$. Each $b_f(k)$
consists of the product of all the monic irreducible polynomials in
$ \fq [T] $ of degree $k$ that divide $f$. With this notation, let
$\mathcal{X}_3: \mathcal{A} \rightarrow \mathbb{N}$ be the random
variable that counts the number of arithmetic operations in $\fq$ of
the EDF step, namely
\begin{equation}
\label{EDF} \mathcal{X}_3(f):=\sum_{k=1}^s
\mathcal{X}_{3,k}(f),\quad
\mathcal{X}_{3,k}(f)\!\!:=\!\!\mathrm{Cost}(\mathrm{EDF}(b_f(k)))\quad(1\le
k\le s).
\end{equation}
%
%From the EDF step, we observe that the polynomial $b_f(k) $ is
%factored into irreducible factors of degree $k$.
%
Finally, we introduce a random variable $\mathcal{X}_4:\mathcal{A}
\rightarrow \mathbb{N}$ that counts the number of operations in
$\fq$ performed by the classical factorization algorithm applied to
${f}/\mathrm{ERF}(f)$.
Our aim is to study the expected value of the random variable
$\mathcal{X}$, namely
\begin{equation}\label{esperanza de la variable aleatoria costo}
E[\mathcal{X}]:=\frac{1}{|\mathcal{A}|}\sum_{f \in
\mathcal{A}}\mathcal{X}(f)= \frac{1}{|\mathcal{A}|} \sum_{k=1}^4
\sum_{f \in \mathcal{A}}\mathcal{X}_k(f).
\end{equation}

We denote by $M(r)$ a \textit{multiplication time}, so that the
product of two polynomials in $\fq[T]$ of degree at most $r$ can be
computed with at most $\tau_1 M(r)$ arithmetic operations in $\fq$.
Using fast arithmetic  we can take $M(r):= r\log r\log\log r$ (see,
e.g., \cite{GaGe99}). For $\tau_1$ suitably chosen, a division with
remainder of two polynomials of degree at most $r$ can also be
computed with at most $\tau_1 M(r)$ arithmetic operations in $\fq$.
Further, the cost of computing the greatest common divisor of two
polynomials in $\fq[T]$ of degree at most $r$ is at most $\tau_2\,
\mathcal{U}(r)$ arithmetic operations in $\fq$, where
$\mathcal{U}(r):= M(r) \log r$ (see, e.g., \cite{GaGe99}). Here,
$\tau_1$ and $\tau_2$ are system and implementation dependent
constants.
%
%----------------------------------------------------------------------
%----------------------------------------------------------------------
%----------------------------------------------------------------------
%----------------------------------------------------------------------
%
\subsection{Elimination of repeated factors}
We consider in detail the step of elimination of repeated factors
(ERF). Let
$$f=f_1^{e_1}\dots f_n^{e_n}=\prod_{p\mid e_i}f_i^{e_i}\prod_{p
\nmid e_i}f_i^{e_i}$$
be the factorization  of $f \in \mathcal{A}$ into monic irreducible
polynomials in $\fq[T]$, where $f_1, \dots,f_n$ are pairwise
distinct, $e_1, \dots,e_n \in \mathbb{N}$ and
$p:=\mathrm{char}(\fq)$. It is clear that $f$ is square--free if and
only if $\gcd (f,f')=1$ (see, e.g., \cite[Corollary 14.25]{GaGe99}).
Assume that $f$ is not square--free. Hence, $u:=\gcd (f,f') \neq 1$.
It follows that $v:=f/u=\prod_{p \nmid e_i} f_i$ is the square--free
part of the product  $\prod_{p \nmid e_i}f_i^{e_i}$ (see, e.g.,
\cite[Theorem 20.4]{Shoup05}).  Since each $e_i \leq r:=\deg(f)$, we
deduce that $\gcd(u,v^r)=\prod_{ p \nmid e_i} f_i^{e_i-1}$.
Therefore,
$$w:=\frac{u}{\gcd(u,v^r)}=\prod_{p \mid e_i} f_i^{e_i}$$
is the part of $f$ which is a power of $ p $. These are the
foundations of the following procedure.
\begin{algorithm3} ${}$
    \begin{enumerate}
        \item []Input: $f \in \fq[T]$ monic of degree $r>0$.

        \item []Output: the square--free part of  $f$, that is, the product of all distinct irreducible factors of  $f$ in $\fq[T]$.

        \begin{center}
            \textbf{procedure ERF (f: polynomial)}
        \end{center}

        \begin{enumerate}
            \item [] Compute $u:=\gcd(f,f')$
            \item [] Compute $v:=\frac{f}{u}$\quad [square--free part of $\prod_{p \nmid e_i}f_i^{e_i}$]
            \item [] Compute $w:=\frac{u}{\gcd(u,v^r)}$\quad [part of $f$ which is a power of $p$]
            \item[] Return $v \cdot \mathrm{ERF}(w^{1/p})$.
        \end{enumerate}
    \end{enumerate}
 \end{algorithm3}
According to \cite[Exercise 14.27]{GaGe99}, for $f\in \fq[T]$ of
degree at most $r$, the number of arithmetic operations in $\fq$
performed by the ERF algorithm to obtain the square--free part of
$f$ is  $\mathcal{O}(M(r)\log r+ r \log (q/p))$. In this section we
analyze the average--case complexity of the ERF algorithm restricted
to elements of the family $\mathcal{A}$. More precisely, we analyze
the expected value $E[\mathcal{X}_1]$ of the random variable
$\mathcal{X}_1$ defined in \eqref{ERF}, namely
\begin{equation}
\label{esperanza ERF}
E[\mathcal{X}_1]:=\frac{1}{|\mathcal{A}|} \sum _{f \in \mathcal{A}} \mathcal{X}_1(f).
\end{equation}

Let $\mathcal{A}^{sq}$ be the set of $f \in \mathcal {A} $ that are
square--free and $\mathcal{A}^{nsq}:=\mathcal{A}\setminus
\mathcal{A}^{sq}$. The probability that a random polynomial  of
$\mathcal{A}$ is square--free is
$$P[\mathcal{A}^{sq}]=\frac{|\mathcal{A}^{sq}|}{|\mathcal{A}|}=1-\frac{|\mathcal{A}^{nsq}|}{|\mathcal{A}|}.
$$
According to \eqref{eq: estimates: upper bound discr locus}, we have
$|\mathcal{A}^{nsq}|\leq r(r-1)\delta_{\bfs G} q^{r-m-1}$. On the
other hand, from Theorem \ref{estimation A} it follows that, if  $q
> 15\delta_{\bfs G}^{13/3}$, then $|\mathcal{A}|\geq\frac{1}{2}
q^{r-m}$,
where $ \bfs G:= (G_1, \dots, G_m) $ are the polynomials defining
the family $ \mathcal{A} $ and $\delta_{\bfs G}:=\deg(G_1)\cdots\deg
(G_m)$. As a consequence,
$$
P[\mathcal{A}^{sq}] \geq 1-\frac{2\,r^2\delta_{\bfs G}
\,q^{r-m-1}}{q^{r-m}} =1-\frac{2\,r^2\delta_{\bfs G}}{q}.
$$
In other words, we have the following result.
\begin{lemma}\label{mas libres de cuadrados}
For $q > 15\delta_{\bfs G}^{13/3}$, the probability that a random
polynomial of  $\mathcal{A}$ is square--free is
$P[\mathcal{A}^{sq}]\ge 1 - 2\,r^2\delta_{\bfs G}/q$. In particular,
if $q >\max\{ 15\delta_{\bfs G}^{13/3},4\,r^2\delta_{\bfs G}\}$,
then $P[\mathcal{A}^{sq}]>1/2$.
\end{lemma}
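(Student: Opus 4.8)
The plan is to bound $P[\mathcal{A}^{sq}]$ from below by combining an upper bound on the number $|\mathcal{A}^{nsq}|$ of non--square--free elements of $\mathcal{A}$ with the lower bound on $|\mathcal{A}|$ already obtained in Theorem \ref{estimation A}. Since $\mathcal{A}^{sq}$ and $\mathcal{A}^{nsq}$ partition $\mathcal{A}$, we have $P[\mathcal{A}^{sq}] = 1 - |\mathcal{A}^{nsq}|/|\mathcal{A}|$, so it suffices to show that this ratio is at most $2\,r^2\delta_{\bfs G}/q$.

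First I would recall that an element $f\in\mathcal{A}$ fails to be square--free precisely when it lies in the discriminant locus $\mathcal{D}(W)$, and that \eqref{eq: estimates: upper bound discr locus} already provides the estimate $|\mathcal{A}^{nsq}| \le r(r-1)\,\delta_{\bfs G}\,q^{r-m-1} \le r^2\,\delta_{\bfs G}\,q^{r-m-1}$. Next, under the standing hypothesis $q > 15\,\delta_{\bfs G}^{13/3}$, Theorem \ref{estimation A} gives $|\mathcal{A}| \ge \frac{1}{2}\,q^{r-m}$. Dividing the two bounds yields $|\mathcal{A}^{nsq}|/|\mathcal{A}| \le 2\,r^2\delta_{\bfs G}/q$, hence $P[\mathcal{A}^{sq}] \ge 1 - 2\,r^2\delta_{\bfs G}/q$, which is the first assertion.

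For the ``in particular'' clause I would simply observe that if, in addition, $q > 4\,r^2\delta_{\bfs G}$, then $2\,r^2\delta_{\bfs G}/q < 1/2$, so that $P[\mathcal{A}^{sq}] > 1 - 1/2 = 1/2$. Imposing both constraints on $q$ amounts to requiring $q > \max\{15\,\delta_{\bfs G}^{13/3},\,4\,r^2\delta_{\bfs G}\}$, as stated.

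The argument is entirely routine and there is no genuine obstacle: both ingredients --- the bound on the discriminant locus and the lower bound on $|\mathcal{A}|$ --- are already established earlier in the paper. The only points requiring (minimal) care are to make sure the threshold $q > 15\,\delta_{\bfs G}^{13/3}$ under which Theorem \ref{estimation A} applies is in force throughout, and to keep track of the constants so that the clean bound $2\,r^2\delta_{\bfs G}/q$ (rather than one involving $r(r-1)$) comes out.
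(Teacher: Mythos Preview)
Your proof is correct and follows essentially the same approach as the paper: the paper also combines the bound $|\mathcal{A}^{nsq}|\le r(r-1)\delta_{\bfs G}q^{r-m-1}$ from \eqref{eq: estimates: upper bound discr locus} with the lower bound $|\mathcal{A}|\ge \tfrac{1}{2}q^{r-m}$ from Theorem \ref{estimation A} to obtain $P[\mathcal{A}^{sq}]\ge 1-2r^2\delta_{\bfs G}/q$, and the ``in particular'' clause is immediate from this.
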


To estimate $E[\mathcal{X}_1]$, we decompose the family
$\mathcal{A}$ into the sets $\mathcal{A}^{sq}$ and
$\mathcal{A}^{nsq}$. We have
$$
E[\mathcal{X}_1]=\frac{1}{|\mathcal{A}|} \sum _{f \in
\mathcal{A}^{sq}} \mathcal{X}_1(f) +\frac{1}{|\mathcal{A}|} \sum _{f
\in \mathcal{A}^{nsq}} \mathcal{X}_1(f)=:S_1^{sq}+S_1^{nsq}.
$$
First we obtain an upper bound for $S_1^{sq}$. On input
$f\in\mathcal{A}^{sq}$, the ERF algorithm performs the first three
steps. Since $u:=\gcd(f,f')=1$ and $\gcd(u,v^r)=1$, its cost is
dominated by the cost of calculating $u$, which is at most
$\tau_2\,\mathcal{U}(r)$ arithmetic operations in $\fq$, and the
cost of calculating $v^r$, which at most $ \tau_1 \, \mathcal{U}(r)$
arithmetic operations in $\fq$. We conclude that, if $f \in
\mathcal{A}^{sq}$, then $\mathcal{X}_1(f) \leq (\tau_1 +\tau_2)\,
\mathcal{U}(r)$. Therefore,
\begin{equation}\label{cota Suma 1 de X1}
S_1^{sq}:=\frac{1}{|\mathcal{A}|} \sum _{f \in \mathcal{A}^{sq}}
\mathcal{X}_1(f)\leq  (\tau_1 +\tau_2)\, \mathcal{U}(r)
\frac{|\mathcal{A}^{sq}|}{|\mathcal{A}|}.
\end{equation}
On the other hand, if  $f \in \mathcal{A}^{nsq}$, then
\cite[Exercise 14.27]{GaGe99} shows that the number of arithmetic
operations in $ \fq $ which performs the ERF algorithm on input $ f
$ is bounded by $ \mathcal{X}_1(f) \leq c_1 \big( \mathcal{U}(r)+ r
\log \big(\frac{q}{p}\big)\big)$, where $c_1$ is a constant
independent of $q$ and $p:=\mathrm{Char}(\fq)$. Hence, we have
\begin{equation}\label{cota suma S2 de X1}
S_1^{nsq}:=\frac{1}{|\mathcal{A}|} \sum _{f \in \mathcal{A}^{nsq}}
\mathcal{X}_1(f)\leq  c_1 \bigg(\mathcal{U}(r)+ r
\log\Big(\frac{q}{p}\Big)\bigg)
\frac{|\mathcal{A}^{nsq}|}{|\mathcal{A}|}.
\end{equation}
Combining  \eqref{cota Suma 1 de X1} and \eqref{cota suma S2 de X1}
we conclude that
\begin{align*}
E[\mathcal{X}_1] & \leq (\tau_1 +\tau_2) \, \mathcal{U}(r)
\frac{|\mathcal{A}^{sq}|}{|\mathcal{A}|} + c_1  \, \mathcal{U}(r)
\frac{|\mathcal{A}^{nsq}|}{|\mathcal{A}|}+ c_1 \,r
\log\Big(\frac{q}{p}\Big)\frac{|\mathcal{A}^{nsq}|}{|\mathcal{A}|}\\
\notag & \leq c_2  \, \mathcal{U}(r) + c_1\, r
\log\Big(\frac{q}{p}\Big) \frac{|\mathcal{A}^{nsq}|}{|\mathcal{A}|},
\end{align*}
where $c_2:=\max\{ \tau_1 +\tau_2 ,c_1\}$. Hence, if
$q>15\delta_{\bfs G}^{13/3}$, then Lemma \ref{mas libres de
cuadrados} implies
$$E[\mathcal{X}_1] \leq c_2\, \mathcal{U}(r) + 2\,c_1\, r^3
\delta_{\bfs G}\log\Big(\frac{q}{p}\Big)\frac{1}{q}.$$
We obtain the following result.
\begin{theorem}\label{costo paso 1}
Let $q > 15\delta_{\bfs G}^{13/3}$. The average cost $ E [\mathcal
{X} _1] $ of the $\mathrm{ERF}$ algorithm applied to elements of $
\mathcal {A} $ is upper bounded as $ E [\mathcal {X} _1] \leq c_2 \,
\mathcal {U} (r) +c_3 \log  \big (\frac {q} {p} \big) \delta_{\bfs
G}\frac {r ^ 3} {q} $, where $c_2$ and $ c_3$ are constants
independent of $ r$ and $ q $.
\end{theorem}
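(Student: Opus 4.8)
The plan is to split the family $\mathcal{A}$ according to square--freeness and estimate the two contributions separately. Writing $\mathcal{A}^{sq}$ for the square--free elements of $\mathcal{A}$ and $\mathcal{A}^{nsq}:=\mathcal{A}\setminus\mathcal{A}^{sq}$, one decomposes
\[
E[\mathcal{X}_1]=\frac{1}{|\mathcal{A}|}\sum_{f\in\mathcal{A}^{sq}}\mathcal{X}_1(f)+\frac{1}{|\mathcal{A}|}\sum_{f\in\mathcal{A}^{nsq}}\mathcal{X}_1(f)=:S_1^{sq}+S_1^{nsq}.
\]
For the square--free part I would use that on an input $f\in\mathcal{A}^{sq}$ the $\mathrm{ERF}$ routine executes only its first three steps, and since then $u:=\gcd(f,f')=1$ and $\gcd(u,v^r)=1$, the running time is dominated by one $\gcd$ computation together with the computation of $v^r$; with fast arithmetic this costs at most $(\tau_1+\tau_2)\,\mathcal{U}(r)$ operations in $\fq$. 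Hence $\mathcal{X}_1(f)\le(\tau_1+\tau_2)\,\mathcal{U}(r)$ for such $f$, giving $S_1^{sq}\le(\tau_1+\tau_2)\,\mathcal{U}(r)\,|\mathcal{A}^{sq}|/|\mathcal{A}|\le(\tau_1+\tau_2)\,\mathcal{U}(r)$.

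For the non--square--free part I would invoke the worst--case bound of \cite[Exercise 14.27]{GaGe99}, which yields $\mathcal{X}_1(f)\le c_1\big(\mathcal{U}(r)+r\log(q/p)\big)$ for every $f\in\fq[T]$ of degree $r$, with $c_1$ independent of $q$ and $p:=\mathrm{char}(\fq)$; therefore $S_1^{nsq}\le c_1\big(\mathcal{U}(r)+r\log(q/p)\big)\,|\mathcal{A}^{nsq}|/|\mathcal{A}|$. Adding the two estimates and grouping the $\mathcal{U}(r)$--terms under the constant $c_2:=\max\{\tau_1+\tau_2,c_1\}$ gives
\[
E[\mathcal{X}_1]\le c_2\,\mathcal{U}(r)+c_1\,r\log\Big(\frac{q}{p}\Big)\,\frac{|\mathcal{A}^{nsq}|}{|\mathcal{A}|}.
\]

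To conclude, I would control the ratio $|\mathcal{A}^{nsq}|/|\mathcal{A}|$ using the geometric input already available: by \eqref{eq: estimates: upper bound discr locus} one has $|\mathcal{A}^{nsq}|\le r(r-1)\,\delta_{\bfs G}\,q^{r-m-1}$, while Theorem \ref{estimation A} gives $|\mathcal{A}|\ge\tfrac12 q^{r-m}$ for $q>15\delta_{\bfs G}^{13/3}$, so that $|\mathcal{A}^{nsq}|/|\mathcal{A}|\le 2r^2\delta_{\bfs G}/q$ (this is precisely Lemma \ref{mas libres de cuadrados}). Substituting into the last display produces $E[\mathcal{X}_1]\le c_2\,\mathcal{U}(r)+2c_1\,r^3\delta_{\bfs G}\log(q/p)/q$, which is the claimed bound with $c_3:=2c_1$. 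I do not expect any genuine obstacle here: the statement is essentially a bookkeeping assembly of the worst--case cost of $\mathrm{ERF}$ with the cardinality estimates of Section~\ref{sec: estimate of |A|}. The only substantive ingredient is the lower bound $|\mathcal{A}|\ge\tfrac12 q^{r-m}$, which itself rests on the absolute irreducibility of $W$ (Corollary \ref{theo: W absolutely irreducible}) and the point--counting estimate of \cite{CaMa06}; given that, everything else is routine.
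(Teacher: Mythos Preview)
Your proposal is correct and follows essentially the same approach as the paper: the same split into $\mathcal{A}^{sq}$ and $\mathcal{A}^{nsq}$, the same per--instance bounds $(\tau_1+\tau_2)\,\mathcal{U}(r)$ and $c_1\big(\mathcal{U}(r)+r\log(q/p)\big)$ from \cite[Exercise 14.27]{GaGe99}, the same definition $c_2:=\max\{\tau_1+\tau_2,c_1\}$, and the same use of Lemma~\ref{mas libres de cuadrados} to conclude with $c_3:=2c_1$.
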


We may paraphrase this result as saying that the average cost of the
ERF algorithm applied to elements of $\mathcal{A}$ is asymptotically
of order $\mathcal{U}(r)$, which corresponds to the cost of
calculating the greatest common divisor $u:=\gcd (f,f')$. This
generalizes the results of \cite[Section 2]{FlGoPa01}.
%
%----------------------------------------------------------------------
%----------------------------------------------------------------------
%----------------------------------------------------------------------
%----------------------------------------------------------------------
%
\subsection{Distinct--degree factorization}
Now we analyze the distinct--degree factorization (DDF) step. Recall
that, given a square--free polynomial $a_f:=\mathrm{ERF}(f)$, the
DDF routine outputs a list $(b(1),\ldots,b(s))$, where $b(k)$ is the
product of all the irreducible factors of degree $k$ of the complete
factorization of $a_f$ over $\fq$. The output $(b(1),\dots,b(s))$ is
called the {\em distinct--degree factorization} of $a_f$.

The DDF procedure is based on the following property (see, e.g.,
\cite[Theorem 3.20]{LiNi83}): for $k \geq 1$, the polynomial
$T^{q^k}-T \in\fq[T]$ is the product of all monic irreducible
polynomials in $\fq[T]$ whose degree divides $k$. It follows that
$g_1:=\gcd(T^q-T,f)$ is the product of all the irreducible factors
of $f$ of degree $ 1 $. Then, for $ 1 \leq k \leq r $, the
polynomial $g_k:=\gcd(T^{q^k }-T, f/g_{k-1}) $ is the product of all
the irreducible factors of $f$ of degree $k$. This proves the
correctness of the following procedure.

\begin{algorithm4}\label{algoritmo DDF}${}$
\begin{enumerate}
        \item[] Input: a monic square--free polynomial $a\in\fq[T]$ of degree $r>0$.

        \item[] Output: the distinct--degree factorization $(b(1),\dots,b(s))$ of $a$ in $\fq[T]$.

        \item [] Let  $g:=a$,\, \,  $h:=T$

        \item [] While $g\neq 1$ do
        \begin{enumerate}
            \item [] Compute $h:=h^q \mod g$
            \item[] Compute $b(k):=\gcd(h-T,g)$
            \item[] Compute $g:=\frac{g}{b(k)}$\quad [$a$ without the irreducible factors of degree at most $k$]

            \item[] $k:=k+1$
        \end{enumerate}

        \item[] End while

        \item[] Return $\bfs b$.
    \end{enumerate}
\end{algorithm4}

In \cite[Theorem 14.4]{GaGe99} it is shown that this algorithm
performs $\mathcal{O}(s M(r) \log(rq))$ arithmetic operations in
$\fq$, where $s$ is the maximum degree of the irreducible factors of
the input polynomial $a$. In this section we analyze the
average--case complexity of the DDF routine restricted to
polynomials of the family $\mathcal{A}$. More precisely, we consider
the expected value $E[\mathcal{X}_2]$ of the random variable
$\mathcal{X}_2$ of \eqref{DDF}, namely
$$
E[\mathcal{X}_2]:=\frac{1}{|\mathcal{A}|} \sum _{f \in \mathcal{A}} \mathcal{X}_2(f).
$$
We decompose as before the set of inputs $\mathcal{A}$ into the
disjoint subsets $\mathcal{A}^{sq}$ (elements of $\mathcal{A}$ which
are square--free) and  $\mathcal{A} ^{nsq}:=\mathcal{A} \setminus
\mathcal{A}^{sq}$. Hence, we have
\begin{equation}\label{esperanza DDF 1}
E[\mathcal{X}_2]=\frac{1}{|\mathcal{A}|} \sum _{f \in
\mathcal{A}^{sq}} \mathcal{X}_2(f)+\frac{1}{|\mathcal{A}|} \sum _{f
\in \mathcal{A}^{nsq}} \mathcal{X}_2(f).
\end{equation}

First we obtain an upper bound for the first sum $S_2^{sq}$ in the
right--hand side of \eqref{esperanza DDF 1}. We express
$\mathcal{A}^{sq}$ as a disjoint union as follows:
$$
\mathcal{A}^{sq}=\bigcup_{i=1}^r \mathcal{A}_{i}^{sq},
$$
where $\mathcal{A}_{i}^{sq}$ is the set of elements of
$\mathcal{A}^{sq}$ for which the maximum degree of the irreducible
factors is $i$. Moreover, for $1 \leq i \leq r$, we can express each
$\mathcal{A}_{i}^{sq}$ as the disjoint union
$$
\mathcal{A}_{i}^{sq}=\bigcup_{\bfs \lambda \in \mathcal{P}_i}
\mathcal{A}_{\bfs \lambda}^{sq},
$$
where $\mathcal{P}_i$ is the set of $\bfs \lambda:=(\lambda_1,
\dots, \lambda_i, 0,\dots,0)\in \mathbb{Z}_{\ge 0}^r$ such that
$\lambda_1+\cdots+ i \, \lambda_i=r$ and $ \lambda_i>0$, and
$\mathcal{A}_{ \bfs \lambda}^{sq}$ is the set of elements of
$\mathcal{A}_{i}^{sq}$ with factorization pattern $\bfs \lambda$.
Therefore,
\begin{equation} \label{suma 11 bis}
S_2^{sq}=\frac{1}{|\mathcal{A}|} \sum_{i=1}^r \sum_{\bfs \lambda\in
\mathcal{P}_i}\sum _{f \in \mathcal{A}_{\bfs \lambda }^{sq}}
\mathcal{X}_2(f).
\end{equation}

Fix $i$ with $ 1 \leq i \leq r$, let $\bfs \lambda\in \mathcal{P}_i$
and  $f \in \mathcal{A}_{\bfs \lambda}^{sq}$. To determine the cost
$\mathcal{X}_2(f)$, we observe that the procedure performs $i$
iterations of the main loop. Fix $l$ with $ 1\leq l\leq i$ and we
consider the $l$th iteration of the DDF algorithm. The number of
products modulo $g$ needed to compute $h^q \mod g$ is denoted by
$\lambda(q)$. Using repeated squaring, and denoting by $\nu(q)$ the
number of ones in the binary representation of $q$, the number of
products required to compute $h^q \mod g$ is
$$ \lambda (q):= \lfloor \log q \rfloor + \nu (q) -1.$$
Thus the first step in the $ l $th iteration  of the DDF algorithm
requires at most $2\,\tau_1\, \lambda (q) M(r_l)$ arithmetic
operations in $\fq$, where $r_l:=\deg g$ (note that $r_1=r$ and $r_l
\leq r$ for any $l$). Then the computation $b(k):=\gcd(h-T,g)$
requires at most $\tau_2 M(r_l) \log r_l$ arithmetic operations in
$\fq$. Finally, the division $g/ b(k) $ requires at most $\tau_1
M(r_l)$ arithmetic operations in $\fq$. As a consequence, we see
that
$$
\mathcal{X}_2(f) \leq \sum_{l=1}^i (2\,\tau_1 \lambda (q) + \tau_2
\log r_l + \tau_1) \,  M(r_l).
$$
Observe that, if $a \leq b$, then $M(a) \leq M(b)$ (see, e.g.,
\cite[\S 14.8]{GaGe99})). It follows that
\begin{equation}\label{Costo X2}
\mathcal{X}_2(f) \leq i\,c_{r,q},\quad c_{r,q}:=
M(r)\,\big(2\,\tau_1\lambda(q)+ \tau_1+\tau_2 \log r\big).
\end{equation}
Thus, we obtain
$$S_2^{sq} \leq \frac{c_{r,q}}{|\mathcal{A}|} \sum_{i=1}^r \sum_{\bfs
\lambda \in \mathcal{P}_i}\sum _{f \in \mathcal{A}_{\bfs \lambda
}^{sq}} i= \frac{c_{r,q}}{|\mathcal{A}|}\sum_{i=1}^r i \sum_{\bfs
\lambda \in \mathcal{P}_i} |\mathcal{A}_{\bfs \lambda}^{sq}|.$$
We have the following result.
\begin{lemma}\label{S2sq cota}
For $q > 15\delta_{\bfs G}^{13/3}$, the sum $S_2^{sq}$ is bounded in
the following way:
\begin{equation}\label{suma 11bisbisbisbis}
S_2^{sq} \leq c_{r,q}\bigg(1+\frac{15\delta_{\bfs
G}^{{13}/{6}}}{q^{{1}/{2}}}\bigg)\bigg(1+\frac{M_{r}}{q} \bigg)\xi
(r+1)= c_{r,q}\,\xi (r+1)\big(1+o(1)\big),
\end{equation}
where $M_{r}:=D\delta q^{\frac{1}{2}} +14\,D^2 \delta^2+ r^2\delta$,
$\delta:=\prod_{i=1}^m \wt(G_i)$, $D:=\sum_{i=1}^m (\wt(G_i)-1)$ and
$\xi\sim 0.62432945\dots$ is the Golomb constant.
 \end{lemma}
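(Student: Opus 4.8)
The plan is to bound the three factors in $S_2^{sq}\le \frac{c_{r,q}}{|\mathcal{A}|}\sum_{i=1}^r i\sum_{\bfs\lambda\in\mathcal{P}_i}|\mathcal{A}_{\bfs\lambda}^{sq}|$ separately: the sum $\sum_i i\sum_{\bfs\lambda\in\mathcal{P}_i}|\mathcal{A}_{\bfs\lambda}^{sq}|$ through Theorem \ref{theorem: estimate fact patterns}, the factor $|\mathcal{A}|^{-1}$ through Theorem \ref{estimation A}, and a surviving purely combinatorial sum by recognizing it as the expected length of the longest cycle of a uniformly random permutation of $\mathbb{S}_r$.

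First I would bound $|\mathcal{A}_{\bfs\lambda}^{sq}|$. Since $\delta\ge 1$ we have $\delta(D-2)+2\le D\delta$, so the first inequality of Theorem \ref{theorem: estimate fact patterns} gives
$$
|\mathcal{A}_{\bfs\lambda}^{sq}|\le \mathcal{T}(\bfs\lambda)\,q^{r-m}+\mathcal{T}(\bfs\lambda)\,q^{r-m-1}M_r=\mathcal{T}(\bfs\lambda)\,q^{r-m}\Bigl(1+\frac{M_r}{q}\Bigr),
$$
with $M_r=D\delta q^{1/2}+14D^2\delta^2+r^2\delta$ exactly as in the statement. Summing over $\bfs\lambda\in\mathcal{P}_i$ and then over $i$ with weight $i$ yields
$$
\sum_{i=1}^r i\sum_{\bfs\lambda\in\mathcal{P}_i}|\mathcal{A}_{\bfs\lambda}^{sq}|\le q^{r-m}\Bigl(1+\frac{M_r}{q}\Bigr)\sum_{i=1}^r i\sum_{\bfs\lambda\in\mathcal{P}_i}\mathcal{T}(\bfs\lambda).
$$
Now $\mathcal{T}(\bfs\lambda)=1/w(\bfs\lambda)$ is the proportion of permutations of $\mathbb{S}_r$ with cycle pattern $\bfs\lambda$, and $\mathcal{P}_i$ is precisely the set of patterns whose largest part equals $i$; hence $\sum_{\bfs\lambda\in\mathcal{P}_i}\mathcal{T}(\bfs\lambda)$ is the probability that a uniformly random permutation of $\mathbb{S}_r$ has longest cycle of length $i$, and $\sum_{i=1}^r i\sum_{\bfs\lambda\in\mathcal{P}_i}\mathcal{T}(\bfs\lambda)$ is the expected length of that longest cycle. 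By the classical analysis of this expectation in terms of the Golomb--Dickman constant $\xi$, it is at most $\xi(r+1)$.

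Finally I would use Theorem \ref{estimation A}, which for $q>15\delta_{\bfs G}^{13/3}$ gives $|\mathcal{A}|^{-1}\le q^{m-r}\bigl(1+15\delta_{\bfs G}^{13/6}/q^{1/2}\bigr)$. Multiplying the three bounds together produces
$$
S_2^{sq}\le c_{r,q}\,\xi(r+1)\Bigl(1+\frac{15\delta_{\bfs G}^{13/6}}{q^{1/2}}\Bigr)\Bigl(1+\frac{M_r}{q}\Bigr),
$$
which is the first displayed inequality; since $15\delta_{\bfs G}^{13/6}/q^{1/2}\to0$ and $M_r/q\to0$ as $q\to\infty$ with $r$ and $\deg G_1,\dots,\deg G_m$ fixed, the right--hand side equals $c_{r,q}\,\xi(r+1)\bigl(1+o(1)\bigr)$. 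I expect the main obstacle to be the combinatorial step, namely identifying $\sum_i i\sum_{\bfs\lambda\in\mathcal{P}_i}\mathcal{T}(\bfs\lambda)$ with the mean longest-cycle length and pinning down the sharp bound $\xi(r+1)$ from the literature on the Golomb constant; everything else is bookkeeping, although the inequality $\delta(D-2)+2\le D\delta$ (ensuring that the $M_r$ of this lemma dominates the error term of Theorem \ref{theorem: estimate fact patterns}) should be stated explicitly.
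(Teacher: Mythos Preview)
Your proposal is correct and follows essentially the same route as the paper: bound $|\mathcal{A}_{\bfs\lambda}^{sq}|$ via Theorem \ref{theorem: estimate fact patterns}, identify $\sum_{i}i\sum_{\bfs\lambda\in\mathcal{P}_i}\mathcal{T}(\bfs\lambda)$ as the expected longest-cycle length in $\mathbb{S}_r$, invoke the bound $E_r\le \xi(r+1)$, and finish with Theorem \ref{estimation A}. The paper cites \cite{GoGa98} for the inequality $E_r/(r+1)\le\xi$; your explicit observation that $\delta(D-2)+2\le D\delta$ (since $\delta\ge 1$) is a detail the paper leaves implicit.
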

\begin{proof}
According to Theorem \ref{theorem: estimate fact patterns}, we have
$$|\mathcal{A}_{\bfs \lambda }^{sq}| \leq q^{r-m}\,\mathcal{T}(\bfs
\lambda)\bigg(1 +\frac{M_{ r}}{q}\bigg),$$
where $\mathcal{T}(\bfs \lambda)$ is the probability of the set of
permutations with cycle pattern $\bfs \lambda $ in the symmetric
group $\mathbb{S}_r$ of $r$ elements. Hence,
\begin{align}\label{suma 11bisbisbis}
S_2^{sq} & \leq
\frac{c_{r,q}}{|\mathcal{A}|}\,q^{r-m}\bigg(1+\frac{M_{r}}{q}
\bigg)\sum_{i=1}^r i \sum_{\bfs \lambda \in \mathcal{P}_i}
\mathcal{T}(\bfs \lambda ).
\end{align}

Now we analyze the sum $E_r:=\sum_{i=1}^r i\sum_{\bfs \lambda \in
\mathcal{P}_i} \mathcal{T}(\bfs \lambda )$. Observe that the sum $
\sum_ {\bfs \lambda \in \mathcal {P}_i} \mathcal{T} (\bfs \lambda) $
expresses the probability of the set of permutations whose longest
cycle has length $ i $. It follows that $ E_r $ is the largest
expected length between cycles of a random permutation in $ \mathbb
{S} _r $. In \cite{GoGa98} it is shown that
$$
\frac{E_r}{r+1}\leq \xi,
$$
where $\xi$ is the Golomb constant (see, e.g., \cite{Knuth98}).
Combining this upper bound, Theorem \ref{estimation A} and
\eqref{suma 11bisbisbis}, we readily deduce the statement of the
lemma.
\end{proof}

Next we obtain an upper bound for the second sum $S_2^{nsq}$ of
\eqref{esperanza DDF 1}, namely
$$
S_2^{nsq}:=\frac{1}{|\mathcal{A}|} \sum _{f \in \mathcal{A}^{nsq}} \mathcal{X}_2(f).
$$
Given $f \in \mathcal{A}^{nsq}$, we bound
$\mathcal{X}_2(f):=\mathrm{Cost}(\mathrm{DDF}(a_f))$, where
$a_f:=\mathrm{ERF}(f)$ is the  output square--free polynomial of the
ERF procedure applied to $f$. By \eqref{Costo X2} we have
$$
\mathcal{X}_2(f) \leq c_{N,q}\cdot s_a,
$$
where $c_{N,q}:=M(N)\,\big(2\,\tau_1\lambda(q)+ \tau_1+\tau_2 \log
N\big)$, $N:=\deg(a_f)$ and $s_a$ is the highest degree of the
irreducible factors of $ a_f $. Since $f \in \mathcal{A}^{nsq}$, we
have $N\leq r-1$ and $s_a\le r -2$.  Moreover, it is easy to see
that these bounds are optimal. Therefore we obtain
$$
\mathcal{X}_2(f) \leq c_{r-1,q}\, (r-2).
$$
Combining this bound, Theorem \ref{estimation A} and \eqref{eq:
estimates: upper bound discr locus} we see that, if $q>
15\delta_{\bfs G}^{13/3}$, then
\begin{align}\nonumber
S_{2}^{nsq} \leq c_{r-1,q}\,(r-2) \frac{|A^{nsq}|}{|\mathcal{A}|}
&\leq c_{r-1,q}\,(r-2)\bigg(1+\frac{15\delta_{\bfs
G}^{{13}/{6}}}{q^{{1}/{2}}}\bigg) \frac{r^2\delta_{\bfs G}\,
q^{r-m-1}}{q^{r-m}}\\&\leq \,c_{r-1,q}\bigg(1+\frac{15\delta_{\bfs
G}^{{13}/{6}}}{q^{{1}/{2}}}\bigg)\,\frac{r^3\delta_{\bfs
G}}{q}.\label{suma 2 bis}
\end{align}

From the upper bounds of Lemma \ref{S2sq cota} and \eqref{suma 2
bis} we conclude that
\begin{align*}
E[\mathcal{X}_2]&=\frac{1}{|\mathcal{A}|} \sum _{f \in
\mathcal{A}^{sq}} \mathcal{X}_2(f)+\frac{1}{|\mathcal{A}|} \sum _{f
\in \mathcal{A}^{nsq}} \mathcal{X}_2(f)\\
&\le c_{r,q}\bigg(1+\frac{15\delta_{\bfs
G}^{{13}/{6}}}{q^{{1}/{2}}}\bigg)\bigg(1+\frac{M_{r}}{q} \bigg)\xi
(r+1)+\,c_{r-1,q}\bigg(1+\frac{15\delta_{\bfs
G}^{{13}/{6}}}{q^{{1}/{2}}}\bigg)\,\frac{r^3\delta_{\bfs G}}{q}.
\end{align*}
%
%Recall that $ \lambda (q):= \lfloor \log q \rfloor + \nu (q) -1.$
%Since $0 \leq \nu(q)-1 \leq \log q$, we deduce that $\lambda (q)
%\leq 2 \log q$ and thus $c_{j,q} \leq M(j)\,(4 \tau_1 \log q +
%\tau_2 \log j)$.
Since $c_{j,q}:=M(j)\,\big(2\, \tau_1 \lambda(q)+\tau_1+\tau_2\log
j\big)$, we have $c_{r-1,q} \leq c_{r,q}$. As a consequence, we
obtain the following result.
\begin{theorem}\label{average de DDF}
For $q> 15\delta_{\bfs G}^{13/3}$, the average cost
$E[\mathcal{X}_2]$ of the $\mathrm{DDF}$ algorithm restricted to
$\mathcal {A} $ is bounded by
\begin{align*}
E[\mathcal{X}_2] &\leq \xi\,(2\, \tau_1 \lambda(q)+\tau_1+\tau_2\log
r) M(r)\,(r+1) \bigg(1 +\frac{M_r+r^2\delta_{\bfs
G}}{q}\bigg)\bigg(1+\frac{15\delta_{\bfs
G}^{{13}/{6}}}{q^{{1}/{2}}}\bigg)\\
&=\xi\,(2\, \tau_1 \lambda(q)+\tau_1+\tau_2\log r)
M(r)\,(r+1)\big(1+o(1)\big),
\end{align*}
where $M_{r}:=D\delta q^{\frac{1}{2}} +14\,D^2 \delta^2+ r^2\delta$,
$\delta:=\prod_{i=1}^m \wt(G_i)$, $D:=\sum_{i=1}^m (\wt(G_i)-1)$ and
$\xi\sim 0.62432945\dots$ is the Golomb constant.
\end{theorem}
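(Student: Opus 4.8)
The plan is to obtain the stated bound by simply assembling the two estimates already established above. Split the set of inputs as $\mathcal{A}=\mathcal{A}^{sq}\cup\mathcal{A}^{nsq}$ and write, as in \eqref{esperanza DDF 1}, $E[\mathcal{X}_2]=S_2^{sq}+S_2^{nsq}$, the two summands being the average cost over the square--free and the non--square--free inputs respectively. For $S_2^{sq}$ we quote Lemma \ref{S2sq cota}, which gives
$$
S_2^{sq}\le c_{r,q}\Bigl(1+\frac{15\delta_{\bfs G}^{13/6}}{q^{1/2}}\Bigr)\Bigl(1+\frac{M_r}{q}\Bigr)\xi\,(r+1),
$$
with $c_{r,q}=M(r)\bigl(2\tau_1\lambda(q)+\tau_1+\tau_2\log r\bigr)$. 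For $S_2^{nsq}$ we use the derivation culminating in \eqref{suma 2 bis}: on a non--square--free $f$ the polynomial $a_f=\mathrm{ERF}(f)$ has degree at most $r-1$ and largest irreducible factor of degree at most $r-2$, so by \eqref{Costo X2} one has $\mathcal{X}_2(f)\le c_{r-1,q}(r-2)$; combining this with the bound $|\mathcal{A}^{nsq}|\le r^2\delta_{\bfs G}\,q^{r-m-1}$ of \eqref{eq: estimates: upper bound discr locus} and the lower bound for $|\mathcal{A}|$ of Theorem \ref{estimation A} yields
$$
S_2^{nsq}\le c_{r-1,q}\Bigl(1+\frac{15\delta_{\bfs G}^{13/6}}{q^{1/2}}\Bigr)\frac{r^3\delta_{\bfs G}}{q}.
$$

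It then remains only to add the two bounds. Using the monotonicity $c_{r-1,q}\le c_{r,q}$ (immediate since $M(r-1)\le M(r)$ and $\log(r-1)\le\log r$), factoring out the common quantity $c_{r,q}\bigl(1+15\delta_{\bfs G}^{13/6}/q^{1/2}\bigr)\xi(r+1)=\xi(2\tau_1\lambda(q)+\tau_1+\tau_2\log r)M(r)(r+1)\bigl(1+15\delta_{\bfs G}^{13/6}/q^{1/2}\bigr)$, and absorbing the lower--order contribution $r^3\delta_{\bfs G}/q$ of $S_2^{nsq}$ into the factor $1+(M_r+r^2\delta_{\bfs G})/q$, we obtain the first displayed inequality of the statement. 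The second form follows because, for $r$ and $\deg G_1,\dots,\deg G_m$ held fixed, both $15\delta_{\bfs G}^{13/6}/q^{1/2}$ and $(M_r+r^2\delta_{\bfs G})/q$ tend to $0$ as $q\to\infty$ (recall $M_r=D\delta q^{1/2}+14D^2\delta^2+r^2\delta$, so $M_r/q=O(q^{-1/2})$).

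In short, there is no real obstacle remaining at this stage: the substantive content lies entirely in the preparatory results — the per--input cost estimate \eqref{Costo X2}, the factorization--pattern count of Theorem \ref{theorem: estimate fact patterns}, the combinatorial identification of $\sum_i i\sum_{\bfs\lambda\in\mathcal{P}_i}\mathcal{T}(\bfs\lambda)$ with the expected length of the longest cycle of a uniformly random element of $\mathbb{S}_r$ together with the Golomb--constant bound $E_r/(r+1)\le\xi$, and the estimate $|\mathcal{A}|\ge q^{r-m}\bigl(1-3\delta_{\bfs G}^{13/6}/q^{1/2}\bigr)$ of Theorem \ref{estimation A}. The one point worth stating explicitly is that the non--square--free inputs are genuinely negligible: their total weight is only $O(r^3\delta_{\bfs G}/q)$ relative to the main term, which is precisely what makes $S_2^{nsq}$ disappear into the $o(1)$, and this in turn rests on the discriminant locus $\mathcal{D}(W)$ having codimension at least one in $W$ (hypothesis $({\sf H}_4)$), as exploited in \eqref{eq: estimates: upper bound discr locus}.
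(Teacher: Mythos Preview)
Your proof is correct and follows essentially the same approach as the paper: decompose $E[\mathcal{X}_2]=S_2^{sq}+S_2^{nsq}$, quote Lemma \ref{S2sq cota} for the square--free part and \eqref{suma 2 bis} for the non--square--free part, use $c_{r-1,q}\le c_{r,q}$, and combine. The paper's own argument is just this sum--and--absorb step, with the same preparatory ingredients you identify.
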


In \cite[Theorem 5]{FlGoPa01} the authors prove that the average
cost of the DDF algorithm applied to a random polynomial $ f \in \fq
[T] $ of degree at most $ r $ is of order $0.26689\,
(2\,\tau_1\,\lambda(q) +\tau_2)\,r^3$. We prove that, assuming that
fast arithmetic is used, the average cost of this algorithm
restricted to $\mathcal{A}$ is of order
$\xi(2\,\tau_1\,\lambda(q)+\tau_1 +\tau_2\log r)\,(r+1)\,M(r)$
arithmetic operations in $\fq$, thus improving the result of
\cite{FlGoPa01} (which assumes that standard arithmetic is used).

The DDF algorithm does not completely factor any polynomial $f \in
\mathcal{A}$ having distinct irreducible factors of the same degree.
More precisely, the classical factorization algorithm ends in this
step if the input polynomial $f$ has a factorization pattern $\bfs
\lambda\in \{0,1\}^r$. We conclude this section with a result on the
probability that the DDF algorithm outputs the complete
factorization of the input polynomial of $\mathcal {A}$.

In \cite{FlSe09} it is shown that most factorizations are completed
after the application of the DDF procedure. More precisely, it is
proved that, when $ r $ is fixed and $ q $ tends to infinity, the
probability that the DDF algorithm produces a complete factorization
of a random polynomial of degree at most $ r $ in $ \fq [T] $ is of
order of $e^{-\gamma} \sim 0.5614\dots$, where $\gamma \sim
0.57721\dots$ is the Euler constant (see \cite[Theorem
6]{FlGoPa01}). We generalize this result to the family
$\mathcal{A}$.
\begin {theorem}
The probability that the $\mathrm{DDF}$ algorithm completes the
factorization of a random polynomial of $\mathcal {A}$ is bounded
from above by $\big(e^{-\gamma}+ e^{-{\gamma}}/{r}+O({\log
r}/{r^2})\big)\big(1+o(1)\big)$, where $ \gamma $ is Euler's
constant.
\end {theorem}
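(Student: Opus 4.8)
The plan is to rephrase the event ``$\mathrm{DDF}$ completes the factorization of $f$'' in terms of the factorization pattern of $f$, and then to combine Theorem~\ref{theorem: estimate fact patterns} with a classical combinatorial estimate on permutations. As noted in the paragraph preceding the statement, the classical factorization algorithm terminates immediately after the $\mathrm{DDF}$ step precisely when $f$ has at most one monic irreducible factor of each degree; equivalently, when its factorization pattern $\bfs\lambda=1^{\lambda_1}2^{\lambda_2}\cdots r^{\lambda_r}$ satisfies $\lambda_i\le 1$ for $1\le i\le r$, and every such $f$ is automatically square--free. Writing $\Lambda_n$ for the set of factorization patterns $\bfs\lambda$ of $n$ with $\lambda_i\le 1$ for all $i$, the probability to be bounded is therefore
$$
P:=\frac{1}{|\mathcal{A}|}\sum_{\bfs\lambda\in\Lambda_r}|\mathcal{A}_{\bfs\lambda}|
=\frac{1}{|\mathcal{A}|}\sum_{\bfs\lambda\in\Lambda_r}|\mathcal{A}_{\bfs\lambda}^{sq}|.
$$

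Next I would bound each term from above, which is exactly the direction supplied by our earlier results. By Theorem~\ref{theorem: estimate fact patterns}, $|\mathcal{A}_{\bfs\lambda}^{sq}|\le \mathcal{T}(\bfs\lambda)\,q^{r-m}(1+M_r/q)$ with $M_r:=D\delta q^{1/2}+14D^2\delta^2+r^2\delta$, $\delta:=\prod_{i=1}^m\wt(G_i)$ and $D:=\sum_{i=1}^m(\wt(G_i)-1)$. Summing over $\bfs\lambda\in\Lambda_r$ and using the bound $|\mathcal{A}|^{-1}\le q^{m-r}\bigl(1+15\delta_{\bfs G}^{13/6}/q^{1/2}\bigr)$ of Theorem~\ref{estimation A} (valid for $q>15\delta_{\bfs G}^{13/3}$), I obtain
$$
P\le \Bigl(1+\frac{15\delta_{\bfs G}^{13/6}}{q^{1/2}}\Bigr)\Bigl(1+\frac{M_r}{q}\Bigr)\sum_{\bfs\lambda\in\Lambda_r}\mathcal{T}(\bfs\lambda)
=\bigl(1+o(1)\bigr)\sum_{\bfs\lambda\in\Lambda_r}\mathcal{T}(\bfs\lambda),
$$
where, as throughout this section, the $o(1)$ tends to $0$ as $q\to\infty$ for fixed $r$ and $\deg G_1,\dots,\deg G_m$.

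It remains to estimate the purely combinatorial quantity $\sum_{\bfs\lambda\in\Lambda_r}\mathcal{T}(\bfs\lambda)$, which equals the proportion of permutations of $\{1,\dots,r\}$ whose cycle lengths are pairwise distinct. Here one uses the generating function identity $\sum_{n\ge 0}\bigl(\sum_{\bfs\lambda\in\Lambda_n}\mathcal{T}(\bfs\lambda)\bigr)z^n=\prod_{k\ge1}\bigl(1+z^k/k\bigr)$, the factorization $\prod_{k\ge1}(1+z^k/k)=(1-z)^{-1}\exp\bigl(\sum_{k\ge1}(\log(1+z^k/k)-z^k/k)\bigr)$, and the evaluation $\sum_{k\ge1}\bigl(\log(1+1/k)-1/k\bigr)=-\gamma$; a standard singularity analysis at $z=1$ then yields $\sum_{\bfs\lambda\in\Lambda_r}\mathcal{T}(\bfs\lambda)=e^{-\gamma}+e^{-\gamma}/r+O(\log r/r^2)$. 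This is precisely the estimate underlying \cite[Theorem~6]{FlGoPa01} (see also \cite{FlSe09}), which we may simply invoke. Substituting it into the bound for $P$ gives the statement. The main obstacle is this last expansion together with its second--order term $e^{-\gamma}/r$: once it is available from the cited references (or from the singularity analysis of $\prod_{k\ge1}(1+z^k/k)$), the remainder of the argument is a routine combination of Theorems~\ref{theorem: estimate fact patterns} and \ref{estimation A}.
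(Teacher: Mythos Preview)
Your proposal is correct and follows essentially the same approach as the paper's own proof: both rewrite the event as a union over factorization patterns $\bfs\lambda\in\{0,1\}^r$, apply Theorem~\ref{theorem: estimate fact patterns} to bound each $|\mathcal{A}_{\bfs\lambda}|$ (using that such $f$ are automatically square--free), combine with Theorem~\ref{estimation A} to control $|\mathcal{A}|^{-1}$, and then invoke a known asymptotic for the proportion of permutations of $\mathbb{S}_r$ with pairwise distinct cycle lengths. The only minor difference is the reference for this last estimate: the paper cites \cite[(4.57)]{GrKn90} and \cite{FlFuGoPaPo06}, while you sketch the generating function $\prod_{k\ge1}(1+z^k/k)$ and cite \cite{FlGoPa01} and \cite{FlSe09}, but this is the same underlying result.
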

\begin{proof}
Let $\mathcal{A}_1$ be set of elements of $ \mathcal {A} $ whose
irreducible factors have all distinct degrees. The probability that
the DDF algorithm outputs the complete factorization of a random
polynomial $f\in \mathcal {A} $ coincides with the probability that
random $f\in \mathcal {A}$ belongs to $\mathcal {A}_1$. We may
express $ \mathcal {A}_1 $ as the following disjoint union:
$$
\mathcal{A}_1=\bigcup_{\bfs \lambda \in \mathcal{P}_{r}} \mathcal{A}_{1,\bfs \lambda},
$$
where $ \mathcal {P}_r $ is the set of all vectors $ \bfs \lambda: =
(\lambda_1, \dots, \lambda_r) \in \{0,1 \}^r $ such that $ \lambda_1
+ \dots + r \, \lambda_r = r $ and $ \mathcal{A}_{1, \bfs \lambda} $
is the set of elements of $ \mathcal {A}_1 $ having factorization
pattern $ \bfs \lambda $. Hence,
\begin{equation}\label{completar factorizacion DDF}
P[\mathcal{A}_1]=\sum_{\bfs \lambda \in \mathcal{P}_r} P
[\mathcal{A}_{1,\bfs \lambda}]=\frac{1}{|\mathcal{A}|} \sum_{\bfs
\lambda \in \mathcal{P}_r} |\mathcal{A}_{1,\bfs \lambda}|.
\end{equation}
Observe that, if $f \in \mathcal{A}_1$, then $f$ is square--free. By
Theorem \ref{theorem: estimate fact patterns}, for $m<r$ we have
$$
|\mathcal{A}_{1,\bfs \lambda}| \leq q^{r-m}\,\mathcal{T}(\bfs
\lambda)\,\bigg(1+\frac{M_{r}}{q}\bigg),
$$
where $M_{r}:=D\delta q^{\frac{1}{2}} +14\,D^2 \delta^2+ r^2\delta$,
$\delta:=\prod_{i=1}^m \wt(G_i)$ and $D:=\sum_{i=1}^m (\wt(G_i)-1)$.
Theorem \ref{estimation A} shows that, if $q > 15\delta_{\bfs
G}^{13/3}$, then
$$
P[ \mathcal{A}_1] \leq \bigg(1+\frac{15\delta_{\bfs
G}^{{13}/{6}}}{q^{{1}/{2}}}\bigg)\bigg(1+\frac{M_{r}}{q}\bigg)\sum_{\bfs
\lambda \in \mathcal{P}_r}\mathcal{T}(\bfs \lambda).
$$
We observe that $\sum_{\bfs \lambda \in \mathcal{P}_r}
\mathcal{T}(\bfs \lambda)$  expresses the probability that a random
permutation of $\mathbb{S}_r$ has a decomposition into cycles of
pairwise different lengths. By \cite[(4.57)]{GrKn90} (see also
\cite[Proposition 1]{FlFuGoPaPo06}), it follows that
$$
\sum _{\bfs \lambda \in \mathcal{P}_r}\mathcal{T}( \bfs \lambda)=
e^{-\gamma}+\frac{e^{-\gamma}}{r}+O\bigg(\frac{\log_2r}{r^2}\bigg).
$$
We deduce that
$$
P[\mathcal{A}_1]\leq \bigg(1+\frac{15\delta_{\bfs
G}^{{13}/{6}}}{q^{{1}/{2}}}\bigg)
\bigg(1+\frac{M_{r}}{q}\bigg)\bigg(e^{-\gamma}+
\frac{e^{-\gamma}}{r}+O\bigg(\frac{\log_2r}{r^2}\bigg)\bigg).
$$
This finishes the proof of theorem.
\end{proof}
%
%----------------------------------------------------------------------
%----------------------------------------------------------------------
%----------------------------------------------------------------------
%----------------------------------------------------------------------
%
\subsection{Equal--degree factorization}
After the first two steps of the classical factorization algorithm,
the general problem of factorization is reduced to  factorizing a
collection of square--free polynomials $ b (k) $, whose irreducible
factors have all the same degree $ k $. The procedure for
equal--degree factorization (EDF) receives as input a vector $ \bfs
b_f: = \mathrm{DDF} (a_f) = (b_f (1), \dots, b_f (s)) $, where each
$ b_f (k) $ is the product of the irreducible factors of degree $ k
$ of the square--free part $ a_f: = \mathrm{ERF} (f) $ of $ f $. Its
output is the irreducible factorization $b_f (k)=b_f (k, 1) \cdots
b_f ( k, l)$ in $\fq[T]$ of each $b_f (k)$. The probabilistic
algorithm presented here is based on the Cantor--Zassenhaus
algorithm \cite{Zassenhaus69}.
 \begin{algorithm5} ${}$
    \begin{enumerate}
        \item []Input: a monic square--free polynomial $c\in\fq[T]$ whose
        irreducible factors in $\fq[T]$ have all degree $k$.

        \item []    Output: the complete factorization of $c$.

        \begin{center}
            \textbf{procedure EDF(c: square--free polynomial , $k$: integer)}
        \end{center}

        \begin{enumerate}
            \item [] If $\deg c=k$, then return $c$

            \item [] End if

            \item[] Choose  a random $h\in\fq[T]$ of degree $\deg{c}-1$.
            \item[] Compute $g:=h^{(q^k-1)/2}-1 \mod c$
            \item[] Compute $d:=\gcd(g,c)$

            \item [] Return $\mathrm{EDF}(d,k)\cdot \mathrm{EDF}(c/d,k)$.
        \end{enumerate}
    \end{enumerate}
\end{algorithm5}
The EDF algorithm is based on the principle we now briefly explain.
Assume that the irreducible factorization of the input polynomial $
c $ is $c= f_1 \cdots f_j $, with each $f_i$ of degree $ k $. The
Chinese remainder Theorem implies that
$$ \fq [T] /(c) \cong \fq [T] / (f_1) \times \dots \times \fq [T] / (f_j). $$
Under this isomorphism, a random $h\in\fq[T]/(c)$ is associated to a
$j$--tuple $(h_1,\dots,h_j)$, where each $h_i$ is a random element
of $\fq[T]/(f_i)$. Since each $f_i$ is irreducible, the quotient
ring $\fq[T]/(f_i)$ is a finite field, isomorphic to $\fqk$. The
multiplicative group $\fqk^*$ being cyclic, there are the same
number $(q^k-1)/2$ of squares and non--squares (see, e.g.,
\cite[Lemma 14.7]{GaGe99}). Recall that $m \in \fqk^*$ is square if
only if $m^{(q^k-1)/2}=1$. Therefore, testing whether
$h_i^{(q^k-1)/2}=1$ discriminates the squares in $\fqk^*$. Thus, if
$g:=h^{(q^k-1)/2}-1 \mod c$, then $\gcd(g,c)$ is the product of all
the $f_i$ with $h$ a square in $\fq[T]/(f_i)$. From the
probabilistic standpoint, a random element $h_i$ of $\fq[T]/(f_i)$
has probability $\alpha:= 1/2-1/(2q^k)$ of being a square and the
dual probability $\beta:=1/2+1/(2q^k)$ of being a non--square.

Then the EDF algorithm is applied recursively to the polynomials $ d
= \gcd (g, c) $ and $ c / d $. In this way, all the irreducible
factors of $c:=b(k)$ are extracted successively.

Following \cite[Section 5]{FlGoPa01}, in this section we analyze the
average--case complexity of the EDF algorithm applied to the family
$ \mathcal {A} $, namely we consider the expected value
$E[\mathcal{X}_3]$ of the random variable $\mathcal{X}_3$ of
\eqref{EDF}:
$$
E[\mathcal{X}_3]:=\frac{1}{|\mathcal{A}|} \sum _{f \in \mathcal{A}} \mathcal{X}_3(f).
$$
We  decompose $\mathcal {X} _3 $ as in \eqref {EDF} in the form
$$\mathcal{X}_3(f):=\sum_{k=1}^{\lceil r/2
\rceil} \mathcal{X}_{3,k}(f),\quad
\mathcal{X}_{3,k}(f)\!\!:=\!\!\mathrm{Cost}(\mathrm{EDF}(b_f(k)))\quad(1\le
k\le {\lceil r/2 \rceil}),$$
where $ b_f (k) $ is the $k$th coordinate of $\bfs b_f: =
\mathrm{DDF} (a_f) = (b_f (1), \dots, b_f (s)) $. Hence, we have
$$
E[\mathcal{X}_3]=\frac{1}{|\mathcal{A}|}\sum_{k=1}^{\lceil r/2
\rceil}\sum_{f\in
\mathcal{A}}\mathcal{X}_{3,k}(f)=\sum_{k=1}^{\lceil r/2 \rceil}
E[\mathcal{X}_{3,k}].
$$

Fix $k$ with $ 1\leq k \leq \lceil r/2 \rceil$ and write
$E[\mathcal{X}_{3,k}]$ as follows:
$$ E[\mathcal{X}_{3,k}]=\frac{1}{|\mathcal{A}|} \sum_{f \in
\mathcal{A}^{sq}} \mathcal{X}_{3,k}(f)+\frac{1}{|\mathcal{A}|}
\sum_{f \in \mathcal{A}^{nsq}}
\mathcal{X}_{3,k}(f)=:S_{3,k}^{sq}+S_{3,k}^{nsq}.
$$
We first bound $S_{3,k}^{sq}$. For this purpose, we express
$\mathcal{A}^{sq}$ as the disjoint union
$$\mathcal{A}^{sq}=\bigcup_{j=0}^{\lfloor r/k \rfloor} \mathcal{A}_{j,k}^{sq},
$$
where $\mathcal{A}_{j,k}^{sq}$ is the set of all elements $f\in
\mathcal{A}^{sq}$ having $j$ irreducible factors of degree $k$.
Hence, \begin{equation} \label{esperanza X3k libre de cuadrados}
S_{3,k}^{sq}= \frac{1}{|\mathcal{A}|}\sum_{j=0}^{\lfloor r/k \rfloor
} \sum_{f \in \mathcal{A}_{j,k}^{sq}} \mathcal{X}_{3,k}(f).
\end{equation}

We first bound the cost $\mathcal{X}_{3,k}(f)$ of the $\mathrm{EDF}$
algorithm applied to any $f \in \mathcal{A}_{j,k}^{sq}$.
 \begin{lemma}\label{Costo}
For any $f\in \mathcal{A}_{j,k}^{sq}$, we have
$$\mathcal{X}_{3,k}(f) \leq \frac{j(j-1)}{\alpha \beta}\big(\tau_1\,\mu_k
{M(r)}+ \tau_3\,\mathcal{U}(r)\big)\, \frac{k}{r},
$$
where $\mu_k:=\lambda \big(\frac{q^{k} -1} {2} \big): =\lfloor
\log(\frac{q^k-1}{2})\rfloor +\nu(\frac{q^k-1}{2})-1$ and
$\tau_3:=\max\{\tau_1,\tau_2\}$.
\end{lemma}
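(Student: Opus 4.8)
The plan is to analyze the recursion tree of the $\mathrm{EDF}$ procedure on the input $c:=b_f(k)$ and to charge the work performed at each node to the pairs of irreducible factors of $c$ that are still ``together'' at that node. Since $f\in\mathcal{A}_{j,k}^{sq}$, the polynomial $c=b_f(k)$ is the product of exactly $j$ distinct monic irreducible polynomials $f_1,\dots,f_j\in\fq[T]$, each of degree $k$; in particular $\deg c=jk\le r$. I view the recursive calls $\mathrm{EDF}(\cdot,k)$ as the nodes of a binary tree: the root is $\mathrm{EDF}(c,k)$, a call $\mathrm{EDF}(c_v,k)$ with $\deg c_v>k$ spawns the children $\mathrm{EDF}(d,k)$ and $\mathrm{EDF}(c_v/d,k)$, and calls with $\deg c_v=k$ (leaves) or on constants (the unproductive splits where $\gcd(g,c_v)\in\{1,c_v\}$) return at cost $\mathcal{O}(1)$. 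For a call $v$ let $i_v$ denote the number of irreducible factors of $c_v$, so $\deg c_v=i_vk$. At a node with $i_v\ge2$, computing $g:=h^{(q^k-1)/2}-1\bmod c_v$ by repeated squaring uses $\mu_k$ multiplications modulo $c_v$, each of cost at most $\tau_1M(\deg c_v)$, while $\gcd(g,c_v)$ and the division $c_v/d$ cost at most $\tau_3\,\mathcal{U}(\deg c_v)$ with $\tau_3:=\max\{\tau_1,\tau_2\}$ (drawing $h$ and the subtraction being of lower order). Hence the cost at $v$ is at most $\tau_1\mu_kM(i_vk)+\tau_3\,\mathcal{U}(i_vk)$.

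Since $M(n)/n=\log n\log\log n$ and $\mathcal{U}(n)/n=M(n)\log n/n$ are nondecreasing and $i_vk\le r$, we have $M(i_vk)\le\tfrac{i_vk}{r}M(r)$ and $\mathcal{U}(i_vk)\le\tfrac{i_vk}{r}\mathcal{U}(r)$, so the cost at $v$ is at most $\tfrac{i_vk}{r}\big(\tau_1\mu_kM(r)+\tau_3\,\mathcal{U}(r)\big)$. Summing over the tree, with the $\mathcal{O}(1)$ contributions of the at most $j$ leaves and of the unproductive calls absorbed into lower order terms, I obtain
\begin{equation*}
\mathcal{X}_{3,k}(f)\le\frac{k}{r}\big(\tau_1\mu_kM(r)+\tau_3\,\mathcal{U}(r)\big)\,\mathbb{E}\Big[\sum_v i_v\Big],
\end{equation*}
the expectation being over the internal randomness of $\mathrm{EDF}$ and the sum ranging over the internal calls ($i_v\ge2$).

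It then remains to show $\mathbb{E}[\sum_v i_v]\le j(j-1)/(\alpha\beta)$. Using $i_v\le i_v(i_v-1)=2\binom{i_v}{2}$ for $i_v\ge2$, one has $\sum_v i_v\le2\sum_v\binom{i_v}{2}=2\sum_{\{a,b\}}N_{ab}$, where $N_{ab}$ counts the calls $v$ with $f_a\mid c_v$ and $f_b\mid c_v$. For a fixed pair $\{a,b\}$ these calls lie along a single root-to-node path, since once $f_a$ and $f_b$ are separated they never reappear in the same polynomial. By the Chinese remainder isomorphism $\fq[T]/(c_v)\cong\prod_i\fq[T]/(f_i)$, and since the residues of the freshly drawn $h$ modulo the distinct irreducible factors are independent and uniform, at each such call $f_a$ and $f_b$ are separated exactly when $h$ is a square modulo precisely one of the two factors, an event of probability $2\alpha\beta$ that is independent of the past; hence $N_{ab}$ is dominated by a geometric random variable of success probability $2\alpha\beta$ and $\mathbb{E}[N_{ab}]\le1/(2\alpha\beta)$. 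Therefore $\mathbb{E}[\sum_v i_v]\le2\binom{j}{2}\cdot\frac{1}{2\alpha\beta}=\frac{j(j-1)}{2\alpha\beta}\le\frac{j(j-1)}{\alpha\beta}$, and substituting this into the displayed inequality yields the claim. The point requiring the most care is precisely this independence claim along the path — the recursion draws a fresh uniform $h$ at every call, so conditioning on the recursion having reached a node where $f_a,f_b$ are still together does not bias the next draw — together with the routine verification that the degenerate calls on constants and the unproductive splits contribute only $\mathcal{O}(1)$ each and hence do not affect the order of the bound.
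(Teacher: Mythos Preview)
Your argument is correct and yields exactly the stated bound, but it proceeds along a genuinely different line from the paper. The paper does not analyze the recursion tree directly; instead it bounds the cost of a single recursive call by a quantity of the form $(\mu_k\widetilde{\tau}_1+\widetilde{\tau}_2)(jk)^2$ and then invokes \cite[Lemma~4]{FlGoPa01} as a black box, obtaining
\[
\mathcal{X}_{3,k}(f)\le\Bigl(\tfrac{j(j-1)}{2\alpha\beta}+j\sum_{m\ge0}\sum_{l=0}^{m}\binom{m}{l}\alpha^{m-l}\beta^{l}\bigl(1-(1-\alpha^{m-l}\beta^{l})^{j-1}\bigr)\Bigr)(\mu_k\widetilde{\tau}_1+\widetilde{\tau}_2)k^2,
\]
and then bounds the double series via $1-(1-u)^{j-1}\le (j-1)u$ to reach $\tfrac{j(j-1)}{\alpha\beta}$. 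Your route---summing $i_v$ over internal nodes, using $i_v\le 2\binom{i_v}{2}$ to pass to pairs, and recognizing $N_{ab}$ as a geometric variable with success parameter $2\alpha\beta$---is more transparent and entirely self-contained; it is essentially what lies inside the cited lemma, but spelled out in a way that avoids the auxiliary double series. One small remark: a modular multiplication costs a product \emph{and} a division with remainder, so the per-node exponentiation cost is really $2\tau_1\mu_kM(i_vk)$ rather than $\tau_1\mu_kM(i_vk)$; however, since your pair-counting bound gives $\mathbb{E}[\sum_v i_v]\le \tfrac{j(j-1)}{2\alpha\beta}$ before you relax to $\tfrac{j(j-1)}{\alpha\beta}$, the missing factor of $2$ is exactly absorbed and the final inequality is unaffected.
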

\begin{proof}
If $j=0$ or $j=1$, then the EDF procedure does not perform any
computation, and the result trivially follows. Therefore, we may
assume that $j\ge 2$.

The cost of a recursive call to the EDF procedure for $ f \in
\mathcal {A}_{j, k} ^{sq}$ is determined by the cost of computing
$h^ {(q ^k-1) / 2} \mod f $, where $ h $ is a random element of $
\fq [T] / (f) $, a greatest common divisor of $f$ with a polynomial
of degree at most $ jk $ and a division of two polynomials of degree
at most $ jk $. Observe that $ \mu_k$ products modulo $f$ are
required to compute $ h^{ (q ^ k-1) / 2} \mod f $ using binary
exponentiation. We conclude that $ h ^ {(q ^ k-1) / 2} \mod f $ can
be computed with at most $2\, \tau_1\, \mu_k M (jk) $ arithmetic
operations in $ \fq $, while the remaining greatest common divisor
and division are computed with at most $ \tau_2\, \mathcal{U} (jk) $
and $ \tau_1\, M(jk) $ arithmetic operations in $ \fq $. In other
words, we have
$$2\, \tau_1\, \mu_k M (jk)+\tau_2\, \mathcal{U} (jk)+\tau_1\,
M(jk)\le \Big(\tau_1\, \mu_k \frac{M (r)}{k\,r}+\tau_2\,
\frac{\mathcal{U} (r)}{2\,k\,r}+\tau_1\,
\frac{M(r)}{2\,k\,r}\Big)(jk)^2$$ arithmetic operations in $\fq$.
Applying \cite[Lemma 4]{FlGoPa01} with $\widetilde{\tau}_1:=
\frac{\tau_1 M(r)}{k\,r}$ and $\widetilde{\tau}_2:=\frac{\tau_3\,
\mathcal{U}(r)}{k\,r}$, we see that
$$\mathcal{X}_{3,k}(f) \leq \bigg(\frac{j(j-1)}{2 \alpha \beta} +
j\sum_{m=0}^{\infty}\sum_{l=0}^m \binom {m}{l} \alpha^{m-l}
\beta^{l}\big(1-(1-\alpha^{m-l} \beta^l)^{j-1}\big)\bigg)\, (\mu_k
\widetilde{\tau}_1 + \widetilde{\tau}_2)\, k^2.
$$
Using the inequality $ 1- (1-u)^{j-1} \leq  (j-1) u $ for $ j \geq 2
$ and $ 0 \leq u \leq 1 $, we obtain
\begin{align*}
\sum_{m=0}^{\infty}\sum_{l=0}^m \binom {m}{l} \alpha^{m-l}
\beta^{l}\big(1-(1-\alpha^{m-l} \beta^l)^{j-1}\big)&\le
(j-1)\sum_{m=0}^{\infty}\sum_{l=0}^m \binom {m}{l} \alpha^{2(m-l)}
\beta^{2l}\\
&\le (j-1) \sum_{m=0}^{\infty}(\alpha^2+
\beta^2)^m=\frac{j-1}{2\alpha \beta}.
\end{align*}
This easily implies the lemma.
\end{proof}

As a consequence of Lemma \ref{Costo}, we have
\begin{equation}
\label{esperanza X3k libre de cuadrados bis} S_{3,k}^{sq}:=
\frac{1}{\vert\mathcal{A}\vert}\sum_{j=2}^{\lfloor r/k \rfloor }
\sum_{f \in \mathcal{A}_{j,k}^{sq}} \mathcal{X}_{3,k}(f) \leq
\sum_{j=2}^{\lfloor r/k \rfloor } \frac{j(j-1)}{\alpha \beta}\,
\big(\tau_1\,\mu_k {M(r)}+ \tau_3\,\mathcal{U}(r)\big)\, \frac{k}{r}
\, \frac{|\mathcal{A}_{j,k}^{sq}|}{|\mathcal{A}|}.
\end{equation}
In the next result we obtain an explicit upper bound for
$S_{3,k}^{sq}$.
\begin{lemma}\label{cota N1}
For $q>15\delta_{\bfs G}^{13/3}$, we have
$$
S_{3,k}^{sq}\leq \frac{1}{\alpha \beta}\bigg(\!\tau_1\mu_k
\frac{M(r)}{k\,r}+ \tau_3\frac{\mathcal{U}(r)}{k\,r}\bigg)
\bigg(1+\frac{15\delta_{\bfs G}^{{13}/{6}}}{q^{{1}/{2}}}\bigg)
\bigg(1+\frac{M_{r}}{q}\bigg),
$$
where $\mu_k$ and $\tau_3$ are as in Lemma \ref{Costo} and $M_{r}$
is defined as in Theorem \ref{average de DDF}.
\end{lemma}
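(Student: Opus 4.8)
The plan is to start from the inequality \eqref{esperanza X3k libre de cuadrados bis}, which is already at hand, so that it only remains to control the weighted sum $\sum_{j=2}^{\lfloor r/k\rfloor} j(j-1)\,|\mathcal{A}_{j,k}^{sq}|/|\mathcal{A}|$. The terms $j=0$ and $j=1$ contribute nothing, as observed in the proof of Lemma \ref{Costo}, so the restriction $j\ge 2$ is harmless. The decisive point is that the weight $j(j-1)$ will be absorbed by the classical formula for the factorial moments of the cycle counts of a random permutation, which produces a factor $k^{-2}$ that cancels the stray $k/r$ coming from Lemma \ref{Costo}.

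First I would refine each set $\mathcal{A}_{j,k}^{sq}$ according to factorization patterns, writing it as the disjoint union $\mathcal{A}_{j,k}^{sq}=\bigcup_{\bfs\lambda:\,\lambda_k=j}\mathcal{A}_{\bfs\lambda}^{sq}$ over the factorization patterns $\bfs\lambda$ of degree $r$ with exactly $j$ parts equal to $k$. Theorem \ref{theorem: estimate fact patterns}, together with the elementary inequality $\delta(D-2)+2\le D\delta$ (valid since $\delta\ge 1$), gives
\[
|\mathcal{A}_{\bfs\lambda}^{sq}|\le q^{r-m}\,\mathcal{T}(\bfs\lambda)\Big(1+\frac{M_r}{q}\Big),\qquad M_r:=D\delta q^{1/2}+14D^2\delta^2+r^2\delta,
\]
exactly as in the proof of Lemma \ref{S2sq cota}. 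Summing over the relevant $\bfs\lambda$ then yields $|\mathcal{A}_{j,k}^{sq}|\le q^{r-m}(1+M_r/q)\,P_{j,k}$, where $P_{j,k}:=\sum_{\bfs\lambda:\,\lambda_k=j}\mathcal{T}(\bfs\lambda)$ is the probability that a uniformly random permutation of $\mathbb{S}_r$ has exactly $j$ cycles of length $k$.

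Next I would recognize $\sum_{j\ge 2} j(j-1)\,P_{j,k}=E\big[N_k(N_k-1)\big]$ as the second falling-factorial moment of the number $N_k$ of $k$-cycles of a random permutation of $\mathbb{S}_r$. By the classical identity for such moments (Goncharov; see also the Cauchy formula for the joint law of cycle counts), $E[N_k(N_k-1)]=k^{-2}$ when $2k\le r$ and $E[N_k(N_k-1)]=0$ otherwise, so in all cases $E[N_k(N_k-1)]\le k^{-2}$. Substituting into \eqref{esperanza X3k libre de cuadrados bis} gives
\[
S_{3,k}^{sq}\le \frac{1}{\alpha\beta}\Big(\tau_1\mu_k\frac{M(r)}{k\,r}+\tau_3\frac{\mathcal{U}(r)}{k\,r}\Big)\,\frac{q^{r-m}}{|\mathcal{A}|}\Big(1+\frac{M_r}{q}\Big),
\]
and a final application of Theorem \ref{estimation A}, which for $q>15\delta_{\bfs G}^{13/3}$ yields $q^{r-m}/|\mathcal{A}|\le 1+15\delta_{\bfs G}^{13/6}/q^{1/2}$, produces precisely the stated bound.

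The manipulations in the first and last steps are routine bookkeeping resting on previously proved estimates; the only genuinely combinatorial ingredient, and the one I would be most careful about, is the identification of $\sum_j j(j-1)P_{j,k}$ with a factorial moment of the $k$-cycle count together with the bound $E[N_k(N_k-1)]\le k^{-2}$, since this is exactly what makes the $j(j-1)$ weighting fit the shape of the desired estimate.
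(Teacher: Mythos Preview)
Your argument is correct and follows the same overall structure as the paper's: decompose each $\mathcal{A}_{j,k}^{sq}$ by factorization patterns, apply Theorem \ref{theorem: estimate fact patterns} and Theorem \ref{estimation A}, and then control the weighted sum $\sum_{j\ge 2} j(j-1)\,P_{j,k}$. The only difference lies in this last combinatorial step. You invoke directly the classical factorial--moment identity $E[N_k(N_k-1)]=k^{-2}$ for $2k\le r$ (and $=0$ otherwise), which immediately cancels the factor $k/r$ from Lemma \ref{Costo} against $k^{-2}$ to give $1/(kr)$. The paper instead substitutes the explicit Shepp--Lloyd formula $P_{j,k}=\frac{1}{j!k^j}\sum_{i=0}^{\lfloor r/k-j\rfloor}\frac{(-1)^i}{i!k^i}$ and performs the index shift $j\mapsto j-2$, reducing the weighted sum to (a truncation of) the identity $\sum_{j\ge 0} P_{j,k}=1$. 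Your route is shorter and sidesteps the mild issue of bounding the truncated alternating inner sum that the paper glosses over; the paper's route, on the other hand, is more self-contained in that it does not appeal to an external moment computation.
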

\begin{proof}
According to \eqref{esperanza X3k libre de cuadrados bis}, we
estimate the probability $P[\mathcal {A}_{j, k}^ {sq} ]$ that a
random $ f \in \mathcal { A}$ is square--free and has $ j $
irreducible factors of degree $ k $. In \cite{KnKn90b} it is shown
that, if $ q $ is sufficiently large, then the probability that a
random $ f \in \fq [T] $ of degree at most $r$ has $j$ distinct
irreducible factors of degree $ k $ tends to
$e^{-1/k}\frac{k^{-j}}{j!}$.

We decompose the set $\mathcal{A}_{j,k}^{sq}$ into the disjoint
union
$$
\mathcal{A}_{j,k}^{sq}=\bigcup_{\bfs \lambda \in
\mathcal{P}_r^{j,k}} \mathcal{A}_{j,\bfs \lambda}^{sq},
$$
where $\mathcal{P}_{r}^{j,k}$ is the set of all $ r $--tuples $ \bfs
\lambda: = (\lambda_1, \dots, \lambda_r) \in \mathbb{Z}_{\geq0}^r $
with $\lambda_1 + \dots + r \, \lambda_r = r $ and $ \lambda_k = j
$. Hence, we have
$$P[\mathcal{A}_{j,k}^{sq}]=\frac{1}{|\mathcal{A}|}
\sum_{\bfs \lambda \in \mathcal{P}_{r}^{j,k}} |\mathcal{A}_{j,\bfs
\lambda}^{sq}|.
$$
From Theorem \ref{theorem: estimate fact patterns} we deduce that
$$|\mathcal{A}_{j,\bfs \lambda}^{sq}| \leq q^{r-m}\,\mathcal{T}(\bfs
\lambda)\bigg(1+\frac{M_{r}}{q}\bigg).$$
From Theorem \ref{estimation A} it follows that, for $q>
15\delta_{\bfs G}^{13/3}$,
$$P[\mathcal{A}_{j,k}^{sq}]=\frac{1}{|\mathcal{A}|}
\sum_{\bfs \lambda \in \mathcal{P}_{r}^{j,k}} |\mathcal{A}_{j,\bfs
\lambda}^{sq}| \leq \bigg(1+\frac{15\delta_{\bfs
G}^{{13}/{6}}}{q^{{1}/{2}}}\bigg)\bigg(1+\frac{M_{r}}{q}\bigg)
\sum_{\bfs \lambda \in \mathcal{P}_{r}^{j,k}} \mathcal{T}(\bfs
\lambda).$$
The sum of the right--hand side expresses the probability that a
random permutation in $ \mathbb {S}_r $ has exactly $ j $ cycles of
length $ k $. In \cite{ShLl96} it is shown that
$$
\sum_{\bfs \lambda \in \mathcal{P}_{r}^{j,k}} \mathcal{T}(\bfs
\lambda)=\frac{1}{j!k^j}\sum_{i=0}^{\lfloor r/k-j \rfloor}(-1)^i
\frac{1}{i! k^i}.
$$
We observe that the sum of all probabilities is $ 1 $, that is,
$$
\sum _{j=0}^{\lfloor r/k \rfloor} \frac{1}{j!k^j}\sum_{i=0}^{\lfloor
r/k-j \rfloor}(-1)^i \frac{1}{i! k^i}=1.
$$
As a consequence, by \eqref{esperanza X3k libre de cuadrados bis} we
deduce that
\begin{align*}
S_{3,k}^{sq}& \leq \sum_{j=2}^{\lfloor r/k \rfloor }
\frac{j(j-1)}{\alpha \beta}\bigg(\!\tau_1\mu_k \frac{M(r)}{k\,r}+
\tau_3\frac{\mathcal{U}(r)}{k\,r}\bigg) k^2
\bigg(1+\frac{15\delta_{\bfs
G}^{{13}/{6}}}{q^{{1}/{2}}}\bigg)\bigg(1+\frac{M_{r}}{q}\bigg)
\frac{1}{j!k^j}\!\!\sum_{i=0}^{\lfloor r/k-j \rfloor} \frac{(-1)^i}{i! k^i}\\
& \leq \frac{1}{\alpha \beta}\bigg(\!\tau_1\mu_k \frac{M(r)}{k\,r}+
\tau_3\frac{\mathcal{U}(r)}{k\,r}\bigg) \bigg(1+\frac{15\delta_{\bfs
G}^{{13}/{6}}}{q^{{1}/{2}}}\bigg)
\bigg(1+\frac{M_{r}}{q}\bigg)\sum_{j=2}^{\lfloor r/k \rfloor}
\frac{1}{(j-2)!k^{j-2}}\!\!\sum_{i=0}^{\lfloor r/k-j \rfloor} \frac{(-1)^i}{i! k^i}\\
& \leq \frac{1}{\alpha \beta}\bigg(\!\tau_1\mu_k \frac{M(r)}{k\,r}+
\tau_3\frac{\mathcal{U}(r)}{k\,r}\bigg) \bigg(1+\frac{15\delta_{\bfs
G}^{{13}/{6}}}{q^{{1}/{2}}}\bigg) \bigg(1+\frac{M_{r}}{q}\bigg).
\end{align*}
This shows the lemma.
\end{proof}

Next we obtain an upper bound for
\begin{align}\label{cota N2}
S_{3,k}^{nsq}:=\frac{1}{|\mathcal{A}|} \sum_{f \in
\mathcal{A}^{nsq}} \mathcal{X}_{3,k}(f).
\end{align}
Let $f \in \mathcal{A}^{nsq}$ and $\bfs
b_f:=\mathrm{DDF}(a_f)=(b_f(1),\dots, b_f(s))$. Assume that
$\deg(b_f(k))=m_k$. We have the following bound (see, e.g.,
\cite[Theorem 14.11]{GaGe99}):
$$
\mathcal{X}_{3,k}(f) \leq c\, (k \log q+ \log m_k)M(m_k)
\log\Big(\frac{m_k}{k}\Big),
$$
where $c$ is a constant independent of $k$ and $q$.  Taking into
account the estimate of $|\mathcal{A}^{nsq}|$ of \eqref{eq:
estimates: upper bound discr locus} and Theorem \ref{estimation A},
we conclude that, if $q > 15\delta_{\bfs G}^{13/3}$, then
\begin{align}
    \label{Cota N2 bis}
S_{3,k}^{nsq}& \leq c\, (k \log q+ \log m_k)M(m_k)
\log\bigg(\frac{m_k}{k}\bigg) \frac{2\,r^2\delta_{\bfs G}}{q}.
\end{align}

Now we are able to bound the cost of the EDF procedure.
\begin{theorem}\label{average de EDF}
For $q > 15\delta_{\bfs G}^{13/3}$, the average cost
$E[\mathcal{X}_3]$ of the $\mathrm{EDF}$ algorithm restricted to
$\mathcal{A}$ is upper bounded as
$$E[\mathcal{X}_3] \leq \tau\, M(r) \log q\bigg(\bigg(1+\frac{15\delta_{\bfs
G}^{{13}/{6}}}{q^{{1}/{2}}}\bigg)\bigg(1+\frac{M_{r}}{q}\bigg)+\frac{r^2\delta_{\bfs
G}}{q}\bigg)= \tau\, \mathcal{U}(r) \log q\, (1+o(1)),$$
where $\tau$ is a constant independent of $q$ and $r$ and $M_{r}$ is
defined as in Theorem \ref{average de DDF}.
\end{theorem}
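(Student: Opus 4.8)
The theorem follows by summing, over $1\le k\le\lceil r/2\rceil$, the two per--degree estimates already established: Lemma \ref{cota N1} for $S_{3,k}^{sq}$ and \eqref{Cota N2 bis} for $S_{3,k}^{nsq}$. Recalling $E[\mathcal{X}_3]=\sum_{k=1}^{\lceil r/2\rceil}(S_{3,k}^{sq}+S_{3,k}^{nsq})$, the plan is to add these bounds, pull the common factor $\big(1+15\delta_{\bfs G}^{13/6}q^{-1/2}\big)\big(1+M_r/q\big)$ out of the square--free contribution, and check that what is left is $O(M(r)\log q)$, while the non--square--free contribution is $O\!\big(M(r)\log q\cdot q^{-1}\cdot\mathrm{poly}(r)\big)$ and hence negligible.

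For the square--free part, I would first note that $\alpha\beta=\tfrac14-\tfrac1{4q^{2k}}\ge\tfrac3{16}$ for $q\ge2$, so $1/(\alpha\beta)$ is absolutely bounded, and that $\mu_k=\big\lfloor\log\tfrac{q^k-1}{2}\big\rfloor+\nu\big(\tfrac{q^k-1}{2}\big)-1\le c_0\,k\log q$ for an absolute $c_0$ (using $\nu(n)\le1+\log_2 n$). Then
$$\sum_{k=1}^{\lceil r/2\rceil}\Big(\tau_1\mu_k\tfrac{M(r)}{kr}+\tau_3\tfrac{\mathcal U(r)}{kr}\Big)\le\tau_1 c_0(\log q)\,\tfrac{M(r)}{r}\big\lceil\tfrac r2\big\rceil+\tau_3\,\tfrac{\mathcal U(r)}{r}\sum_{k=1}^{\lceil r/2\rceil}\tfrac1k\le C_1\,M(r)\log q,$$
since $\big\lceil\tfrac r2\big\rceil/r\le1$, $\sum_{k\le r/2}\tfrac1k=O(\log r)$ and $\mathcal U(r)(\log r)/r=M(r)(\log r)^2/r\le M(r)$. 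Summing Lemma \ref{cota N1} over $k$ then gives $\sum_k S_{3,k}^{sq}\le C_2\,M(r)\log q\big(1+15\delta_{\bfs G}^{13/6}q^{-1/2}\big)\big(1+M_r/q\big)$ with $C_2$ absolute.

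For the non--square--free part I would sum \eqref{Cota N2 bis}. With $m_k:=\deg b_f(k)$ one has $m_k\le\deg a_f\le r-1$, $k\le m_k$, $\log(m_k/k)\le\log r$, $M(m_k)\le M(r)$, $\sum_k m_k=\deg a_f\le r$, and, by superadditivity of $M$, $\sum_k M(m_k)\le M(r)$ and $\sum_k kM(m_k)\le\big(\sum_k m_k^2\big)\log r\log\log r\le r\,M(r)$; feeding these into $\sum_{k}(k\log q+\log m_k)M(m_k)\log(m_k/k)$ and multiplying by $\tfrac{2r^2\delta_{\bfs G}}{q}$ yields $\sum_k S_{3,k}^{nsq}\le C_3\,M(r)\log q\cdot q^{-1}\,\mathrm{poly}(r)\,\delta_{\bfs G}$. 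Adding the two contributions gives the bound of the statement in the form $E[\mathcal X_3]\le\tau\,M(r)\log q\big(\,(1+15\delta_{\bfs G}^{13/6}q^{-1/2})(1+M_r/q)+r^2\delta_{\bfs G}/q\,\big)$, and letting $q\to\infty$ with $r$, $\delta$, $D$ fixed makes $M_r/q$, $\delta_{\bfs G}^{13/6}q^{-1/2}$ and $r^2\delta_{\bfs G}/q$ tend to $0$, so the parenthesis is $1+o(1)$ and $E[\mathcal X_3]\le\tau\,M(r)\log q\,(1+o(1))$ (using $M(r)\le\mathcal U(r)$ for the $\mathcal U(r)$ form of the statement).

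The only delicate point is the accounting of the polynomial factors in $r$ that show up in the non--square--free sum (from the $\lceil r/2\rceil$ summands and from $\sum_k kM(m_k)$): they must be tracked carefully and then absorbed into the constant and into the $q^{-1}$ correction; nothing genuinely new is required beyond Lemma \ref{cota N1} and \eqref{Cota N2 bis}, which carry all the substantive work.
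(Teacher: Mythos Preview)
Your proposal is correct and follows essentially the same route as the paper: sum Lemma \ref{cota N1} and \eqref{Cota N2 bis} over $k$, bound $1/(\alpha\beta)$ by an absolute constant, use $\mu_k\le 2k\log q$ and the harmonic--number estimate for the square--free part, and bound the non--square--free sum using $M(m_k)\le M(r)$ together with $\sum_k m_k\le r$. The paper makes the constants explicit (e.g., $1/(\alpha\beta)\le 16/3$, $\mu_k\le 2k\log q$, and obtains $S_2\le 2rM(r)\log q$ via $\log(m_k/k)/(m_k/k)\le 1$ rather than superadditivity), but the structure of the argument is identical.
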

\begin{proof}
Recall that $E[\mathcal{X}_3]=S_{3,k}^{sq}+S_{3,k}^{nsq}$. From
Lemma \ref{cota N1} and \eqref{Cota N2 bis}, we have
\begin{align*}
S_{3,k}^{sq}&\le \bigg(1+\frac{15\delta_{\bfs
G}^{{13}/{6}}}{q^{{1}/{2}}}\bigg)\bigg(1+\frac{M_{r}}{q}\bigg)
\sum_{k=1}^{\lceil r/2 \rceil}  \frac{1}{\alpha
\beta}\bigg(\!\tau_1\mu_k \frac{M(r)}{k\,r}+
\tau_3\frac{\mathcal{U}(r)}{k\,r}\bigg),\\
S_{3,k}^{nsq}&\le \frac{2\,c\,r^2\delta_{\bfs G}}{q}
\sum_{k=1}^{\lceil r/2 \rceil}(k \log q+ \log m_k)M(m_k)
\log\Big(\frac{m_k}{k}\Big).
\end{align*}

We first estimate the sum
$$S_1:=\sum_{k=1}^{\lceil r/2 \rceil}  \frac{1}{\alpha
\beta}\bigg(\!\tau_1\mu_k \frac{M(r)}{k\,r}+
\tau_3\frac{\mathcal{U}(r)}{k\,r}\bigg).$$
Recall that $\mu_k:=\lfloor \log(\frac{q^k-1}{2})\rfloor
+\nu(\frac{q^k-1}{2})-1$, $\alpha:={1}/{2}-{1}/(2 q^k)$ and
$\beta:={1}/{2}+{1}/(2 q^k)$. It is easy to see that
$$
\frac{1}{\alpha \beta}\leq \frac{4q^2}{q^2-1}\leq \frac{16}{3},\quad
\mu_k \leq 2\, k \log q.
$$
As a consequence,
\begin{align*}
S_1\!\leq\! \frac{64 \tau_1}{3}\,\frac{M(r) {\lceil r/2 \rceil} \log
q}{r} + \frac{32 \tau_3}{3}
\frac{\mathcal{U}(r)}{r}\sum_{k=1}^{\lceil r/2 \rceil}
\frac{1}{k}\leq  M(r) \log q\bigg(\frac{64 \tau_1}{3}+\frac{32
\tau_3}{3} \frac{H(\lceil r/2 \rceil)\log r}{r} \bigg),
\end{align*}
where $H(\lceil r/2 \rceil)$ is the $\lceil r/2 \rceil$--th harmonic
number. Since $H(N)\leq 1+\ln N$ (see, e.g., \cite[\S
6.3]{GrKnPa94}), we deduce that, if $r \geq 2$, then ${H(\lceil r/2
\rceil)\log r}/{r} \leq 1$. We conclude that
\begin{align} \label{primera suma de X3 bisbis} S_1\leq M(r) \log
q\bigg(\frac{64 \tau_1}{3}+\frac{32 \tau_3}{3}\bigg).
\end{align}

We now estimate the sum
$$S_2:=\sum_{k=1}^{\lceil r/2 \rceil}(k \log q+ \log m_k)M(m_k)
\log\Big(\frac{m_k}{k}\Big).$$
We have the following inequalities:
$$
\sum_{k=1}^{\lceil r/2 \rceil}k\,
M(m_k)\log\Big(\frac{m_k}{k}\Big)\leq M(r)\sum_{k=1}^{\lceil r/2
\rceil}m_k \frac{\log\big(\frac{m_k}{k}\big)}{\frac{m_k}{k}}\leq
M(r)\sum_{k=1}^{\lceil r/2 \rceil} m_k \leq r\, M(r),
$$
$$
\sum_{k=1}^{\lceil r/2 \rceil} M(m_k)\log(m_k)
\log\Big(\frac{m_k}{k}\Big)  \leq M(r)\sum_{k=1}^{\lceil r/2 \rceil}
\log^2(m_k) \leq M(r)\sum_{k=1}^{\lceil r/2 \rceil} m_k \leq r M(r).
$$
Hence, we deduce that
\begin{align}\label{segunda suma de X3 bis}
S_2& \leq 2\, r M(r) \log q.\\ \notag
\end{align}

From \eqref{primera suma de X3 bisbis} and \eqref{segunda suma de X3
bis} we obtain the following upper bound for $E[\mathcal{X}_3]$:
$$
E[\mathcal{X}_3] \leq M(r)\log q \bigg(\bigg(1+\frac{15\delta_{\bfs
G}^{{13}/{6}}}{q^{{1}/{2}}}\bigg)\bigg(1+\frac{M_{r}}{q}\bigg)\bigg(\frac{64
\tau_1}{3}+\frac{32 \tau_3}{3}\bigg) +\frac{4\,c\,r^3\delta_{\bfs
G}}{q}\bigg).
$$
Defining $ \tau: =\max \{\frac{64 \tau_1} {3}+\frac {32 \tau_3}
{3},4\,c\}$, the statement of the theorem follows.
\end{proof}

In  \cite[Theorem 9]{FlGoPa01}, using the classical multiplication
of polynomials, it is shown that the EDF algorithm requires on
average $ \mathcal {O} (r ^ 2 \log q)$ arithmetic operations in
$\fq$ on the set of elements of $\fq[T]$ of degree at most $r$.
Theorem \ref{average de EDF} proves that, using fast multiplication,
the EDF algorithm performs on average $r\, \log q $ arithmetic
operations in $ \fq $ on $\mathcal{A}$, up to logarithmic terms and
terms which tend to zero as $q$ tends to infinity (for fixed
$\delta_{\bfs G}$ and $r$).

Our analysis improves the worst--case analysis of \cite[Theorem
14.11]{GaGe99}, where it is proved that the EDF algorithm applied to
a polynomial of degree at most $r$ having $j$ irreducible factors of
degree $ k $ requires $ \mathcal {O} ((k \log q+\log r) M(r) \log
j)$ arithmetic operations in $ \fq $, that is, $\mathcal{O}^\sim
(k\,r \log q )$ arithmetic operations in $\fq$.
%
%----------------------------------------------------------------------
%----------------------------------------------------------------------
%----------------------------------------------------------------------
%----------------------------------------------------------------------
%
\subsection{Average-case analysis of the classical algorithm}
Now we are able to conclude the analysis of the average cost of the
factorization algorithm applied to elements of $ \mathcal {A} $. For
this purpose, it remains to analyze the behavior of the classical
factorization algorithm when the first three steps fail to find the
complete factorization of the input polynomial, namely the expected
value $E[\mathcal{X}_4]$ of the random variable $\mathcal{X}_4$
which counts the number of arithmetic operations in $ \fq$ that the
algorithm performs to factorize $f/\mathrm{ERF}(f) $, when  $ f $
runs over all elements of $ \mathcal {A} $. We can rewrite $ E
[\mathcal {X} _4] $ as follows:
$$
E[\mathcal{X}_4]=\frac{1}{|\mathcal{A}|}\sum_{f \in
\mathcal{A}^{sq}} \mathcal{X}_4(f) + \frac{1}{|\mathcal{A}|}\sum_{f
\in \mathcal{A}^{nsq}} \mathcal{X}_4(f)=:S_4^{sq}+S_4^{nsq}.
$$

We estimate the first sum $S_4^{sq}$. If $f \in \mathcal{A}^{sq}$,
then $f/\mathrm{ERF}(f)=1$ and the algorithm does not perform any
further operation. Hence, the cost of this step is that of dividing
two polynomials of degree at most $r$ at most, namely $\tau_1 M(r)$
arithmetic operations in $\fq$. Thus,
\begin{equation}
\label{Suma 1 de esperanza}
S_4^{sq}
:=\frac{1}{|\mathcal{A}|}\sum_{f \in \mathcal{A}^{sq}} \mathcal{X}_4(f)\leq \tau_1 M(r).
\end{equation}

Now we estimate the second sum $S_4^{nsq}$. For this purpose, we
decompose the set $\mathcal{A}^{nsq}$ into the disjoint union of the
set $\mathcal{A}_{=2}^{nsq}$ of elements having all the irreducible
factors of multiplicity at most $2$, and $\mathcal{A}_{\geq
2}^{nsq}:=\mathcal{A}^{nsq}\setminus \mathcal{A}_{=2}^{nsq}$. If  $f
\in \mathcal{A}_{=2}^{nsq}$, then  $f$ is of the form $f=\prod_i f_i
\prod_j f_j^2$, and we have $f/\mathrm{ERF}(f)=\prod_j f_j$.
Consequently, in this case only the first three steps of the
algorithm are executed, and the worst--case analysis of the
classical algorithm of \cite[Theorem 14.14]{GaGe99} shows that
$\mathcal{X}_4(f) \leq c_3\, r\, M(r)\log(rq)$, where $c_3$ is a
constant independent of $q$ and $r$. On the other hand, if $f \in
\mathcal{A}_{\geq 2}^{nsq}$, then the four steps of the algorithm
are executed. Observe that the last step is executed as many times
as the highest multiplicity arising in the irreducible factors of
$f/\mathrm{ERF}(f)$. Thus the worst--case analysis of \cite[Theorem
14.14]{GaGe99} implies that $\mathcal{X}_4(f) \leq c_4\, r^2 M(r)
\log(rq)$, where $c_4$ is a constant independent of $q$ and $r$. It
follows that
\begin{align}
\label{Suma 2 esperanza} S_4^{nsq}&\leq c_3\, r\, M(r) \log(rq)
\frac{|\mathcal{A}_{=2}^{nsq}|}{|\mathcal{A}|}+ c_4\, r^2 M(r)
\log(rq)\frac{|\mathcal{A}_{\geq 2}^{nsq}|}{|\mathcal{A}|}
\end{align}
Since $\mathcal{A}_{=2}^{nsq}$ is a subset of $\mathcal{A}^{nsq}$,
from \eqref{eq: estimates: upper bound discr locus} we have that
\begin{equation}
\label{cota de no libres de cuadrados con un factor de grado 2}
|\mathcal{A}_{=2}^{nsq}|\leq r(r-1)\delta_{\bfs G}q^{r-m-1} \leq
r^2\delta_{\bfs G} q^{r-m-1}.
\end{equation}
On the other hand, if $f \in \mathcal{A}_{\geq 2}^{nsq}$, then
$\deg(\gcd(f,f')) \geq 2$. We deduce that
$\mathrm{Res}(f,f')=\mathrm{Subres}(f,f')=0$. Hence,
$\mathcal{A}_{\geq 2}^{nsq}$  is a subset of $\mathcal{S}_1(W)$,
where $W\subset \A^r$ is the affine variety defined by $G_1, \dots,
G_m$, $\mathcal{D}(W)$ is the discriminant locus of $W$ and
$\mathcal{S}_1(W)$ is the first subdiscriminant locus of $W$. We
deduce that
\begin{align}
\label{cota de no libres de cuadrados con otros factores}
|\mathcal{A}_{\geq 2}^{nsq}| \leq r(r-1)^2(r-2)\delta_{\bfs G}
q^{r-m-2}\leq r^4\delta_{\bfs G} q^{r-m-2}.
\end{align}
Further, if $q >15\delta_{\bfs G}^{13/3}$, then Theorem
\ref{estimation A} implies $|\mathcal{A}|\geq\frac{1}{2} q^{r-m}.$
Replacing \eqref{cota de no libres de cuadrados con un factor de
grado 2}, \eqref{cota de no libres de cuadrados con otros factores}
in \eqref{Suma 2 esperanza} we obtain
\begin{align}
\label{Suma 2 esperanza bis} \mathcal{S}_4^{nsq} & \leq 2\,c_3 M(r)
\log(rq) \frac{r^3\delta_{\bfs G}}{q}+ 2\,c_4 M(r) \log(rq)
\frac{r^6\delta_{\bfs G}}{q^2}.
\end{align}
Combining \eqref{Suma 1 de esperanza} and \eqref{Suma 2 esperanza bis} we obtain the following result.
\begin{theorem}\label{costo en promedio cuarto paso}
Let $q > 15\delta_{\bfs G}^{13/3}$. The average cost
$E[\mathcal{X}_4]$ of the fourth step of the classical factorization
algorithm on $\mathcal{A}$ is bounded in the following way:
$$E[\mathcal{X}_4] \leq
\tau_1 M(r)+ \frac{c\, r^6\delta_{\bfs G} M(r) \log(rq)}{q}=\tau_1
M(r)(1+o(1)),$$ where $c$ is a constant independent of $q$ and $r$.
\end{theorem}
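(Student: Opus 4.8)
The plan is to decompose the expectation as $E[\mathcal{X}_4]=S_4^{sq}+S_4^{nsq}$, where $S_4^{sq}:=|\mathcal{A}|^{-1}\sum_{f\in\mathcal{A}^{sq}}\mathcal{X}_4(f)$ and $S_4^{nsq}$ is the analogous average over the non--square--free elements $\mathcal{A}^{nsq}$, and to bound the two summands separately. For the square--free part, note that $f\in\mathcal{A}^{sq}$ forces $\mathrm{ERF}(f)=f$, so $f/\mathrm{ERF}(f)=1$ and the fourth step performs a single division of two polynomials of degree at most $r$; hence $\mathcal{X}_4(f)\le\tau_1M(r)$ and $S_4^{sq}\le\tau_1M(r)\,|\mathcal{A}^{sq}|/|\mathcal{A}|\le\tau_1M(r)$. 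This will be the dominant term of the final estimate.

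For the non--square--free part I would further split $\mathcal{A}^{nsq}$ into the set $\mathcal{A}_{=2}^{nsq}$ of polynomials all of whose irreducible factors have multiplicity at most $2$ and the complementary set $\mathcal{A}_{\ge2}^{nsq}$ of those having some irreducible factor of multiplicity at least $3$. If $f\in\mathcal{A}_{=2}^{nsq}$ then $f/\mathrm{ERF}(f)$ is square--free, so the recursive call runs only the ERF, DDF and EDF routines, and the worst--case analysis of \cite[Theorem 14.14]{GaGe99} gives $\mathcal{X}_4(f)\le c_3\,rM(r)\log(rq)$ for an absolute constant $c_3$; if $f\in\mathcal{A}_{\ge2}^{nsq}$, all four steps are executed and the fourth recurses at most as often as the largest multiplicity occurring, so \cite[Theorem 14.14]{GaGe99} yields $\mathcal{X}_4(f)\le c_4\,r^2M(r)\log(rq)$. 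Consequently $S_4^{nsq}\le c_3\,rM(r)\log(rq)\,|\mathcal{A}_{=2}^{nsq}|/|\mathcal{A}|+c_4\,r^2M(r)\log(rq)\,|\mathcal{A}_{\ge2}^{nsq}|/|\mathcal{A}|$.

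It then remains to bound the two cardinalities. Since $\mathcal{A}_{=2}^{nsq}\subset\mathcal{A}^{nsq}$, which is the set of $\fq$--rational points of the discriminant locus $\mathcal{D}(W)$, estimate \eqref{eq: estimates: upper bound discr locus} gives $|\mathcal{A}_{=2}^{nsq}|\le r^2\delta_{\bfs G}\,q^{r-m-1}$. For $\mathcal{A}_{\ge2}^{nsq}$ I would observe that an irreducible factor of multiplicity at least $3$ makes $\deg\gcd(f,f')\ge2$, so both $\mathrm{Res}(f,f')$ and $\mathrm{Subres}(f,f')$ vanish and $\mathcal{A}_{\ge2}^{nsq}$ lies in the $\fq$--rational points of the first subdiscriminant locus $\mathcal{S}_1(W)$; by $({\sf H}_4)$ and $({\sf H}_5)$ we have $\mathcal{S}_1(W)\subset(A_0\cdot\mathcal{S}_1)(W)$ of codimension at least $2$ in $W$, hence dimension at most $r-m-2$, and bounding its degree by the B\'ezout inequality \eqref{eq: Bezout} and applying \eqref{eq: upper bound -- affine gral} gives $|\mathcal{A}_{\ge2}^{nsq}|\le r^4\delta_{\bfs G}\,q^{r-m-2}$. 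Finally, for $q>15\delta_{\bfs G}^{13/3}$ Theorem \ref{estimation A} supplies $|\mathcal{A}|\ge\tfrac{1}{2}q^{r-m}$; substituting the two cardinality bounds and this lower bound into the estimate for $S_4^{nsq}$ yields $S_4^{nsq}\le 2c_3M(r)\log(rq)\,r^3\delta_{\bfs G}/q+2c_4M(r)\log(rq)\,r^6\delta_{\bfs G}/q^2$, and adding $S_4^{sq}$ and absorbing everything beyond $\tau_1M(r)$ into one constant $c$ gives $E[\mathcal{X}_4]\le\tau_1M(r)+c\,r^6\delta_{\bfs G}M(r)\log(rq)/q=\tau_1M(r)(1+o(1))$.

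The main obstacle I anticipate is the structural step identifying $\mathcal{A}_{\ge2}^{nsq}$ with a subvariety of small dimension: one must check that, in the characteristics actually at play, an irreducible factor of multiplicity at least $3$ does force $\deg\gcd(f,f')\ge2$ (so the first subresultant vanishes), and then correctly deduce from $({\sf H}_4)$ and $({\sf H}_5)$ that $\mathcal{S}_1(W)$ has codimension at least $2$ in $W$, keeping in mind that $({\sf H}_5)$ is stated relative to $\mathcal{D}(W)$ rather than to $W$ itself. Everything else is a routine assembly of the worst--case estimates of \cite{GaGe99} with the cardinality bounds already established and the lower bound on $|\mathcal{A}|$ of Theorem \ref{estimation A}.
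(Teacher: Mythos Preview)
Your proposal is correct and follows essentially the same approach as the paper: the same decomposition $E[\mathcal{X}_4]=S_4^{sq}+S_4^{nsq}$, the same further split of $\mathcal{A}^{nsq}$ into $\mathcal{A}_{=2}^{nsq}$ and $\mathcal{A}_{\ge2}^{nsq}$, the same worst--case bounds from \cite[Theorem 14.14]{GaGe99}, and the same cardinality estimates via \eqref{eq: estimates: upper bound discr locus}, the subdiscriminant locus, and Theorem \ref{estimation A}. The anticipated obstacle is also handled just as in the paper, by noting that a factor of multiplicity at least $3$ forces $\deg\gcd(f,f')\ge2$ and then combining $({\sf H}_4)$ and $({\sf H}_5)$ to get codimension at least $2$ for $\mathcal{S}_1(W)$ in $W$.
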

Theorem \ref{costo en promedio cuarto paso} shows that the average
cost of the last step of the classical factorization algorithm
applied to elements of $\mathcal {A} $ is $\tau_1\, M(r)(1+o(1))$
arithmetic operations in $ \fq $, which asymptotically coincides
with the cost of dividing two polynomials of degree at most $r$.
%
%----------------------------------------------------------------------
%----------------------------------------------------------------------
%----------------------------------------------------------------------
%----------------------------------------------------------------------
%----------------------------------------------------------------------
%----------------------------------------------------------------------
%----------------------------------------------------------------------
%----------------------------------------------------------------------
%

%\bibliographystyle{amsalpha}

%\bibliography{refs1,finite_fields,Newref}

\newcommand{\etalchar}[1]{$^{#1}$}
\providecommand{\bysame}{\leavevmode\hbox
to3em{\hrulefill}\thinspace}
\providecommand{\MR}{\relax\ifhmode\unskip\space\fi MR }
% \MRhref is called by the amsart/book/proc definition of \MR.
\providecommand{\MRhref}[2]{%
  \href{http://www.ams.org/mathscinet-getitem?mr=#1}{#2}
} \providecommand{\href}[2]{#2}

\end{document}